\documentclass[]{article}

\addtolength{\oddsidemargin}{-.3in}
\addtolength{\evensidemargin}{-.3in}
\addtolength{\textwidth}{0.6in}
\addtolength{\topmargin}{-.3in}
\addtolength{\textheight}{0.6in}

\usepackage{graphicx}
\usepackage{amsmath}
\usepackage{amssymb}
\usepackage{amsthm}
\usepackage{pxfonts}
\usepackage{enumerate}
\usepackage{color}
\usepackage{mathdots}
\usepackage{sectsty}
\usepackage{tikz}
\usepackage{adjustbox}
\usepackage{caption}
\usepackage{bbold}
\usepackage[hidelinks]{hyperref}
\allowdisplaybreaks
\newcommand{\dle}{\rotatebox[origin=c]{45}{$\le$}}
\newcommand{\dless}{\rotatebox[origin=c]{-45}{$<$}}

\sectionfont{\scshape\centering\fontsize{11}{14}\selectfont}
\subsectionfont{\scshape\fontsize{11}{14}\selectfont}
\usepackage{fancyhdr}
\usepackage[nottoc,notlot,notlof]{tocbibind}

\newcommand\shorttitle{Exact solution of interacting particle systems related to random matrices}
\newcommand\authors{Theodoros Assiotis}

\fancyhf{}

\fancyhead[C]{%
\ifodd\value{page}
  \small\scshape\authors
\else
  \small\scshape\shorttitle
\fi
}
\fancyfoot[C]{\thepage}

\pagestyle{fancy}


\newtheorem{thm}{Theorem}[section]
\newtheorem{cor}[thm]{Corollary}
\newtheorem{lem}[thm]{Lemma}
\newtheorem{defn}[thm]{Definition}
\newtheorem{rmk}[thm]{Remark}
\newtheorem{prop}[thm]{Proposition}
\newtheorem*{theorem*}{Theorem}

\title{\large \bf EXACT SOLUTION OF INTERACTING PARTICLE SYSTEMS RELATED TO RANDOM MATRICES}
\author{\small THEODOROS ASSIOTIS}
\date{}

\begin{document}

\maketitle

\begin{abstract} We consider one-dimensional diffusions, with polynomial drift and diffusion coefficients, so that in particular the motion can be space-inhomogeneous, interacting via one-sided reflections. The prototypical example is the well-known model of Brownian motions with one-sided collisions, also known as Brownian TASEP, which is equivalent to Brownian last passage percolation. We obtain a formula for the finite dimensional distributions of these particle systems, starting from arbitrary initial condition, in terms of a Fredholm determinant of an explicit kernel. As far as we can tell, in the space-inhomogeneous setting and for general initial condition this is the first time such a result has been proven. We moreover consider the model of non-colliding diffusions, again with polynomial drift and diffusion coefficients, which includes the ones associated to all the classical ensembles of random matrices. We prove that starting from arbitrary initial condition the induced point process has  determinantal correlation functions in space and time with an explicit correlation kernel. A key ingredient in our general method of exact solution for both models is the application of the backward in time diffusion flow on certain families of polynomials constructed from the initial condition. 
\end{abstract}

\tableofcontents

\section{Introduction}
This paper is about the exact solution of certain interacting particle systems connected to random matrices. We begin with the simplest motivating example, which albeit in the discrete setting is very closely related, namely the totally asymmetric simple exclusion process (TASEP), see \cite{Spitzer}. The study of TASEP has received a lot of attention in the last few decades, arguably culminating, at least for the purposes we are interested in, in the exact solution (in a sense to be discussed later) for arbitrary initial condition in \cite{KPZfixedpoint}. This led in the 1:2:3 scaling to the construction of the KPZ fixed point \cite{KPZfixedpoint}, the central object in the KPZ universality class \cite{CorwinKPZsurvey}. The continuous space analogue of TASEP is the model of Brownian motions with one-sided collisions or reflections \cite{ReflectedBrownianKPZ}, also called Brownian TASEP, and which is also equivalent to Brownian last passage percolation \cite{OConnellYor,DirectedLandscape}. It was first discovered in \cite{OConnellYor,BougerolJeulin}, and will be discussed in a more general setting in this paper, that this particle system is intimately related to Hermitian Brownian motion \cite{Dyson}. The Brownian model has also recently been solved for general initial condition and shown to converge to the KPZ fixed point \cite{NicaQuastelRemenik}. More generally, particle systems of this type, at least in the discrete setting, have been intensely studied and we give a more detailed literature review in Section \ref{SectionPreviousResults}.

A different type of interacting particle system of significant interest is that of non-colliding (also called non-intersecting) diffusions. Such systems were first studied since they arise as eigenvalue evolutions of Hermitian matrix valued diffusions \cite{MatrixYamadaWatanabe}. The quintessential example is that of non-intersecting Brownian motions, also called Dyson's Brownian motion, which arises as the eigenvalue evolution of Brownian motion on Hermitian matrices. This model has been studied from many different points of view for decades \cite{Dyson,IntroductionToRandomMatrices,UniversalityBook,JohanssonUniversality,KatoriTanemura,SpohnDyson,Tsai}. Our interest here is in exact solvability, for arbitrary deterministic initial condition, in the sense of obtaining explicit formulae for the space-time correlations of the model. These turn out to be given in terms of determinants of an explicit correlation kernel. For fixed time this result goes back to the works of Johansson \cite{JohanssonUniversality} and Brezin and Hikami \cite{brezin1997extension} and for multiple times it is due to Katori and Tanemura in \cite{KatoriTanemura}. Another such system which can be solved exactly in this sense is that of non-intersecting squared Bessel processes \cite{KatoriTanemuraBessel} and we give a more detailed literature review in Section \ref{SectionPreviousResults} and Section \ref{SectionExamples}.

In this paper we study one-dimensional diffusions with polynomial drift and diffusion coefficients which interact via one-sided collisions, namely they solve a simple system of stochastic differential equations with reflection, see equations (\ref{IPS1}) and (\ref{IPS2}). We note that the individual one-dimensional diffusions are called Pearson diffusions by virtue of their relation to the important family of Pearson distributions \cite{PearsonStats}. They were first considered by Kolmogorov in 1931 \cite{KolmogorovPearson}, revisited by Wong in the 1960s \cite{Wong}, and in the past decades they have been much studied in statistics and mathematical finance \cite{PearsonStats}. Moreover, we study the model of non-colliding Pearson diffusions, which also solve a system of interacting stochastic differential equations, see equation (\ref{NonCollidingSDE}). This model includes as special cases all the eigenvalue evolutions of matrix processes related to the classical ensembles of random matrices \cite{ForresterBook}, see Section \ref{SectionExamples} for more details.

Our main results, Theorems \ref{ThmOneSidedUp}, \ref{ThmOneSidedDown} and \ref{ThmBessel} below, on Pearson diffusions with one-sided collisions, starting from arbitrary deterministic initial condition, give a formula for the finite dimensional distributions of this particle system, at a fixed time, in terms of a Fredholm determinant of an explicit kernel.
Our main result, Theorem \ref{NoncollidingThm} below, on non-colliding Pearson diffusions starting from arbitrary initial condition determines the space-time correlations of the induced point process in terms of determinants of an explicit kernel. Except for the Brownian case, and for the non-colliding model also the squared Bessel, all our theorems are new for all other Pearson diffusions.

The contribution of this paper is two-fold. First, in the case of TASEP-like particle systems, both in discrete and continuous space, we solve exactly (in the sense of Theorems \ref{ThmOneSidedUp} and \ref{ThmOneSidedDown}) for the first time, for general\footnote{For a special, the fully-packed, initial condition, in the discrete setting, some general space-inhomogeneous models have been solved, see for example \cite{InhomogeneousKnizelPetrovSaenz,InhomogeneousPushTASEP,DeterminantalStructures}.} initial condition, models for which the motion of particles depends in a non-trivial way on their spatial location (in previous works the motion of particles was translation invariant, see Section \ref{SectionPreviousResults}). Second, we show how the backward in time diffusion flow applied to certain families of polynomials can be used as a key tool to solve both diffusions with one-sided reflections and non-colliding diffusions in a uniform way (within each model).

Armed with the explicit Fredholm determinant formulae we obtain in this paper it would be possible to investigate scaling limits and connections to integrable systems for the interacting particle systems we consider, see the discussion in Sections \ref{SectionScalingLimits} and \ref{SectionIntegrableSystems}. We will pursue this in the future. Moreover, it is possible to consider in the discrete setting space-inhomogeneous particle systems with pushing and blocking mechanism. The ideas presented here, if adapted appropriately, should allow to solve exactly such discrete models as well. We leave this for future work.

Finally, we note that the backward in time diffusion flow in the case of Brownian motion appeared recently in the proof of Newman's conjecture from number theory \cite{NewmanConj}, in a remarkable conjecture on deforming the characteristic polynomial of the Ginibre random matrix ensemble to that of the Gaussian unitary ensemble \cite{hall2022heat}, in statistical mechanics \cite{kabluchko2022leeyang} and finite free probability \cite{marcus2022finite}. Whether analogous applications exist for other Pearson diffusions is not clear but would be interesting to find out.

\subsection{Models and main results}

We fix $N\in \mathbb{N}$ and an open interval $(l,r)\subseteq \mathbb{R}$ once and for all throughout the paper. We consider the following differential operator:
\begin{equation*}
\mathsf{L}=\mathsf{a}(x)\frac{    d^2}{dx^2}+\mathsf{b}(x)\frac{d}{dx},
\end{equation*}
where the functions $\mathsf{a}$ and $\mathsf{b}$ are given by the polynomials:
\begin{equation}\label{DiffusionDrift}
\mathsf{a}(x)=a_2x^2+a_1x+a_0, \ \ \mathsf{b}(x)=b_1x+b_0. 
\end{equation}
Moreover, define the polynomials for each $k=1,\dots, N$,
\begin{equation}\label{Drift}
\mathsf{b}^{(k)}(x)=\mathsf{b}(x)+(N-k)\mathsf{a}'(x)   
\end{equation}
and consider the differential operators, for $k=1,\dots,N$,
\begin{equation*}
    \mathsf{L}^{(k)}=\mathsf{a}(x)\frac{    d^2}{dx^2}+\mathsf{b}^{(k)}(x)\frac{d}{dx},
\end{equation*}
so that in particular $\mathsf{L}^{(N)}\equiv\mathsf{L}$. Note that given $\mathsf{L}$ and $N$, the operators $\mathsf{L}^{(k)}$, for $k=1,\dots,N$, are completely determined. The following is the standing, and basically only, assumption throughout the paper (and will not be recalled in every single result statement).

\begin{defn}[Standing assumption]\label{StandingAssumption}
We assume that for each $k=1,\dots,N$ the differential operator $\mathsf{L}^{(k)}$, with $\mathsf{a}$ and $\mathsf{b}$ as in (\ref{DiffusionDrift}) and (\ref{Drift}), with $\mathsf{a}(x)>0$ for all $x\in (l,r)$, is the generator of a one-dimensional diffusion process in $(l,r)$ with each boundary point $l,r$ being either natural or entrance, see \cite{ItoMckean,HandbookBM,KarlinMcGregor,EthierKurtz} for details on this terminology. In particular, the boundary points are inaccessible and the diffusion associated to $\mathsf{L}^{(k)}$ (which acts on a suitable domain of functions, see \cite{ItoMckean,HandbookBM,KarlinMcGregor,EthierKurtz}) is completely determined by $\mathsf{a}$ and $\mathsf{b}$ without needing to specify boundary conditions at $l$ or $r$.
\end{defn}

There are concrete integral conditions due to Feller, involving the functions $\mathsf{a}$ and $\mathsf{b}$, for when a boundary point is natural or entrance, see for example  \cite{ItoMckean,HandbookBM,KarlinMcGregor,EthierKurtz}. These have already been worked out for the diffusions we consider and we will give references in the sequel. We call the diffusion with generator $\mathsf{L}^{(k)}$ the $\mathsf{L}^{(k)}$-diffusion (similarly for $\mathsf{L}$). We write $\left(e^{t\mathsf{L}^{(k)}};t\ge 0\right)$ for the associated semigroup and, abusing notation, $e^{t\mathsf{L}^{(k)}}(x,y)$ for its transition density with respect to the Lebesgue measure in $(l,r)$, and analogously for $\mathsf{L}$. All these transition densities can be written explicitly in terms of hypergeometric functions, see for example \cite{Wong,arista2022explicit,FisherSnedecor}, but we will not make use of such formulae in this paper. We only need some basic qualitative properties. By standard results, see for example \cite{StroockPDEbook}, $e^{t\mathsf{L}^{(k)}}(x,y)$ is smooth in $(x,y)\in (l,r)^2$ and $y\mapsto\left|\partial_x^ie^{t\mathsf{L}^{(k)}}(x,y)\right|$, $i\in \mathbb{N}$, integrates polynomials in $(l,r)$. We moreover note that using the spectral expansion \cite{ItoMckean} of the transition density in terms of hypergeometric functions \cite{Wong,arista2022explicit,FisherSnedecor}, $z\mapsto e^{t\mathsf{L}^{(k)}}(z,y)$, for $y\in (l,r)$, can be extended\footnote{This property will not be used in the proof other than to give a final contour integral expression for the formula in Theorem \ref{NoncollidingThm}, which is aesthetically pleasing and may be better amenable to asymptotic analysis in the future (it is possible to give an expression in terms of divided differences \cite{DividedDifferences} instead, see Section \ref{SectionNonColliding}).} to an analytic function in a complex neighbourhood of any compact subinterval of $(l,r)$. 

Finally, from the stochastic analysis point of view, by our assumption above (the form of $\mathsf{a}$ and $\mathsf{b}$) and the Yamada-Watanabe theorem \cite{RevuzYor,IkedaWatanabe} the $\mathsf{L}^{(k)}$-diffusion is the unique strong solution $\left(\mathsf{x}(t);t\ge 0\right)$ to the stochastic differential equation (SDE) in $(l,r)$:
\begin{equation*}
d\mathsf{x}(t)=\sqrt{2\mathsf{a}(\mathsf{x}(t))}d\mathsf{w}(t)+\mathsf{b}^{(k)}(\mathsf{x}(t))dt,
\end{equation*}
with $\left(\mathsf{w}(t);t\ge 0\right)$ a standard Brownian motion. 

\paragraph{Diffusions with one-sided collisions}

Our interest in this paper is in $\mathsf{L}^{(k)}$-diffusions interacting by one-sided reflections (also called collisions). Towards that end, define the Weyl chambers $\mathbb{W}_N^\uparrow$ and $\mathbb{W}_N^\downarrow$, corresponding to the interval $(l,r)$:
\begin{align*}
\mathbb{W}_N^{\uparrow}&=\left\{x=(x_1,\dots,x_N)\in (l,r)^N:x_1\le \cdots \le x_N \right\}, \\ \mathbb{W}_N^{\downarrow}&=\left\{x=(x_1,\dots,x_N)\in (l,r)^N:x_1\ge \cdots \ge x_N \right\}.
\end{align*}
We write $\mathbb{W}_N^{\uparrow,\circ},\mathbb{W}_N^{\downarrow,\circ}$ for the interiors (when the inequalities are strict) of $\mathbb{W}_N^{\uparrow},\mathbb{W}_N^{\downarrow}$ respectively. 

We consider the following system of SDEs with reflection \cite{RevuzYor,IkedaWatanabe} in the chamber $\mathbb{W}_N^{\uparrow}$:
\begin{equation}\label{IPS1}
 d\mathsf{x}^\uparrow_k(t)=\sqrt{2\mathsf{a}\left(\mathsf{x}^{\uparrow}_{k}(t)\right)}d\mathsf{w}_k(t)+\mathsf{b}^{(k)}\left(\mathsf{x}^\uparrow_k(t)\right)dt+\frac{1}{2}d\mathfrak{l}_k^{\uparrow}(t),
\end{equation}
with the $\mathsf{w}_k$ being independent standard Brownian motions and where the finite variation terms $\mathfrak{l}_k^{\uparrow}$, which only increases when particles collide to keep them ordered (in other words in  $\mathbb{W}_N^{\uparrow}$), can be identified with a semimartingale local time:
\begin{equation}
\mathfrak{l}_k^{\uparrow}= \textnormal{sem. loc. time of } \mathsf{x}^\uparrow_k-\mathsf{x}^\uparrow_{k-1} \textnormal{ at } 0,
\end{equation}
with $\mathfrak{l}_1^{\uparrow} \equiv 0$. These SDEs have a unique strong solution in $\mathbb{W}_N^{\uparrow}$, see \cite{InterlacingDiffusions} (by virtue of the form of $\mathsf{a}$ and $\mathsf{b}$ the Yamada-Watanabe condition therein is satisfied). Write $\left(\mathsf{S}_t^{\uparrow, (N)};t \ge 0\right)$
for the semigroup of the corresponding Markov process; remarkably this has an explicit expression, see Proposition \ref{TransDensityProp1}, which is the starting point of our analysis. In words, the dynamics are as follows: for each $k$ the $k$-th particle evolves as an independent $\mathsf{L}^{(k)}$-diffusion and when it collides with the $(k-1)$-th particle it receives an infinitesimal push $\frac{1}{2}\mathfrak{l}_k^{\uparrow}$ (which is the only form of interaction between the particles) to keep the ordering. Such particle systems are known as diffusions with one-sided collisions or one-sided reflections. The most famous particle system of this type is when the $\mathsf{L}^{(k)}$-diffusion is a Brownian motion in which case it is also called Brownian TASEP \cite{NicaQuastelRemenik,ReflectedBrownianKPZ}. Since the Brownian local time can be written as a running maximum, see \cite{RevuzYor}, it becomes equivalent to so-called Brownian last passage percolation \cite{OConnellYor}. 

Similarly, consider the following system of SDEs with reflection (now to the left) in the chamber $\mathbb{W}_N^{\downarrow}$:
\begin{equation}\label{IPS2}
 d\mathsf{x}^\downarrow_k(t)=\sqrt{2\mathsf{a}\left(\mathsf{x}^\downarrow_{k}(t)\right)}d\mathsf{w}_k(t)+\mathsf{b}^{(k)}\left(\mathsf{x}^\downarrow_k(t)\right)dt-\frac{1}{2}d\mathfrak{l}_k^{\downarrow}(t),
\end{equation}
with the $\mathsf{w}_k$ being independent standard Brownian motions and where again the finite variation terms $\mathfrak{l}_k^{\downarrow}$ can be identified with a local time:
\begin{equation}
\mathfrak{l}_k^{\downarrow}= \textnormal{sem. loc. time of } \mathsf{x}^\downarrow_k-\mathsf{x}^\downarrow_{k-1} \textnormal{ at } 0,
\end{equation}
with $\mathfrak{l}_1^{\downarrow} \equiv 0$. Again, these equations have a unique strong solution in $\mathbb{W}_N^{\downarrow}$, see \cite{InterlacingDiffusions} (by virtue of the form of $\mathsf{a}$ and $\mathsf{b}$). We write $\left(\mathsf{S}_t^{\downarrow, (N)};t \ge 0\right)$
for the semigroup of the corresponding Markov process. This again has an explicit expression, see Proposition \ref{TransDensityProp2}. The dynamics have an analogous intuitive description as the one above for (\ref{IPS1}).

To state our main results, Theorems \ref{ThmOneSidedUp} and \ref{ThmOneSidedDown} below, on the particle systems (\ref{IPS1}) and (\ref{IPS2}) we need to introduce some basic ingredients. We write $\mathbf{1}_{(\mathcal{A})}$ for the indicator function of a set $\mathcal{A}$. 
\begin{defn}\label{PolyDefinition}
Let $x=(x_1,\dots,x_N)\in (l,r)^N$. For $n=1,\dots,N$ and $k=0,\dots,n-1$ we define the polynomial $\mathsf{q}_k^{(n)}(z)=\mathsf{q}_k^{(n)}(z;x)$ of degree $k$ by requiring, for $i=0,\dots,n-1$,
\begin{equation}\label{PolyDef}
   \partial_z^{i}\mathsf{q}_k^{(n)}(z;x)\big|_{z=x_{n-i}} =(-1)^k\mathbf{1}_{(i=k)}.
\end{equation}
\end{defn}
Clearly, we only require condition (\ref{PolyDef}) to hold for $i=0,\dots,k$ to define $\mathsf{q}_k^{(n)}$ uniquely. Moreover, using (\ref{PolyDef}) we can write a triangular system of equations for the coefficients of $\mathsf{q}_k^{(n)}$ which in particular can be solved to give a complicated explicit expression for them. Alternatively, we have the following rather neat integral expression (where we perform each nested integral  $\int_{y_j}^z f(y_{j+1})dy_{j+1}$ consecutively assuming $y_j<z$) for $\mathsf{q}_k^{(n)}(z)$, which however we will not make use of in this paper,
\begin{equation}
\mathsf{q}_k^{(n)}(z)=\mathsf{q}_k^{(n)}(z;x)=\int^{x_n}_z\int^{x_{n-1}}_{y_1}\cdots\int^{x_{n-k+2}}_{y_{k-2}}\int^{x_{n-k+1}}_{y_{k-1}}dy_k dy_{k-1}\cdots dy_1.
\end{equation}
These polynomials appear implicitly in \cite{NicaQuastelRemenik}. They seem quite natural but we do not know whether they have been studied before for different purposes. We finally record the simplest possible example of such polynomials. If all the coordinates of $x$ are equal, namely $x=(x_*,x_*,\dots,x_*)$ then we observe that $\mathsf{q}^{(n)}_k$ is given by
\begin{equation}\label{AllCoordEqPoly}
\mathsf{q}^{(n)}_k(z)=(-1)^k\frac{(z-x_*)^k}{k!}.
\end{equation}

At several places throughout the paper we will need to apply the diffusion flow corresponding to $\mathsf{L}$ (or more generally $\mathsf{L}^{(k)}$) backward in time to certain families of polynomials. In general, solving the diffusion equation backward in time is non-sensical but it is well-defined on polynomials $p(z)$ as a power series, for any $t\in \mathbb{C}$,
\begin{equation}\label{powerseriesdef}
e^{t\mathsf{L}}p(z)=\sum_{j=0}^\infty \frac{t^j}{j!}\mathsf{L}^jp(z).
\end{equation}
The fact that this makes sense and matches, as it should, for $t>0$, the action of the semigroup $e^{t\mathsf{L}}$ on $p$ will be discussed in Section \ref{SectionBackwardFlow}. Moreover, for a multivariate function $g(z_1,\dots,z_m)$ which is a polynomial in the variable $z_j$ we denote by $e^{t\mathsf{L}_{z_j}}g(z_1,\dots,z_m)$, for $t\in \mathbb{C}$, the application of $e^{t\mathsf{L}}$ to the polynomial $z_j\mapsto g(z_1,\dots,z_m)$.

We write $\partial^{-1}$ for the operator, acting on suitably integrable functions $f$ (which integrate polynomials in $(l,r)$ for example; all functions to which we will apply $\partial^{-1}$ in the sequel will be such),
\begin{equation}\label{IntegrationOperDef}
    \partial^{-1}f(x)=\int_l^xf(y)dy.
\end{equation}
It is then easy to see that, for such $f$, for any $m\in \mathbb{N}$, 
\begin{equation}
    \partial^{-m}f(x)=\underbrace{\partial^{-1}\cdots \partial^{-1}}_{\textnormal{m  times}}f(x)=\int_l^x\frac{(x-y)^{m-1}}{(m-1)!}f(y)dy.
\end{equation}
Abusing notation we will also write $\partial^{-m}(x,y)$ for the corresponding integral kernel:
\begin{equation*}
\partial^{-m}\left(x,y\right)=\frac{(x-y)^{m-1}}{(m-1)!}\mathbf{1}_{(y<x)}.
\end{equation*}
We also define the constants $\mathsf{c}^{(k)}=\mathsf{c}^{(k)}(\mathsf{L})$, for $k=1,\dots,N$, 
\begin{equation*}
 \mathsf{c}^{(k)}=2(N-k-1)a_2+b_1. 
\end{equation*}
Finally, for a fixed vector $z=(z_1,\dots,z_M)\in (l,r)^M$ and indices $n_1<\cdots<n_M$ we introduce the multiplication operators
\begin{equation}
\chi_z^+\left(n_j,w\right)=\mathbf{1}_{(w>z_j)}, \ \ \chi_z^-\left(n_j,w\right)=\mathbf{1}_{(w<z_j)}.
\end{equation}
Our two first main results, Theorems \ref{ThmOneSidedUp} and \ref{ThmOneSidedDown} below, give a formula for the distribution of the interacting particle systems (\ref{IPS1}) and (\ref{IPS2}) in terms of a Fredholm determinant, see for example \cite{SimonFredholm} for background on Fredholm determinants, of an explicit kernel.

\begin{thm}\label{ThmOneSidedUp} Under the standing assumption in Definition \ref{StandingAssumption}, consider the interacting particle system $\left(\left(\mathsf{x}_1^\uparrow(t),\dots,\mathsf{x}_N^\uparrow(t)\right);t\ge 0\right)$ evolving according to the dynamics (\ref{IPS1}) with initial condition $x=(x_1,\dots,x_N)\in \mathbb{W}_N^\uparrow$. For any $t>0$, indices $1\le n_1<n_2<\cdots <n_M \le N$ and locations $z=(z_1,\dots,z_M)\in (l,r)^M$, we have
\begin{align}
 \mathbb{P}\left(\mathsf{x}^\uparrow_{n_j}(t)\le z_j, j=1,\dots,M\right) = \det\left(\mathbf{I}-\chi_z^+\mathfrak{K}_t\chi_z^+\right)_{L^2\left(\left\{n_1,\dots, n_M\right\}\times (l,r)\right)},
\end{align}
where $\det$ is the Fredholm determinant ($\mathbf{I}$ is the identity operator), with
\begin{align}\label{IPSKernel}
\mathfrak{K}_t\left[\left(n_1,y_1\right);\left(n_2,y_2\right)\right]&=-\frac{\left(y_1-y_2\right)^{n_2-n_1-1}}{(n_2-n_1-1)!}\mathbf{1}_{(y_2<y_1)}\mathbf{1}_{(n_2>n_1)}+\partial_{y_1}^{n_1}\mathfrak{G}_{n_2}\left(y_1,y_2\right)e^{-t\mathsf{L}^{(n_2)}_{y_2}},
\end{align}
where $\mathfrak{G}_{n}(y_1,y_2)$ is given by (note this is where the initial condition $x=(x_1,\dots,x_N)$ appears)
\begin{equation}\label{GfunctionDef}
\mathfrak{G}_n(y_1,y_2)=\sum_{k=1}^{n}e^{t\sum_{j=k}^{n-1}\mathsf{c}^{(j)}}\partial_{y_1}^{-k}e^{t\mathsf{L}^{(k)}}\left(x_k,y_1\right)\mathsf{q}_{n-k}^{(n)}(y_2;x).
\end{equation}
\end{thm}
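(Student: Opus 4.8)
The plan is to run the biorthogonalisation scheme of Borodin--Ferrari--Pr\"ahofer--Sasamoto, in the streamlined form used by Matetski--Quastel--Remenik and by Nica--Quastel--Remenik \cite{NicaQuastelRemenik}, the new feature being to push it through in the space-inhomogeneous setting where the one-particle generators $\mathsf{L}^{(k)}$ genuinely depend on $k$.

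First I would feed in the explicit formula for the semigroup $\mathsf{S}_t^{\uparrow,(N)}$ from Proposition \ref{TransDensityProp1}, which expresses the transition density $\mathsf{S}_t^{\uparrow,(N)}(x,y)$ as a single $N\times N$ determinant with entries built from the one-particle semigroups $e^{t\mathsf{L}^{(k)}}$ and the iterated integration kernels $\partial^{-m}$. The probability in the statement equals $\int_{(l,r)^N}\prod_{j=1}^{M}\mathbf{1}_{(y_{n_j}\le z_j)}\,\mathsf{S}_t^{\uparrow,(N)}(x,y)\,dy$, and to expose the $M$ relevant marginals I would integrate out the coordinates $y_i$ with $i\notin\{n_1,\dots,n_M\}$. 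The unconstrained integrations telescope via the semigroup property together with the commutation relation
\[
e^{t\mathsf{L}^{(k+1)}}\partial^{-1}=e^{t\mathsf{c}^{(k)}}\,\partial^{-1}\,e^{t\mathsf{L}^{(k)}},
\]
which at the level of differential operators is the one-line identity $\partial\,\mathsf{L}^{(k+1)}=\bigl(\mathsf{L}^{(k)}+\mathsf{c}^{(k)}\bigr)\partial$, read off from (\ref{Drift}) and the definition of $\mathsf{c}^{(k)}$ using $\mathsf{a}''\equiv 2a_2$. This is precisely the source of the prefactors $e^{t\sum_{j=k}^{n-1}\mathsf{c}^{(j)}}$ appearing in (\ref{GfunctionDef}).

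Second, with the object rewritten on $L^2(\{n_1,\dots,n_M\}\times(l,r))$, I would recast $\mathbb{P}(\mathsf{x}^\uparrow_{n_j}(t)\le z_j,\ j=1,\dots,M)$ as $\det(\mathbf{I}-\chi_z^+\mathfrak{K}_t\chi_z^+)$ by the standard argument: the joint law of $(\mathsf{x}^\uparrow_{n_1}(t),\dots,\mathsf{x}^\uparrow_{n_M}(t))$ arises as a marginal of a (signed) determinantal structure whose correlation kernel is the sum of a ``free'' convolution part, namely the term $-\partial^{-(n_2-n_1)}(y_1,y_2)\mathbf{1}_{(n_2>n_1)}$ of (\ref{IPSKernel}), and a finite-rank part $\sum_{k}\Psi^{(n_1)}_{k}\otimes\Phi^{(n_2)}_{k}$. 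Reading off $\mathfrak{G}$, the functions $\Psi^{(n)}_{j}$ are explicit -- up to the constant $e^{t\sum_{i=n-j}^{n-1}\mathsf{c}^{(i)}}$ they equal $\partial_y^{\,j}e^{t\mathsf{L}^{(n-j)}}(x_{n-j},y)$ -- whereas the $\Phi^{(n)}_{j}$ are, a priori, only characterised implicitly: each is the unique polynomial of degree at most $n-1$ that is biorthogonal to $\Psi^{(n)}_{0},\dots,\Psi^{(n)}_{n-1}$ in $L^2(l,r)$. The only genuinely new bookkeeping relative to \cite{NicaQuastelRemenik} is the $k$-dependence of the generators, which is absorbed by the intertwining above.

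Third -- the heart of the matter -- I would exhibit the $\Phi^{(n)}_{j}$ and verify that characterisation. The claim is that $\Phi^{(n)}_{j}(y)=e^{-t\mathsf{L}^{(n)}_{y}}\,\mathsf{q}^{(n)}_{j}(y;x)$, the backward diffusion flow applied to the polynomial of Definition \ref{PolyDefinition}, which is well defined by the terminating power series (\ref{powerseriesdef}) since $\mathsf{L}^{(n)}$ maps polynomials of degree $\le d$ to polynomials of degree $\le d$; the degree condition on $\Phi^{(n)}_{j}$ is then automatic. For biorthogonality I would move the $j$ derivatives off $\Psi^{(n)}_{j}$ by integrating by parts (boundary terms vanishing by the decay of $e^{t\mathsf{L}^{(n-j)}}(x_{n-j},\cdot)$), commute $\partial^{\,j}$ past $e^{-t\mathsf{L}^{(n)}}$ via the relation above -- which contributes exactly $e^{-t\sum_{i=n-j}^{n-1}\mathsf{c}^{(i)}}$ and cancels the prefactor -- and then invoke $\int e^{t\mathsf{L}^{(n-j)}}(x_{n-j},y)\,e^{-t\mathsf{L}^{(n-j)}_{y}}h(y)\,dy=\bigl(e^{t\mathsf{L}^{(n-j)}}e^{-t\mathsf{L}^{(n-j)}}h\bigr)(x_{n-j})=h(x_{n-j})$ for polynomial $h$, which is legitimate because the transition densities integrate polynomials and the semigroup and power-series actions agree on polynomials. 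This collapses $\langle\Psi^{(n)}_{j},\Phi^{(n)}_{m}\rangle$ to $(-1)^{j}\,\partial_z^{\,j}\mathsf{q}^{(n)}_{m}(z;x)\big|_{z=x_{n-j}}=\delta_{jm}$ by the very defining relation (\ref{PolyDef}). Uniqueness in the Borodin--Ferrari--Pr\"ahofer--Sasamoto characterisation then identifies $\mathfrak{K}_t$, and regrouping the $\Psi$ part with one more application of the intertwining reproduces (\ref{IPSKernel})--(\ref{GfunctionDef}) verbatim. I expect the main obstacle to be the second step -- assembling the determinantal/biorthogonal representation and handling the boundary points and the signed measures cleanly in the inhomogeneous case -- together with the analytic checks involved in interchanging the formal backward flow with the integrals defining the pairing, which rest on the regularity and polynomial-integrability of $e^{t\mathsf{L}^{(k)}}(x,\cdot)$ recorded after Definition \ref{StandingAssumption}.
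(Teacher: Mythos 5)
Your overall architecture is the same as the paper's: Sch\"utz-type formula from Proposition \ref{TransDensityProp1}, a signed determinantal structure plus Eynard--Mehta, and then biorthogonalization with $\Phi^{(n)}_j$ proportional to $e^{-t\mathsf{L}^{(n)}}\mathsf{q}^{(n)}_j(\cdot\,;x)$; your commutation relation $\partial\,\mathsf{L}^{(k+1)}=(\mathsf{L}^{(k)}+\mathsf{c}^{(k)})\partial$ is exactly the infinitesimal form of the key identity (\ref{KeyIdentity1}) that makes the inhomogeneous case work. One imprecision in your first step: the reduction to a determinantal structure is not achieved by integrating out the coordinates $y_i$, $i\notin\{n_1,\dots,n_M\}$, and ``telescoping via the semigroup property'' (the ordering constraint $y_1\le\cdots\le y_N$ prevents this); rather one \emph{un}-integrates, writing each entry $\partial^{j-i}e^{t\mathsf{L}^{(i)}}(x_i,\cdot)$ as an iterated integral over auxiliary variables via (\ref{KeyIdentity1}), so that the $N$-point law becomes a marginal of a signed measure on the interlacing-type set $\mathcal{D}_N$ to which Eynard--Mehta applies (Proposition \ref{DeterminantalProp1}). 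You correctly flag this step as the main obstacle, but the mechanism you describe is not the one that works.

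The genuine gap is in your biorthogonality check. You take $\Psi^{(n)}_j$ in the form $\partial_y^{\,j}e^{t\mathsf{L}^{(n-j)}}(x_{n-j},y)$ (the paper's (\ref{PsiRep2})) and integrate by parts $j$ times, asserting the boundary terms vanish ``by the decay of $e^{t\mathsf{L}^{(n-j)}}(x_{n-j},\cdot)$''. Under the standing assumption $(l,r)$ may have a \emph{finite entrance} boundary (squared Bessel at $0$, Jacobi at $0$ and $1$), where the transition density does not decay but vanishes only to a finite algebraic order; since $\Phi^{(n)}_m$ is a polynomial, nonvanishing at finite endpoints, the boundary terms $\partial_y^{\,i}e^{t\mathsf{L}^{(n-j)}}(x_{n-j},y)\,\partial_y^{\,j-1-i}\Phi^{(n)}_m(y)\big|_{y=l}$ are not generically zero. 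They do in fact vanish, but only because the index shift in $\mathsf{b}^{(k)}$ forces the level-$(n-j)$ density to vanish to order at least $j$ at such a boundary --- a nontrivial claim you would have to prove (compare the effort spent on (\ref{DerivativeAtZero}) in the Bessel case). The paper sidesteps all of this by using the \emph{other} representation (\ref{PsiRep1}), $\Psi^{(n)}_j(z)\propto\partial_{x_{n-j}}^{\,j}e^{t\mathsf{L}^{(n)}}(x_{n-j},z)$, with the derivatives on the starting point and the semigroup at the same level $n$ as $\Phi^{(n)}_m$: the derivative is pulled outside the integral, $e^{t\mathsf{L}^{(n)}}e^{-t\mathsf{L}^{(n)}}\mathsf{q}^{(n)}_m=\mathsf{q}^{(n)}_m$ collapses the pairing to $(-1)^j\partial^j\mathsf{q}^{(n)}_m(x_{n-j})=\mathbf{1}_{(j=m)}$, and no integration by parts is ever performed. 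Establishing that both representations of $\Psi^{(n)}_j$ hold (Proposition \ref{PropPsiRep}) and using each where it is harmless is the missing ingredient in your write-up.
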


\begin{rmk}
Note that, the notation $\partial_{y_1}^{n_1}\mathfrak{G}_{n_2}\left(y_1,y_2\right)e^{-t\mathsf{L}^{(n_2)}_{y_2}}$ in (\ref{IPSKernel}) is shorthand for
\begin{equation}
\partial_{y_1}^{n_1}\sum_{k=1}^{n_2}e^{t\sum_{j=k}^{n_2-1}\mathsf{c}^{(j)}}\partial_{y_1}^{-k}e^{t\mathsf{L}^{(k)}}\left(x_k,y_1\right)\left[e^{-t\mathsf{L}^{(n_2)}}\mathsf{q}_{n_2-k}^{(n_2)}\right](y_2;x).
\end{equation}
\end{rmk}

\begin{thm}\label{ThmOneSidedDown}
Under the standing assumption in Definition \ref{StandingAssumption}, consider the interacting particle system $\left(\left(\mathsf{x}_1^\downarrow(t),\dots,\mathsf{x}_N^\downarrow(t)\right);t\ge 0\right)$ evolving according to the dynamics (\ref{IPS2}) with initial condition $x=(x_1,\dots,x_N)\in \mathbb{W}_N^\downarrow$. For any $t>0$, indices $1\le n_1<n_2<\cdots <n_M \le N$ and locations $z=(z_1,\dots,z_M)\in (l,r)^M$, we have
\begin{align}
 \mathbb{P}\left(\mathsf{x}^\downarrow_{n_j}(t)\ge z_j, j=1,\dots,M\right) = \det\left(\mathbf{I}-\chi_z^-\mathfrak{K}_t\chi_z^-\right)_{L^2\left(\left\{n_1,\dots, n_M\right\}\times (l,r)\right)},
\end{align}
where $\mathfrak{K}_t$ is constructed as in (\ref{IPSKernel}) and (\ref{GfunctionDef}), but with $x\in \mathbb{W}_N^\downarrow$ instead.
\end{thm}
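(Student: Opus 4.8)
The plan is to reduce Theorem \ref{ThmOneSidedDown} to Theorem \ref{ThmOneSidedUp} (or to its proof) via a reflection/symmetry argument, rather than redoing the entire Fredholm determinant computation. First I would observe that the map $\sigma: (l,r)\to(-r,-l)$, $\sigma(x)=-x$, conjugates the $\downarrow$-dynamics on $\mathbb{W}_N^\downarrow$ to $\uparrow$-dynamics on the reversed chamber. Concretely, if $\mathsf{x}^\downarrow(t)$ solves (\ref{IPS2}) in $(l,r)$ with coefficients $\mathsf{a},\mathsf{b}$, then $\mathsf{y}_k(t) := -\mathsf{x}^\downarrow_{N+1-k}(t)$ solves a system of type (\ref{IPS1}) in the interval $(-r,-l)$, driven by the time-reversed-in-index Brownian motions, with transformed polynomial coefficients $\tilde{\mathsf{a}}(y)=\mathsf{a}(-y)$ and $\tilde{\mathsf{b}}(y)=-\mathsf{b}(-y)$ (and with the drifts $\mathsf{b}^{(k)}$ transforming correctly because $\tilde{\mathsf{a}}'(y) = -\mathsf{a}'(-y)$, so the index shift $(N-k)\mathsf{a}' \mapsto (N-k)\tilde{\mathsf{a}}'$ is compatible with relabeling $k\mapsto N+1-k$). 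The local time terms flip sign exactly as needed: a downward push $-\tfrac12 d\mathfrak{l}^\downarrow_k$ becomes an upward push $+\tfrac12 d\mathfrak{l}^\uparrow$. Thus the event $\{\mathsf{x}^\downarrow_{n_j}(t)\ge z_j\}$ becomes $\{\mathsf{y}_{N+1-n_j}(t)\le -z_j\}$, which is of the form handled by Theorem \ref{ThmOneSidedUp} applied in $(-r,-l)$.

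Next I would track how all the ingredients of the kernel (\ref{IPSKernel})–(\ref{GfunctionDef}) transform under $\sigma$. The constants $\mathsf{c}^{(k)}$ depend only on $a_2,b_1$, and since $\tilde a_2 = a_2$, $\tilde b_1 = b_1$ they are invariant (after relabeling indices). The operators $\mathsf{L}^{(k)}$ are conjugated to $\tilde{\mathsf{L}}^{(N+1-k)}$ by $\sigma$, so $e^{t\mathsf{L}^{(k)}}(x,y)$ transforms to $e^{t\tilde{\mathsf{L}}^{(N+1-k)}}(-x,-y)$, and similarly for the backward flow (which is well-defined on polynomials by (\ref{powerseriesdef})). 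The integration operator $\partial^{-1}f(x)=\int_l^x f$ transforms to $-\int_{-r}^x$, i.e. picks up sign factors that combine with the sign factors from $\partial_{y_1}^{n_1}$ and from the reflected polynomials $\mathsf{q}_k^{(n)}$. The polynomials $\mathsf{q}_k^{(n)}(z;x)$ satisfy (\ref{PolyDef}), and under $z\mapsto -z$, $x\mapsto -x$ and index reversal the defining conditions are preserved up to a sign $(-1)^k$, which is exactly absorbed by the $(-1)^k$ already present in (\ref{PolyDef}); so the reflected polynomials are again of the same type for the reflected data. The first term in (\ref{IPSKernel}), namely $-\tfrac{(y_1-y_2)^{n_2-n_1-1}}{(n_2-n_1-1)!}\mathbf{1}_{(y_2<y_1)}$, is (after relabeling indices via $n\mapsto N+1-n$, which reverses their order) mapped to the analogous kernel with $y_i\mapsto -y_i$; one checks the indicator and sign work out. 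The upshot is that the reflected kernel equals $\mathfrak{K}_t$ built from the reflected initial condition, and the conjugation $\chi_z^+ \leftrightarrow \chi_z^-$ is exactly the reflection of the cut-off functions.

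Alternatively — and this may in fact be cleaner to write — I would not invoke Theorem \ref{ThmOneSidedUp} as a black box but instead rerun its proof verbatim, noting at each step that the only structural facts used are: the explicit transition density from Proposition \ref{TransDensityProp2} (the $\downarrow$-analogue of Proposition \ref{TransDensityProp1}), the algebraic identity (\ref{PolyDef}) defining $\mathsf{q}_k^{(n)}$, and the backward-flow intertwining relations established in Section \ref{SectionBackwardFlow}. Each of these has a $\downarrow$-version with the roles of $<$ and $>$ (hence of $\chi^+$ and $\chi^-$, and of the order on the Weyl chamber) interchanged, and the computation is otherwise identical. I would state this as: "the proof is identical to that of Theorem \ref{ThmOneSidedUp}, replacing Proposition \ref{TransDensityProp1} by Proposition \ref{TransDensityProp2}, reversing all inequalities, and using $x\in\mathbb{W}_N^\downarrow$."

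The main obstacle I anticipate is bookkeeping of signs: the interplay between the $(-1)^k$ in the definition of $\mathsf{q}_k^{(n)}$, the sign from reflecting $\partial^{-1}$, the sign from $\partial_{y_1}^{n_1}$, and the index reversal $k\mapsto N+1-k$ must all cancel precisely so that the reflected kernel is literally $\mathfrak{K}_t$ (with $\chi^-$ in place of $\chi^+$) and not $\mathfrak{K}_t$ up to some stray sign or transpose. Verifying that the boundary behaviour (natural/entrance at $l,r$) is preserved under $\sigma$ — so that the standing assumption in Definition \ref{StandingAssumption} holds for the reflected operators — is routine but should be remarked on, since Theorem \ref{ThmOneSidedUp} is only available under that assumption. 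Given the symmetry of Feller's conditions under $x\mapsto -x$, this causes no difficulty.
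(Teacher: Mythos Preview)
Your alternative approach---rerunning the proof of Theorem \ref{ThmOneSidedUp} verbatim, replacing Proposition \ref{TransDensityProp1} by Proposition \ref{TransDensityProp2} and $\chi_z^+$ by $\chi_z^-$---is precisely what the paper does; its entire proof reads ``The proof is word for word the same as the one of Theorem \ref{ThmOneSidedUp} above.'' This works cleanly because Theorem \ref{ThmDetStructure} already establishes the Fredholm determinant formula for \emph{both} particle systems with the \emph{same} implicit kernel $K$ (only the multiplication operator changes), and the biorthogonalisation in Proposition \ref{BiorthogonalProp} is insensitive to whether $x\in\mathbb{W}_N^\uparrow$ or $x\in\mathbb{W}_N^\downarrow$.

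Your primary reflection approach, by contrast, contains an indexing slip: setting $\mathsf{y}_k=-\mathsf{x}^\downarrow_{N+1-k}$ reverses the ordering twice and lands back in $\mathbb{W}_N^\downarrow$, not $\mathbb{W}_N^\uparrow$, and the drift $\mathsf{b}^{(N+1-k)}$ does not transform to $\tilde{\mathsf{b}}^{(k)}$ as you claim (you get the coefficient $(k-1)\tilde{\mathsf{a}}'$ rather than $(N-k)\tilde{\mathsf{a}}'$). The correct reflection is simply $\mathsf{y}_k=-\mathsf{x}^\downarrow_k$ with no index reversal, under which $-\mathsf{b}^{(k)}(-y)=\tilde{\mathsf{b}}(y)+(N-k)\tilde{\mathsf{a}}'(y)=\tilde{\mathsf{b}}^{(k)}(y)$ directly. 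Even once repaired, this route requires transporting the standing assumption, the transition densities, the operator $\partial^{-1}$ (whose base point moves from $l$ to $-r$), and the polynomials $\mathsf{q}_k^{(n)}$ through the change of variables and then verifying the resulting kernel is literally $\mathfrak{K}_t$---strictly more work than the one-line argument the paper uses.
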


We now give a probabilistic representation, in terms of a discrete time random walk with exponentially distributed steps, for the kernel $\mathfrak{K}_t$ in the case of interacting squared Bessel diffusions. Writing out the SDE (\ref{IPS2}) in $(0,\infty)$ explicitly in the squared Bessel case:
\begin{equation}\label{BESQparticleSystem}
   d\mathsf{x}^\downarrow_k(t)=2\sqrt{\mathsf{x}^\downarrow_{k}(t)}d\mathsf{w}_k(t)+\left(\theta+2N-2k\right)dt-\frac{1}{2}d\mathfrak{l}_k^{\downarrow}(t),
\end{equation}
where we need $\theta\ge 2$, in order to satisfy our standing assumption (since $\theta+2N-2k\ge 2$ the point $0$ is an entrance boundary point, see \cite{RevuzYor}, while $\infty$ is always natural for $\mathsf{L}^{(k)}$, for $k=1,\dots,N$).  Now, for $\theta\in \mathbb{R}$, write 
$\mathcal{B}^{(\theta)}=2x\frac{d^2}{dx^2}+\theta\frac{d}{dx}$ for the generator of a squared Bessel diffusion process in $(0,\infty)$ with dimension $\theta$ killed when (if) it hits the origin and moreover write $e^{t\mathcal{B}^{(\theta)}}(x,y)$ for its transition density. Note that, for $\theta\ge 2$ the origin is almost surely never reached while for $\theta<2$ this happens almost surely, see \cite{RevuzYor,BesselSurvey}. In particular the transition density  $e^{t\mathcal{B}^{(\theta)}}(x,y)$ is sub-Markovian (integrates to less than 1) for $\theta<2$. It is given explicitly, for $\theta \ge 2$, by
\begin{equation}\label{BESQtransition}
e^{t\mathcal{B}^{(\theta)}}(x,y)=\frac{1}{2t}\left(\frac{y}{x}\right)^{\frac{\theta}{2}}e^{-\frac{(x+y)}{2t}}I_\theta\left(\frac{\sqrt{xy}}{t}\right),
\end{equation}
where $I_\theta$ is the modified Bessel function of the first kind, and for $\theta <2$ it is obtained by the symmetry property $e^{t\mathcal{B}^{(\theta)}}(x,y)=e^{t\mathcal{B}^{(4-\theta)}}(y,x)$, see \cite{RevuzYor,BesselSurvey}. The result then reads as follows. 

\begin{thm}\label{ThmBessel} Let $\theta\ge 2$. Consider the squared Bessel diffusions interacting according to the dynamics (\ref{BESQparticleSystem}) with initial condition $x=(x_1,\dots,x_N)\in \mathbb{W}_N^\downarrow$. Then, for any $t>0$, indices $1\le n_1<n_2<\cdots <n_M \le N$ and locations $z=(z_1,\dots,z_M)\in (l,r)^M$, we have
\begin{align}
 \mathbb{P}\left(\mathsf{x}^\downarrow_{n_j}(t)\ge z_j, j=1,\dots,M\right) = \det\left(\mathbf{I}-\chi_z^-\mathfrak{B}^{(\theta)}_t\chi_z^-\right)_{L^2\left(\left\{n_1,\dots, n_M\right\}\times (l,r)\right)},
\end{align}
where the kernel $\mathfrak{B}_t^{(\theta)}$ is given by
\begin{align*}
\mathfrak{B}_t^{(\theta)}\left[\left(n_1,y_1\right);\left(n_2,y_2\right)\right]&=-\frac{\left(y_1-y_2\right)^{n_2-n_1-1}}{(n_2-n_1-1)!}\mathbf{1}_{(y_2<y_1)}\mathbf{1}_{(n_2>n_1)}\\&+\partial_{y_1}^{n_1}e^{t\mathcal{B}_{y_1}^{(4-\theta-2N)}}\mathbf{E}_{\mathsf{R}_0=y_1}\left[e^{y_1-\mathsf{R}_\tau}\frac{\left(\mathsf{R}_\tau-y_2\right)^{n_2-\tau-1}}{(n_2-\tau-1)!}\mathbf{1}_{(\tau<n_2)}\right]e^{-t\mathcal{B}_{y_2}^{(\theta+2N-2n_2)}},
\end{align*}
where $\left(\mathsf{R}_k;k\ge 0\right)$ is a discrete-time random walk with exponential, with parameter $1$, steps to the left and $\tau=\tau(x)=\min\left\{k\ge 0:\mathsf{R}_k\ge x_{k+1}\right\}$ an integer-valued stopping time and $\mathbf{E}$ denotes expectation with respect to this random walk. Moreover, this kernel can be written as
\begin{equation}\label{PathIntegralKernel}
  \mathfrak{B}_t^{(\theta)}\left[\left(n_1,y_1\right);\left(n_2,y_2\right)\right]=-\partial^{-(n_2-n_1)}(y_1,y_2)\mathbf{1}_{(n_2>n_1)}+\partial_{y_1}^{-(n_2-n_1)}  \mathfrak{B}_t^{(\theta)}\left[\left(n_2,y_1\right);\left(n_2,y_2\right)\right].
\end{equation}
\end{thm}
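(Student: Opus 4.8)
The plan is to derive Theorem \ref{ThmBessel} from Theorem \ref{ThmOneSidedDown} by specialising the coefficients and then rewriting the kernel $\mathfrak{K}_t$ of (\ref{IPSKernel})--(\ref{GfunctionDef}). In the squared Bessel case $\mathsf{a}(x)=2x$, $\mathsf{b}(x)=\theta$ on $(l,r)=(0,\infty)$ one has $\mathsf{a}'\equiv2$, $\mathsf{b}^{(k)}(x)=\theta+2(N-k)$, hence $\mathsf{L}^{(k)}=\mathcal{B}^{(\theta+2N-2k)}$, and $\mathsf{c}^{(k)}=2(N-k-1)a_2+b_1=0$ for every $k$; for $\theta\ge2$ the origin is entrance for each $\mathsf{L}^{(k)}$ ($k\le N$) and $\infty$ is natural, so Definition \ref{StandingAssumption} holds and Theorem \ref{ThmOneSidedDown} applies. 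The Fredholm determinant formula is then immediate, with a kernel $\mathfrak{K}_t$ in which all prefactors $e^{t\sum_j\mathsf{c}^{(j)}}$ equal $1$; it remains to identify $\mathfrak{K}_t$ with $\mathfrak{B}^{(\theta)}_t$ and to establish (\ref{PathIntegralKernel}).

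First I would transfer the time-dependence of $\mathfrak{G}_n$ onto a single backward flow in $y_1$. Using the symmetry $e^{t\mathcal{B}^{(d)}}(x,y)=e^{t\mathcal{B}^{(4-d)}}(y,x)$ and the intertwining $\partial\,\mathcal{B}^{(d)}=\mathcal{B}^{(d+2)}\partial$ (hence $\partial^{-1}e^{t\mathcal{B}^{(d)}}=e^{t\mathcal{B}^{(d-2)}}\partial^{-1}$), one checks that for each $k$ the function $(t,y_1)\mapsto\partial_{y_1}^{-k}e^{t\mathcal{B}^{(\theta+2N-2k)}}(x_k,y_1)$ solves $\partial_t u=\mathcal{B}^{(4-\theta-2N)}_{y_1}u$ and tends, as $t\downarrow0$, to $\partial^{-k}$ of the Dirac mass at $x_k$, namely $\frac{(y_1-x_k)^{k-1}}{(k-1)!}\mathbf{1}_{(y_1>x_k)}$; uniqueness for this (sub-Markovian Bessel) Cauchy problem gives $\partial_{y_1}^{-k}e^{t\mathcal{B}^{(\theta+2N-2k)}}(x_k,y_1)=e^{t\mathcal{B}^{(4-\theta-2N)}_{y_1}}\big[\tfrac{(\cdot-x_k)^{k-1}}{(k-1)!}\mathbf{1}_{(\cdot>x_k)}\big](y_1)$. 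Summing over $k=1,\dots,n$ yields $\mathfrak{G}_n(y_1,y_2)=e^{t\mathcal{B}^{(4-\theta-2N)}_{y_1}}\big[w\mapsto H_n(w,y_2)\big](y_1)$, where $H_n(w,y_2):=\sum_{k=1}^{n}\frac{(w-x_k)^{k-1}}{(k-1)!}\mathbf{1}_{(w>x_k)}\mathsf{q}^{(n)}_{n-k}(y_2;x)$; since $e^{-t\mathsf{L}^{(n_2)}_{y_2}}=e^{-t\mathcal{B}^{(\theta+2N-2n_2)}_{y_2}}$, identifying $\mathfrak{K}_t$ with $\mathfrak{B}^{(\theta)}_t$ reduces to the identity $H_n(w,y_2)=\mathbf{E}_{\mathsf{R}_0=w}\big[e^{w-\mathsf{R}_\tau}\frac{(\mathsf{R}_\tau-y_2)^{n-\tau-1}}{(n-\tau-1)!}\mathbf{1}_{(\tau<n)}\big]$ for $n=1,\dots,N$.

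To prove this I would, for fixed $w$, view both sides as polynomials in $y_2$ of degree $\le n-1$. By the triangular (``staircase'') interpolation built into Definition \ref{PolyDefinition}, such a polynomial is determined by the data $\partial_{y_2}^{i}(\cdot)\big|_{y_2=x_{n-i}}$, $i=0,\dots,n-1$. For $H_n$ the $i$-th datum is $(-1)^{i}\frac{(w-x_{n-i})^{n-i-1}}{(n-i-1)!}\mathbf{1}_{(w>x_{n-i})}$ by (\ref{PolyDef}); for the expectation, differentiating under $\mathbf{E}$ shows the $i$-th datum is $(-1)^i\,\mathbf{E}_{\mathsf{R}_0=w}\big[e^{w-\mathsf{R}_\tau}\frac{(\mathsf{R}_\tau-x_m)^{m-\tau-1}}{(m-\tau-1)!}\mathbf{1}_{(\tau<m)}\big]$ with $m=n-i$. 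Hence everything reduces to the single-barrier identity $\mathbf{E}_{\mathsf{R}_0=w}\big[e^{w-\mathsf{R}_\tau}\frac{(\mathsf{R}_\tau-x_m)^{m-\tau-1}}{(m-\tau-1)!}\mathbf{1}_{(\tau<m)}\big]=\frac{(w-x_m)^{m-1}}{(m-1)!}\mathbf{1}_{(w>x_m)}$, valid for every $m=1,\dots,N$, which I would prove by induction on $m$ (carrying along the one-step shift of the barrier sequence) through conditioning on the first step: on $\{w\ge x_1\}$ one has $\tau=0$ and integrand $\frac{(w-x_m)^{m-1}}{(m-1)!}$, while on $\{w<x_1\}$ a step to $w-E$ with $E$ exponential of parameter $1$ reduces the statement to the case $m-1$ for the once-shifted barriers --- crucially the weight $e^{w-\mathsf{R}_\tau}$ cancels the factor $e^{-E}$ in the step density, leaving $\int_0^\infty(\cdot)(w-E)\,dE$ --- and the ordering $x\in\mathbb{W}_N^\downarrow$ collapses the resulting indicators to $\mathbf{1}_{(w>x_m)}$, closing the induction. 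This gives $\mathfrak{K}_t=\mathfrak{B}^{(\theta)}_t$.

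Finally, for (\ref{PathIntegralKernel}), write $\mathfrak{B}^{(\theta)}_t[(n_1,y_1);(n_2,y_2)]=-\partial^{-(n_2-n_1)}(y_1,y_2)\mathbf{1}_{(n_2>n_1)}+\partial_{y_1}^{n_1}\Psi_{n_2}(y_1,y_2)$, where $\Psi_{n_2}=e^{t\mathcal{B}^{(4-\theta-2N)}_{y_1}}\mathbf{E}_{\mathsf{R}_0=y_1}[\cdots]e^{-t\mathcal{B}^{(\theta+2N-2n_2)}_{y_2}}$ is independent of $n_1$; taking $n_1=n_2$ gives $\mathfrak{B}^{(\theta)}_t[(n_2,y_1);(n_2,y_2)]=\partial_{y_1}^{n_2}\Psi_{n_2}$. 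Applying $\partial_{y_1}^{-(n_2-n_1)}$ and using the Taylor-remainder identity $\partial_{y_1}^{-(n_2-n_1)}\partial_{y_1}^{n_2}\Psi_{n_2}=\partial_{y_1}^{n_1}\Psi_{n_2}-\sum_{i=n_1}^{n_2-1}\frac{y_1^{\,i-n_1}}{(i-n_1)!}(\partial_{y_1}^{i}\Psi_{n_2})(0^+,y_2)$, it remains to see these boundary values vanish: $\Psi_{n_2}=\mathfrak{G}_{n_2}\,e^{-t\mathcal{B}^{(\theta+2N-2n_2)}_{y_2}}$, and each summand $\partial_{y_1}^{-k}e^{t\mathcal{B}^{(\theta+2N-2k)}}(x_k,\cdot)$ ($k\ge1$) vanishes at the origin to order $k+(\theta+2N-2k)=\theta+2N-k\ge\theta+N>n_2-1$ --- the $k$-fold primitive from $0$ contributes $k$, and $e^{t\mathcal{B}^{(\theta+2N-2k)}}(x_k,y_1)\sim c\,y_1^{\theta+2N-2k}$ as $y_1\downarrow0$ since $\theta+2N-2k\ge2$ --- so all its derivatives up to order $n_2-1$ vanish at $0^+$. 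Hence $\partial_{y_1}^{-(n_2-n_1)}\mathfrak{B}^{(\theta)}_t[(n_2,y_1);(n_2,y_2)]=\partial_{y_1}^{n_1}\Psi_{n_2}$, which together with $-\partial^{-(n_2-n_1)}(y_1,y_2)\mathbf{1}_{(n_2>n_1)}$ equals $\mathfrak{B}^{(\theta)}_t[(n_1,y_1);(n_2,y_2)]$. The main obstacle is the single-barrier identity --- pinning down the inductive bookkeeping of the shifted barriers and the precise cancellation of the exponential weight against the step density; a secondary technical point is justifying the semigroup intertwining across the singular $t\downarrow0$ limit in the second step.
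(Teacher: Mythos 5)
Your overall architecture matches the paper's: specialise Theorem \ref{ThmOneSidedDown} (noting $\mathsf{L}^{(k)}=\mathcal{B}^{(\theta+2N-2k)}$, $\mathsf{c}^{(k)}\equiv0$), commute $\partial_{y_1}^{-k}$ past the Bessel semigroup so that a single $e^{t\mathcal{B}^{(4-\theta-2N)}_{y_1}}$ factors out of $\mathfrak{G}_{n_2}$, invoke the random-walk representation, and prove (\ref{PathIntegralKernel}) by showing the relevant $y_1$-derivatives vanish at the origin so that $\partial^{-(n_2-n_1)}\partial^{n_2}=\partial^{n_1}$ on the second term. Where you differ is in how the two key ingredients are established. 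For the commutation identity (the paper's Lemma \ref{KeyBesselLemma}) the paper gives a short, fully elementary computation from the duality $e^{t\mathcal{B}^{(\gamma)}}(x,z)=e^{t\mathcal{B}^{(4-\gamma)}}(z,x)$ and the identity $\int_0^z e^{t\mathcal{B}^{(\gamma)}}(x,y)\,dy=\int_x^\infty e^{t\mathcal{B}^{(\gamma+2)}}(y,z)\,dy$; you instead invoke uniqueness for the Cauchy problem $\partial_t u=\mathcal{B}^{(4-\theta-2N)}_{y_1}u$ with initial data $\partial^{-k}\delta_{x_k}$. This is the weakest link in your write-up: $4-\theta-2N<0$, so $0$ is an exit boundary for this degenerate operator, and a uniqueness theorem in the right class (with the correct boundary specification at $0$, plus control of the boundary terms when you iterate $\partial^{-1}\mathcal{B}^{(d)}=\mathcal{B}^{(d-2)}\partial^{-1}$) is not off-the-shelf and would need to be supplied; the paper's two integral identities avoid this entirely. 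For the random-walk representation the paper simply cites Lemma \ref{RWrepLem} from the literature, whereas you reprove it via the staircase interpolation property (\ref{PolyDef}) of the $\mathsf{q}_k^{(n)}$ plus a first-step induction with the exponential weight cancelling the step density; this is correct and makes the argument more self-contained.

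Two smaller points. First, in the vanishing-order count for (\ref{PathIntegralKernel}) your exponent is off: the standard small-$y$ behaviour is $e^{t\mathcal{B}^{(\delta)}}(x,y)=y^{\delta/2-1}\times(\textnormal{analytic in }y)$, not $y^{\delta}$, so the correct order of vanishing of $\partial_{y_1}^{-k}e^{t\mathcal{B}^{(\theta+2N-2k)}}(x_k,y_1)$ at $0$ is $k+(\theta+2N-2k)/2-1=\theta/2+N-1\ge N>n_2-1$; the conclusion survives, but the bookkeeping should be corrected (and one should note the analytic factorisation, not merely an $O(y^\alpha)$ bound, to conclude that the derivatives themselves vanish — the paper instead derives this from the derivative recursion $\partial_y e^{t\mathcal{B}^{(\theta)}}(z,y)=\frac{1}{2t}\left(e^{t\mathcal{B}^{(\theta-2)}}(z,y)-e^{t\mathcal{B}^{(\theta)}}(z,y)\right)$ together with the symmetry). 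Second, your reduction of the random-walk identity to the single-barrier identity via matching the data $\partial_{y_2}^i(\cdot)|_{y_2=x_{n-i}}$ is valid because the $\mathsf{q}_k^{(n)}$ form a basis of $\mathfrak{P}_{\le n-1}$ on which this data map is triangular and invertible; it is worth saying this explicitly.
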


\begin{rmk}
As in Theorems \ref{ThmOneSidedUp} and \ref{ThmOneSidedDown}, the second term in the sum in the definition of $\mathfrak{B}_t^{(\theta)}$ is shorthand for 
\begin{equation}
\partial_{y_1}^{n_1}e^{t\mathcal{B}_{y_1}^{(4-\theta-2N)}}\mathbf{E}_{\mathsf{R}_0=y_1}\left[e^{y_1-\mathsf{R}_\tau}\frac{1}{(n_2-\tau-1)!}e^{-t\mathcal{B}_{y_2}^{(\theta+2N-2n_2)}}\left(\mathsf{R}_\tau-y_2\right)^{n_2-\tau-1}\mathbf{1}_{(\tau<n_2)}\right].
\end{equation}
\end{rmk}

The formula in the theorem above is analogous and motivated by the representations in terms of random walks in \cite{NicaQuastelRemenik, KPZfixedpoint, matetski2022tasep} for the Brownian case and for models in the discrete setting. It is natural to ask if analogous representations also exist for the more general diffusions considered in this paper and we discuss this briefly in Remark \ref{RemarkProbRep}.

\paragraph{Non-colliding diffusions} We now go on to discuss our results on non-colliding diffusions. Towards this end, we denote by $\mathsf{\Delta}_N(x)$, for $x\in \mathbb{W}_N^\uparrow$, the Vandermonde determinant:
\begin{align*}
\mathsf{\Delta}_N(x)=\prod_{1\le i<j\le N}(x_j-x_i).
\end{align*}
Then, as we explain in Section \ref{SectionNonColliding} it is possible to consider the following Markov semigroup $\left(\mathsf{P}_t^{(N)};t\ge 0\right)$ in $\mathbb{W}_N^{\uparrow}$, given by its explicit transition kernel
\begin{equation}\label{NonCollidingSemigroup}
\mathsf{P}_t^{(N)}(x,dy)=e^{-t\lambda_N}\frac{\mathsf{\Delta}_N(y)}{\mathsf{\Delta}_N(x)}\det\left(e^{t\mathsf{L}}(x_i,y_j)\right)_{i,j=1}^Ndy, 
\end{equation}
with $\lambda_N=\frac{1}{6}N(N-1)(2a_2(N-2)+3b_1)$. This is the Doob $h$-transform \cite{Doob,Pinsky,RevuzYor} of $N$ independent $\mathsf{L}$-diffusions killed when they intersect (equivalently the diffusion with generator $\sum_{i=}^N \mathsf{L}_{x_i}$ with Dirichlet boundary conditions in $\mathbb{W}_N^\uparrow$) by the Vandermonde determinant $\mathsf{\Delta}_N$. The formula (\ref{NonCollidingSemigroup}) is initially defined for $x\in \mathbb{W}_N^{\uparrow,\circ}$ and then extended by L'H\^{o}pital's rule to general $x\in \mathbb{W}_N^\uparrow$, by virtue of the smoothness of $z\mapsto e^{t\mathsf{L}}(z,y)$ (weak continuity of the probability measures $x\mapsto \mathsf{P}_t\left(x,dy\right)$ will be discussed in more detail in the proof of Proposition \ref{DistributionNonColliding}).

By standard results, see \cite{Pinsky,RevuzYor}, on how the generator of a diffusion process transforms under a Doob's $h$-transform \cite{Doob} the semigroup (\ref{NonCollidingSemigroup}) corresponds to the dynamics given by the system of SDEs in $\mathbb{W}_N^{\uparrow}$:
\begin{equation}\label{NonCollidingSDE}
 d\mathsf{z}_i(t)=\sqrt{2\mathsf{a}(\mathsf{z}_i(t))}d\mathsf{w}_i(t)+\left(\mathsf{b}(\mathsf{z}_i(t))+2\mathsf{a}(\mathsf{z}_i(t))\sum_{j\neq i}\frac{1}{\mathsf{z}_i(t)-\mathsf{z}_j(t)}\right)dt,
\end{equation}
where the $\mathsf{w}_i$ are independent standard Brownian motions. For initial conditions $x\in \mathbb{W}_N^{\uparrow,\circ}$ it is easy to show, using a generic argument by virtue of the Doob $h$-transform structure, see for example \cite{SingularValuesBMDrift}, that these SDEs have a unique strong solution with a.s. no collision, in particular a.s. for all $t>0$, $\mathsf{z}(t)=\left(\mathsf{z}_1(t),\dots,\mathsf{z}_N(t)\right)\in \mathbb{W}_N^{\uparrow,\circ}$. This remains true for general initial conditions $x\in \mathbb{W}_N^{\uparrow}$ (despite initially coinciding coordinates) and it can be shown using the results of \cite{GraczykMalecki}. We discuss this in Section \ref{SectionNonColliding}.

Interacting particle systems of the form (\ref{NonCollidingSDE}) arise as the eigenvalue processes of matrix valued diffusions. In the most classical case of Brownian motion, $\mathsf{a}(x)\equiv \frac{1}{2}, \mathsf{b}(x)\equiv 0$, this is the much-studied Dyson Brownian motion \cite{Dyson,IntroductionToRandomMatrices,UniversalityBook}, the eigenvalue process of Brownian motion on the space of Hermitian matrices. We discuss more examples in Section \ref{SectionExamples}.

Our next main result, Theorem \ref{NoncollidingThm} below, says that the correlations in space and time of the interacting particle system (\ref{NonCollidingSDE}) can be computed explicitly. In order to state this result precisely we need to recall the definition of a determinantal point process as suitable for the setting of this paper, see \cite{BorodinDeterminantal,JohanssonDeterminantal} for generalities.

Consider the space $\mathfrak{X}=D\times (l,r)$ where $D$ is some discrete set and endow $\mathfrak{X}$ with the measure $\mu=\mathsf{Count}\times \mathsf{Leb}$ which is the product of counting measure on $D$ with Lebesgue measure on $(l,r)$. A point configuration in $\mathfrak{X}$ is a locally finite collection of points (also referred to as particles) in $\mathfrak{X}$, which we assume are pairwise distinct. We denote the set of all point configurations in $\mathfrak{X}$ by $\mathsf{Conf}(\mathfrak{X})$. There is a natural way to equip $\mathsf{Conf}(\mathfrak{X})$ with a Borel structure, see \cite{BorodinDeterminantal}. A random point process on $\mathfrak{X}$ is a probability measure $\mathfrak{M}$ on $\mathsf{Conf}(\mathfrak{X})$. More generally, abusing terminology (strictly speaking when there is no positivity we cannot speak about randomness), a ``signed random point process" is a (possibly) signed measure on $\mathsf{Conf}(\mathfrak{X})$ of total mass one (this will be relavant for the proofs of Theorems \ref{ThmOneSidedUp} and \ref{ThmOneSidedDown}). We define the correlation functions $\{\rho_n\}_{n=1}^\infty$ (with respect to $\mu$) of the point process associated to the measure $\mathfrak{M}$ on $\mathsf{Conf}(\mathfrak{X})$, if they exist, as follows. For any $n\ge 1$ and any $f$ a compactly supported bounded Borel function on $\mathfrak{X}^n$ we have:
\begin{align}\label{CorrFnDef}
 \int_{\mathfrak{X}^n}f(\mathfrak{z}_1,\dots,\mathfrak{z}_n)\rho_n(\mathfrak{z}_1,\dots,\mathfrak{z}_n)d\mu(\mathfrak{z}_1)\cdots d\mu(\mathfrak{z}_n)= \int_{\mathsf{Conf}(\mathfrak{X})}\sum_{\mathfrak{y}_{i_1},\dots,\mathfrak{y}_{i_n}\in Y} f(\mathfrak{y}_{i_1},\dots,\mathfrak{y}_{i_n})d\mathfrak{M}(Y),
\end{align}
where the sum is taken over all $n$-tuples $\mathfrak{y}_{i_1},\dots,\mathfrak{y}_{i_n}$ of pairwise distinct points of the point configuration $Y$. 

\begin{defn}
A ``signed random point process" on $\mathfrak{X}$ given by the (possibly signed) measure $\mathfrak{M}$ (of total mass $1$) on $\mathsf{Conf}(\mathfrak{X})$ is called a (``signed") determinantal point process if there exists a function $K:\mathfrak{X}\times \mathfrak{X}\to \mathbb{C}$ such that all the correlation functions of the point process $\{\rho_n\}_{n=1}^\infty$ (with respect to $\mu=\mathsf{Count}\times \mathsf{Leb}$) defined by (\ref{CorrFnDef}) are given by:
\begin{equation}
 \rho_n(\mathfrak{z}_1,\dots,\mathfrak{z}_n)= \det\left(K(\mathfrak{z}_i,\mathfrak{z}_j)\right)_{i,j=1}^n, \ \ \textnormal{for} \ n=1,2,\dots,
\end{equation}
in which case the function $K$ is called the correlation kernel.

\end{defn}

Our main result on the interacting particle system (\ref{NonCollidingSDE}) is then the following.

\begin{thm}\label{NoncollidingThm}
Under the standing assumption in Definition \ref{StandingAssumption}, consider the dynamics (\ref{NonCollidingSDE}), with semigroup (\ref{NonCollidingSemigroup}), starting from $x=\left(x_1,\dots,x_N\right)\in \mathbb{W}_N^\uparrow$. For any times $0<t_1<t_2<\cdots<t_M$ these dynamics give rise in a natural way to a random point process on $\{t_1,\dots,t_M\}\times (l,r)$. This point process is then determinantal with correlation kernel given by (with the first variable corresponding to time and the second space),
\begin{align}
 \mathsf{K}\left[(s,y_1);(t,y_2)\right] =-e^{(s-t)\mathsf{L}}(y_2,y_1)\mathbf{1}_{(t<s)}+\frac{1}{2\pi \textnormal{i}}\oint_{\mathsf{\Gamma}^{(N)}}e^{s\mathsf{L}}(z,y_1)e^{-t\mathsf{L}_{y_2}}(y_2-z)^{N-1}\prod_{m=1}^N\frac{1}{z-x_m}dz,
\end{align}
where $e^{-t\mathsf{L}_{y_2}}(y_2-z)^{N-1}$ denotes the application of $e^{-t\mathsf{L}}$ to $(y_2-z)^{N-1}$ as a polynomial in the $y_2$ variable and $\mathsf{\Gamma}^{(N)}$ is a positively oriented contour, encircling all the points $x_1,\dots,x_N$, in a complex neighbourhood of $(x_1,x_N)$.
\end{thm}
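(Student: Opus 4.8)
The plan is to follow the by-now standard route for such problems: (i) express the fixed-time (or multi-time) distribution of the non-colliding process as a (signed) determinantal measure via the Eynard--Mehta / Lindström--Gessel--Viennot machinery, (ii) biorthogonalize the resulting kernel, and (iii) identify the biorthogonal functions explicitly using the backward-in-time diffusion flow applied to the polynomials $(y-z)^{N-1}$, exactly as flagged in the introduction. First I would record that, by the Karlin--McGregor formula, starting from $x\in\mathbb{W}_N^{\uparrow,\circ}$ the multi-time distribution of $\mathsf{z}(t)$ under $\mathsf{P}_t^{(N)}$ at times $t_1<\dots<t_M$ is, after cancelling the intermediate Vandermondes and the $h$-function, a product of determinants $\det\left(e^{(t_{a}-t_{a-1})\mathsf{L}}(\cdot,\cdot)\right)$ times a final Vandermonde $\mathsf{\Delta}_N(\mathsf{z}(t_M))$ — i.e.\ precisely the structure to which the Eynard--Mehta theorem applies. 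The correlation kernel then has the form $\mathsf{K}[(t_a,y);(t_b,y')] = -\mathbf{1}_{(t_b<t_a)}e^{(t_a-t_b)\mathsf{L}}(y',y) + \sum_{k,l}\Psi_k^{a}(y)\,[\mathsf{M}^{-1}]_{kl}\,\Phi_l^{b}(y')$, where $\Psi,\Phi$ are push-forwards of a fixed family of functions under the semigroups and $\mathsf{M}$ is the pairing (Gram) matrix.

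The crux is to choose a convenient generating family. I would take the natural one suggested by the Vandermonde: at the last time take $\Psi_k = e^{t_M\mathsf{L}}$-evolved monomials (or, cleanly, the functions $z\mapsto e^{t_M\mathsf{L}}(z,\cdot)$ evaluated along a contour encircling the $x_m$'s), and at the first time take $\Phi_l$ to be $e^{-t_1\mathsf{L}}$ applied to the monomial basis; more precisely one wants to engineer the biorthogonal system so that the dual functions are $e^{-t\mathsf{L}}(y-z)^{N-1}$ for $z$ on the contour $\mathsf{\Gamma}^{(N)}$. Here the backward flow (\ref{powerseriesdef}) is essential: $e^{-t\mathsf{L}}$ is not a genuine semigroup but it is perfectly well-defined on polynomials, and the key algebraic fact to verify is an \emph{intertwining/duality}: for a polynomial $p$ of degree $<N$ one has $\int e^{t\mathsf{L}}(x,y)\,\big(e^{-t\mathsf{L}}p\big)(y)\,dy = p(x)$ (this is where $\lambda_N$ and the eigenvalue structure of $\mathsf{L}$ on polynomials enters, and why degree $N-1$ is exactly the cutoff). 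Combined with a residue computation, $\tfrac{1}{2\pi\mathrm i}\oint_{\mathsf{\Gamma}^{(N)}}(y_2-z)^{N-1}\prod_m(z-x_m)^{-1}\,dz$ is a Lagrange-interpolation-type object that inverts the Vandermonde, which is precisely what the Eynard--Mehta inverse matrix $\mathsf{M}^{-1}$ should produce. Assembling these pieces gives the single contour-integral term in the kernel, with the $e^{s\mathsf{L}}(z,y_1)$ factor coming from evolving forward to time $s$ and the $e^{-t\mathsf{L}_{y_2}}$ factor from the backward flow acting on the degree-$(N-1)$ polynomial at time $t$.

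There are two technical points I expect to be the genuine obstacles. The first is the passage from $x\in\mathbb{W}_N^{\uparrow,\circ}$ to arbitrary $x\in\mathbb{W}_N^{\uparrow}$ with coinciding coordinates: the Karlin--McGregor determinant degenerates, and one must take the L'Hôpital/confluent limit carefully, replacing rows of the Vandermonde and of $\det(e^{t\mathsf{L}}(x_i,y_j))$ by derivatives — here the contour-integral form of the kernel is a real asset because $\prod_m (z-x_m)^{-1}$ handles multiplicities transparently (higher-order poles), whereas one must still check that the determinantal-process identity survives the limit; this is where the stated smoothness of $z\mapsto e^{t\mathsf{L}}(z,y)$ and the integrability against polynomials, together with the analytic continuation of $z\mapsto e^{t\mathsf{L}}(z,y)$ into a complex neighbourhood of $[x_1,x_N]$, are used to justify both the contour and the limit. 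The second obstacle is analytic rather than algebraic: verifying that the formally-manipulated objects (the backward flow applied to polynomials, the exchange of contour integral and $y$-integration, the finiteness of the Gram matrix and its invertibility) are all legitimate — for this one leans on the qualitative bounds on $e^{t\mathsf{L}^{(k)}}(x,y)$ asserted in the excerpt (smoothness, polynomial integrability of all $x$-derivatives) and on the entrance/natural boundary classification, which guarantees no boundary terms arise in the integrations by parts implicit in the duality identity.
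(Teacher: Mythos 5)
Your proposal is correct and follows essentially the same route as the paper: Karlin--McGregor plus the Doob $h$-transform structure to get a product-of-determinants measure, the Eynard--Mehta theorem, biorthogonalization via the duality $\int e^{t\mathsf{L}}(x,y)\,e^{-t\mathsf{L}}p(y)\,dy=p(x)$ with contour-integral (divided-difference) functions at one end and backward-flow polynomials at the other, a residue identity to collapse the sum into the single $(y_2-z)^{N-1}$ term, and a continuity/dominated-convergence argument for coinciding coordinates. The paper implements exactly these steps (Proposition \ref{DistributionNonColliding}, Lemmas \ref{LemOrhogonalityNonColliding} and \ref{PropagationLemNonColliding}, Proposition \ref{EynardMehtaNonColliding}), so no substantive differences to report.
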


\begin{rmk}
As mentioned earlier, it is possible to give a formula for $\mathsf{K}$ that does not make use of complex analysis, involving divided differences \cite{DividedDifferences}, see the proofs in Section \ref{SectionNonColliding} for more details.
\end{rmk}

Although the interacting particle systems (\ref{IPS1}), (\ref{IPS2}) and (\ref{NonCollidingSDE}) might seem unrelated and of very different nature it is actually possible to couple them, for special initial conditions, in a larger interacting particle system taking values in an interlacing array. We discuss this in Section \ref{SectionConnection}, which although being independent to the rest of paper it is rather important conceptually. In the Brownian case this type of result is very well-known and there are a couple of ways of constructing this coupling, see for example \cite{OConnellYor,Warren}. Moreover, it is possible to give a direct connection between the semigroups $\mathsf{S}_t^{\uparrow,(N)}$, $\mathsf{S}_t^{\downarrow,(N)}$ and $\mathsf{P}_t^{(N)}$. We again explain this in Section \ref{SectionConnection}. Although there is a partial connection between these two types of interacting particle systems we currently do not have a direct connection between Theorems \ref{ThmOneSidedUp}, \ref{ThmOneSidedDown} and Theorem \ref{NoncollidingThm}. A key feature of the formulae appearing in all these theorems is the application of the backward in time diffusion flow to different families of polynomials but beyond that a more precise connection is elusive. It would be very interesting if one exists. In particular, is it possible to relate directly the correlation kernel $\mathfrak{K}_t$ from Theorems \ref{ThmOneSidedUp} and \ref{ThmOneSidedDown} to the correlation kernel $\mathsf{K}$ from Theorem \ref{NoncollidingThm}?

\subsection{Relation to previous results}\label{SectionPreviousResults}

Results of the form of Theorems \ref{ThmOneSidedUp} and \ref{ThmOneSidedDown} first appeared in  the discrete space setting, for special initial conditions, for a number of TASEP-like systems in \cite{Sasamoto,BorodinFerrariSasamoto,BorodinFerrariPrahoferSasamoto,BorodinFerrariPushASEP}. Then, in the continuous space setting the model of Brownian motions with one-sided collisions was solved exactly\footnote{In the sense that the finite dimensional distributions can be written as a Fredholm determinant involving an explicit kernel.} for some special initial conditions, which lead to Airy processes in the limit, in \cite{FerrariSpohnWeiss1,FerrariSpohnWeiss2,ReflectedBrownianKPZ}. In the last few years, in a breakthrough work \cite{KPZfixedpoint}, the exact solution of TASEP from general initial condition was found. Making use of this\footnote{We can obtain Brownian motions with one-sided collisions as the low density limit of TASEP.}, it is not hard to solve the Brownian model from arbitrary initial condition as well, see \cite{NicaQuastelRemenik}. Beyond this, the exact solution of particle systems (\ref{IPS1}) and (\ref{IPS2}) for all other diffusions considered here is new.

Since the appearance of \cite{KPZfixedpoint} there have been a number of works \cite{arai2020kpz,KolmogorovTASEP,matetski2022polynuclear,matetski2022tasep,NikosTASEP} which find exact solutions to some interacting particle systems, the most general setup being that of \cite{matetski2022tasep} (except in the case of discrete-time TASEP when a time-inhomogeneous version can be considered as well using the RSK correspondence, see \cite{NikosTASEP}). In \cite{matetski2022tasep} a nice theory is developed to solve exactly certain discrete particle systems, which include for example the ones studied in \cite{DiekerWarren}. We note that all of these papers consider models where the motion of individual particles (without the interactions) does not depend on their spatial location. In this work we consider and solve, for the first time, models for which the motion of particles depends in a non-trivial way on their spatial location. In particular, our results would not follow even after adapting the general theory of \cite{matetski2022tasep} to the continuous space setting.

Regarding the particle system (\ref{NonCollidingSDE}) an exact computation of the correlation kernel $\mathsf{K}$ (in equivalent form) in the Brownian case, for fixed time $s=t$, first appeared in the work of Brezin and Hikami \cite{brezin1997extension} and Johansson \cite{JohanssonUniversality} on universality for random Wigner matrices. The computation of the kernel for any times $s,t$ for both the Brownian and squared Bessel cases appeared in the works of Katori and Tanemura \cite{KatoriTanemura,KatoriTanemuraBessel}. Again, this is obtained there in an equivalent form as it is possible to give $e^{-t\mathsf{L}}p$, with $p$ a polynomial and $t>0$, as an integral expression in these cases. All other cases considered in this paper are new. 

The work of Katori on determinantal martingales \cite{DeterminantalMartingales} used to solve explicitly (in the sense of Theorem \ref{NoncollidingThm}) some non-colliding diffusions is also worth mentioning. The so-called fundamental martingale polynomials $m_n(t,x)$ therein which are a key ingredient in this theory are basically given by the simple formula $e^{-t\mathsf{L}}x^n$ (in the Brownian and squared Bessel cases which are the ones considered there), see Proposition \ref{MartingalePolynomials}, also Section \ref{RmkEquivalentRep}. Using determinantal martingales it is also possible to solve exactly some elliptic versions of space-time determinantal processes and non-colliding Brownian motions on the unit circle, see \cite{DeterminantalMartingales,KatoriElliptic}. Although these models do not directly fall in our framework it is conceivable that a backward in time diffusion flow on the unit circle applied to some sort of trigonometric polynomials, see \cite{kabluchko2022leeyang}, can be used to solve these models as well.

\subsection{On the proofs}\label{SectionProofs}

The proof of Theorems \ref{ThmOneSidedUp} and \ref{ThmOneSidedDown} is in two steps\footnote{The two-step strategy is of course not new. It was followed in the original works \cite{Sasamoto,BorodinFerrariSasamoto,BorodinFerrariPrahoferSasamoto,BorodinFerrariPushASEP} on special initial conditions and more recently in \cite{KPZfixedpoint,NicaQuastelRemenik,arai2020kpz,matetski2022tasep,NikosTASEP} for general initial conditions for certain particle systems. It is the only strategy that does not require a-priori knowledge of the explicit formula for the kernel. If one already knows the highly-nontrivial explicit form of the kernel then at least in the discrete setting \cite{KolmogorovTASEP,matetski2022polynuclear} it is possible to show that the corresponding Fredholm determinant solves the backward Kolmogorov equation with appropriate boundary conditions and the solution is unique (this is basically the only other strategy available).} 
: first we prove there is an underlying ``signed" determinantal process structure and then obtain an explicit formula for the correlation kernel by solving a biorthogonalization problem. Our starting point is explicit formulae for the transition semigroups $\mathsf{S}_t^{\uparrow,(N)}$, $\mathsf{S}_t^{\downarrow,(N)}$ obtained in \cite{InterlacingDiffusions}. Then, the fact that there is a determinantal structure in this non-translation invariant in space setting is non-trivial and relies on a key relation between the transition densities $e^{t\mathsf{L}^{(k)}}(x,y)$ and $e^{t\mathsf{L}^{(k+1)}}(x,y)$.

To then solve the biorthogonalization problem we first obtain, using this relation, two different representations for the functions that appear as data in the problem. Finally, finding the biorthogonal functions (polynomials) and proving the required orthogonality relation is very clean using the backward in time diffusion flow technology. To prove Theorem \ref{ThmBessel} we use some rather special properties of the squared Bessel diffusion. A key ingredient being Lemma \ref{KeyBesselLemma} which gives some kind of commutation relation between the $\partial^{-1}$ operator and squared Bessel diffusion semigroups. We note here that $\partial^{-1}$ and the semigroup of a squared Bessel diffusion do not commute exactly and this is an important difference and extra complication to the works \cite{KPZfixedpoint,NicaQuastelRemenik,matetski2022tasep} where the corresponding operators actually commute. Analogues possibly exist for other Pearson diffusions as well, see the discussion in Remark \ref{RemarkProbRep}. Now, to prove Theorem \ref{NoncollidingThm} we again follow the same two-step strategy. The fact that there is an underlying determinantal structure is basically an immediate consequence of the Karlin-McGregor formula for non-intersecting paths \cite{KarlinMcGregor} and the Eynard-Mehta theorem \cite{BorodinRains}. The non-trivial part of the proof is to compute the kernel explicitly. In order to do this we again need to find certain biorthogonal functions. Although the proof is actually presented in a somewhat different way, the essence of the key result in Section \ref{SectionNonColliding}, Proposition \ref{DistributionNonColliding}, is to find such biorthogonal functions, see the discussion preceding that proposition for more details. Using the backward in time diffusion flow finding these biorthogonal functions can again be done in a uniform and clean way.

\subsection{Scaling limits}\label{SectionScalingLimits}
Using the Fredholm determinant formula in Theorems \ref{ThmOneSidedUp} and \ref{ThmOneSidedDown} we can consider various scaling limits of the interacting diffusions considered here. Such results were proven for special initial conditions for a number of different models in the past two decades \cite{BorodinFerrariPrahofer,BorodinFerrariPrahoferSasamoto,BorodinFerrariPushASEP,FerrariSpohnWeiss1,FerrariSpohnWeiss2,ReflectedBrownianKPZ}. Then in \cite{KPZfixedpoint}, TASEP was exactly solved for general initial condition and this was used to construct and give a formula for the transition probabilities of the KPZ fixed point $\mathfrak{h}(t,x)$, by showing convergence of the Fredholm determinants corresponding to TASEP. This convergence is proven by showing trace class convergence of the rescaled kernels. This can be very technically demanding and has been worked out in detail for general initial conditions only in the case of TASEP. Convergence has also been shown for general initial conditions for the Brownian model in \cite{NicaQuastelRemenik} but some results from \cite{DirectedLandscape} are used which shorten the technical arguments. Now, at a kind of soft edge scaling we would expect that the interacting squared Bessel diffusions (\ref{BESQparticleSystem}) should also converge to the KPZ fixed point. 

At least for the initial condition $x=(0,\dots,0)$, which corresponds to looking at consecutive submatrices of a Laguerre unitary ensemble matrix \cite{ForresterBook} convergence of the correlation kernels, in an equivalent form involving Laguerre polynomials, see Section \ref{RmkEquivalentRep}, to the extended Airy kernel follows from \cite{ForresterNagao}. It should be possible to prove this for general initial conditions using the probabilistic representation in Theorem \ref{ThmBessel} but this would be long and technical to perform rigorously and will be done elsewhere. A more robust way to show convergence to the KPZ fixed point, at the appropriate scaling, would be to generalize the recent breakthrough work \cite{quastel2022convergence} to the continuous state space setting. There convergence to the KPZ fixed point is shown for finite range exclusion processes (and the KPZ equation by taking yet another limit) by comparing their transition probabilities to the ones for TASEP using energy estimates. One would expect that the role of TASEP should be played by the Brownian model. In any case, our primary interest in these models is not showing convergence to the KPZ fixed point (which would be interesting) but rather to show convergence to some novel limiting objects. In particular, we believe that at least in the squared Bessel case, in the hard edge scaling, new behaviour should emerge. We will pursue it in the future.

Now, regarding the non-colliding diffusions (\ref{NonCollidingSDE}) the asymptotics of the correlation kernel, in certain regimes, have been computed in the Brownian and squared Bessel cases in \cite{KatoriTanemura,KatoriTanemuraBessel}. This has been done using an equivalent expression, which somehow writes $e^{-t\mathsf{L}}p$, for $t>0$, as an integral (it is unclear whether this can be done in general). Returning to the general case we note that although $e^{t\mathsf{L}}(x,y)$ is explicit \cite{Wong,arista2022explicit,FisherSnedecor} for any $\mathsf{L}$ satisfying our standing assumption, the expression can be rather involved. Fortunately, working at the level of generators things become much clearer conceptually but not fully rigorous when taking limits. In particular, after a gauge transformation and rescaling of the kernel and some non-trivial formal computations it is possible to see that, at least in some examples beyond the Brownian and squared Bessel cases, the kernel $\mathsf{K}$ (recall this depends on $N$) should converge as $N\to \infty$ to a limiting kernel $\mathsf{K}_\infty$.
The following type of problem, imprecisely stated, becomes central to make this investigation rigorous. Suppose $f_N$ is a polynomial and that, in a suitable sense, $f_N \to f_\infty$ for some not necessarily polynomial but nice function $f_\infty$. Moreover, assume that the second order differential operator $\mathfrak{A}_N$, which is some transformed version of $\mathsf{L}^{(N)}=\mathsf{L}$ so that $e^{-t\mathfrak{A}_N}f_N$ makes sense for any $t\in \mathbb{R}$, satisfies $\mathfrak{A}_N \to \mathfrak{A}_\infty$, in an appropriate sense, for some limiting second order differential operator $\mathfrak{A}_\infty$. Then one would hope that, subject to certain conditions (finding the right conditions is part of the actual problem),
\begin{equation*}
   e^{-t\mathfrak{A}_N}f_N \overset{N \to \infty}{\longrightarrow} e^{-t \mathfrak{A}_\infty}f_\infty, \ \ t\in \mathbb{R},
\end{equation*}
and moreover be able to obtain strong enough estimates to prove convergence of kernels in trace class\footnote{A related example that might be good to keep in mind is the gauged transformed and appropriately rescaled Ornstein-Uhlenbeck generator converging to the Airy Hamiltonian $-\frac{d^2}{dx^2}+x$ which corresponds to the top curve of Dyson Brownian converging at the edge scaling to the Airy process, see \cite{QuastelRemenikAiry}.}. To prove this sort of result would involve some non-trivial analysis with Schr\"{o}dinger semigroups and we will pursue it elsewhere. 

\subsection{Connections to integrable systems}\label{SectionIntegrableSystems}

There is a long history of connections between random matrices and integrable systems, see for example \cite{ForresterBook}. This extends to the level of interacting particle systems as we now discuss. We first consider one-point distributions, i.e. the distribution of a single particle. We note that from Corollary \ref{CorollaryDistribution}, with $\mu$
a probability measure supported on $\mathbb{W}_N^{\uparrow,\circ}$ \footnote{But we can extend the statement to initial conditions with coinciding coordinates using an entrance law, see Remark \ref{EntranceLaw}.} and the Markov kernels $\mathsf{\Lambda}\mathsf{E}_\uparrow$ and $\mathsf{\Lambda}\mathsf{E}_\downarrow$ defined in Section \ref{SectionConnection}, we have:
\begin{equation*}
\mathbb{P}_{\mu\mathsf{\Lambda}\mathsf{E}_\uparrow }\left(\mathsf{x}_N^\uparrow(t)\le w\right)=\mathbb{P}_{\mu}\left(\mathsf{z}_N(t)\le w\right), \ \ \mathbb{P}_{\mu\mathsf{\Lambda}\mathsf{E}_\downarrow }\left(\mathsf{x}_N^\downarrow(t)\ge w\right)=\mathbb{P}_{\mu}\left(\mathsf{z}_1(t)\ge w\right), \ \ \forall t\ge 0, \ w\in (l,r).
\end{equation*}
Then, using the connection between $\left(\mathsf{z}(t);t\ge 0\right)$ and eigenvalues of matrix diffusions we obtain that, for certain special initial conditions, these probabilities (which are thus gap probabilities for eigenvalues of random matrices) can be represented
in terms of Painlev\'{e} equations \cite{ForresterBook,TracyWidom,BorodinDeift,AdlerVanMoerbekeAnnals,ForresterWitteCPAM,ForresterWitteCauchy,ForresterNagoyaCorr}. We note that there is some well-developed theory on gap probabilities of eigenvalues of random matrices and integrable systems, see for example \cite{ForresterBook,TracyWidom,BorodinDeift,AdlerVanMoerbekeAnnals,ForresterWitteCPAM,ForresterWitteCauchy,ForresterNagoyaCorr}.

It is rather remarkable, and far from well-understood, that more sophisticated connections to intergrable systems exist for the multipoint and multitime distributions, at least for some related models that we now survey. For the KPZ fixed point $\mathfrak{h}(t,x)$, with general initial condition $\mathfrak{h}_0$, it was shown in \cite{QuastelRemenikKP}, that the multipoint distributions:
\begin{equation*}
\mathbb{P}_{\mathfrak{h}_0}\left(\mathfrak{h}(t,x_1)\le w_1, \mathfrak{h}(t,x_2)\le w_2,\dots,\mathfrak{h}(t,x_m)\le w_m\right),
\end{equation*}
after an appropriate transformation, satisfy the matrix KP equation, see \cite{QuastelRemenikKP} for the precise statement. It is natural then to ask if something analogous happens for pre-limit models (multipoint in this case would mean looking at distributions of several particles at the same time). At least for TASEP, as the authors mention in \cite{matetski2022polynuclear}, it appears that the transition probabilities do not solve some classical completely integrable equation. However, it was very recently shown in \cite{matetski2022polynuclear} that the multipoint distributions of another discrete model, the polynuclear growth model (PNG), after an appropriate transformation, satisfy the non-abelian 2D Toda equation. It would be very interesting if such results exist\footnote{It is reasonable to hope this, since the continuous world of random matrices is in general nicer for explicit computations compared to the world of discrete models.} for the interacting particle systems (\ref{IPS1}) and (\ref{IPS2}). We hope to investigate this in the future.

In a different direction, for the non-colliding diffusions (\ref{NonCollidingSDE}), in the case of Ornstein-Uhlenbeck and squared radial Ornstein-Uhlenbeck processes (see Section \ref{SectionExamples}), starting from the invariant measure $\mathsf{M}_N$, see Proposition \ref{PropInvariantMeasure}, it has been shown in \cite{TracyWidom,AdlerVanMoerbekeDyson} that the multitime distributions of the top-curve $(\mathsf{z}_N(t);t\ge 0)$:
\begin{equation}
\mathbb{P}_{\mathsf{M}_N}\left(\mathsf{z}_N(t_1)\le w_1, \mathsf{z}_N(t_2)\le w_2,\dots, \mathsf{z}_N(t_m)\le w_m\right),
\end{equation}
solve a system of nonlinear partial differential equations. It is reasonable to expect that analogous results should hold for the other diffusions considered in this paper, when starting from their invariant measure. Finally, as far as we can tell, connections to integrable systems for non-colliding diffusions starting from arbitrary deterministic initial condition have not been investigated yet.

\paragraph{Organisation of the paper} The paper is organised as follows. In Section \ref{SectionExamples} we discuss examples of diffusions that fall within our framework and whether and where the corresponding interacting particle systems (\ref{IPS1}), (\ref{IPS2}), (\ref{NonCollidingSDE}) have been studied before. In Section \ref{SectionBackwardFlow} we study the diffusion flow corresponding to $\mathsf{L}$ (or $\mathsf{L}^{(k)}$) on polynomials backward in time. In Section \ref{SectionMainProofs} we prove Theorems \ref{ThmOneSidedUp}, \ref{ThmOneSidedDown} and \ref{ThmBessel}. In Section \ref{SectionDetStructure} we prove the existence of some ``signed" determinantal point process structure underlying the particle systems (\ref{IPS1}) and (\ref{IPS2}). In Section \ref{SectionBiorthogonalization} we obtain the explicit form of the kernel $\mathfrak{K}_t$ in Theorems \ref{ThmOneSidedUp} and \ref{ThmOneSidedDown}. In Section \ref{SectionRWKernel} we prove Theorem \ref{ThmBessel}. In Section \ref{SectionNonColliding} we prove Theorem \ref{NoncollidingThm}. In Section \ref{SectionConnection} we discuss the connection between the particle systems and their transition kernels.

\paragraph{Acknowledgements} I am grateful to Neil O'Connell for first introducing me to one-dimensional diffusions related to orthogonal polynomials. I am grateful to Jon Warren for discussions connected to Section 6 and in particular explanations related to Proposition \ref{PropIntertwining}. I am grateful to Alexander I. Bufetov for discussions related to Theorem \ref{NoncollidingThm}. I would also like to thank Daniel Remenik for interesting comments on a preliminary version of the paper and Brian Hall for correspondence on the backward in time diffusion flow. Finally, I am  grateful to the referees for a very careful reading of the paper and for very useful comments and suggestions which have improved the exposition.

\paragraph{Statements and declarations} The author has no conflicts of interest. No data has been used in the research described here.

\section{Some examples of diffusions and invariant measures}\label{SectionExamples}

In this section we give examples of diffusions that fall within our framework (satisfying our standing assumption in Definition \ref{StandingAssumption}), in particular to which Theorems \ref{ThmOneSidedUp}, \ref{ThmOneSidedDown} and \ref{NoncollidingThm} can all be applied. 
We also give references to where the explicit form of the transition densities $e^{t\mathsf{L}}(x,y)$ can be found along with other relevant facts on these diffusions, such as invariant measures. 
Before discussing the explicit diffusion examples we record here a little result, that we refer back to below, on the invariant measure of the non-colliding SDEs (\ref{NonCollidingSDE}).

\begin{prop}\label{PropInvariantMeasure} Consider the measure on $(l,r)$ with density
\begin{equation}\label{SpeedMeasure}
\mathsf{m}(y)=\frac{1}{Z_1} \frac{1}{\left(a_2y^2+a_1y+a_0\right)}\exp\left(\int^y\frac{b_1z+b_0}{a_2z^2+a_1z+a_0}dz\right),
\end{equation}
where the indefinite integral $\int^yf(z)dz$ denotes an anti-derivative of $f$ and $Z_1$ is a normalization constant, which we assume is finite and so that $\int_l^r\mathsf{m}(y)dy=1$ (in particular $\mathsf{m}$ is unambiguously defined). Moreover, assume that this probability
measure has moments of order at least $2N-2$, namely $\int_l^r |y|^{2N-2}\mathsf{m}(y)dy<\infty$. Then, the unique invariant probability measure of the dynamics (\ref{NonCollidingSDE}) with semigroup (\ref{NonCollidingSemigroup}) in $\mathbb{W}_N^\uparrow$ is given by:
\begin{equation}\label{InvariantMeasure}
\mathsf{M}_N(dx)=\frac{1}{Z_N}\prod_{i=1}^N\mathsf{m}(x_i)\times \mathsf{\Delta}_N^2(x)\mathbf{1}_{\left(x\in \mathbb{W}_N^\uparrow\right)}dx,
\end{equation}
for some normalization constant $Z_N$.
\end{prop}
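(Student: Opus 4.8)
The plan is to verify directly that $\mathsf{M}_N$ is invariant for the Markov kernel (\ref{NonCollidingSemigroup}) and then to settle uniqueness by a separate irreducibility argument. The single structural fact I would use throughout is that, up to the normalizing constant, $\mathsf{m}$ is exactly the speed measure density of the one-dimensional $\mathsf{L}$-diffusion, so that this diffusion is symmetric (reversible) with respect to $\mathsf{m}$; concretely,
$\mathsf{m}(x)\,e^{t\mathsf{L}}(x,y)=\mathsf{m}(y)\,e^{t\mathsf{L}}(y,x)$ for all $x,y\in(l,r)$ and $t\ge 0$, which is standard for diffusions whose boundary points are natural or entrance (see e.g. \cite{ItoMckean,HandbookBM,KarlinMcGregor}; the unknown constant $Z_1$ cancels from both sides). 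I would first record that the moment assumption $\int_l^r|y|^{2N-2}\mathsf{m}(y)dy<\infty$ guarantees $Z_N<\infty$, so that $\mathsf{M}_N$ is a genuine probability measure: $\mathsf{\Delta}_N^2(x)$ is a polynomial in which each $x_i$ occurs to power at most $2N-2$, hence $\int_{(l,r)^N}\prod_i\mathsf{m}(x_i)\,\mathsf{\Delta}_N^2(x)\,dx$ factors, monomial by monomial, into finite one-dimensional moments of $\mathsf{m}$.

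For invariance I would compute $\int_{\mathbb{W}_N^\uparrow}\mathsf{M}_N(dx)\,\mathsf{P}_t^{(N)}(x,dy)$ step by step. The denominator $\mathsf{\Delta}_N(x)$ in (\ref{NonCollidingSemigroup}) cancels one of the two Vandermonde factors in $\mathsf{M}_N$; the remaining integrand $\prod_i\mathsf{m}(x_i)\,\mathsf{\Delta}_N(x)\det(e^{t\mathsf{L}}(x_i,y_j))_{i,j}$ is symmetric in $x_1,\dots,x_N$ (a coordinate permutation $\pi$ contributes $\mathrm{sgn}(\pi)$ from the Vandermonde and $\mathrm{sgn}(\pi)$ from permuting rows of the determinant), so $\int_{\mathbb{W}_N^\uparrow}$ may be replaced by $\tfrac1{N!}\int_{(l,r)^N}$. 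Expanding the determinant over permutations and applying the reversibility identity to each factor $\mathsf{m}(x_i)e^{t\mathsf{L}}(x_i,y_{\sigma(i)})=\mathsf{m}(y_{\sigma(i)})e^{t\mathsf{L}}(y_{\sigma(i)},x_i)$, the weight $\prod_i\mathsf{m}(x_i)$ is transferred to $\prod_j\mathsf{m}(y_j)$ and the determinant turns into $\det(e^{t\mathsf{L}}(y_i,x_j))_{i,j}$, leaving $\int_{(l,r)^N}\mathsf{\Delta}_N(x)\det(e^{t\mathsf{L}}(y_i,x_j))_{i,j}\,dx$. Writing $\mathsf{\Delta}_N(x)=\det(x_j^{i-1})_{i,j=1}^N$ and invoking the Andr\'eief identity, this equals $N!\det\big(\int_l^r x^{i-1}e^{t\mathsf{L}}(y_k,x)\,dx\big)_{i,k}=N!\det\big(e^{t\mathsf{L}}z^{i-1}\big|_{z=y_k}\big)_{i,k}$, whose $(i,k)$ entry is, for fixed $i$, a polynomial in $y_k$ of degree $i-1$ with leading coefficient $e^{t(a_2(i-1)(i-2)+b_1(i-1))}$; row-reducing to Vandermonde form gives $N!\,e^{t\lambda_N}\mathsf{\Delta}_N(y)$, where the identity $\sum_{d=0}^{N-1}\big(a_2d(d-1)+b_1d\big)=\tfrac16N(N-1)(2a_2(N-2)+3b_1)=\lambda_N$ is a short power-sum check. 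Collecting factors, the $e^{-t\lambda_N}$ and the $1/N!$ cancel and one recovers $\tfrac1{Z_N}\prod_j\mathsf{m}(y_j)\,\mathsf{\Delta}_N^2(y)\,\mathbf{1}_{(y\in\mathbb{W}_N^\uparrow)}\,dy=\mathsf{M}_N(dy)$.

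For uniqueness I would argue that for $x,y\in\mathbb{W}_N^{\uparrow,\circ}$ the density $\det(e^{t\mathsf{L}}(x_i,y_j))_{i,j}$ is strictly positive (strict total positivity of the one-dimensional transition kernel, the Karlin--McGregor positivity of \cite{KarlinMcGregor}), so the process with semigroup (\ref{NonCollidingSemigroup}) is irreducible on $\mathbb{W}_N^{\uparrow,\circ}$; since $\mathsf{M}_N$ charges the open chamber and is invariant and finite, this forces positive Harris recurrence and hence a unique invariant probability measure (alternatively one may reduce to the ergodicity of the one-dimensional $\mathsf{L}$-diffusion, which is positive recurrent precisely because $Z_1<\infty$). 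The symmetrization and sign bookkeeping are routine; the two points needing care are the interchange of integral and sum in the determinant expansion, which follows by dominating $|\mathsf{\Delta}_N(x)\det(e^{t\mathsf{L}}(x_i,y_j))|$ by a finite sum of (polynomial)$\times\prod_i e^{t\mathsf{L}}(\,\cdot\,,x_i)$ terms and using that $y\mapsto e^{t\mathsf{L}}(x,y)$ integrates polynomials (recorded in the introduction), and the uniqueness step itself; I expect the latter to be the only genuinely conceptual obstacle, the rest being a direct computation.
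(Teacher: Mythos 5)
Your proposal is correct and follows essentially the same route as the paper: the paper's proof consists of noting that $\mathsf{m}$ is the speed measure density with respect to which $\mathsf{L}$ is reversible and then deferring to the generic computation in Proposition 4.4 of \cite{HuaPickrell}, which is exactly the symmetrization--reversibility--Andr\'eief--leading-coefficient argument you write out (your power-sum check indeed recovers $\lambda_N$, consistent with (\ref{LeadingCoeff}) and Lemma \ref{LemCoincidence}). Your explicit treatment of uniqueness via strict positivity of the Karlin--McGregor density and Harris recurrence is a reasonable filling-in of a step the paper leaves to the cited reference.
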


\begin{proof}
The proof is standard. The integrability assumption above gives that $\mathsf{M}_N$ in (\ref{InvariantMeasure}) is a well-defined probability measure (since the highest degree of the monomials appearing in $\mathsf{\Delta}_N^2(x)$ is $2N-2$). Note that, (\ref{SpeedMeasure}) is the formula for the density of the so-called speed measure of the $\mathsf{L}$-diffusion with respect to which $\mathsf{L}$ is reversible, see for example \cite{ItoMckean,HandbookBM,KarlinTaylor}. Then, the rest of the argument is word for word the same as the proof of Proposition 4.4 of \cite{HuaPickrell} to which we refer the reader (there a special $\mathsf{L}$-diffusion, the Hua-Pickrell diffusion below, is considered but the argument is completely generic).
\end{proof}

\begin{rmk}
The measure (\ref{InvariantMeasure}) is exactly an orthogonal polynomial ensemble with weight $\mathsf{m}$, see \cite{ForresterBook}. The corresponding point process on $(l,r)$ is determinantal with kernel given in terms of the orthogonal polynomials corresponding to $\mathsf{m}$, see \cite{ForresterBook}.
\end{rmk}

\paragraph{Brownian motion} We have the generator $\mathsf{L}=\frac{1}{2}\frac{d^2}{dx^2}$ in $(-\infty,\infty)$. Both $-\infty$ and $\infty$ are natural boundary points, see \cite{HandbookBM}. The transition kernel is simply the heat kernel and there is no invariant probability measure. As mentioned, the model of Brownian motions with one-sided collisions (\ref{IPS1}), (\ref{IPS2}), in its many equivalent forms, has been heavily studied in the last two decades, see \cite{OConnellYor,Warren,ReflectedBrownianKPZ,NicaQuastelRemenik,DirectedLandscape}. The non-colliding particle system (\ref{NonCollidingSDE}) in this case is called Dyson's Brownian motion, it arises as the eigenvalue evolution of Brownian motion on Hermitian matrices and has been intensely studied from many points of view for decades \cite{Dyson,IntroductionToRandomMatrices,UniversalityBook,JohanssonUniversality,KatoriTanemura,SpohnDyson,Tsai}. 

\paragraph{Ornstein-Uhlenbeck process} This generalises the Brownian motion. We have the generator $\mathsf{L}=\frac{1}{2}\frac{d^2}{dx^2}-\gamma \frac{d}{dx}$ in $(-\infty,\infty)$. Both $-\infty$ and $\infty$ are natural boundary points for any $\gamma$, see \cite{HandbookBM}. The transition kernel can be written explicitly in terms of a series involving Hermite polynomials \cite{Wong} or as a closed expression using Mehler's formula, see \cite{HandbookBM}. For $\gamma>0$ the invariant measure is given by the Gaussian distribution and, by virtue of Proposition \ref{PropInvariantMeasure} for example, the invariant measure for the non-colliding diffusions in  (\ref{NonCollidingSDE}) is given by the law of the eigenvalues of the Gaussian unitary ensemble, see \cite{ForresterBook}. The non-colliding particle system (\ref{NonCollidingSDE}) arises as the eigenvalues of a Hermitian Ornstein-Uhlenbeck process and has been studied as much as the standard Brownian model \cite{Dyson,UniversalityBook,JohanssonHahn}. 

\paragraph{Geometric Brownian motion} We have the generator $\mathsf{L}=\sigma^2x^2\frac{d^2}{dx^2}+\beta x \frac{d}{dx}$ in $(0,\infty)$. Both $0$ and $\infty$ are natural boundary points for any $\sigma,\beta$, see \cite{HandbookBM}. The transition kernel can easily be obtained from the heat kernel and there is no invariant probability measure, see \cite{HandbookBM}. The non-colliding particle system (\ref{NonCollidingSDE}) in this case was briefly discussed in \cite{InterlacingDiffusions} but, as far as we can tell, a canonical matrix diffusion has not been considered somewhere. We note that non-colliding geometric Brownian motions of (\ref{NonCollidingSDE}) do not arise by simply exponentiating Dyson Brownian motion (as can be checked by applying It\^{o}'s formula).

\paragraph{Squared Bessel process} We have the generator $\mathsf{L}=2x\frac{d^2}{dx^2}+\gamma \frac{d}{dx}$ in $(0,\infty)$. For $\gamma \ge 2$, the point $0$ is an entrance boundary point while $\infty$ is always natural for any $\gamma$, see \cite{HandbookBM}. The transition kernel of $\mathsf{L}$ we have already given explicitly in (\ref{BESQtransition}) and there is no invariant probability measure, see \cite{HandbookBM}. The non-colliding diffusions (\ref{NonCollidingSDE}) arise as the eigenvalues of the so-called Laguerre matrix process on positive definite Hermitian matrices introduced in \cite{KonigOConnell} and further studied in \cite{DemniLaguerre}. The corresponding diffusion on real symmetric positive definite matrices appeared earlier and is known as the Wishart process \cite{Wishart}.

\paragraph{Squared radial Ornstein-Uhlenbeck process} This generalises the squared Bessel process. We have the generator $\mathsf{L}=2x\frac{d^2}{dx^2}+\left(-\gamma_1 x+\gamma_2\right) \frac{d}{dx}$ in $(0,\infty)$. For $\gamma_2 \ge 2$, the point $0$ is an entrance boundary point while $\infty$ is always natural for any $\gamma_1,\gamma_2$, see \cite{HandbookBM}.  The transition kernel can be written explicitly in terms of a series involving Laguerre polynomials \cite{Wong} or as a closed expression involving Bessel functions, see \cite{HandbookBM}. For $\gamma_1>0$ the invariant measure is given by the Gamma distribution and, by virtue of Proposition \ref{PropInvariantMeasure} for example, the invariant measure for (\ref{NonCollidingSDE}) is given by the law of the eigenvalues of the Laguerre unitary ensemble, see \cite{ForresterBook}. The non-colliding diffusions (\ref{NonCollidingSDE}) arise as the eigenvalues of the Ornstein-Uhlenbeck analogue of the Laguerre matrix process on positive definite Hermitian matrices. 

\paragraph{Jacobi process} We have the generator $\mathsf{L}=2x(1-x)\frac{d^2}{dx^2}+\left(-(\gamma_1+\gamma_2)x+\gamma_2\right)\frac{d}{dx}$ in $(0,1)$. For $\gamma_1,\gamma_2 \ge 2$ both the points $0$ and $1$ are entrance boundary points, see \cite{AlbaneseKuznetsov}. The transition kernel can be written explicitly as a series involving Jacobi polynomials, see \cite{Wong}. For $\gamma_1,\gamma_2>0$ the invariant measure is given by the beta distribution and, by virtue of Proposition \ref{PropInvariantMeasure} for example, the invariant measure for (\ref{NonCollidingSDE}) is given by the law of the eigenvalues of the Jacobi unitary ensemble, see \cite{ForresterBook}. The non-colliding particle system (\ref{NonCollidingSDE}) arises as the evolution of eigenvalues of the so-called matrix Jacobi diffusion introduced in \cite{Doumerc}. 

\paragraph{Inverse gamma diffusion} This is also known as inhomogeneous geometric Brownian motion. We have the generator $\mathsf{L}=2x^2\frac{d^2}{dx^2}+\left(\gamma_1x+\gamma_2\right)\frac{d}{dx}$ in $(0,\infty)$. For $\gamma_2>0$ the point $0$ is an entrance boundary while $\infty$ is always a natural boundary point for any $\gamma_1,\gamma_2$, see \cite{InhomogeneousGeomBM}. An explicit formula for the transition kernel in terms of the Bessel orthogonal polynomials and hypergeometric functions can be found in \cite{Wong}. For $\gamma_1<2$ (still assuming $\gamma_2> 0$) the invariant measure is the inverse Gamma distribution, and by virtue of Proposition \ref{PropInvariantMeasure} for example (we need a stricter restriction on the parameters to get finite moments), the invariant measure for (\ref{NonCollidingSDE}) is given by the law of eigenvalues of the inverse Laguerre ensemble, \cite{ForresterBook}. As far as we can tell, the non-colliding diffusions (\ref{NonCollidingSDE}) for this particular diffusion, and the corresponding matrix process, first appeared in the work of Rider and Valk\'{o} \cite{RiderValko} in relation to a matrix extension of Dufresne's identity \cite{Dufresne}. 

\paragraph{Hua-Pickrell diffusion} We have the generator $\mathsf{L}=(1+x^2)\frac{d^2}{dx^2}+\left(\gamma_1x+\gamma_2\right)\frac{d}{dx}$ in $(-\infty,\infty)$. Both $-\infty$ and $\infty$ are always natural boundary points for any $\gamma_1,\gamma_2$, see \cite{HuaPickrell}. An explicit formula for the transition kernel in terms of the Romanovski orthogonal polynomials and hypergeometric functions can be found in \cite{Wong,arista2022explicit}. For $\gamma_1<1$ the invariant measure is the Pearson IV distribution (the Cauchy distribution is the special case $\gamma_1=\gamma_2=0$), see \cite{HuaPickrell}, and by virtue of Proposition \ref{PropInvariantMeasure}  for example (we need a stricter restriction on the parameters to get finite moments), the invariant measure for (\ref{NonCollidingSDE}) is given by the law of eigenvalues of the so-called Hua-Pickrell or Cauchy unitary ensemble, see \cite{BorodinOlshanskiErgodic,ForresterWitteCauchy,ForresterBook,HuaPickrell}.  We introduced and studied the non-colliding diffusions (\ref{NonCollidingSDE}), and the associated matrix process, in \cite{HuaPickrell} motivated by the construction of infinite-dimensional dynamics associated to a certain determinantal point process with infinitely many points and a matrix extension \cite{MatrixBougerol} of Bougerol's identity \cite{Bougerol}. 

\paragraph{Fisher-Snedecor diffusion} We have the generator $\mathsf{L}=2x(1+x)\frac{d^2}{dx^2}+\left(\gamma_1x+\gamma_2\right)\frac{d}{dx}$ in $(0,\infty)$. For $\gamma_2 \ge 2$ the point $0$ is an entrance boundary point while $\infty$ is always natural for any $\gamma_1,\gamma_2$. An explicit formula for the transition kernel in terms of the Fisher-Snedecor orthogonal polynomials and hypergeometric functions can be found in \cite{FisherSnedecor}. For $\gamma_1,\gamma_2>0$ the invariant measure of the diffusion is the beta prime distribution and by virtue of Proposition \ref{PropInvariantMeasure} (we need a stricter restriction on the parameters to get finite moments) the invariant measure of the non-colliding diffusions (\ref{NonCollidingSDE}) is the law of the eigenvalues of the Hermitian matrix beta prime distribution \cite{GuptaNagar}. As far as we can tell, the non-colliding particle system  for this diffusion (\ref{NonCollidingSDE}) had not been considered in the literature before and a corresponding matrix diffusion has not yet been introduced.

\section{Backward in time diffusion flow on polynomials}\label{SectionBackwardFlow}

In this section we show that the definition in (\ref{powerseriesdef}) of $e^{t\mathsf{L}}p$ as a power series makes sense and it is consistent, for $t>0$, with integration against the transition kernel of the $\mathsf{L}$-diffusion. We also prove some other facts about the backward in time diffusion flow on polynomials which, although not strictly necessary for subsequent developments, might be of independent interest. Write $[z^i]p(z)$ for the coefficient of the term $z^i$ in a polynomial $p(z)$.

\begin{prop}\label{WellDefinedProp}
Let $M\ge 1$ and $p \in \mathfrak{P}_M$, where $\mathfrak{P}_M$ is the set of polynomials (with complex coefficients) of degree $M$. Let $t\in \mathbb{C}$. Then, the linear map 
\begin{equation*}
    e^{t\mathsf{L}}:\mathfrak{P}_M \longrightarrow \mathfrak{P}_M,
\end{equation*}
given by (\ref{powerseriesdef}) is well-defined, with the series (\ref{powerseriesdef}) converging uniformly on compact sets in $(t,z)\in \mathbb{C}^2$. Moreover, we have:
\begin{equation}\label{LeadingCoeff}
[z^M]e^{t\mathsf{L}}p(z)=e^{tM(b_1+a_2(M-1))}[z^M]p(z).  
\end{equation}
\end{prop}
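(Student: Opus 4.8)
The plan is to reduce everything to linear algebra on a finite-dimensional space. The key observation is that $\mathsf{L}$ never raises the degree of a polynomial: since $\deg\mathsf{a}\le 2$ and $\deg\mathsf{b}\le 1$, for $\deg p\le M$ the terms $\mathsf{a}p''$ and $\mathsf{b}p'$ both have degree $\le M$. Thus $\mathsf{L}$ restricts to a linear endomorphism of the $(M+1)$-dimensional vector space $V$ of polynomials of degree $\le M$, and on $V$ the operator $e^{t\mathsf{L}}=\sum_{j\ge 0}\tfrac{t^j}{j!}\mathsf{L}^j$ is simply the exponential of a fixed $(M+1)\times(M+1)$ matrix, which exists for every $t\in\mathbb{C}$. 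This already produces a linear map $e^{t\mathsf{L}}\colon\mathfrak{P}_M\to\mathfrak{P}_M$, modulo checking that the degree is preserved exactly (done below via the leading coefficient).

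For the quantitative convergence claim I would equip $V$ with the max-of-coefficients norm $\|q\|=\max_{0\le i\le M}|[z^i]q|$. Since $\mathsf{L}$ is a fixed linear map on the finite-dimensional space $V$, there is a constant $K=K(\mathsf{a},\mathsf{b},M)$ with $\|\mathsf{L}q\|\le K\|q\|$, hence $\|\mathsf{L}^j p\|\le K^j\|p\|$, so every coefficient of $\mathsf{L}^j p$ is bounded by $K^j\|p\|$ in absolute value. Combined with the elementary bound $|q(z)|\le (M+1)\max(1,|z|^M)\|q\|$ for $q\in V$, this gives, for $|t|\le R$ and $|z|\le\rho$,
\[
\left|\tfrac{t^j}{j!}\mathsf{L}^j p(z)\right|\le \tfrac{(RK)^j}{j!}\,(M+1)\max(1,\rho^M)\,\|p\|,
\]
whose right-hand side is the general term of a convergent series independent of $(t,z)$ on the compact set $\{|t|\le R\}\times\{|z|\le\rho\}$. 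The Weierstrass $M$-test then yields uniform convergence of the series (\ref{powerseriesdef}) on compacts in $(t,z)\in\mathbb{C}^2$ (and, if wanted, joint analyticity); in particular the limit $e^{t\mathsf{L}}p$ is, for each $t$, a polynomial in $z$ of degree $\le M$ with coefficients $\sum_j\tfrac{t^j}{j!}[z^i]\mathsf{L}^j p$.

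Finally, for the leading coefficient: because $\mathsf{L}$ cannot raise degree, $[z^M]\mathsf{L}q$ depends only on $[z^M]q$, and a one-line computation with $q=\alpha z^M+(\text{lower order})$ gives $[z^M]\mathsf{L}q=\big(a_2M(M-1)+b_1M\big)\alpha=M\big(b_1+a_2(M-1)\big)[z^M]q$. Writing $\lambda_M:=M(b_1+a_2(M-1))$ and iterating, $[z^M]\mathsf{L}^j p=\lambda_M^j[z^M]p$ for all $j\ge 0$, so
\[
[z^M]e^{t\mathsf{L}}p=\sum_{j\ge 0}\tfrac{t^j}{j!}\lambda_M^j[z^M]p=e^{t\lambda_M}[z^M]p,
\]
which is exactly (\ref{LeadingCoeff}); since $e^{t\lambda_M}\neq 0$ for all $t\in\mathbb{C}$ and $[z^M]p\neq 0$ for $p\in\mathfrak{P}_M$, the image $e^{t\mathsf{L}}p$ again has degree exactly $M$, so $e^{t\mathsf{L}}$ is a well-defined linear self-map of $\mathfrak{P}_M$. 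I do not expect any genuine obstacle here; the only point requiring a little care is keeping the uniformity joint in $(t,z)$, which is handled precisely by separating the (automatic, finite-dimensional) operator bound from the elementary pointwise bound on polynomials.
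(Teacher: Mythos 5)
Your proposal is correct and follows essentially the same route as the paper: both arguments rest on the observation that $\mathsf{L}$ preserves the space of polynomials of degree at most $M$, bound the coefficients of $\mathsf{L}^jp$ geometrically in $j$ (the paper via an explicit coefficient recurrence, you via the automatic boundedness of a linear map on a finite-dimensional space, which amounts to the same estimate), conclude uniform convergence on compacts by the Weierstrass $M$-test, and obtain (\ref{LeadingCoeff}) from the identity $[z^M]\mathsf{L}q=M(b_1+a_2(M-1))[z^M]q$. No gaps.
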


\begin{proof} Consider a polynomial $p(z)=\sum_{i=0}^M \epsilon_i z^i \in \mathfrak{P}_M$. Differentiating we get
\begin{equation*}
    \frac{d}{dz}p(z)=\sum_{i=0}^{M-1}(i+1)\epsilon_{i+1}z^i, \ \ \frac{d^2}{dz^2}p(z)=\sum_{i=0}^{M-2}(i+1)(i+2)\epsilon_{i+2} z^i.
\end{equation*}
Define $p_j(z)=\mathsf{L}^jp(z)=\sum_{i=0}^M\epsilon_i^{(j)}z^i$ and observe that this is a polynomial of degree at most $M$.
We have the following recurrence for the coefficients $\epsilon_i^{(j)}$ in $j$:
\begin{align*}
\epsilon_i^{(j+1)}&=a_2 i(i-1)\epsilon_i^{(j)}+a_1(i+1)i\epsilon_{i+1}^{(j)}+a_0 (i+1)(i+2) \epsilon_{i+2}^{(j)}+b_1 i \epsilon_i^{(j)}+b_0(i+1)\epsilon_{i+1}^{(j)}\\
&=\epsilon_i^{(j)}[a_2i(i-1)+b_1i]+\epsilon_{i+1}^{(j)}[a_2i(i+1)+b_0(i+1)]+a_0(i+1)(i+2)\epsilon_{i+2}^{(j)}.
\end{align*}
In particular, there is some finite constant $\eta$, depending only on $a_0, a_1, a_2, b_0, b_1$ and $i$ (recall that $i=0,\dots, M$ is fixed) such that:
\begin{equation*}
\left|\epsilon_i^{(j+1)}\right|\le \eta \left(\left|\epsilon_i^{(j)}\right|+\left|\epsilon_{i+1}^{(j)}\right|+\left|\epsilon_{i+2}^{(j)}\right|\right).
\end{equation*}
By summing this relation over $i$ we get that, for a different constant $\tilde{\eta}$, independent of $j$:
\begin{equation*}
\sum_{i=0}^M\left|\epsilon_i^{(j+1)}\right|\le \tilde{\eta} \sum_{i=0}^M\left|\epsilon_i^{(j)}\right|.
\end{equation*}
By iterating, we finally get that, for any $i=1,\dots,M$, for some constant $\eta^*$ independent of $j$,
$\left|\epsilon_i^{(j)}\right| \le \sum_{m=1}^M \left|\epsilon_m^{(j)}\right| \le (\eta^*)^j$. Suppose now that $(t,z)\in \mathcal{V}$, where $\mathcal{V}$ is an arbitrary compact subset of $\mathbb{C}^2$. Then, we have the following bound for any $j$:
\begin{equation*}
 \left|\frac{t^j\mathsf{L}^j}{j!}p(z) \right|\le  \frac{|t|^j}{{j!}}\sum_{i=0}^M\left|\epsilon_i^{(j)}\right||z|^i \le\frac{|t|^j\left(\eta^*\right)^j}{{j!}}\sum_{i=0}^M|z|^i   \le \frac{C_{\mathcal{V},\eta^*}^j}{j!},
\end{equation*}
for some finite constant $C_{\mathcal{V},\eta^*}$. By the Weirstrass M-test this gives that $e^{t\mathsf{L}}p(z)$ is well-defined, being a polynomial of degree at most $M$, with the series (\ref{powerseriesdef}) converging uniformly in $(t,z)$ on compact sets in $\mathbb{C}^2$. Finally, using the recurrence for $\epsilon_M^{(j)}$ we obtain that for $p\in \mathfrak{P}_M$:
\begin{equation*}
[z^M]\mathsf{L}^jp(z)=\left(b_1 M+a_2M(M-1)\right)^j[z^M]p(z),
\end{equation*}
from which both (\ref{LeadingCoeff}) and the first claim of the proposition that the degree of $e^{t\mathsf{L}}p$ is exactly $M$ follow (since by (\ref{LeadingCoeff}) $[z^M]e^{t\mathsf{L}}p(z)\neq 0$).
\end{proof}

\begin{rmk}
The operator $e^{t\mathsf{L}}$ is well-defined as a power series not only on polynomials but more generally on analytic functions with certain growth conditions. Since we do not need this in this paper we do not pursue it further.
\end{rmk}

\begin{lem}\label{LemCoincidence}
Let $p$ be a polynomial. Then, for $t>0$ and $x\in (l,r)$, the definition of $e^{t\mathsf{L}}p(x)$ in (\ref{powerseriesdef}) coincides with 
\begin{equation}\label{ExpectationDisplay}
    \int_{l}^r e^{t\mathsf{L}}(x,y)p(y)dy,
\end{equation}
where recall that $e^{t\mathsf{L}}(x,y)$ is the transition density with respect to the Lebesgue measure of an $\mathsf{L}$-diffusion in $(l,r)$.
\end{lem}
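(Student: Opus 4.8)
The plan is to show that both expressions in the statement solve the same parabolic initial value problem and then invoke uniqueness, or more directly, to exploit the known regularity and decay of the transition density to justify differentiating under the integral sign and matching Taylor coefficients in $t$. First I would fix $p\in\mathfrak{P}_M$ and $x\in(l,r)$ and define $u(t,x)=\int_l^r e^{t\mathsf{L}}(x,y)p(y)\,dy$ for $t>0$. By the hypotheses recalled after Definition \ref{StandingAssumption} (smoothness of $(x,y)\mapsto e^{t\mathsf{L}}(x,y)$ and the fact that $y\mapsto|\partial_x^i e^{t\mathsf{L}}(x,y)|$ integrates polynomials on $(l,r)$), the integral in (\ref{ExpectationDisplay}) is finite, and $u$ is smooth in $x$ with $\partial_x^i u(t,x)=\int_l^r \partial_x^i e^{t\mathsf{L}}(x,y)\,p(y)\,dy$. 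Since $e^{t\mathsf{L}}(x,y)$ is the transition density of the $\mathsf{L}$-diffusion, the forward/backward equation gives $\partial_t u(t,x)=\mathsf{L}_x u(t,x)$ for $t>0$, again with differentiation under the integral sign justified by the same integrability. Moreover $u(t,x)\to p(x)$ as $t\downarrow 0$ (the diffusion has inaccessible boundaries and the semigroup is conservative, so $e^{t\mathsf{L}}(x,\cdot)$ is an approximate identity against the continuous function $p$, using also a uniform integrability bound near $t=0$).

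Next I would observe that $v(t,x):=e^{t\mathsf{L}}p(x)$ defined by the power series (\ref{powerseriesdef}) also satisfies $\partial_t v=\mathsf{L}_x v$ with $v(0,x)=p(x)$: by Proposition \ref{WellDefinedProp} the series converges uniformly on compact subsets of $(t,x)\in\mathbb{C}^2$, so it may be differentiated term by term in $t$, and $\partial_t\sum_j \frac{t^j}{j!}\mathsf{L}^j p=\sum_j\frac{t^j}{j!}\mathsf{L}^{j+1}p=\mathsf{L}\,v$. Both $u$ and $v$ are thus solutions of the same Cauchy problem for the operator $\partial_t-\mathsf{L}_x$ on $(0,\infty)\times(l,r)$ with the same continuous initial data; to conclude $u\equiv v$ I would quote a uniqueness theorem for this parabolic problem — this is where the growth of the solutions matters. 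The clean route is to fix the polynomial $p$ and note $v(t,x)$ is a polynomial in $x$ of degree $M$ with coefficients that are entire in $t$, so it has at most polynomial growth in $x$ (locally uniformly in $t$); and one checks $u$ has comparable growth using that the diffusion's transition density has sub-polynomial tails in the sense that it integrates polynomials with a bound locally uniform in $x$ and $t$. Uniqueness in this growth class then follows from the standard parabolic maximum principle / a Phragmén–Lindelöf argument, or alternatively — and this is perhaps the slickest bookkeeping — by comparing Taylor coefficients: $\partial_t^j u(t,x)|_{t=0}=\mathsf{L}^j p(x)$ (using the semigroup property $e^{t\mathsf{L}}=(e^{(t/n)\mathsf{L}})^n$ and $\partial_t|_{0}$ reproducing $\mathsf{L}$ on the smooth-enough class of polynomials, or just iterating $\partial_t u=\mathsf{L}u$ and letting $t\downarrow 0$), which matches the coefficients of the series for $v$; combined with real-analyticity of $t\mapsto u(t,x)$ this forces $u=v$.

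I expect the main obstacle to be the rigorous justification that $t\mapsto u(t,x)$ is real-analytic (equivalently, that one may take $t\downarrow0$ in the identities $\partial_t^j u=\mathsf{L}^j u$ and that these determine $u$), together with pinning down the exact growth class in which the parabolic uniqueness theorem applies given that $\mathsf{a}$ and $\mathsf{b}$ are unbounded polynomials and the boundary points may be only natural (so classical bounded-coefficient results don't directly apply). In practice one sidesteps the analyticity subtlety: because $p$ ranges over the finite-dimensional space $\mathfrak{P}_M$ which is $\mathsf{L}$-invariant, $u(t,\cdot)=e^{t\mathsf{L}}p$ is obtained by exponentiating the finite matrix representing $\mathsf{L}$ on $\mathfrak{P}_M$, so the only real content is the identity $\partial_t|_{t=0}e^{t\mathsf{L}}p=\mathsf{L}p$ for the genuine semigroup acting on polynomials, which reduces to a short argument via Itô's formula / Dynkin's formula (polynomials are in the extended domain of the generator here, with the boundary terms vanishing since the boundaries are inaccessible), after which the semigroup property of $(e^{t\mathsf{L}})_{t\ge0}$ upgrades this to $\partial_t e^{t\mathsf{L}}p=\mathsf{L}\,e^{t\mathsf{L}}p$ for all $t>0$ and the two polynomial-valued solutions of this linear ODE with the same initial value coincide.
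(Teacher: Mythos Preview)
Your overall strategy---show that both the power series and the integral solve the backward equation $\partial_t w=\mathsf{L}_x w$ with initial data $p$, then invoke uniqueness---matches the paper's. The paper, however, implements the uniqueness step more directly: rather than appealing to an abstract parabolic maximum principle or to real-analyticity in $t$ (both of which, as you note, are delicate here because the coefficients $\mathsf{a},\mathsf{b}$ are unbounded and the boundaries are only natural/entrance), it takes an \emph{arbitrary} classical solution $u$ satisfying a polynomial bound on $\partial_x u$, applies It\^o's formula to $s\mapsto u(t-s,\mathsf{x}(s))$, checks via the polynomial-moment property of the $\mathsf{L}$-diffusion that the resulting local martingale is a true martingale, and concludes by optional stopping that $u(t,x)=\mathbb{E}_x[p(\mathsf{x}(t))]$. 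Then one simply observes that the power series (\ref{powerseriesdef}) is such a solution.

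Your final ``sidestep'' lands on the right tool (It\^o/Dynkin) but has a small circularity: to invoke uniqueness for the linear ODE on the finite-dimensional space $\mathfrak{P}_M$, you would need to know a priori that the genuine semigroup $u(t,\cdot)=\int e^{t\mathsf{L}}(\cdot,y)p(y)\,dy$ takes values in $\mathfrak{P}_M$---but that is essentially the statement being proved. The paper sidesteps this by not trying to show both candidates live in $\mathfrak{P}_M$; it shows instead that any solution in a polynomial-growth class must equal the probabilistic representation, and then checks the power series lies in that class. Your iterated-Dynkin / Taylor-with-remainder idea (not spelled out, but implicit in ``comparing Taylor coefficients'') would also close the gap cleanly, since $\mathsf{L}^j p$ stays in $\mathfrak{P}_M$ and the remainder is controllable.
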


\begin{proof}
We first claim that the classical solution to the partial differential equation with $(t,x)\in [0,\infty) \times (l,r)$:
\begin{equation}\label{pde}
 \partial_t u(t,x)=\mathsf{L}_xu(t,x)  , \ \ u(0,x)=p(x),
\end{equation}
subject to the bound, for any $T>0$, $|\partial_x u(t,x)|\le h_T(x)$ uniformly in $t\in [0,T]$ for some non-negative polynomial $h_T$, is unique and given by the probabilistic representation $u(t,x)=\mathbb{E}_x\left[p(\mathsf{x}(t))\right]$, where $(\mathsf{x}(t);t\ge 0)$ is an $\mathsf{L}$-diffusion starting from $x$, which is exactly the expression in (\ref{ExpectationDisplay}). The claim can be proven as follows. Let $u$ be an arbitrary solution of (\ref{pde}) subject to the above conditions. Applying It\^{o}'s formula \cite{RevuzYor} we see that, for any $t$, the process $\left(s\mapsto u(t-s,\mathsf{x}(s));0\le s \le t\right)$ is a local martingale by virtue of (\ref{pde}). Moreover, by virtue of the polynomial bound in the condition above and the fact that the $\mathsf{L}$-diffusion has polynomial moments we obtain that the quadratic variation of $\left(s\mapsto u(t-s,\mathsf{x}(s));0\le s \le t\right)$ is integrable and hence it is a true martingale \cite{RevuzYor}. The optional stopping theorem \cite{RevuzYor} gives the desired claim.

Finally, it is easily seen that $e^{t\mathsf{L}}p(x)$ defined in (\ref{powerseriesdef}) also solves (\ref{pde}) and satisfies the required bound; we can justify differentiating term by term by adapting the proof of Proposition \ref{WellDefinedProp}.
The conclusion follows.
\end{proof}

\begin{lem}\label{LemmaSemigroup}
Let $p$ be a polynomial and $s,t\in \mathbb{C}$. We have $e^{s\mathsf{L}}e^{t\mathsf{L}}p=e^{(s+t)\mathsf{L}}p$.
\end{lem}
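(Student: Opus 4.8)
The plan is to reduce the identity to a finite-dimensional, purely algebraic statement. Set $M=\deg p$, so that $p\in\mathfrak{P}_M$. By Proposition \ref{WellDefinedProp} the operator $\mathsf{L}$ maps $\mathfrak{P}_M$ into itself, hence its restriction $\mathsf{L}_M:=\mathsf{L}\big|_{\mathfrak{P}_M}$ is a linear endomorphism of the finite-dimensional vector space $\mathfrak{P}_M$, and the power series (\ref{powerseriesdef}), applied to any $q\in\mathfrak{P}_M$, is exactly the action of the matrix exponential $e^{t\mathsf{L}_M}$ on $q$. For a fixed linear operator on a finite-dimensional space the semigroup identity $e^{s\mathsf{L}_M}e^{t\mathsf{L}_M}=e^{(s+t)\mathsf{L}_M}$ is the standard property of the matrix exponential; evaluating both sides at $p$ gives the claim.

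Equivalently — and this is how I would actually write it out — one argues directly with the series. By the coefficientwise bounds established in the proof of Proposition \ref{WellDefinedProp} (namely $|[z^i]\mathsf{L}^jp|\le(\eta^*)^j$), the double series $\sum_{j,k\ge 0}\frac{|s|^j|t|^k}{j!\,k!}(\eta^*)^{j+k}=e^{|s|\eta^*}e^{|t|\eta^*}<\infty$ converges, so for each power of $z$ the relevant series may be summed in any order (Cauchy product of absolutely convergent series). Therefore
\begin{equation*}
e^{s\mathsf{L}}e^{t\mathsf{L}}p=\sum_{j\ge 0}\sum_{k\ge 0}\frac{s^j t^k}{j!\,k!}\mathsf{L}^{j+k}p=\sum_{n\ge 0}\frac{1}{n!}\Bigl(\sum_{j+k=n}\binom{n}{j}s^j t^k\Bigr)\mathsf{L}^n p=\sum_{n\ge 0}\frac{(s+t)^n}{n!}\mathsf{L}^n p=e^{(s+t)\mathsf{L}}p,
\end{equation*}
where the binomial theorem is used in the middle step.

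The only point requiring any care is the interchange of the two infinite sums, and this is supplied entirely by the absolute convergence already proven in Proposition \ref{WellDefinedProp}, so there is no genuine analytic obstacle. As an alternative sanity check one may note that both sides are, for fixed $p$, entire $\mathfrak{P}_M$-valued functions of $(s,t)\in\mathbb{C}^2$, and that for real $s,t>0$ the identity follows from Lemma \ref{LemCoincidence} together with the Chapman–Kolmogorov equation for the transition kernel $e^{t\mathsf{L}}(x,y)$; analytic continuation in $(s,t)$ then extends it to all of $\mathbb{C}^2$.
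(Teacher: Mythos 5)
Your argument is correct and is exactly the elaboration of the paper's one-line proof, which simply asserts that the identity "follows from the power series definition in (\ref{powerseriesdef})"; your Cauchy-product computation (justified by the absolute convergence bounds from Proposition \ref{WellDefinedProp}) is the natural way to fill that in, and the finite-dimensional matrix-exponential reduction is an equivalent packaging of the same idea. No gap.
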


\begin{proof}
This follows from the power series definition in (\ref{powerseriesdef}).
\end{proof}

\begin{prop}\label{MartingalePolynomials}
Let $p$ be a polynomial and $\left(\mathsf{x}(t);t\ge 0\right)$ be a realisation of the $\mathsf{L}$-diffusion, with $\mathsf{x}(0)=x$ deterministic, with natural filtration $\left(\mathcal{F}_s\right)_{s\ge 0}$. Then, $\left(\left[e^{-t\mathsf{L}}p\right](\mathsf{x}(t))\right)_{t\ge 0}$ is a martingale.
\end{prop}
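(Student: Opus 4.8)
The plan is to deduce the martingale property directly from the Markov property of the $\mathsf{L}$-diffusion, using only the two facts already established for the backward flow on polynomials: that it coincides with integration against the transition kernel for positive times (Lemma \ref{LemCoincidence}) and that it satisfies the semigroup law for all complex times (Lemma \ref{LemmaSemigroup}).

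First I would record the structural preliminaries. For each fixed $t\ge 0$ the function $q_t:=e^{-t\mathsf{L}}p$ is again a polynomial, of the same degree as $p$, by Proposition \ref{WellDefinedProp} applied with complex time $-t$. Moreover $q_t(\mathsf{x}(t))$ is integrable, since $q_t$ is a polynomial and the $\mathsf{L}$-diffusion has polynomial moments of all orders (the same fact used in the proof of Lemma \ref{LemCoincidence}). Hence $\left(\left[e^{-t\mathsf{L}}p\right](\mathsf{x}(t))\right)_{t\ge 0}$ is a well-defined $\left(\mathcal{F}_s\right)_{s\ge 0}$-adapted $L^1$-process, and it remains to check the conditional expectation identity.

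The core step: fix $0\le s\le t$; the case $s=t$ is trivial, so assume $s<t$. By the Markov property of the $\mathsf{L}$-diffusion,
\[
\mathbb{E}\bigl[\,\left[e^{-t\mathsf{L}}p\right](\mathsf{x}(t))\mid\mathcal{F}_s\,\bigr]=\mathbb{E}_{y}\bigl[\,\left[e^{-t\mathsf{L}}p\right](\mathsf{x}(t-s))\,\bigr]\Big|_{y=\mathsf{x}(s)}.
\]
Applying Lemma \ref{LemCoincidence} to the polynomial $e^{-t\mathsf{L}}p$ at the positive time $t-s$, the inner expectation equals $\left[e^{(t-s)\mathsf{L}}\bigl(e^{-t\mathsf{L}}p\bigr)\right](y)$, and then Lemma \ref{LemmaSemigroup} gives $e^{(t-s)\mathsf{L}}e^{-t\mathsf{L}}p=e^{-s\mathsf{L}}p$. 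Evaluating at $y=\mathsf{x}(s)$ yields
\[
\mathbb{E}\bigl[\,\left[e^{-t\mathsf{L}}p\right](\mathsf{x}(t))\mid\mathcal{F}_s\,\bigr]=\left[e^{-s\mathsf{L}}p\right](\mathsf{x}(s)),
\]
which is precisely the martingale identity.

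I do not expect a genuine obstacle here: the only points needing care are the integrability input (covered by the polynomial moment bounds for the diffusion) and the time bookkeeping — one propagates the polynomial $e^{-t\mathsf{L}}p$ forward by the genuine probabilistic semigroup over the increment $t-s$, and the algebraic cancellation $e^{(t-s)\mathsf{L}}e^{-t\mathsf{L}}=e^{-s\mathsf{L}}$ does the rest. As a cross-check one could instead argue via It\^{o} calculus exactly as in the proof of Lemma \ref{LemCoincidence}: setting $v(t,x):=\left[e^{-t\mathsf{L}}p\right](x)$, term-by-term differentiation of the power series (\ref{powerseriesdef}), justified as in Proposition \ref{WellDefinedProp}, shows $\partial_t v=-\mathsf{L}_xv$, so It\^{o}'s formula makes $\left(v(t,\mathsf{x}(t))\right)_{t\ge 0}$ a local martingale with vanishing drift, and integrability of its quadratic variation (again from the polynomial moment bounds) upgrades it to a true martingale.
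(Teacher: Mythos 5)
Your proposal is correct and follows essentially the same route as the paper: integrability from polynomial moments, then the Markov property combined with Lemma \ref{LemCoincidence} and the semigroup identity $e^{(t-s)\mathsf{L}}e^{-t\mathsf{L}}p=e^{-s\mathsf{L}}p$ from Lemma \ref{LemmaSemigroup}. The extra It\^{o}-calculus cross-check is fine but not needed.
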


\begin{proof}
First note that, since the $\mathsf{L}$-diffusion has polynomial moments, $\mathbb{E}_x\left[\left|e^{-t\mathsf{L}}p\left(\mathsf{x}(t)\right)\right|\right]<\infty$. Then, for $s\le t$, by the Markov property and Lemma \ref{LemmaSemigroup} we have:
\begin{equation*}
 \mathbb{E}\left[e^{-t\mathsf{L}}p(\mathsf{x}(t))\big|\mathcal{F}_s\right]=\left[e^{(t-s)\mathsf{L}}e^{-t\mathsf{L}}p\right](\mathsf{x}(s))=\left[e^{-s\mathsf{L}}p\right](\mathsf{x}(s)),
\end{equation*}
and the conclusion follows.
\end{proof}

\begin{rmk}
Suppose that $p$ is a polynomial eigenfunction of $\mathsf{L}$ with eigenvalue $\lambda$, $\mathsf{L}p(x)=\lambda p(x)$. Then, it is easy to see from (\ref{powerseriesdef}), that for any $t\in \mathbb{C}$, $e^{t\mathsf{L}}p(x)=e^{\lambda t}p(x)$.
\end{rmk}

We finally record here the equations of motion for the zeroes of $e^{t\mathsf{L}}p(z)$. This is of interest to us since applying the diffusion flow backward in time to the polynomial $\prod_{i=1}^N(z-x_i)$, namely with roots given by the initial condition $x=(x_1,\dots,x_N)\in \mathbb{W}_N^{\uparrow}$ of the SDEs (\ref{NonCollidingSDE}), plays a key role in Theorem \ref{NoncollidingThm}. Observe how similar (essentially we remove the noise and reverse time) these equations are to the interacting SDE (\ref{NonCollidingSDE}).

\begin{prop}
Let $p \in \mathfrak{P}_M$. Let $\mathfrak{z}_1(t), \dots, \mathfrak{z}_M(t)$ be the zeroes of $e^{t\mathsf{L}}p(z)$. Suppose that for some $t_0\in \mathbb{C}$ the zeroes $\mathfrak{z}_1(t_0), \dots, \mathfrak{z}_M(t_0)$ are distinct. Then, for all $t\in \mathbb{C}$ in a neighbourhood of $t_0$, each $\mathfrak{z}_i(t)$ depends holomorphically on $t$ and we have the following equations of motion, for $i=1,\dots,M$:
\begin{equation}\label{RootsEquation}
\partial_t \mathfrak{z}_i(t)=-\mathsf{b}(\mathfrak{z}_i(t))+2\mathsf{a}(\mathfrak{z}_i(t))\sum_{j\neq i} \frac{1}{\mathfrak{z}_j(t)-\mathfrak{z}_i(t)}.
\end{equation}
\end{prop}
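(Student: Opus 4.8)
The plan is to differentiate the defining relation $e^{t\mathsf{L}}p(\mathfrak{z}_i(t)) = 0$ in $t$ and solve for $\partial_t\mathfrak{z}_i(t)$. First I would set up the holomorphic dependence: write $P(t,z) = e^{t\mathsf{L}}p(z)$, which by Proposition \ref{WellDefinedProp} is jointly holomorphic in $(t,z)\in\mathbb{C}^2$ and, for each $t$, a polynomial of degree exactly $M$ in $z$ with a nonvanishing leading coefficient (so it really has $M$ zeroes counted with multiplicity). At $t_0$ the zeroes are distinct, hence simple, so $\partial_z P(t_0,\mathfrak{z}_i(t_0))\neq 0$; by the holomorphic implicit function theorem each $\mathfrak{z}_i(t)$ extends to a holomorphic function on a neighbourhood of $t_0$, and the zeroes stay distinct there.

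Next, the core computation. Differentiating $P(t,\mathfrak{z}_i(t))=0$ gives
\begin{equation*}
\partial_t P(t,\mathfrak{z}_i(t)) + \partial_z P(t,\mathfrak{z}_i(t))\,\partial_t\mathfrak{z}_i(t) = 0,
\end{equation*}
and since $\partial_t P = \mathsf{L}_z P = \mathsf{a}(z)\partial_z^2 P + \mathsf{b}(z)\partial_z P$, evaluating at $z=\mathfrak{z}_i(t)$ yields
\begin{equation*}
\mathsf{a}(\mathfrak{z}_i)\partial_z^2 P(t,\mathfrak{z}_i) + \mathsf{b}(\mathfrak{z}_i)\partial_z P(t,\mathfrak{z}_i) + \partial_z P(t,\mathfrak{z}_i)\,\partial_t\mathfrak{z}_i = 0.
\end{equation*}
Dividing through by $\partial_z P(t,\mathfrak{z}_i)\neq 0$ gives
\begin{equation*}
\partial_t\mathfrak{z}_i = -\mathsf{b}(\mathfrak{z}_i) - \mathsf{a}(\mathfrak{z}_i)\,\frac{\partial_z^2 P(t,\mathfrak{z}_i)}{\partial_z P(t,\mathfrak{z}_i)}.
\end{equation*}
It then remains to identify the ratio $\partial_z^2 P/\partial_z P$ at a simple zero. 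Writing $P(t,z) = c(t)\prod_{j=1}^M(z-\mathfrak{z}_j(t))$ with $c(t)\neq 0$, the standard logarithmic-derivative identities give $\partial_z P(t,\mathfrak{z}_i) = c(t)\prod_{j\neq i}(\mathfrak{z}_i-\mathfrak{z}_j)$ and $\partial_z^2 P(t,\mathfrak{z}_i) = 2c(t)\sum_{k\neq i}\prod_{j\neq i,k}(\mathfrak{z}_i-\mathfrak{z}_j)$, so that
\begin{equation*}
\frac{\partial_z^2 P(t,\mathfrak{z}_i)}{\partial_z P(t,\mathfrak{z}_i)} = 2\sum_{j\neq i}\frac{1}{\mathfrak{z}_i-\mathfrak{z}_j} = -2\sum_{j\neq i}\frac{1}{\mathfrak{z}_j-\mathfrak{z}_i}.
\end{equation*}
Substituting this back produces exactly (\ref{RootsEquation}).

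I do not expect a serious obstacle here: the argument is a textbook application of the holomorphic implicit function theorem together with the logarithmic-derivative computation for a polynomial at a simple root, and the only inputs needed beyond that are already supplied by Proposition \ref{WellDefinedProp} (joint holomorphy in $(t,z)$ and that the degree, hence the leading coefficient, is preserved). The mildest point of care is bookkeeping: one must note that the factorization constant $c(t)$ cancels in the ratio, so no analysis of $c(t)$ is required, and that the simplicity of the zeroes — which guarantees $\partial_z P(t,\mathfrak{z}_i(t))\neq 0$ and hence that the division is legitimate — persists on a whole neighbourhood of $t_0$ by continuity, not merely at $t_0$ itself.
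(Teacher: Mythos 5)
Your proposal is correct and follows essentially the same route as the paper: the holomorphic implicit function theorem for local holomorphy of the roots, implicit differentiation of $e^{t\mathsf{L}}p(\mathfrak{z}_i(t))=0$ using $\partial_t e^{t\mathsf{L}}p=\mathsf{L}_z e^{t\mathsf{L}}p$, and then the logarithmic-derivative identification of $\partial_z^2 P/\partial_z P$ at a simple root (which the paper defers to the cited references but which you carry out explicitly and correctly).
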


\begin{proof}
The proof is a straightforward adaptation of the argument in \cite{TaoHeatFlow,hall2022heat} given there for the heat kernel/Brownian case, see in particular Proposition 2.7 in \cite{hall2022heat} for more details. Local holomorphicity of the roots $\mathfrak{z}_i(t)$ in $t$ is a consequence of the (holomorphic) implicit function theorem, since by assumption the $\mathfrak{z}_i(t_0)$ are distinct, we have that $\partial_z e^{t_0\mathsf{L}}p$ is non-zero at $\mathfrak{z}_i(t_0)$. Hence, implicitly differentiating  in $t$ in a neighbourhood of $t_0$ the equation $e^{t\mathsf{L}}p\left(\mathfrak{z}_i(t)\right)=0$, using the fact $\partial_t e^{t\mathsf{L}}p(z)=\mathsf{L}_z e^{t\mathsf{L}}p(z)$, for any $z\in \mathbb{C}$, we get the equation:
\begin{equation*}
\mathsf{L}_z e^{t\mathsf{L}}p\left(\mathfrak{z}_i(t)\right)+\partial_t\mathfrak{z}_i(t)\partial_z e^{t\mathsf{L}}p\left(\mathfrak{z}_i(t)\right)=0.
\end{equation*}
After some computations, see \cite{TaoHeatFlow,hall2022heat} for more details, we obtain (\ref{RootsEquation}).
\end{proof}

\section{Proof of Theorems \ref{ThmOneSidedUp}, \ref{ThmOneSidedDown} and \ref{ThmBessel}}\label{SectionMainProofs}
\subsection{Existence of determinantal structure}\label{SectionDetStructure}

In this subsection we prove that there is a hidden ``signed" determinantal point process structure underlying the interacting particle systems (\ref{IPS1}) and (\ref{IPS2}). We need some additional notation. Recall that we defined the operator $\partial^{-1}$ in (\ref{IntegrationOperDef}) by $\partial^{-1}f(x)=\int_{l}^x f(y)dy$. Moreover, define the operator $\hat{\partial}^{-1}$, on functions $f$ in $(l,r)$ which integrate polynomials, by:
\begin{equation}\label{IntegralOp}
\hat{\partial}^{-1}f(x)=-\int_{x}^rf(y)dy.
\end{equation}
It is easy to see that, for any $m\in \mathbb{N}$,
\begin{equation}
 \hat{\partial}^{-m}f(x)=\underbrace{\hat{\partial}^{-1}\cdots \hat{\partial}^{-1}}_{\textnormal{m  times}}f(x)=-\int_x^r\frac{(x-y)^{m-1}}{(m-1)!}f(y)dy.
\end{equation}
For $n\in \mathbb{N}$, we use the convention $\hat{\partial}^n=\partial^n$.

In what follows we will make frequent use of the following observations.
\begin{lem}\label{LemmaObservations}
Let $m,n \in \mathbb{N}$. Suppose $f$ is a smooth function and integrates polynomials in $(l,r)$, namely its integral against polynomials in $(l,r)$ is finite. Then, we have
\begin{align*}
\partial^m\partial^n f&=\partial^{n}\partial^{m}f=\partial^{m+n}f,\\
\partial^{-m}\partial^{-n}f&=\partial^{-n}\partial^{-m}f=\partial^{-n-m}f,\\
\partial^m\partial^{-n}f&=\partial^{m-n}f.
\end{align*}
Moreover, for $k=1,\dots,N$ and $t>0$, we have 
\begin{align}
\partial_y^{m}\partial_x^n e^{t\mathsf{L}^{(k)}}(x,y)&=\partial_x^n\partial_y^m e^{t\mathsf{L}^{(k)}}(x,y),\label{PartialCommutation}\\
\partial_y^{-m}\partial_x^n e^{t\mathsf{L}^{(k)}}(x,y)&=\partial_x^n\partial_y^{-m} e^{t\mathsf{L}^{(k)}}(x,y)\label{IntDerCommutation}.
\end{align}
All the relations above also hold with $\partial$ replaced by $\hat{\partial}$.
\end{lem}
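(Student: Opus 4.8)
The plan is to reduce everything to two basic facts: (i) the operators $\partial$ and $\partial^{-1}$ (and likewise $\hat\partial^{-1}$) act on a single variable and commute with differentiation/integration in an \emph{independent} variable; and (ii) the transition density $e^{t\mathsf{L}^{(k)}}(x,y)$ is smooth in $(x,y)\in(l,r)^2$ and, together with its $x$-derivatives, integrates polynomials against the $y$-variable (both stated in the introductory discussion following Definition \ref{StandingAssumption}). Given these, the purely one-variable identities $\partial^m\partial^n f=\partial^{m+n}f$, $\partial^{-m}\partial^{-n}f=\partial^{-m-n}f$, and $\partial^m\partial^{-n}f=\partial^{m-n}f$ are elementary: the first is iterated differentiation; the second follows from the explicit kernel $\partial^{-m}(x,y)=\frac{(x-y)^{m-1}}{(m-1)!}\mathbf 1_{(y<x)}$ together with the Chu--Vandermonde/Beta-integral identity $\int_y^x\frac{(x-u)^{a-1}}{(a-1)!}\frac{(u-y)^{b-1}}{(b-1)!}du=\frac{(x-y)^{a+b-1}}{(a+b-1)!}$; and the third is the fundamental theorem of calculus, $\partial\,\partial^{-1}f=f$, iterated (when $m<n$ one is left with $\partial^{-(n-m)}f$, when $m>n$ with $\partial^{m-n}f$, using $\partial^{-0}=\mathrm{id}$). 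For $\hat\partial$ the same arguments apply verbatim with the sign flip and the limits $\int_x^r$ in place of $\int_l^x$; the convention $\hat\partial^n=\partial^n$ makes the mixed case $\partial^m\hat\partial^{-n}$ consistent. Throughout one needs the hypothesis that $f$ is smooth and integrates polynomials so that all the iterated integrals converge and differentiation under the integral sign is legitimate.

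For the two displayed relations \eqref{PartialCommutation} and \eqref{IntDerCommutation} involving $e^{t\mathsf{L}^{(k)}}(x,y)$, the point is that $\partial_x$ and $\partial_y$ (resp.\ $\partial_y^{-1}$) act on different variables, so the identities are instances of (a) equality of iterated partial derivatives of a smooth function, which is Schwarz/Clairaut, giving \eqref{PartialCommutation}; and (b) differentiation under the integral sign, giving \eqref{IntDerCommutation}: explicitly,
\[
\partial_y^{-m}\partial_x^n e^{t\mathsf{L}^{(k)}}(x,y)=\int_l^y\frac{(y-v)^{m-1}}{(m-1)!}\,\partial_x^n e^{t\mathsf{L}^{(k)}}(x,v)\,dv=\partial_x^n\int_l^y\frac{(y-v)^{m-1}}{(m-1)!}\,e^{t\mathsf{L}^{(k)}}(x,v)\,dv,
\]
where interchanging $\partial_x^n$ with the $v$-integral is justified by the stated local integrability of $v\mapsto|\partial_x^i e^{t\mathsf{L}^{(k)}}(x,v)|$ against polynomials on $(l,r)$ and smoothness in $x$, via a standard dominated-convergence argument on compact $x$-intervals. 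The $\hat\partial$ version replaces $\int_l^y$ by $-\int_y^r$ and is identical.

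The only genuine technical point — and the one I would write out carefully — is the justification of interchanging $\partial_x^n$ with the (improper, since $(l,r)$ may be unbounded and the endpoints only natural/entrance) $y$-integral in \eqref{IntDerCommutation}. This is exactly where the qualitative property recorded before the lemma, namely that $v\mapsto|\partial_x^i e^{t\mathsf{L}^{(k)}}(x,v)|$ integrates polynomials on $(l,r)$ for every $i\in\mathbb N$, is used: it supplies a locally uniform (in $x$) integrable majorant $\frac{|y-v|^{m-1}}{(m-1)!}\sup_{x\in K}|\partial_x^{n+1}e^{t\mathsf{L}^{(k)}}(x,v)|$ on any compact $K\subset(l,r)$, licensing differentiation under the integral sign. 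Everything else is bookkeeping with the explicit polynomial kernels $\partial^{-m}(x,y)$, and the repeated-integral identity above handles the composition rules among the $\partial^{-m}$'s.
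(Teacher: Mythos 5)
Your proposal is correct and follows essentially the same route as the paper: the one-variable composition rules are read off from the definitions (explicit kernels, Fubini, and the fundamental theorem of calculus), \eqref{PartialCommutation} is Clairaut's theorem via the smoothness of $e^{t\mathsf{L}^{(k)}}$, and \eqref{IntDerCommutation} is differentiation under the integral sign justified by dominated convergence using the locally uniform polynomial-integrating majorant $\sup_{\xi\in\mathcal{U}_x}\bigl|\partial_\xi^{i}e^{t\mathsf{L}^{(k)}}(\xi,\cdot)\bigr|$, exactly as in the paper's proof. Your write-up simply spells out the details the paper leaves as ``immediate by the definitions.''
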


\begin{proof}
The first three relations are immediate by the definitions. Relation (\ref{PartialCommutation}) follows from smoothness of $e^{t\mathsf{L}^{(k)}}(x,y)$ in $(x,y)\in (l,r)^2$ for $t>0$ while (\ref{IntDerCommutation}) is valid by virtue of the dominated convergence theorem since by standard estimates \cite{StroockPDEbook}, see \cite{HuaPickrell} where this is worked out in detail in the Hua-Pickrell case, we get that for a small neighbourhood $\mathcal{U}_x\subset (l,r)$ of $x$, the function $\sup_{\xi \in \mathcal{U}_x}\big|\partial_\xi^n e^{t\mathsf{L}^{(k)}}(\xi,\cdot)\big|$ integrates polynomials. Finally, it is easy to see that $\partial$ can be replaced by $\hat{\partial}$ in all the above.
\end{proof}

We note that, for $m,n \in \mathbb{N}$, $\partial^{-m} \partial^n f$ is in general not equal to $\partial^{-m+n}f=\partial^n\partial^{-m}f$. We would need to enforce boundary conditions at $l$ (respectively $r$ for $\hat{\partial}$) for $f$ for this to be true, for example $\partial^{-1}\partial f(x)=f(x)-f(l)$, $\hat{\partial}^{-1}\hat{\partial}f(x)=f(x)-f(r)$. However, we will not need to make use of this.

The starting point of our analysis are the following explicit expressions for the semigroups  $\mathsf{S}_t^{\uparrow,(N)}$ and $\mathsf{S}_t^{\downarrow,(N)}$ obtained in \cite{InterlacingDiffusions}. These are usually called Schutz type formulae because of the seminal work of Schutz \cite{Schutz} on the transition probabilities of TASEP.

\begin{prop}\label{TransDensityProp1} Under the standing assumption in Definition \ref{StandingAssumption}, the transition kernel of the interacting particle system (\ref{IPS1}) in $\mathbb{W}_N^{\uparrow}$ is given by, with $x=(x_1,\dots,x_N),y=(y_1,\dots,y_N)\in \mathbb{W}_N^\uparrow$,
\begin{equation}\label{TransitionDensityIPS1Dis}
\mathsf{S}_t^{\uparrow,(N)}(x,dy)=\det\left(\left[\partial^{j-i}_{\cdot}e^{t\mathsf{L}^{(i)}}(x_i,\cdot)\right](y_j)\right)_{i,j=1}^Ndy.
\end{equation}
\end{prop}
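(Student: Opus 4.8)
The statement to prove is Proposition~\ref{TransDensityProp1}, the Sch\"utz-type formula for the transition kernel of the one-sided reflected system (\ref{IPS1}). Here is how I would approach it.

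\medskip

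The plan is to verify the three defining properties of the transition density of the Markov process determined by (\ref{IPS1}): (i) that the right-hand side of (\ref{TransitionDensityIPS1Dis}), call it $p_t(x,y)$, solves the forward (Kolmogorov) equation $\partial_t p_t = \sum_{i=1}^N \big(\mathsf{L}^{(i)}_{y_i}\big)^* p_t$ in the interior $\mathbb{W}_N^{\uparrow,\circ}$ (or, working with the backward generator acting on $x$, that it solves $\partial_t p_t(x,y) = \sum_{i=1}^N \mathsf{L}^{(i)}_{x_i} p_t(x,y)$, which is cleaner since each $e^{t\mathsf{L}^{(i)}}(x_i,\cdot)$ already solves its own scalar equation); (ii) that $p_t$ satisfies the correct reflecting boundary conditions on the walls $y_k = y_{k-1}$ of $\mathbb{W}_N^\uparrow$, which for one-sided reflection of particle $k$ off particle $k-1$ amounts to a condition coupling the $(k-1)$-st and $k$-th rows/columns of the determinant; and (iii) that $p_t(x,\cdot) \to \delta_x$ as $t\downarrow 0$. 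Since the process is already known to exist uniquely (by the reference \cite{InterlacingDiffusions}), it suffices to check that this explicit $p_t$ has all these properties and then invoke uniqueness. In fact the cleanest route, and the one I expect the paper to take, is simply to cite \cite{InterlacingDiffusions} directly: this proposition is stated there as an input, so the ``proof'' is a pointer plus perhaps a sentence reconciling notation.

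\medskip

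If one wants a self-contained argument, the key structural fact making the determinant work is the intertwining relation between consecutive generators $\mathsf{L}^{(k)}$ and $\mathsf{L}^{(k+1)}$ mediated by $\partial$: because $\mathsf{b}^{(k)}(x) = \mathsf{b}(x) + (N-k)\mathsf{a}'(x)$, one has the commutation $\partial \circ \mathsf{L}^{(k+1)} = \mathsf{L}^{(k)} \circ \partial$ (equivalently $\mathsf{L}^{(k+1)} \circ \partial^{-1} = \partial^{-1}\circ \mathsf{L}^{(k)}$ on suitable functions), so that $\partial_x^{\,j-i} e^{t\mathsf{L}^{(i)}}(x_i,\cdot)$ evaluated at $y_j$ carries a consistent semigroup action across the matrix entries. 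Differentiating $p_t$ in $t$, each entry contributes $\mathsf{L}^{(i)}_{x_i}$ acting on row $i$, and summing over rows via multilinearity of the determinant gives the generator equation; the reflecting boundary condition on the wall between particles $k-1$ and $k$ follows because the difference of the relevant two rows, restricted to $y_k = y_{k-1}$, produces a determinant with two equal rows up to the $\partial$-shift, which vanishes exactly in the way the local time term in (\ref{IPS1}) dictates. The initial condition $p_0(x,y) = \prod_i \delta(x_i - y_i)$ on $\mathbb{W}_N^{\uparrow,\circ}$ comes from the fact that the diagonal entries $e^{0\cdot\mathsf{L}^{(i)}}(x_i,\cdot) = \delta_{x_i}$ dominate and the off-diagonal shifted entries do not contribute a delta mass at the diagonal point of $\mathbb{W}_N^\uparrow$.

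\medskip

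The main obstacle in a fully rigorous self-contained proof is the boundary-condition verification: the local-time/reflection term $\tfrac12 d\mathfrak{l}_k^\uparrow$ in (\ref{IPS1}) translates into a Neumann-type (oblique derivative) condition on $\partial \mathbb{W}_N^\uparrow$, and one must check carefully that the antisymmetry-like structure of the Sch\"utz determinant — in which row $i$ is $\partial^{\,j-i}$ applied to the $\mathsf{L}^{(i)}$-semigroup — produces precisely this condition and not, say, the two-sided (Dyson-type) Dirichlet condition or a different oblique angle. This requires the relation $\partial \mathsf{L}^{(k+1)} = \mathsf{L}^{(k)}\partial$ together with a careful bookkeeping of which pairs of rows collapse on which wall. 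Because all of this is already carried out in \cite{InterlacingDiffusions} under exactly the Yamada--Watanabe-type hypotheses guaranteed by our polynomial $\mathsf{a},\mathsf{b}$, I would present the proof as: cite \cite{InterlacingDiffusions} for the formula, note that our standing assumption (Definition~\ref{StandingAssumption}) ensures the hypotheses there hold, and record the generator-intertwining identity $\partial\,\mathsf{L}^{(k+1)} = \mathsf{L}^{(k)}\,\partial$ as the algebraic mechanism, leaving the PDE/boundary verification to the reference.
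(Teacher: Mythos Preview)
Your proposal is correct and matches the paper's treatment: the paper does not give a proof of Proposition~\ref{TransDensityProp1} but simply cites it as a result obtained in \cite{InterlacingDiffusions}, exactly as you anticipated. Your additional sketch of a self-contained argument (backward equation via multilinearity, the intertwining $\partial\,\mathsf{L}^{(k+1)} = \mathsf{L}^{(k)}\,\partial$, boundary conditions, and initial condition) is extra material not present in the paper but is along the right lines for how the result is established in the cited reference.
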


\begin{prop}\label{TransDensityProp2} Under the standing assumption in Definition \ref{StandingAssumption}, the transition kernel of the interacting particle system (\ref{IPS2}) in $\mathbb{W}_N^{\downarrow}$ is given by, with $x=(x_1,\dots,x_N),y=(y_1,\dots,y_N)\in \mathbb{W}_N^\downarrow$,
\begin{equation}
\mathsf{S}_t^{\downarrow,(N)}(x,dy)=\det\left(\left[\hat{\partial}^{j-i}_{\cdot}e^{t\mathsf{L}^{(i)}}(x_i,\cdot)\right](y_j)\right)_{i,j=1}^Ndy.
\end{equation}

\end{prop}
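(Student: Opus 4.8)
Proof proposal.

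The result, together with its companion Proposition~\ref{TransDensityProp1}, is proved in \cite{InterlacingDiffusions}; let me describe the route I would take. By a standard approximation argument it is enough to treat $x\in\mathbb{W}_N^{\downarrow,\circ}$ and then pass to the closed chamber by continuity, each matrix entry $[\hat{\partial}_{\cdot}^{j-i}e^{t\mathsf{L}^{(i)}}(x_i,\cdot)](y_j)$ being jointly smooth in its arguments. Write $u(t;x,y)$ for the right-hand side. The plan is to identify $u$ with the transition density of (\ref{IPS2}) by checking that, viewed as a function of the backward variables $(t,x)$, it solves the Kolmogorov backward equation of the reflected system with the correct one-sided reflecting boundary conditions on $\partial\mathbb{W}_N^{\downarrow}$ and the correct initial datum, and then appealing to uniqueness for that boundary value problem --- uniqueness being available because (\ref{IPS2}) is well posed by \cite{InterlacingDiffusions}, so its semigroup $\mathsf{S}_t^{\downarrow,(N)}$ already supplies one solution, and two solutions of polynomial growth coincide by an optional-stopping argument of the type used in Lemma~\ref{LemCoincidence}.

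The bulk equation is immediate: row $i$ of the matrix depends on $x$ only through $x_i$, $\partial_t e^{t\mathsf{L}^{(i)}}(x_i,\cdot)=\mathsf{L}^{(i)}_{x_i}e^{t\mathsf{L}^{(i)}}(x_i,\cdot)$, and $\mathsf{L}^{(i)}_{x_i}$ commutes with the operators $\hat{\partial}_{y_j}$ acting on the second argument, so multilinearity of the determinant gives $\partial_t u=\sum_{i=1}^N\mathsf{L}^{(i)}_{x_i}u$ on $\mathbb{W}_N^{\downarrow,\circ}$, which is the generator of (\ref{IPS2}) away from the walls. The initial condition is checked by letting $t\downarrow0$: the diagonal entries $e^{t\mathsf{L}^{(i)}}(x_i,y_i)$ converge to $\delta_{x_i}(y_i)$, the strict-upper entries ($j>i$) stay bounded, and the strict-lower entries ($j<i$), being iterated antiderivatives based at $r$ of mass concentrating at $x_i\ne y_j$, vanish, so $u(t;x,\cdot)\,dy$ converges weakly to $\prod_i\delta_{x_i}$ restricted to $\mathbb{W}_N^{\downarrow}$.

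The heart of the matter, and the step I expect to be the main obstacle, is the boundary behaviour on each wall $\{x_k=x_{k-1}\}$, where the one-sided push $-\tfrac12\,d\mathfrak{l}_k^{\downarrow}$ operates and imposes a (one-sided, Neumann-type) boundary condition that $u$ must satisfy. This is exactly where the fine structure is used: the shift $\mathsf{b}^{(k)}=\mathsf{b}+(N-k)\mathsf{a}'$ is tuned so that $\partial\,\mathsf{L}^{(k+1)}=\bigl(\mathsf{L}^{(k)}+\mathsf{c}^{(k)}\bigr)\partial$ as differential operators, equivalently $\mathsf{L}^{(k+1)}\hat{\partial}^{-1}=\hat{\partial}^{-1}\bigl(\mathsf{L}^{(k)}+\mathsf{c}^{(k)}\bigr)$ on functions decaying at $r$ --- and $e^{t\mathsf{L}^{(i)}}(x_i,\cdot)$, with its $r$-based antiderivatives and all their $x$-derivatives, does decay at $r$ because, under Definition~\ref{StandingAssumption}, the boundary point $r$ is natural or entrance, hence inaccessible. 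Using these identities one shows that on $\{x_k=x_{k-1}\}$ two adjacent rows of the matrix degenerate in the precise way that forces the required boundary identity for the determinant. This is the computation to grind through; it is verbatim the one behind Proposition~\ref{TransDensityProp1} with the single substitution of integration from $l$ (the operator $\partial^{-1}$) by integration from $r$ (the operator $\hat{\partial}^{-1}$), the attendant boundary analysis now being carried out at the right endpoint, all the differential intertwinings above being symmetric under $l\leftrightarrow r$. Once $u$ is known to solve the backward problem with the right boundary data and initial datum, the uniqueness input identifies it with $\mathsf{S}_t^{\downarrow,(N)}(x,dy)$, which in particular yields $u\ge0$ and $\int_{\mathbb{W}_N^{\downarrow}}u(t;x,y)\,dy=1$ a posteriori. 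An alternative, and in fact the route of \cite{InterlacingDiffusions}, is to bypass the direct boundary analysis and argue inductively in $N$ via the Markov intertwining kernels relating the $N$- and $(N-1)$-particle systems; the same backward-in-time flow identities are what make those intertwinings close.
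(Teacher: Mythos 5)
First, note that this paper does not actually prove the proposition: it is imported verbatim from \cite{InterlacingDiffusions}, so there is no in-paper argument to compare with; the proof there goes through the intertwining with the interlacing-array dynamics, i.e.\ the route you list only as the ``alternative'' and which is echoed in Section \ref{SectionConnection} of this paper (cf.\ (\ref{AlternativeSemigroup})). Your primary route (backward equation in the interior, boundary conditions on the walls, initial datum, uniqueness) is the legitimate Sch\"utz-type verification, and the differential intertwining $\partial\,\mathsf{L}^{(k+1)}=(\mathsf{L}^{(k)}+\mathsf{c}^{(k)})\,\partial$ you isolate is correct (it follows from $\mathsf{b}^{(k)}=\mathsf{b}^{(k+1)}+\mathsf{a}'$ and $(\mathsf{b}^{(k+1)})'=\mathsf{c}^{(k)}$) and is exactly what underlies the key identities (\ref{KeyIdentity1})--(\ref{KeyIdentity2}).

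As a proof, however, the proposal has genuine gaps. The decisive step --- the boundary condition --- is announced but not performed. Concretely, the condition to check is $\partial_{x_k}u=0$ on $\{x_k=x_{k-1}\}$ (the push acts only in the $-e_k$ direction), and it holds because, by (\ref{KeyIdentity2}), $\partial_{x_k}\hat{\partial}_{y_j}^{\,j-k}e^{t\mathsf{L}^{(k)}}(x_k,y_j)=-e^{t\mathsf{c}^{(k-1)}}\hat{\partial}_{y_j}^{\,j-(k-1)}e^{t\mathsf{L}^{(k-1)}}(x_k,y_j)$, so that on the wall the differentiated $k$-th row becomes a scalar multiple of row $k-1$ and the derivative of the determinant vanishes; without carrying this out the argument has no content. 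Second, your initial-condition check is wrong as stated: for $j<i$ the entry tends to $\hat{\partial}^{-(i-j)}\delta_{x_i}(y_j)=-\tfrac{(y_j-x_i)^{i-j-1}}{(i-j-1)!}\mathbf{1}_{(y_j<x_i)}$, which does \emph{not} vanish in general (take $N=2$, $x=(5,4)$, $y=(1,0.5)$); weak convergence of $u(t;x,\cdot)\,dy$ to $\delta_x$ is true but must be extracted from the determinant as a whole, tested against functions on $\mathbb{W}_N^{\downarrow}$, not entrywise. Third, the uniqueness input is not Lemma \ref{LemCoincidence}, which concerns a single diffusion with no boundary; here one needs an It\^o/optional-stopping argument for the reflected system in which the local-time contributions are killed precisely by the boundary condition above, so the uniqueness step and the boundary step are intertwined and both must be written out before the sketch becomes a proof.
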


The following identities, proved in Section 13.4 of \cite{InterlacingDiffusions} and used to obtain the explicit form of the transition kernels in Propositions \ref{TransDensityProp1} and \ref{TransDensityProp2} will be important below,
\begin{align}
 e^{t\mathsf{L}^{(j)}}(x,y)&=-e^{-t\mathsf{c}^{(j)}}\partial_y^{-1}\partial_xe^{t\mathsf{L}^{(j+1)}}(x,y),\label{KeyIdentity1}\\
 e^{t\mathsf{L}^{(j)}}(x,y)&=-e^{-t\mathsf{c}^{(j)}}\hat{\partial}_y^{-1}\partial_xe^{t\mathsf{L}^{(j+1)}}(x,y).\label{KeyIdentity2}
\end{align}
Observe that, since $e^{t\mathsf{L}^{(j+1)}}$ is a bona-fide Markov semigroup ($e^{t\mathsf{L}^{(j+1)}}\mathbf{1}=\mathbf{1}$) then the two identities are actually equivalent. 

The proposition below is the key step in the proof of existence of a determinantal structure for the interacting particle system (\ref{IPS1}). In the translation invariant (in space) setting the argument has been employed several times \cite{BorodinFerrariPushASEP,BorodinFerrariPrahoferSasamoto,BorodinFerrariPrahofer,BorodinFerrariSasamoto} and is known as Sasamoto's trick \cite{Sasamoto}. The fact that something analogous can be done in the non-translation invariant setting we consider here is non-trivial and crucially relies on the equation (\ref{KeyIdentity1}) above. It is worth noting that the set $\mathcal{D}_N$ in (\ref{SetDN}) below is very closely related to the notion of interlacing arrays $\mathbb{IA}_N$ defined in (\ref{InterlacingArrayDefinition}) as we explain in Remark \ref{InterlacingComparison}. Moreover, the measure in (\ref{SignedMeasure}) below actually coincides, for special initial conditions, with the distribution at time $t$ of certain dynamics (\ref{DynamicsArray}) in interlacing arrays, see Proposition \ref{MultilevelProp} and Remark \ref{EntranceLaw}. Finally, a more subtle connection between the signed measure (\ref{SignedMeasure}) and the dynamics (\ref{DynamicsArray}) exists beyond these special initial conditions, see the discussion around Proposition \ref{PropAlternativeRoute} (which also gives an alternative approach to Proposition \ref{DeterminantalProp1}).

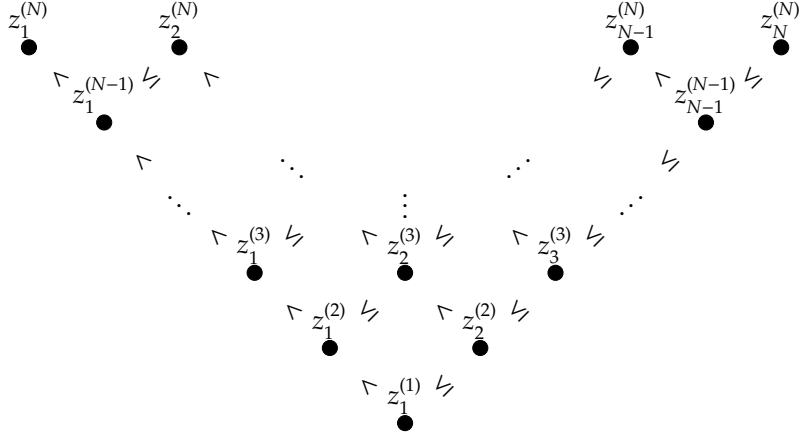
\begin{figure}
\centering
\captionsetup{singlelinecheck = false, justification=justified}
\begin{tikzpicture}

\draw[fill] (5,0) circle [radius=0.1];
\node[above ] at (5,0) {$z_1^{(1)}$};

\draw[fill] (4,1) circle [radius=0.1];
\node[above ] at (4,1) {$z_1^{(2)}$};

\draw[fill] (6,1) circle [radius=0.1];
\node[above ] at (6,1) {$z_2^{(2)}$};

\draw[fill] (3,2) circle [radius=0.1];
\node[above ] at (3,2) {$z_1^{(3)}$};

\draw[fill] (5,2) circle [radius=0.1];
\node[above ] at (5,2) {$z_2^{(3)}$};

\draw[fill] (7,2) circle [radius=0.1];
\node[above ] at (7,2) {$z_3^{(3)}$};

\node[] at (5,3) {$\vdots$};

\node[] at (8,3) {$\iddots$};

\node[] at (2,3) {$\ddots$};

\draw[fill] (1,4) circle [radius=0.1];
\node[above ] at (1,4) {$z_1^{(N-1)}$};

\draw[fill] (9,4) circle [radius=0.1];
\node[above ] at (9,4) {$z_{N-1}^{(N-1)}$};

\draw[fill] (0,5) circle [radius=0.1];
\node[above ] at (0,5) {$z_1^{(N)}$};

\draw[fill] (2,5) circle [radius=0.1];
\node[above ] at (2,5) {$z_2^{(N)}$};

\draw[fill] (8,5) circle [radius=0.1];
\node[above ] at (8,5) {$z_{N-1}^{(N)}$};

\draw[fill] (10,5) circle [radius=0.1];
\node[above ] at (10,5) {$z_{N}^{(N)}$};

\node[] at (4.5,0.5) {$\dless$};

\node[] at (3.5,1.5) {$\dless$};

\node[] at (0.4,4.6) {$\dless$};

\node[] at (1.5,3.5) {$\dless$};

\node[] at (5.5,1.5) {$\dless$};

\node[] at (4.5,2.5) {$\dless$};

\node[] at (2.4,4.6) {$\dless$};

\node[] at (2.5,2.5) {$\dless$};

\node[] at (8.4,4.6) {$\dless$};

\node[] at (6.5,2.5) {$\dless$};

\node[] at (6.5,3.5) {$\iddots$};

\node[] at (3.5,3.5) {$\ddots$};

\node[] at (5.5,0.5) {$\dle$};

\node[] at (6.5,1.5) {$\dle$};

\node[] at (7.5,2.5) {$\dle$};

\node[] at (8.5,3.5) {$\dle$};

\node[] at (9.6,4.6) {$\dle$};

\node[] at (5.5,2.5) {$\dle$};

\node[] at (4.5,1.5) {$\dle$};

\node[] at (3.5,2.5) {$\dle$};

\node[] at (1.6,4.6) {$\dle$};

\node[] at (7.6,4.6) {$\dle$};

\end{tikzpicture}
\caption{A visualisation of an element $(z_i^{(n)})_{1\le i \le n \le N}$ in $\mathcal{D}_N$. The $n$-th row corresponds to the vector $(z_1^{(n)},\dots,z_n^{(n)})\in (l,r)^n$, which by virtue of the inequalities in (\ref{SetDN}) actually belongs to $\mathbb{W}_n^{\uparrow,\circ}$. We observe from the inequalities in (\ref{SetDN}) that the coordinates strictly increase as we go down the diagonals starting from the top row and going to the right edge while they increase (not necessarily strictly) as we go up the diagonals starting from the left edge and going towards the top row as illustrated in the figure.}\label{FigureDN}
\end{figure}

\begin{prop}\label{DeterminantalProp1}
Let $x=(x_1,\dots,x_N)\in \mathbb{W}_N^{\uparrow}$. The distribution in $\mathbb{W}_N^{\uparrow}$ at time $t$ of the interacting particle system (\ref{IPS1}) starting from $x$, namely, 
\begin{equation}\label{DistributionIPS1t}
\det\left(\left[\partial^{j-i}_{\cdot}e^{t\mathsf{L}^{(i)}}(x_i,\cdot)\right]\left(y^{(j)}_j\right)\right)_{i,j=1}^Ndy_1^{(1)}dy_2^{(2)}\cdots dy_N^{(N)}
\end{equation}
is the marginal in the $(y_1^{(1)},y_2^{(2)},\dots,y_N^{(N)})$ variables of the signed measure 
\begin{align}\label{SignedMeasure}
\left(-1\right)^{\frac{N(N-1)}{2}}e^{-t\sum_{k=1}^{N-1}k\mathsf{c}^{(k)}}\det\left(\partial_{x_i}^{N-i}e^{t\mathsf{L}}\left(x_i,y_j^{(N)}\right)\right)_{i,j=1}^N \mathbf{1}_{\left((y^{(1)},\dots,y^{(N)})\in\mathcal{D}_N\right)}\prod_{1\le i \le n \le N}dy_i^{(n)},
\end{align}
where the set $\mathcal{D}_N$ is given by, see Figure \ref{FigureDN} for an illustration,
\begin{equation}\label{SetDN}
\mathcal{D}_N=\left\{z_i^{(n)}\in (l,r); i=1,\dots,n; n=1,\dots,N:z_i^{(n+1)}<z_i^{(n)}\le z_{i+1}^{(n+1)}\right\}.
\end{equation}
\end{prop}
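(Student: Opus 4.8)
The plan is to derive the claimed identity by integrating the signed measure (\ref{SignedMeasure}) over the ``auxiliary'' variables $y_i^{(n)}$ for $i<n$, one level of the array at a time, from the top row $n=1$ up to row $n=N$, and to recognize at each stage the effect of applying $\partial^{-1}$ (which is forced by the range of integration in $\mathcal{D}_N$) together with the key identity (\ref{KeyIdentity1}). First I would recall precisely how $\mathcal{D}_N$ constrains the integration: given the ``diagonal'' variables $y_j^{(j)}$ that we wish to keep, the inequalities $z_i^{(n+1)}<z_i^{(n)}\le z_{i+1}^{(n+1)}$ in (\ref{SetDN}) say that each $y_i^{(n)}$ with $i<n$ ranges over an interval whose endpoints are adjacent entries of row $n+1$; integrating a function of $y_i^{(n)}$ against $dy_i^{(n)}$ over such an interval is exactly an application of $\partial^{-1}$ (or a difference of two $\partial^{-1}$'s, evaluated at the two endpoints). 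So the integration over the non-diagonal variables is a structured iteration of $\partial^{-1}$ operators.

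The mechanism that makes the determinant collapse is the following. Write the integrand in (\ref{SignedMeasure}) as a single $N\times N$ determinant in the top-row variables $y_1^{(N)},\dots,y_N^{(N)}$, and think of the rows of the array below as recording, via $\mathcal{D}_N$, a chain of interlacing constraints. Integrating out row $N-1$ given rows $N$ and the diagonal entry $y_{N-1}^{(N-1)}$ turns $\partial_{x_i}^{N-i}e^{t\mathsf{L}}(x_i,\cdot)$ into $\partial^{-1}$ of itself evaluated between consecutive top-row points, and here one invokes (\ref{KeyIdentity1}): $e^{t\mathsf{L}^{(j)}}(x,y)=-e^{-t\mathsf{c}^{(j)}}\partial_y^{-1}\partial_x e^{t\mathsf{L}^{(j+1)}}(x,y)$, which converts a derivative in $x$ plus an antiderivative in $y$ of the level-$(j+1)$ kernel into the level-$j$ kernel, at the cost of the scalar factor $e^{-t\mathsf{c}^{(j)}}$ and a sign. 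Iterating this down the levels, the powers $\partial_{x_i}^{N-i}$ in (\ref{SignedMeasure}) get progressively reduced: after integrating out all auxiliary variables, the $(i,j)$ entry of the determinant should become $\bigl[\partial^{j-i}_{\cdot}e^{t\mathsf{L}^{(i)}}(x_i,\cdot)\bigr](y_j^{(j)})$, precisely the entry in (\ref{DistributionIPS1t}). The accumulated scalar prefactors $e^{-t\mathsf{c}^{(j)}}$ combine, with the appropriate multiplicities (the operator $e^{t\mathsf{L}^{(k)}}$ is recovered $k$ times as one descends), to give $e^{-t\sum_{k=1}^{N-1}k\mathsf{c}^{(k)}}$, and the accumulated signs from (\ref{KeyIdentity1}) combine to $(-1)^{N(N-1)/2}$; these exactly cancel the prefactors written in (\ref{SignedMeasure}), leaving (\ref{DistributionIPS1t}). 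Equivalently, and perhaps more cleanly to bookkeep, I would do this columnwise: integrating over the entries of a single diagonal of the array (the entries $y_i^{(i)},y_i^{(i+1)},\dots$ lying along one NE-SW diagonal) produces, for a fixed column index of the determinant, a telescoping composition of $\partial^{-1}$'s and applications of (\ref{KeyIdentity1}), so that the matrix entries in that column descend one operator-level at a time; multilinearity of the determinant in its columns then lets one treat the columns independently.

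The technical points that need care, and where I expect the real work to be, are two. First, justifying the interchange of integration and the determinant expansion, and more importantly checking that the integrations over the $y_i^{(n)}$, $i<n$, can be carried out in an order that genuinely realizes each as a clean $\partial^{-1}$: the constraints in $\mathcal{D}_N$ couple neighbouring levels, so one must verify that after fixing the top row and all diagonal entries, the remaining integration domain factorizes into the product of intervals claimed, and that Fubini applies (here the standing integrability — that $y\mapsto|\partial_x^i e^{t\mathsf{L}^{(k)}}(x,y)|$ integrates polynomials, used via Lemma \ref{LemmaObservations} — is what guarantees everything is finite and the order of integration is irrelevant). Second, the sign and prefactor bookkeeping: one must track, for the $j$-th column, exactly how many times (\ref{KeyIdentity1}) is invoked as the level drops from $N$ down to $i$ in the $(i,j)$ entry, confirm the total count of $\mathsf{c}^{(k)}$ factors is $\sum_{k=1}^{N-1}k$, and that the total number of sign flips is $N(N-1)/2$. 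I would also remark that, as the proposition's preamble suggests, an alternative (and structurally illuminating) route is available via the dynamics on interlacing arrays (\ref{DynamicsArray}) and Proposition \ref{PropAlternativeRoute}, but the direct integration argument above is self-contained given only Propositions \ref{TransDensityProp1}, the identities (\ref{KeyIdentity1})–(\ref{KeyIdentity2}), and Lemma \ref{LemmaObservations}.
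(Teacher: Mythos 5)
Your proposal runs the argument in the reverse direction from the paper (the paper starts from the Schütz-type formula (\ref{DistributionIPS1t}), expands each entry of the determinant into an iterated integral via (\ref{KeyIdentity1}), and pulls the integrals out by multilinearity; you propose to start from (\ref{SignedMeasure}) and integrate the auxiliary variables away). Either direction can work, and you have correctly identified the ingredients — (\ref{KeyIdentity1}), the realization of $\partial^{-1}$ as integration, multilinearity, and the prefactor count $\sum_{k=1}^{N-1}k\,\mathsf{c}^{(k)}$ with sign $(-1)^{N(N-1)/2}$. But there is a genuine gap at the central step. You assert that, after fixing the diagonal entries $y_j^{(j)}$, integrating over $\mathcal{D}_N$ realizes each auxiliary integration as ``a clean $\partial^{-1}$.'' It does not: the constraints $z_i^{(n+1)}<z_i^{(n)}\le z_{i+1}^{(n+1)}$ are two-sided, so each integration is over an interval with two \emph{variable} endpoints, i.e.\ a difference of two $\partial^{-1}$'s (as you note parenthetically and then drop), and moreover the domain does not factorize into a product of intervals because consecutive levels are coupled. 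No choice of integration order fixes this, and it is not a Fubini or integrability issue.

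The missing idea is the antisymmetrization step. The function being integrated, $\det\bigl(\partial_{x_i}^{N-i}e^{t\mathsf{L}}(x_i,y_j^{(N)})\bigr)_{i,j=1}^N$, is antisymmetric in the top-row variables $y_1^{(N)},\dots,y_N^{(N)}$, and this is exactly what allows one to trade the two-sided interlacing region for the one-sided region $\bigl\{z_i^{(n+1)}\le z_i^{(n)}\bigr\}$ in which the variables decouple along the NE--SW diagonals and each diagonal genuinely produces the nested integrals $\int_l^{y_m^{(m)}}\int_l^{y_m^{(m+1)}}\cdots$, i.e.\ $\partial^{-(N-m)}$. This is Lemma 5.6 of \cite{ReflectedBrownianKPZ}, which the paper invokes precisely at this point (in the other direction, to pass from $\widehat{\mathcal{D}}_N$ to $\widetilde{\mathcal{D}}_N$). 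Without it, your ``telescoping composition of $\partial^{-1}$'s'' along each diagonal is not available, because the upper constraints $z_i^{(n)}\le z_{i+1}^{(n+1)}$ truncate every integral. Once you insert this lemma, the rest of your outline (repeated use of (\ref{KeyIdentity1}) to descend from $e^{t\mathsf{L}^{(N)}}$ to $e^{t\mathsf{L}^{(i)}}$, with $\mathsf{c}^{(k)}$ appearing $k$ times and $N-i$ sign flips in row $i$) goes through and reproduces the paper's computation in reverse.
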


\begin{proof} Looking at the $m$-th column of the matrix in the determinant in (\ref{DistributionIPS1t}) we claim that the $i$-th entry can be written as (where the second equality is simply writing the $\partial^{-1}$ notation out):
\begin{align*}
    &\partial_{y_m^{(m)}}^{m-i}e^{t\mathsf{L}^{(i)}}\left(x_i,y_m^{(m)}\right)=(-1)^{N-i}e^{-t\sum_{k=i}^{N-1}\mathsf{c}^{(k)}}\partial^{-(N-m)}_{y^{(m)}_m}\partial_{x_i}^{N-i}e^{t\mathsf{L}^{(N)}}\left(x_i,y_m^{(m)}\right)\\
    &=(-1)^{N-i}e^{-t\sum_{k=i}^{N-1}\mathsf{c}^{(k)}}\int_l^{y_{m}^{(m)}}\int_l^{y_m^{(m+1)}}\cdots \int_l^{y_m^{(N-1)}}\partial_{x_i}^{N-i}e^{t\mathsf{L}^{(N)}}\left(x_i,y_m^{(N)}\right)dy_m^{(N)}\cdots y_m^{(m+1)},
\end{align*}
for some dummy variables $y_m^{(m+1)},\dots,y_m^{(N)}$ so that $y_m^{(N)}\le y_m^{(N-1)}\le \cdots \le y_m^{(m+1)}\le y_m^{(m)}$. This can be seen as follows. By repeated use of the identity (\ref{KeyIdentity1}) and Lemma \ref{LemmaObservations}, we have 
\begin{align*}
e^{t\mathsf{L}^{(i)}}\left(x_i,y_m^{(m)}\right)=-e^{-\mathsf{c}^{(i)}t}\partial_{y_m^{(m)}}^{-1}\partial_{x_i}e^{t\mathsf{L}^{(i+1)}}\left(x_i,y_m^{(m)}\right)=\cdots=(-1)^{N-i}e^{-t\sum_{k=i}^{N-1}\mathsf{c}^{(k)}}\partial_{y_m^{(m)}}^{-(N-i)}\partial_{x_i}^{N-i}e^{t\mathsf{L}^{(N)}}\left(x_i,y_m^{(m)}\right).
\end{align*}
Then, applying $\partial_{y_m^{(m)}}^{m-i}$ to both sides of the identity above gives the desired claim (observe that $-(N-i)\le 0$ and hence $\partial^{m-i}\partial^{-(N-i)}=\partial^{-(N-m)}$ from Lemma \ref{LemmaObservations}).

Hence, by multilinearity of the determinant, we can take the multiple integrals outside the determinant and we can write display (\ref{DistributionIPS1t}) (suppressing $\prod_{n=1}^Ndy_n^{(n)}$) as follows:
\begin{align}\label{IntermediateIntegralDisplay}
\left(-1\right)^{\frac{N(N-1)}{2}}e^{-t\sum_{k=1}^{N-1}k\mathsf{c}^{(k)}}\int_{\widehat{\mathcal{D}}_N    
}\det\left(\partial_{x_i}^{N-i}e^{t\mathsf{L}^{(N)}}\left(x_i,y_j^{(N)}\right)\right)_{i,j=1}^N \prod_{1\le i<n \le N}dy_i^{(n)},
\end{align}
where $\widehat{\mathcal{D}}_N=\widehat{\mathcal{D}}_N\left(y_1^{(1)},\dots,y_N^{(N)}\right)$ is given by
\begin{equation*}
\widehat{\mathcal{D}}_N=\left\{z_i^{(n)}\in (l,r): z_n^{(n)}=y_n^{(n)}; z_i^{(n+1)}\le z_i^{(n)}\right\}.
\end{equation*}
Since the determinant is antisymmetric in the $y_j^{(N)}$-variables we can then use Lemma 5.6 in \cite{ReflectedBrownianKPZ} to write display (\ref{IntermediateIntegralDisplay}) as 
\begin{align*}
\left(-1\right)^{\frac{N(N-1)}{2}}e^{-t\sum_{k=1}^{N-1}k\mathsf{c}^{(k)}}\int_{\widetilde{\mathcal{D}}_N    
}\det\left(\partial_{x_i}^{N-i}e^{t\mathsf{L}^{(N)}}\left(x_i,y_j^{(N)}\right)\right)_{i,j=1}^N \prod_{1\le i<n \le N}dy_i^{(n)},
\end{align*}
where $\widetilde{\mathcal{D}}_N=\widetilde{\mathcal{D}}_N\left(y_1^{(1)},\dots,y_N^{(N)}\right)$ is given by
\begin{equation*}
\widetilde{\mathcal{D}}_N=\left\{z_i^{(n)}\in (l,r): z_n^{(n)}=y_n^{(n)}; z_i^{(n+1)}\le z_i^{(n)}<z_{i+1}^{(n+1)}\right\}.
\end{equation*}
Finally, since we are integrating a continuous function, the integral remains unchanged if we change a strict inequality ``$<$" for an inequality ``$\le$" which, after
recalling that $\mathsf{L}^{(N)}\equiv \mathsf{L}$, concludes the proof.
\end{proof}

\begin{rmk}\label{InterlacingComparison}
We note that the set $\mathcal{D}_N$ is very closely related to the notion of interlacing arrays $\mathbb{IA}_N$ defined in (\ref{InterlacingArrayDefinition}). The only difference is that for $\mathcal{D}_N$ we require some of the inequalities, the ones appearing as we move down the diagonals from the top row to the right edge as depicted in Figure \ref{FigureDN} to be strict (while for interlacing arrays they do not need to be strict). Since the measures we study have continuous densities, considering such measures over $\mathcal{D}_N$ or over interlacing arrays (\ref{InterlacingArrayDefinition}) is essentially one and the same. The main reason for using $\mathcal{D}_N$ is that its indicator function can be written as a product of determinants of special form (and we need the strict inequalities for this to be true), see Lemma \ref{LemInterlacing} below, which will allow us to apply the Eynard-Mehta theorem \cite{JohanssonDeterminantal,BorodinDeterminantal,BorodinRains} in the sequel. 
\end{rmk}

An analogous result holds for the particle system (\ref{IPS2}), making use of the identity (\ref{KeyIdentity2}) now instead.

\begin{prop}\label{DeterminantalProp2}
Let $x=(x_1,\dots,x_N)\in \mathbb{W}_N^{\downarrow}$. The distribution in $\mathbb{W}_N^{\downarrow}$ at time $t$ of the interacting particle system (\ref{IPS2}) starting from $x$, namely, 
\begin{equation}\label{DistributionIPS2t}
\det\left(\left[\hat{\partial}^{j-i}_{\cdot}e^{t\mathsf{L}^{(i)}}(x_i,\cdot)\right]\left(y^{(j)}_1\right)\right)_{i,j=1}^Ndy_1^{(1)}dy_1^{(2)}\cdots dy_1^{(N)}
\end{equation}
is the marginal in the $(y_1^{(1)},y_1^{(2)},\dots,y_1^{(N)})$ variables of the signed measure in (\ref{SignedMeasure}), with $x\in \mathbb{W}_N^{\downarrow}$ instead.
\end{prop}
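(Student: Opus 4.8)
The plan is to mirror the proof of Proposition \ref{DeterminantalProp1}, replacing every use of identity (\ref{KeyIdentity1}) with identity (\ref{KeyIdentity2}) and every $\partial^{-1}$ acting on the spatial variables of the transition densities with $\hat{\partial}^{-1}$. Concretely, I would look at the $m$-th column of the matrix in the determinant (\ref{DistributionIPS2t}); its $i$-th entry is $\hat{\partial}_{y_1^{(m)}}^{m-i}e^{t\mathsf{L}^{(i)}}\bigl(x_i,y_1^{(m)}\bigr)$, and by repeated application of (\ref{KeyIdentity2}) together with Lemma \ref{LemmaObservations} (which holds verbatim with $\partial$ replaced by $\hat\partial$) this equals
\[
(-1)^{N-i}e^{-t\sum_{k=i}^{N-1}\mathsf{c}^{(k)}}\hat{\partial}_{y_1^{(m)}}^{-(N-m)}\partial_{x_i}^{N-i}e^{t\mathsf{L}^{(N)}}\bigl(x_i,y_1^{(m)}\bigr),
\]
using that $m-i\le 0$ only after the full chain of $N-i$ integrations, so $\hat\partial^{m-i}\hat\partial^{-(N-i)}=\hat\partial^{-(N-m)}$. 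Writing the $\hat\partial^{-1}$'s out as integrals $-\int_{\cdot}^{r}$ introduces dummy variables $y_1^{(m+1)},\dots,y_1^{(N)}$ with $y_1^{(m)}\le y_1^{(m+1)}\le\cdots\le y_1^{(N)}$.

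Next I would pull the nested integrals outside the determinant by multilinearity, collecting the sign $(-1)^{N(N-1)/2}$ and the exponential prefactor $e^{-t\sum_{k=1}^{N-1}k\mathsf{c}^{(k)}}$ exactly as in Proposition \ref{DeterminantalProp1}. This expresses (\ref{DistributionIPS2t}) as an integral of $\det\bigl(\partial_{x_i}^{N-i}e^{t\mathsf{L}^{(N)}}(x_i,y_j^{(N)})\bigr)_{i,j=1}^N$ over a region $\widehat{\mathcal{D}}_N^{\downarrow}$ of the form $\{z_1^{(n)}: z_n^{(n)}=y_1^{(n)};\ z_i^{(n)}\le z_i^{(n+1)}\}$ — note the inequalities now point the opposite way because of the $-\int_\cdot^r$. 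I would then invoke the antisymmetry of the determinant in the $y_j^{(N)}$-variables together with the appropriate orientation of Lemma 5.6 of \cite{ReflectedBrownianKPZ} (or apply that lemma after the substitution $y\mapsto -y$, which swaps $\mathbb{W}_N^\uparrow$ and $\mathbb{W}_N^\downarrow$) to convert the full integration region into the ``ordered'' region encoding the set $\mathcal{D}_N$; finally, since everything is continuous, strict inequalities may be exchanged for non-strict ones at no cost, yielding precisely the marginal of (\ref{SignedMeasure}) in the variables $(y_1^{(1)},\dots,y_1^{(N)})$, with $x\in\mathbb{W}_N^{\downarrow}$.

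Because the argument is a direct transcription, the only real point requiring care is bookkeeping the direction of the inequalities: $\hat\partial^{-1}f(x)=-\int_x^r f$ integrates \emph{upward}, so the dummy variables along each ``column'' are increasing rather than decreasing, and correspondingly the down-chamber $\mathbb{W}_N^\downarrow$ and the right-edge strict inequalities in $\mathcal{D}_N$ match up. I would either track this explicitly or, more cleanly, reduce to Proposition \ref{DeterminantalProp1} by the reflection $x_i\mapsto -x_i$, $y\mapsto -y$: this sends $\mathsf{L}$ with coefficients $(\mathsf{a},\mathsf{b})$ to another operator of the same form still satisfying the standing assumption, sends $\hat\partial$ to $\partial$, $\mathbb{W}_N^\downarrow$ to $\mathbb{W}_N^\uparrow$, and the kernel in Proposition \ref{TransDensityProp2} to that in Proposition \ref{TransDensityProp1}, so the claim follows immediately from Proposition \ref{DeterminantalProp1}. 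I expect this sign/orientation matching to be the main (mild) obstacle; the algebraic manipulations themselves are identical to the up case.
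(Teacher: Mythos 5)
Your proposal is correct and follows essentially the same route as the paper's proof: apply identity (\ref{KeyIdentity2}) repeatedly to rewrite each entry of the $m$-th column as $\hat{\partial}^{-(N-m)}\partial_{x_i}^{N-i}e^{t\mathsf{L}^{(N)}}$, expand the $\hat{\partial}^{-1}$'s as upward integrals introducing dummy variables along the diagonals, pull the nested integrals out by multilinearity, and finish with the antisymmetry of the determinant and Lemma 5.6 of \cite{ReflectedBrownianKPZ}. The reflection $x\mapsto -x$ you mention as an alternative is not used in the paper but would also work, provided one checks that the sign picked up by each entry $\hat{\partial}^{j-i}\mapsto(-1)^{j-i}\partial^{j-i}$ cancels in the determinant.
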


\begin{proof}
We argue as in the proof of Proposition \ref{DeterminantalProp1}. Looking at the $m$-th column of the matrix in the determinant in (\ref{DistributionIPS2t}) we claim that the $i$-th entry can be written as:
\begin{align*}
    \hat{\partial}_{y_m^{(m)}}^{m-i}e^{t\mathsf{L}^{(i)}}\left(x_i,y_m^{(m)}\right)&=(-1)^{N-i}e^{-t\sum_{k=i}^{N-1}\mathsf{c}^{(k)}}\hat{\partial}^{-(N-m)}_{y^{(m)}_m}\partial_{x_i}^{N-i}e^{t\mathsf{L}^{(N)}}\left(x_i,y_m^{(m)}\right)\\
    &=(-1)^{-i-m}e^{-t\sum_{k=i}^{N-1}\mathsf{c}^{(k)}}\int_{y_1^{(m)}}^r\cdots\int_{y_{N-m}^{(N-1)}}^{r} \partial_{x_i}^{N-i}e^{t\mathsf{L}^{(N)}}\left(x_i,y_{N-m+1}^{(N)}\right)dy_{N-m+1}^{(N)}\cdots y_2^{(m+1)},
\end{align*}
for some dummy variables $y_{2}^{(m+1)},\dots,y_{N-m+1}^{(N)}$ so that $y_1^{(m)}\le y_2^{(m+1)}\le \cdots\le y_{N-m}^{(N-1)} \le y_{N-m+1}^{(N)}$. As before, the claim can be seen by repeatedly using the identity (\ref{KeyIdentity2}) and Lemma \ref{LemmaObservations}:
\begin{align*}
e^{t\mathsf{L}^{(i)}}\left(x_i,y_m^{(m)}\right)=-e^{-\mathsf{c}^{(i)}t}\hat{\partial}_{y_m^{(m)}}^{-1}\partial_{x_i}e^{t\mathsf{L}^{(i+1)}}\left(x_i,y_m^{(m)}\right)=\cdots=(-1)^{N-i}e^{-t\sum_{k=i}^{N-1}\mathsf{c}^{(k)}}\hat{\partial}_{y_m^{(m)}}^{-(N-i)}\partial_{x_i}^{N-i}e^{t\mathsf{L}^{(N)}}\left(x_i,y_m^{(m)}\right).
\end{align*}
and then applying $\hat{\partial}_{y_m^{(m)}}^{m-i}$ to both sides. Thus, by multilinearity of the determinant, we can write display (\ref{DistributionIPS2t}) (suppressing $\prod_{n=1}^Ndy_n^{(n)}$) as follows:
\begin{align}
e^{-t\sum_{k=1}^{N-1}k\mathsf{c}^{(k)}}\int_{\overline{\mathcal{D}}_N    
}\det\left(\partial_{x_i}^{N-i}e^{t\mathsf{L}^{(N)}}\left(x_i,y_{N-j+1}^{(N)}\right)\right)_{i,j=1}^N \prod_{2\le i\le n \le N}dy_i^{(n)}\\
=\left(-1\right)^{\frac{N(N-1)}{2}}e^{-t\sum_{k=1}^{N-1}k\mathsf{c}^{(k)}}\int_{\overline{\mathcal{D}}_N    
}\det\left(\partial_{x_i}^{N-i}e^{t\mathsf{L}^{(N)}}\left(x_i,y_j^{(N)}\right)\right)_{i,j=1}^N \prod_{2\le i\le n \le N}dy_i^{(n)},
\end{align}
where $\overline{\mathcal{D}}_N=\overline{\mathcal{D}}_N\left(y_1^{(1)},\dots,y_1^{(N)}\right)$ (not to be confused with the closure of $\mathcal{D}_N$) is given by
\begin{equation*}
\overline{\mathcal{D}}_N=\left\{z_i^{(n)}\in (l,r): z_1^{(n)}=y_1^{(n)}; z_i^{(n)}\le z_{i+1}^{(n+1)}\right\}.
\end{equation*}
Finally, arguing as in the proof of Proposition \ref{DeterminantalProp1}, using Lemma 5.6 of \cite{ReflectedBrownianKPZ}, we obtain the conclusion.
\end{proof}

We will shortly need the following well-known lemma.

\begin{lem}\label{LemInterlacing}
We have the representation
\begin{equation*}
\mathbf{1}_{\left((y^{(1)},\dots,y^{(N)})\in\mathcal{D}_N\right)}=\prod_{n=1}^{N}\det\left(\partial^{-1}\left(y_i^{(n-1)},y_j^{(n)}\right)\right)_{i,j=1}^n,
\end{equation*}
with the convention that $y_n^{(n-1)}$ are some ``virtual" variables so that by definition $\partial^{-1}(y_n^{(n-1)},z)\equiv 1$.
\end{lem}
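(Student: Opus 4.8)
The plan is to evaluate the right-hand side factor by factor. Since the integral kernel $\partial^{-1}(x,y)$ equals $\mathbf{1}_{(y<x)}$, and since $\partial^{-1}(y_n^{(n-1)},z)\equiv 1$ for the virtual variable, the $n$-th factor is the determinant $D_n$ of the $n\times n$ matrix whose last row is $(1,\dots,1)$ and whose $(i,j)$-entry for $1\le i\le n-1$ is $\mathbf{1}_{(y_j^{(n)}<y_i^{(n-1)})}$; in particular $D_1=1$. The task then reduces to the single-level identity
\begin{equation*}
D_n=\prod_{i=1}^{n-1}\mathbf{1}_{(y_i^{(n)}<y_i^{(n-1)}\le y_{i+1}^{(n)})},\qquad n=2,\dots,N,
\end{equation*}
because, after the relabelling $n\mapsto n-1$, multiplying these over $n$ (the $n=1$ factor being $1$) is precisely the product of the defining inequalities of $\mathcal{D}_N$ in (\ref{SetDN}). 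Note that on $\mathcal{D}_N$ every row $(y_1^{(n)},\dots,y_n^{(n)})$ is automatically strictly increasing, since $y_i^{(n)}<y_i^{(n-1)}\le y_{i+1}^{(n)}$ forces $y_i^{(n)}<y_{i+1}^{(n)}$ (cf. the caption of Figure \ref{FigureDN}); so it suffices to prove the single-level identity for arbitrary increasing tuples $(y_1^{(n-1)},\dots,y_{n-1}^{(n-1)})$ and $(y_1^{(n)},\dots,y_n^{(n)})$.

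To prove the single-level identity I would apply the column operations $C_j\mapsto C_j-C_{j+1}$ for $j=1,\dots,n-1$ (leaving $C_n$ alone), which leave $D_n$ unchanged. Since the level-$n$ coordinates are increasing, $\mathbf{1}_{(y_j^{(n)}<a)}-\mathbf{1}_{(y_{j+1}^{(n)}<a)}=\mathbf{1}_{(y_j^{(n)}<a\le y_{j+1}^{(n)})}$ for every $a$, so after these operations the last row becomes $(0,\dots,0,1)$ while the top-left $(n-1)\times(n-1)$ block acquires entries $\mathbf{1}_{(y_j^{(n)}<y_i^{(n-1)}\le y_{j+1}^{(n)})}$. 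Laplace expansion along the last row shows $D_n$ equals the determinant of this block. Finally I would use that the level-$(n-1)$ coordinates are increasing as well: the intervals $(y_1^{(n)},y_2^{(n)}],\dots,(y_{n-1}^{(n)},y_n^{(n)}]$ are disjoint and ordered, so each row $i$ of the block has at most one nonzero entry, in the column $j(i)$ determined by $y_i^{(n-1)}\in(y_{j(i)}^{(n)},y_{j(i)+1}^{(n)}]$, and $i\mapsto j(i)$ is non-decreasing with values in $\{1,\dots,n-1\}$. Hence the block is nonsingular only when this map is the identity permutation, in which case it equals the identity matrix, and this yields $D_n=\prod_{i=1}^{n-1}\mathbf{1}_{(y_i^{(n)}<y_i^{(n-1)}\le y_{i+1}^{(n)})}$.

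Multiplying the $D_n$ over $n=1,\dots,N$ then gives the stated representation. The only genuinely delicate point is the reduction in the first paragraph: the column-operation argument really does use that each level is ordered, and away from $\mathcal{D}_N$ — once some level fails to be increasing — the product of the $D_n$ need not vanish, so the identity should be read, as is standard for such interlacing determinants, for configurations with each $(y_1^{(n)},\dots,y_n^{(n)})\in\mathbb{W}_n^{\uparrow,\circ}$, which holds on $\mathcal{D}_N$ and is the situation relevant to the subsequent Eynard-Mehta argument. Beyond this bookkeeping the proof is elementary linear algebra and I foresee no real difficulty.
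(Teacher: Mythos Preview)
Your proposal is correct and follows the same route as the paper: the paper simply cites the ``well-known linear algebraic fact'' that, with the virtual-variable convention, $\det\left(\partial^{-1}(x_i,y_j)\right)_{i,j=1}^{n+1}=\mathbf{1}_{(y_1<x_1\le y_2<\cdots\le y_n<x_n\le y_{n+1})}$, and applies it level by level, whereas you actually prove this fact via the column operations $C_j\mapsto C_j-C_{j+1}$ and the resulting row-sparsity argument. Your caveat about needing each row to be ordered is accurate and is implicit in the paper's statement of the fact (and is all that is needed for the subsequent Eynard--Mehta application).
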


\begin{proof}
This follows from the well-known linear algebraic fact,
\begin{equation*}
\mathbf{1}_{(y_1<x_1\le y_2<x_2\le \cdots \le y_n<x_n\le y_{n+1})}=\det\left(\partial^{-1}\left(x_i,y_j\right)\right)_{i,j=1}^{n+1},
\end{equation*}
with the convention $\partial^{-1}(x_{n+1},z)\equiv1$, for any $z$.
\end{proof}

\begin{rmk}\label{InterlacingConstraintsRmk}
We note that we also have the following representation 
\begin{equation*}
\mathbf{1}_{\left((y^{(1)},\dots,y^{(N)})\in\mathcal{D}_N\right)}=\prod_{n=1}^{N}\det\left(\hat{\partial}^{-1}\left(y_i^{(n-1)},y_j^{(n)}\right)\right)_{i,j=1}^n,
\end{equation*}
where $\hat{\partial}^{-1}\left(x,y\right)=-\mathbf{1}_{x\le y}$ is the integral kernel of (\ref{IntegralOp}) and by definition $\hat{\partial}^{-1}\left(y_n^{(n-1)},z\right)\equiv 1$.
\end{rmk}

To proceed we require a special case of (one of the many variants of) the Eynard-Mehta theorem, see for example Lemma 3.5 in \cite{ReflectedBrownianKPZ}. For the convenience of the reader we give the statement from \cite{ReflectedBrownianKPZ} explicitly.

\begin{prop}\label{EynardMehtaEdge} Assume we have a signed measure (of total mass one) on $(l,r)\times (l,r)^2 \times \cdots \times (l,r)^N$ of the form
\begin{equation}\label{MeasureSigned}
    \frac{1}{Z} \prod_{n=1}^N\det\left(\phi\left(y_i^{(n-1)},y_j^{(n)}\right)\right)_{i,j=1}^n \det\left(\Psi_{N-i}^{(N)}\left(y_j^{(N)}\right)\right)_{i,j=1}^N,
\end{equation}
where the $y_{n}^{(n-1)}$ are some ``virtual" variables (in particular we assume $\phi\left(y_n^{(n-1)},z\right)$ has been defined and is independent of $n$) and $Z$ is a non-zero normalization constant. Define, for $n \in \mathbb{N}$:
\begin{equation*}
\phi^{(n)}(y_1,y_2)=\left(\phi*\cdots*\phi\right)(y_1,y_2),
\end{equation*}
where $\phi$ appears $n$ times in the convolutions $\left(\phi*\psi\right)(y_1,y_2)=\int_l^r\phi(y_1,z)\psi(z,y_2)dz$. Moreover, define for $n=1,\dots, N-1$, the functions
\begin{equation}\label{PsiRecursion}
    \Psi_{n-j}^{(n)}(z)=\left(\phi^{(N-n)}*\Psi_{N-j}^{(N)}\right)(z), \ \ j=1,\dots,N.
\end{equation}
For each $n=1,\dots,N$, the functions
\begin{equation}
    \left\{\phi^{(n)}\left(y_1^{(0)},z\right),\phi^{(n-1)}\left(y_2^{(1)},z\right),\dots,\phi\left(y_n^{(n-1)},z\right)\right\}
\end{equation}
are linearly independent and generate some $n$-dimensional space $V_n$. Moreover, define (uniquely) functions $\Phi_{n-j}^{(n)}(z)$, $j=1,\dots,n$ spanning $V_n$, with
\begin{align}\label{PrelimBiorthogonality}
\int_l^r\Phi_{n-i}^{(n)}(z)\Psi_{n-j}^{(n)}(z)dz=\mathbf{1}_{(i=j)},
\end{align}
for $1\le i,j\le n$. Finally, assume that $\phi\left(y_n^{(n-1)},z\right)=c_n\Phi_{0}^{(n)}(z)$, for some $c_n\neq 0$, for $n=1,\dots,N$. Then, the correlation functions of the ``signed random point process" induced by (\ref{MeasureSigned}) are determinantal and moreover the correlation kernel is given by
\begin{align}
K\left[\left(n_1,y_1\right);\left(n_2,y_2\right)\right]=-\phi_{n_2-n_1}\left(y_1,y_2\right)\mathbf{1}_{(n_2>n_1)}+\sum_{k=1}^{n_2}\Psi_{n_1-k}^{(n_1)}(y_1)\Phi_{n_2-k}^{(n_2)}(y_2).
\end{align}
\end{prop}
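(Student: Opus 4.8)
\emph{Proof plan.} This is a restatement of (a special case of) the Eynard--Mehta theorem, and the economical route is to quote the general formulation in \cite{BorodinRains}, or directly Lemma~3.5 of \cite{ReflectedBrownianKPZ} whose statement we have reproduced, and refer there for the proof. Let me nonetheless indicate how a self-contained argument is organised. One reads (\ref{MeasureSigned}) as a determinantal $L$-ensemble on the staircase array in which level $n$ carries $n$ genuine coordinates, each level $n-1$ being padded by the single virtual coordinate $y_n^{(n-1)}$ so that every transition factor $\det(\phi(y_i^{(n-1)},y_j^{(n)}))_{i,j=1}^n$ becomes a square $n\times n$ block. With this reading the Lindstr\"{o}m--Gessel--Viennot / Cauchy--Binet computation underlying Eynard--Mehta produces (the nonvanishing of $Z$ being assumed) a determinantal point process whose correlation kernel has the generic form
\begin{equation*}
K\left[(n_1,y_1);(n_2,y_2)\right]=-\phi^{(n_2-n_1)}(y_1,y_2)\mathbf{1}_{(n_2>n_1)}+\sum_{i,j}\Psi^{(n_1)}_{n_1-i}(y_1)\,\bigl[\mathcal{G}^{-1}\bigr]_{ij}\,\widetilde{\Phi}^{(n_2)}_{n_2-j}(y_2),
\end{equation*}
where $\phi^{(m)}$ is the $m$-fold convolution, $\Psi^{(n)}_{n-i}$ is the backward propagation (\ref{PsiRecursion}) of the top-level data, $\widetilde{\Phi}^{(n)}_{n-j}$ is \emph{any} basis of $V_n$ propagated through the array, and $\mathcal{G}_{ij}=\int_l^r\Psi^{(N)}_{N-i}(z)\widetilde{\Phi}^{(N)}_{N-j}(z)\,dz$ is the Gram matrix read off at the top level $N$.

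The second ingredient is to diagonalise this middle matrix. Since $\dim V_n=n$ by the linear-independence hypothesis, one may pick the unique basis $\Phi^{(n)}_{n-j}$, $j=1,\dots,n$, of $V_n$ for which the biorthogonality (\ref{PrelimBiorthogonality}) holds against $\{\Psi^{(n)}_{n-i}\}_{i=1}^n$. Two structural facts make this choice consistent across levels: first, adjointness of convolution, $\int(\phi^{(m)}*f)\,g=\int f\,(g*\phi^{(m)})$, together with the definition (\ref{PsiRecursion}), shows that forward-propagating the $\Phi$'s and backward-propagating the $\Psi$'s are mutually adjoint, so biorthogonality at level $n$ propagates to biorthogonality among the $n$ propagated functions at level $n+1$; second, the one missing basis element of $V_{n+1}$ is exactly the one attached to the new virtual variable $y_{n+1}^{(n)}$, and the hypothesis $\phi(y_{n+1}^{(n)},\cdot)=c_{n+1}\Phi^{(n+1)}_0(\cdot)$ says it is precisely the remaining biorthogonal partner $\Phi^{(n+1)}_0$ of $\Psi^{(n+1)}_0$. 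Hence $\mathcal{G}=\mathbf{I}$ at the top level, and the generic kernel collapses to
\begin{equation*}
K\left[(n_1,y_1);(n_2,y_2)\right]=-\phi^{(n_2-n_1)}(y_1,y_2)\mathbf{1}_{(n_2>n_1)}+\sum_{k=1}^{n_2}\Psi^{(n_1)}_{n_1-k}(y_1)\Phi^{(n_2)}_{n_2-k}(y_2),
\end{equation*}
the cut-off at $k=n_2$ reflecting that only the $n_2$ functions $\Phi^{(n_2)}_{n_2-1},\dots,\Phi^{(n_2)}_{0}$ are present at level $n_2$.

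The step I expect to be the main obstacle is not either of the two displays above but the verification that the virtual-variable padding genuinely places (\ref{MeasureSigned}) in the Eynard--Mehta framework in the first place: one must track carefully, in the Cauchy--Binet expansion of the measure over each transition block, that the virtual row $(\phi(y_n^{(n-1)},y_j^{(n)}))_{j=1}^n$ contributes as a single term proportional to $\Phi^{(n)}_0$ rather than as a generic element of $V_n$, which is exactly what $\phi(y_n^{(n-1)},\cdot)=c_n\Phi^{(n)}_0$ with $c_n\neq0$ guarantees, so that no residual correction term survives in the ``$\Psi\Phi$'' part of the kernel. The signed (rather than probabilistic) nature of the measure is immaterial to this algebra; only total mass one is used. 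For the complete details I refer to \cite{BorodinRains} and \cite{ReflectedBrownianKPZ}; see also \cite{BorodinDeterminantal,JohanssonDeterminantal}.
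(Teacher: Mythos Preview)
Your proposal is correct and matches the paper's treatment exactly: the paper does not prove this proposition at all but simply quotes it as Lemma~3.5 of \cite{ReflectedBrownianKPZ}, which is precisely the reference you invoke. Your additional self-contained sketch goes beyond what the paper offers and is a reasonable outline of the Eynard--Mehta machinery, though it is not needed here.
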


The following result proves that there is some determinantal structure associated to the interacting particle systems (\ref{IPS1}) and (\ref{IPS2}). However, the correlation kernel at this stage is still given implicitly in terms of the solution of a biorthogonalisation problem which will be solved explicitly in the next subsection.

\begin{thm}\label{ThmDetStructure}
Consider the interacting particle system $\left(\left(\mathsf{x}_1^\uparrow(t),\dots,\mathsf{x}_N^\uparrow(t)\right);t\ge 0\right)$ evolving according to the dynamics (\ref{IPS1}) with initial condition $x=(x_1,\dots,x_N)\in \mathbb{W}_N^\uparrow$. For any $t>0$, indices $1\le n_1<n_2<\cdots <n_M \le N$ and locations $z=(z_1,\dots,z_M)\in (l,r)^M$, we have
\begin{align}\label{Fredholm1}
 \mathbb{P}\left(\mathsf{x}^\uparrow_{n_j}(t)\le z_j, j=1,\dots,M\right) = \det\left(\mathbf{I}-\chi_z^+K\chi_z^+\right)_{L^2\left(\left\{n_1,\dots, n_M\right\}\times (l,r)\right)},
\end{align}
where $\det$ is the Fredholm determinant, with
\begin{align}\label{IntermediateKernel}
K\left[\left(n_1,y_1\right);\left(n_2,y_2\right)\right]&=-\partial^{-(n_2-n_1)}\left(y_1,y_2\right)\mathbf{1}_{(n_2>n_1)}+\sum_{k=1}^{n_2}\mathsf{\Psi}_{n_1-k}^{(n_1)}(y_1)\Phi_{n_2-k}^{(n_2)}(y_2),
\end{align}
where the functions $\mathsf{\Psi}$ are given by:
\begin{align}\label{PsiFunctionsDef}
\mathsf{\Psi}_{N-i}^{(N)}(w)=\partial_{x_i}^{N-i}e^{t\mathsf{L}}(x_i,w) \ \textnormal{and} \ \mathsf{\Psi}_{n-i}^{(n)}(w)=\left(\partial^{-(N-n)}\mathsf{\Psi}_{N-i}^{(N)}\right)(w), \ n=1,\dots, N-1, i=1,\dots, N,
\end{align}
and the functions $\Phi_i^{(n)}$, $i=0,\dots,n-1$, are uniquely determined by the following conditions:
\begin{enumerate}
    \item $\textnormal{span}\left\{\Phi_0^{(n)}(w),\dots,\Phi_{n-1}^{(n)}(w) \right\}=\mathfrak{P}_{\le n-1}$, the space of polynomials of degree at most $n-1$,
    \item The biorthogonality relations $\int_l^r\Phi_i^{(n)}(w)\mathsf{\Psi}_{j}^{(n)}(w)dw=\mathbf{1}_{i=j}$, $0\le i,j\le n-1$, hold. 
\end{enumerate}
Moreover, if we consider the interacting particle system $\left(\left(\mathsf{x}_1^\downarrow(t),\dots,\mathsf{x}_N^\downarrow(t)\right);t\ge 0\right)$ evolving according to the dynamics (\ref{IPS2}) with initial condition $x=(x_1,\dots,x_N)\in \mathbb{W}_N^\downarrow$, we have
\begin{align}\label{Fredholm2}
 \mathbb{P}\left(\mathsf{x}^\downarrow_{n_j}(t)\ge z_j, j=1,\dots,M\right) = \det\left(\mathbf{I}-\chi_z^-K\chi_z^-\right)_{L^2\left(\left\{n_1,\dots, n_M\right\}\times (l,r)\right)},
\end{align}
where $K$ is constructed as above, with $x\in \mathbb{W}_N^\downarrow$ instead.
\end{thm}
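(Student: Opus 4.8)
This is the usual two-step scheme: first produce an Eynard--Mehta (``signed'') determinantal structure from the Sch\"utz-type formulae, then identify the space in which the biorthogonalisation takes place. I will carry out the $\mathbb{W}_N^\uparrow$ case in detail; the $\mathbb{W}_N^\downarrow$ case is obtained by the same argument, using Proposition \ref{DeterminantalProp2} in place of Proposition \ref{DeterminantalProp1} and the $\hat{\partial}^{-1}$-representation of the interlacing indicator in Remark \ref{InterlacingConstraintsRmk} in place of Lemma \ref{LemInterlacing}.

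\textbf{Step 1: putting the law into Eynard--Mehta form.} By Proposition \ref{DeterminantalProp1} the law of $\left(\mathsf{x}_1^\uparrow(t),\dots,\mathsf{x}_N^\uparrow(t)\right)$ is the marginal, in the variables $\left(y_1^{(1)},\dots,y_N^{(N)}\right)$, of the total-mass-one signed measure (\ref{SignedMeasure}) on $\mathcal{D}_N$. Substituting into (\ref{SignedMeasure}) the factorisation $\mathbf{1}_{\left((y^{(1)},\dots,y^{(N)})\in\mathcal{D}_N\right)}=\prod_{n=1}^{N}\det\left(\partial^{-1}\left(y_i^{(n-1)},y_j^{(n)}\right)\right)_{i,j=1}^n$ of Lemma \ref{LemInterlacing} and absorbing the scalar $(-1)^{N(N-1)/2}e^{-t\sum_{k}k\mathsf{c}^{(k)}}$ into the (nonzero, since the measure has mass one) normalisation, one obtains precisely a measure of the shape (\ref{MeasureSigned}) with $\phi=\partial^{-1}$, so that $\phi\left(y_n^{(n-1)},\cdot\right)\equiv 1$ by the virtual-variable convention, and with $\mathsf{\Psi}_{N-i}^{(N)}(w)=\partial_{x_i}^{N-i}e^{t\mathsf{L}}(x_i,w)$.

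\textbf{Step 2: applying Proposition \ref{EynardMehtaEdge} and passing to a Fredholm determinant.} A direct convolution computation with the kernel $\partial^{-1}(x,y)=\mathbf{1}_{(y<x)}$ gives $\phi^{(n)}=\partial^{-n}$, so the free term of the Eynard--Mehta kernel is $-\partial^{-(n_2-n_1)}(y_1,y_2)\mathbf{1}_{(n_2>n_1)}$ and the recursion (\ref{PsiRecursion}) reproduces exactly the functions $\mathsf{\Psi}^{(n)}$ of (\ref{PsiFunctionsDef}). Linear independence of $\left\{\mathsf{\Psi}_0^{(n)},\dots,\mathsf{\Psi}_{n-1}^{(n)}\right\}$ (needed for the biorthogonal family to be well defined) follows from nonvanishing of the normalisation, and the pinning hypothesis $\phi\left(y_n^{(n-1)},\cdot\right)=c_n\Phi_0^{(n)}$ holds with $c_n=1$, $\Phi_0^{(n)}\equiv 1$. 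Proposition \ref{EynardMehtaEdge} then shows the multilevel point process on $\{1,\dots,N\}\times(l,r)$ (with $n$ points at level $n$) is signed determinantal with a kernel of the form (\ref{IntermediateKernel}), where, for each $n$, the $\Phi_j^{(n)}$ are the biorthogonal dual of $\left\{\mathsf{\Psi}_j^{(n)}\right\}$ inside the $n$-dimensional space $V_n:=\mathrm{span}\left\{\phi^{(n)}\left(y_1^{(0)},\cdot\right),\dots,\phi\left(y_n^{(n-1)},\cdot\right)\right\}$. Since $\mathsf{x}_n^\uparrow(t)=y_n^{(n)}$ is the rightmost point at level $n$, the event $\left\{\mathsf{x}_{n_j}^\uparrow(t)\le z_j,\ j=1,\dots,M\right\}$ is exactly the event that the configuration has no point in $\bigcup_j\{n_j\}\times(z_j,r)$; the standard hole-probability identity for (signed) determinantal point processes — whose convergence is ensured by the trace-class bounds on $\chi_z^+K\chi_z^+$ coming from the qualitative estimates on $e^{t\mathsf{L}^{(k)}}(x,y)$ recorded in the introduction — turns this into the Fredholm determinant $\det\left(\mathbf{I}-\chi_z^+K\chi_z^+\right)$ on $L^2\left(\{n_1,\dots,n_M\}\times(l,r)\right)$.

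\textbf{Step 3 (the crux): replacing $V_n$ by $\mathfrak{P}_{\le n-1}$.} The kernel just produced does not yet match the statement, in which the $\Phi_j^{(n)}$ are required to span the polynomial space $\mathfrak{P}_{\le n-1}$ rather than $V_n$ (which in general consists of piecewise-polynomial, not polynomial, functions). I would first check that $\mathfrak{P}_{\le n-1}$ is an admissible choice, i.e. that the pairing matrix $\left(\int_l^r w^p\,\mathsf{\Psi}_j^{(n)}(w)\,dw\right)_{p,j=0}^{n-1}$ is invertible: since $\int_l^r w^p\,\partial^{-m}g(w)\,dw$ is the integral of $g$ against a polynomial of degree $p+m$, and $e^{t\mathsf{L}}$ preserves polynomial degree by Proposition \ref{WellDefinedProp}, this reduces to the nonvanishing of a confluent-Vandermonde-type determinant built from $x_1,\dots,x_N$, which one verifies, e.g. by a limiting argument from pairwise distinct $x_i$. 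Granting this, there is a unique biorthogonal family $\left\{\Phi_j^{(n)}\right\}\subset\mathfrak{P}_{\le n-1}$, and the remaining point is that replacing $V_n$ by $\mathfrak{P}_{\le n-1}$ leaves the relevant Fredholm determinants unchanged. This is precisely the ``change of the biorthogonalisation space'' step of \cite{BorodinFerrariPrahoferSasamoto,ReflectedBrownianKPZ,KPZfixedpoint,NicaQuastelRemenik}; it is available here because $\Phi_0^{(n)}\equiv 1$ lies in both $V_n$ and $\mathfrak{P}_{\le n-1}$ and because the $\mathsf{\Psi}$'s are compatible across levels, $\partial^{-1}\mathsf{\Psi}_j^{(n)}=\mathsf{\Psi}_{j-1}^{(n-1)}$, which together allow one to rewrite the non-free part of the kernel so that only the span $\mathrm{span}\left\{\mathsf{\Psi}_j^{(n)}\right\}$ and the condition $\Phi_0^{(n)}\equiv 1$ enter. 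I expect this to be the main technical obstacle; once it is in place the kernel has exactly the form (\ref{IntermediateKernel}) with the $\Phi_j^{(n)}$ characterised by conditions (1)--(2) of the theorem, proving (\ref{Fredholm1}). The identical argument with $\partial$ replaced by $\hat{\partial}$ (Remark \ref{InterlacingConstraintsRmk}) and with $\mathsf{x}_n^\downarrow(t)=y_1^{(n)}$ the leftmost point at level $n$ yields (\ref{Fredholm2}).
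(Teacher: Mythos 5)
Your proposal follows the same route as the paper: feed the signed measure of Proposition \ref{DeterminantalProp1} (resp.\ \ref{DeterminantalProp2}), factorised via Lemma \ref{LemInterlacing}, into Proposition \ref{EynardMehtaEdge} with $\phi=\partial^{-1}$, and read off the kernel and the Fredholm determinant for the edge marginal. However, there are two problems. The more serious one is your treatment of the pinning hypothesis $\phi\left(y_n^{(n-1)},\cdot\right)=c_n\Phi_0^{(n)}$. You assert it holds "with $c_n=1$, $\Phi_0^{(n)}\equiv 1$", but this is both false and unjustified. Since $\phi\left(y_n^{(n-1)},\cdot\right)\equiv 1$, the hypothesis amounts to $\Phi_0^{(n)}$ being a nonzero \emph{constant}, and because the $\Phi$'s are pinned down by biorthogonality this is equivalent to the nontrivial statement that $\int_l^r\mathsf{\Psi}_j^{(n)}(w)\,dw=0$ for $j=1,\dots,n-1$ while $\int_l^r\mathsf{\Psi}_0^{(n)}(w)\,dw\neq 0$. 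Verifying this requires the alternative representation $\mathsf{\Psi}_{n-i}^{(n)}(w)=(-1)^{N-i}e^{t\sum_{j=i}^{N-1}\mathsf{c}^{(j)}}\partial_w^{n-i}e^{t\mathsf{L}^{(i)}}(x_i,w)$ of Proposition \ref{PropPsiRep}, which rests on the commutation identity (\ref{KeyIdentity1}); one then integrates a $w$-derivative of a probability density over $(l,r)$ to get $0$ for $j\geq 1$, and obtains $\Phi_0^{(n)}\equiv(-1)^{n-N}e^{-t\sum_{j=n}^{N-1}\mathsf{c}^{(j)}}$, not $1$. Without this step the hypotheses of Proposition \ref{EynardMehtaEdge} are simply not checked, and the conclusion does not follow.

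The second issue is that your "Step 3", which you flag as the main technical obstacle, rests on a misreading and is unnecessary. The space $V_n$ in Proposition \ref{EynardMehtaEdge} is spanned by $\phi^{(n-j+1)}\left(y_j^{(j-1)},\cdot\right)$ for $j=1,\dots,n$, where all the $y_j^{(j-1)}$ are \emph{virtual} variables with $\phi\left(y_j^{(j-1)},w\right)\equiv 1$; each generator is therefore $\int_l^r\partial^{-(n-j)}(w,z)\,dw$, a genuine polynomial of degree $n-j$ in $z$ (not a piecewise polynomial — that would be the case only if the first argument were an actual point), so $V_n=\mathfrak{P}_{\le n-1}$ directly, exactly as in the references with the same $\phi$. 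No change of biorthogonalisation space, no confluent-Vandermonde invertibility check, and no identity of the form $\partial^{-1}\mathsf{\Psi}_j^{(n)}=\mathsf{\Psi}_{j-1}^{(n-1)}$ (which, as stated, does not match the definition (\ref{PsiFunctionsDef}) anyway) are needed. The genuinely hard biorthogonalisation work — explicitly constructing the $\Phi_j^{(n)}$ — belongs to the next stage of the argument (the proof of Theorem \ref{ThmOneSidedUp}), not to this statement, which only requires the implicit characterisation.
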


\begin{proof} The proof is standard given all the preliminary results. We apply Proposition \ref{EynardMehtaEdge}, noting Lemma \ref{LemInterlacing}, to the measure (\ref{SignedMeasure}) (which after symmetrization is exactly of the form (\ref{MeasureSigned})). This measure due to Propositions \ref{DeterminantalProp1} and \ref{DeterminantalProp2} has the distributions of the interacting particle systems (\ref{IPS1}) and (\ref{IPS2}), at fixed time $t>0$, as marginals, either as the right-most $(y_1^{(1)},y_2^{(2)},\dots,y_n^{(n)})$ or left-most $(y_1^{(1)},y_1^{(2)},\dots,y_1^{(n)})$ coordinates respectively. 

In applying Proposition \ref{EynardMehtaEdge} we first identify the $\phi(y_1,y_2)$ function with $\partial^{-1}(y_1,y_2)$ so that in particular 
$\phi^{(n)}(y_1,y_2)=\partial^{-n}(y_1,y_2)$, with the convention that $\phi(y_n^{(n-1)},y)\equiv 1$. Then, we can identify the $\Psi^{(n)}_{n-i}$ functions with the functions $\mathsf{\Psi}_{n-i}^{(n)}$ given in (\ref{PsiFunctionsDef}) by virtue of (\ref{PsiRecursion}). Moreover, as in \cite{ReflectedBrownianKPZ,FerrariSpohnWeiss1,FerrariSpohnWeiss2,NicaQuastelRemenik} (we have the same $\phi$ function) the space $V_n$ from Proposition \ref{EynardMehtaEdge} is given by
\begin{equation}
   V_n=\textnormal{span}\left\{y^{n-1},\dots,y,1\right\}=\mathfrak{P}_{\le n-1}.
\end{equation}
Now, we note that by virtue of display (\ref{PsiRep2}) in Proposition \ref{PropPsiRep} below and due to the fact that $e^{t\mathsf{L}^{(i)}}(x_i,\cdot)$ integrates to $1$ in $(l,r)$ we have (since we can take the $\partial^{k}_{x_i}$ derivatives outside the integral which is identically $1$):
\begin{equation*}
\int_{l}^r\mathsf{\Psi}_0^{(n)}(w)dw=(-1)^{N-n}e^{t\sum_{j=n}^{N-1}\mathsf{c}^{(j)}}, \ \ \int_l^r \mathsf{\Psi}_j^{(n)}(w)dw=0, \ \ \textnormal{ for } j=1,\dots,n-1,
\end{equation*}
which together with (\ref{PrelimBiorthogonality}) leads to $\Phi_0^{(n)}(y)=(-1)^{n-N}e^{-t\sum_{j=n}^{N-1}\mathsf{c}^{(j)}}$. Hence, the final assumption $\phi(y_n^{(n-1)},y)=c_n\Phi_{0}^{(n)}(y)$ of Proposition \ref{EynardMehtaEdge} is satisfied.

We can thus write down the correlation kernel associated to the signed measure (\ref{SignedMeasure}) as in (\ref{IntermediateKernel}), with the $\mathsf{\Psi}^{(n)}_{i}(z)$ functions given by (\ref{PsiFunctionsDef}) and the $\Phi_j^{(n)}(z)$ functions determined by the conditions in the statement of the proposition.
Finally, the Fredholm determinant identity (\ref{Fredholm1}) is a standard consequence of looking at an edge  marginal (either right-most or left-most particles) of a determinantal point process, see for example \cite{BorodinDeterminantal,JohanssonDeterminantal,ReflectedBrownianKPZ}. To conclude, the argument for the interacting particle system (\ref{IPS2}) and identity (\ref{Fredholm2}) is completely analogous by using Proposition \ref{DeterminantalProp2} now instead.
\end{proof}

\begin{rmk}
Observe that, we could have used in the proof above the representation for the $\phi$-function from Remark \ref{InterlacingConstraintsRmk} which would have given a slightly different correlation kernel (giving rise to the same ``signed" point process of course).
\end{rmk}

\subsection{Biorthogonalization and proof of Theorems \ref{ThmOneSidedUp} and \ref{ThmOneSidedDown}}\label{SectionBiorthogonalization}

We begin by giving two representations of $\mathsf{\Psi}_{n-i}^{(n)}$. One valid for any $i=1,\dots,N$ which we have already used in the proof of Theorem \ref{ThmDetStructure} above and will also use in computing the formula for the correlation kernel $\mathfrak{K}_t$ and one valid only for $i=1,\dots,n$ which is key in finding the biorthogonal polynomials below.

\begin{prop}\label{PropPsiRep}
Let $n=1,\dots,N$ be arbitrary. Then, we have the following representations for the $\mathsf{\Psi}$-functions from (\ref{PsiFunctionsDef}),
\begin{align}
 \mathsf{\Psi}_{n-i}^{(n)}(z)&=(-1)^{N-n}e^{t\sum_{j=n}^{N-1}\mathsf{c}^{(j)}}\partial_{x_i}^{n-i}e^{t\mathsf{L}^{(n)}}(x_i,z), \ \ \textnormal{ for } i=1,\dots,n,\label{PsiRep1}\\
 \mathsf{\Psi}_{n-i}^{(n)}(z)&=(-1)^{N-i}e^{t\sum_{j=i}^{N-1}\mathsf{c}^{(j)}}\partial_z^{n-i}e^{t\mathsf{L}^{(i)}}(x_i,z), \ \ \textnormal{ for } i=1,\dots,N.\label{PsiRep2}
\end{align}
\end{prop}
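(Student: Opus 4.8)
The plan is to start from the two definitions of $\mathsf{\Psi}_{n-i}^{(n)}$ in (\ref{PsiFunctionsDef}), namely $\mathsf{\Psi}_{N-i}^{(N)}(w)=\partial_{x_i}^{N-i}e^{t\mathsf{L}}(x_i,w)$ and $\mathsf{\Psi}_{n-i}^{(n)}=\partial^{-(N-n)}\mathsf{\Psi}_{N-i}^{(N)}$ for $n<N$, and to rewrite the right-hand side using the two key identities (\ref{KeyIdentity1}) and (\ref{KeyIdentity2}) from \cite{InterlacingDiffusions}, which relate $e^{t\mathsf{L}^{(j)}}$ and $e^{t\mathsf{L}^{(j+1)}}$ via a single integration in the forward variable and a single derivative in the backward variable. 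Concretely, for (\ref{PsiRep2}) I would iterate (\ref{KeyIdentity1}) starting from $e^{t\mathsf{L}^{(i)}}(x_i,\cdot)$: each application converts $e^{t\mathsf{L}^{(j)}}(x_i,z)$ into $-e^{-t\mathsf{c}^{(j)}}\partial_z^{-1}\partial_{x_i}e^{t\mathsf{L}^{(j+1)}}(x_i,z)$, so after $N-i$ steps one gets
\[
e^{t\mathsf{L}^{(i)}}(x_i,z)=(-1)^{N-i}e^{-t\sum_{j=i}^{N-1}\mathsf{c}^{(j)}}\partial_z^{-(N-i)}\partial_{x_i}^{N-i}e^{t\mathsf{L}}(x_i,z),
\]
using Lemma \ref{LemmaObservations} to freely move the $\partial_{x_i}$ derivatives and the $\partial_z^{-1}$'s past one another. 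Applying $\partial_z^{n-i}$ to both sides (and noting $-(N-i)+(n-i)=-(N-n)\le 0$, so $\partial_z^{n-i}\partial_z^{-(N-i)}=\partial_z^{-(N-n)}$ holds without boundary terms by Lemma \ref{LemmaObservations}) gives exactly $\partial^{-(N-n)}\mathsf{\Psi}_{N-i}^{(N)}(z)=(-1)^{N-n}(-1)^{n-i}e^{-t\sum_{j=i}^{N-1}\mathsf{c}^{(j)}}\partial_z^{n-i}e^{t\mathsf{L}^{(i)}}(x_i,z)$; comparing signs, and taking the $e^{-t\sum}$ factor to the other side, yields (\ref{PsiRep2}). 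The case $n=N$ is the trivial case $N-n=0$.

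For (\ref{PsiRep1}), valid only for $i=1,\dots,n$, the idea is to stop the iteration of (\ref{KeyIdentity1}) at level $n$ rather than at level $N$, and to match it up with the $\partial^{-(N-n)}$ in the definition of $\mathsf{\Psi}_{n-i}^{(n)}$. Starting again from $e^{t\mathsf{L}^{(i)}}(x_i,z)$ and applying (\ref{KeyIdentity1}) a total of $N-i$ times but grouping the last $N-n$ applications, one can write $\partial_{x_i}^{N-i}e^{t\mathsf{L}}(x_i,z)$ in terms of $e^{t\mathsf{L}^{(n)}}(x_i,z)$: inverting the first $n-i$ steps gives
\[
e^{t\mathsf{L}^{(n)}}(x_i,z)=(-1)^{n-i}e^{-t\sum_{j=i}^{n-1}\mathsf{c}^{(j)}}\partial_z^{-(n-i)}\partial_{x_i}^{n-i}e^{t\mathsf{L}^{(n)}}(x_i,z)
\]
— wait, this is circular; instead I iterate (\ref{KeyIdentity1}) from level $n$ up to level $N$, obtaining $e^{t\mathsf{L}^{(n)}}(x_i,z)=(-1)^{N-n}e^{-t\sum_{j=n}^{N-1}\mathsf{c}^{(j)}}\partial_z^{-(N-n)}\partial_{x_i}^{N-n}e^{t\mathsf{L}}(x_i,z)$. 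Applying $\partial_{x_i}^{n-i}$ to both sides and using that $\partial_{x_i}$ and $\partial_z^{-1}$ commute on these transition densities (Lemma \ref{LemmaObservations}, relation (\ref{IntDerCommutation})), the right-hand side becomes $(-1)^{N-n}e^{-t\sum_{j=n}^{N-1}\mathsf{c}^{(j)}}\partial_z^{-(N-n)}\partial_{x_i}^{N-i}e^{t\mathsf{L}}(x_i,z)=(-1)^{N-n}e^{-t\sum_{j=n}^{N-1}\mathsf{c}^{(j)}}\partial_z^{-(N-n)}\mathsf{\Psi}_{N-i}^{(N)}(z)=(-1)^{N-n}e^{-t\sum_{j=n}^{N-1}\mathsf{c}^{(j)}}\mathsf{\Psi}_{n-i}^{(n)}(z)$. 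Solving for $\mathsf{\Psi}_{n-i}^{(n)}(z)$ gives precisely (\ref{PsiRep1}). The restriction $i\le n$ is exactly what makes $\partial_{x_i}^{n-i}$ a genuine (nonnegative-order) derivative, so the manipulation is legitimate; for $i>n$ this argument breaks down, which is why (\ref{PsiRep1}) is only asserted in that range.

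I expect the main subtlety, rather than a genuine obstacle, to be bookkeeping of two things: first, the order in which derivatives $\partial_{x_i}$, antiderivatives $\partial_z^{-1}$ and derivatives $\partial_z$ are applied, which must be justified by the commutation relations in Lemma \ref{LemmaObservations} (in particular that $\partial_z^{m}\partial_z^{-n}=\partial_z^{m-n}$ only when $m-n$ stays of the correct sign or no boundary term is created — here it always does because the net orders are $-(N-n)\le 0$ and $-(N-i)\le 0$); and second, tracking the signs and the exponential prefactors $e^{-t\mathsf{c}^{(j)}}$ accumulated over the iterations so that the two telescoping sums $\sum_{j=i}^{N-1}\mathsf{c}^{(j)}$ and $\sum_{j=n}^{N-1}\mathsf{c}^{(j)}$ come out correctly. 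Everything else is a direct consequence of the identities (\ref{KeyIdentity1})–(\ref{KeyIdentity2}) and the definition (\ref{PsiFunctionsDef}).
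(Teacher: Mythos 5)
Your proposal is correct and follows essentially the same route as the paper: both arguments iterate identity (\ref{KeyIdentity1}) to relate $e^{t\mathsf{L}^{(i)}}$ (resp.\ $e^{t\mathsf{L}^{(n)}}$) to $\partial_z^{-(N-i)}\partial_{x_i}^{N-i}e^{t\mathsf{L}}$ (resp.\ $\partial_z^{-(N-n)}\partial_{x_i}^{N-n}e^{t\mathsf{L}}$), then use the decompositions $\partial_z^{n-i}\partial_z^{-(N-i)}=\partial_z^{-(N-n)}$ and $\partial_{x_i}^{N-i}=\partial_{x_i}^{n-i}\partial_{x_i}^{N-n}$ together with the commutation relations of Lemma \ref{LemmaObservations}; the only difference is that you run the substitution in the opposite direction (from the semigroup identity to the definition of $\mathsf{\Psi}$ rather than the reverse). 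The only blemish is cosmetic: in your intermediate display for (\ref{PsiRep2}) the factor should read $e^{+t\sum_{j=i}^{N-1}\mathsf{c}^{(j)}}$ once it has been moved to the $\partial_z^{n-i}e^{t\mathsf{L}^{(i)}}$ side, but your final conclusion and sign count $(-1)^{N-n}(-1)^{n-i}=(-1)^{N-i}$ are right.
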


\begin{proof}
Recall that by definition $\mathsf{\Psi}_{n-i}^{(n)}(z)=\left(\partial^{-(N-n)}\mathsf{\Psi}_{N-i}^{(N)}\right)(z)$. Now, observe that for any choice of $i,n=1,\dots, N$, since $-(N-i)\le 0$, we have $\partial_{z}^{-(N-n)}=\partial_z^{n-i}\partial_z^{-(N-i)}$ from Lemma \ref{LemmaObservations}. Hence, by making repeated use of equation (\ref{KeyIdentity1}) and Lemma \ref{LemmaObservations}:
\begin{align*}
\mathsf{\Psi}_{n-i}^{(n)}(z)=\partial_z^{-(N-n)}\partial_{x_i}^{N-i}e^{t\mathsf{L}^{(N)}}(x_i,z)&=\partial_z^{n-i}\partial_z^{-(N-i)}\partial_{x_i}^{N-i}e^{t\mathsf{L}^{(N)}}(x_i,z)\\
&=(-1)^{N-i}e^{t\sum_{j=i}^{N-1}\mathsf{c}^{(j)}}\partial_{z}^{n-i}e^{t\mathsf{L}^{(i)}}(x_i,z),
\end{align*}
giving (\ref{PsiRep2}). To prove the other identity, suppose $i=1,\dots,n$, so that in particular $\partial_z^{N-i}=\partial_z^{n-i}\partial_z^{N-n}$ from Lemma \ref{LemmaObservations}. Then, again by repeated use of equation (\ref{KeyIdentity1}) and Lemma \ref{LemmaObservations} we can compute
\begin{align*}
 \mathsf{\Psi}_{n-i}^{(n)}(z)&=\partial_z^{-(N-n)}\partial_{x_i}^{N-i}e^{t\mathsf{L}^{(N)}}(x_i,z)=\partial_z^{-(N-n)}\partial_{x_i}^{n-i}\partial_{x_i}^{N-n}e^{t\mathsf{L}^{(N)}}(x_i,z)\\&=\partial_{x_i}^{n-i}\partial_{z}^{-(N-n)}\partial_{x_i}^{N-n}e^{t\mathsf{L}^{(N)}}(x_i,z)
 =(-1)^{N-n}e^{t\sum_{j=n}^{N-1}\mathsf{c}^{(j)}}\partial_{x_i}^{n-i}e^{t\mathsf{L}^{(n)}}(x_i,z),
\end{align*}
giving (\ref{PsiRep1}).
\end{proof}

We are now led to the definition of certain polynomials $\mathsf{\Phi}_i^{(n)}(z)$. The choice of notation is clearly suggestive. We will shortly identify $\mathsf{\Phi}_i^{(n)}$ with the polynomials $\Phi_i^{(n)}$ from Theorem \ref{ThmDetStructure}. The computation in Proposition \ref{BiorthogonalProp} below, albeit very simple, is the crux of the argument.

\begin{defn} Let $x=(x_1,\dots,x_N) \in (l,r)^N$. For $n=1,\dots,N$ and $i=0,1,\dots,n-1$ define the following polynomials (we drop dependence on $x$ in the notation):
\begin{equation}
\mathsf{\Phi}_i^{(n)}(z)=(-1)^{i+n-N}e^{-t\sum_{j=n}^{N-1}\mathsf{c}^{(j)}}e^{-t\mathsf{L}^{(n)}}\mathsf{q}_i^{(n)}(z;x),
\end{equation}
where recall that the polynomial $\mathsf{q}_i^{(n)}(z)=\mathsf{q}_i^{(n)}(z;x)$ was defined in Definition \ref{PolyDefinition}. Observe that, $\mathsf{\Phi}_0^{(n)}(z)\equiv (-1)^{n-N}e^{-t\sum_{j=n}^{N-1}\mathsf{c}^{(j)}}$.
\end{defn}

\begin{prop}\label{BiorthogonalProp}
Let $n=1,\dots,N$. Then, the functions $\mathsf{\Psi}_i^{(n)}$ and polynomials $\mathsf{\Phi}_j^{(n)}$ are biorthogonal with respect to Lebesgue measure in $(l,r)$:
\begin{equation}\label{BiorthogonalityRelation}
\int_{l}^r \mathsf{\Psi}_i^{(n)}(z)\mathsf{\Phi}_j^{(n)}(z)dz=\mathbf{1}_{(i=j)},  
\end{equation}
for $i,j=0,1,\dots,n-1$.
\end{prop}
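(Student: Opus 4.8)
The plan is to combine the representation (\ref{PsiRep1}) of $\mathsf{\Psi}_{i}^{(n)}$ with the definition of $\mathsf{\Phi}_{j}^{(n)}$ and to observe that the backward flow $e^{-t\mathsf{L}^{(n)}}$ built into $\mathsf{\Phi}_j^{(n)}$ is exactly inverted by the forward heat kernel $e^{t\mathsf{L}^{(n)}}(x_{n-i},\cdot)$ appearing in $\mathsf{\Psi}_i^{(n)}$, so that the integral collapses to the evaluation of an $i$-th derivative of $\mathsf{q}_j^{(n)}$ at $x_{n-i}$, which is precisely the normalisation (\ref{PolyDef}).

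Concretely, first I would note that for $i=0,\dots,n-1$ we may apply (\ref{PsiRep1}) with index $n-i$ in place of $i$ (this is in the allowed range $1,\dots,n$), giving $\mathsf{\Psi}_i^{(n)}(z)=(-1)^{N-n}e^{t\sum_{k=n}^{N-1}\mathsf{c}^{(k)}}\partial_{x_{n-i}}^{i}e^{t\mathsf{L}^{(n)}}(x_{n-i},z)$. Substituting this and $\mathsf{\Phi}_j^{(n)}(z)=(-1)^{j+n-N}e^{-t\sum_{k=n}^{N-1}\mathsf{c}^{(k)}}\big[e^{-t\mathsf{L}^{(n)}}\mathsf{q}_j^{(n)}\big](z)$ into the left-hand side of (\ref{BiorthogonalityRelation}), the exponential factors cancel and the signs combine to $(-1)^{(N-n)+(j+n-N)}=(-1)^j$, leaving
\[
\int_l^r \mathsf{\Psi}_i^{(n)}(z)\mathsf{\Phi}_j^{(n)}(z)\,dz=(-1)^j\,\partial_{x_{n-i}}^{i}\int_l^r e^{t\mathsf{L}^{(n)}}(x_{n-i},z)\big[e^{-t\mathsf{L}^{(n)}}\mathsf{q}_j^{(n)}\big](z)\,dz,
\]
where moving $\partial_{x_{n-i}}^{i}$ outside the $z$-integral is justified, as in the proof of Lemma \ref{LemmaObservations}, by the fact that $\sup_{\xi}\big|\partial_\xi^{i}e^{t\mathsf{L}^{(n)}}(\xi,\cdot)\big|$ over a small neighbourhood of $x_{n-i}$ integrates polynomials (and $e^{-t\mathsf{L}^{(n)}}\mathsf{q}_j^{(n)}$ is a polynomial of degree $j$ by Proposition \ref{WellDefinedProp}).

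Finally I would invoke Lemma \ref{LemCoincidence} and Lemma \ref{LemmaSemigroup}, both of which hold verbatim with $\mathsf{L}$ replaced by $\mathsf{L}^{(n)}$ since $\mathsf{L}^{(n)}$ has the same polynomial form and satisfies the standing assumption: the inner integral equals $\big[e^{t\mathsf{L}^{(n)}}e^{-t\mathsf{L}^{(n)}}\mathsf{q}_j^{(n)}\big](x_{n-i})=\mathsf{q}_j^{(n)}(x_{n-i})$, so the whole expression becomes $(-1)^j\,\partial_z^{i}\mathsf{q}_j^{(n)}(z)\big|_{z=x_{n-i}}$, which by the defining relation (\ref{PolyDef}) of $\mathsf{q}_j^{(n)}$ equals $(-1)^j(-1)^j\mathbf{1}_{(i=j)}=\mathbf{1}_{(i=j)}$, as required. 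The computation is otherwise mechanical; the only step demanding a moment of care is the interchange of the $x$-derivative with the $z$-integral, but this is the same qualitative property of the transition density already used repeatedly in the paper, so it is not a genuine obstacle.
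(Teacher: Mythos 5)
Your proposal is correct and follows essentially the same route as the paper's proof: substitute the representation (\ref{PsiRep1}) (with index $n-i$) and the definition of $\mathsf{\Phi}_j^{(n)}$, pull the $x_{n-i}$-derivative outside the $z$-integral, use Lemmas \ref{LemCoincidence} and \ref{LemmaSemigroup} to collapse $e^{t\mathsf{L}^{(n)}}e^{-t\mathsf{L}^{(n)}}\mathsf{q}_j^{(n)}$ to $\mathsf{q}_j^{(n)}$, and conclude via the defining property (\ref{PolyDef}). Your sign bookkeeping ($(-1)^j$ rather than the paper's $(-1)^i$, immaterial on the event $i=j$) and the explicit justification of the derivative--integral interchange are if anything slightly more careful than the original.
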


\begin{proof}
Using the representation (\ref{PsiRep1}), we can compute by virtue of Lemma \ref{LemmaSemigroup}, after taking the derivative $\partial_{x_{n-i}}^i$ outside the integral,
\begin{align*}
  \int_{l}^r \mathsf{\Psi}_i^{(n)}(z)\mathsf{\Phi}_j^{(n)}(z)dz&=\int_l^r\partial_{x_{n-i}}^i e^{t\mathsf{L}^{(n)}}(x_{n-i},z)(-1)^ie^{-t\mathsf{L}^{(n)}}\mathsf{q}_j^{(n)}(z;x)dz \\
  &=(-1)^i\partial_z^{i}\mathsf{q}_j^{(n)}(z;x)\big|_{z=x_{n-i}}=\mathbf{1}_{(i=j)},
\end{align*}
for any $i,j=0,1,\dots,n-1$.
\end{proof}

We can finally prove our main results.

\begin{proof}[Proof of Theorem \ref{ThmOneSidedUp}]
We apply Theorem \ref{ThmDetStructure}. Observe that, since by Proposition \ref{WellDefinedProp}, $\mathsf{\Phi}_j^{(n)}$ is a polynomial of degree $j$, we have
\begin{equation*}
   \textnormal{span}\left\{\mathsf{\Phi}_0^{(n)}(y),\dots,\mathsf{\Phi}_{n-1}^{(n)}(y)\right\} =\mathfrak{P}_{\le n-1},
\end{equation*}
and moreover by virtue of the biorthogonality relation (\ref{BiorthogonalityRelation}) in Proposition \ref{BiorthogonalProp} we can then identify the $\Phi_j^{(n)}$ functions in Theorem \ref{ThmDetStructure} with $\mathsf{\Phi}_j^{(n)}$. Let us denote the correlation kernel $K$ from Theorem \ref{ThmDetStructure} by $\mathfrak{K}_t$. It remains to simplify the sum term in (\ref{IntermediateKernel}) as follows, recalling the representation (\ref{PsiRep2}) for $\mathsf{\Psi}_{n-i}^{(n)}$, by observing that
\begin{align*}
\sum_{k=1}^{n_2}\mathsf{\Psi}_{n_1-k}^{(n_1)}(y_1)\mathsf{\Phi}_{n_1-k}^{(n_2)}(y_2)=\partial_{y_1}^{n_1}\left[\sum_{k=1}^{n_2}e^{t\sum_{j=k}^{n_2-1}\mathsf{c}^{(j)}}\partial_{y_1}^{-k}e^{t\mathsf{L}^{(k)}}(x_k,y_1)\mathsf{q}_{n_2-k}^{(n_2)}(y_2;x)\right]e^{-t\mathsf{L}_{y_2}^{(n_2)}},
\end{align*}
which gives the form of the function $\mathfrak{G}_{n_2}(y_1,y_2)$ and concludes the proof.
\end{proof}

\begin{proof}[Proof of Theorem \ref{ThmOneSidedDown}]
The proof is word for word the same as the one of Theorem \ref{ThmOneSidedUp} above.
\end{proof}

\subsection{Connection to orthogonal polynomials}\label{RmkEquivalentRep} We now briefly explain how one can recover from $\mathfrak{K}_t$ the standard forms (directly related to random matrices as we now point out) of the correlation kernel for the most classical initial condition $x=(0,\dots,0)$ in the Brownian and squared Bessel cases. Using the results of Section 6 it can be shown that the correlation kernel $\mathfrak{K}_t$ equivalently governs the distribution, at a fixed time $t>0$, of some stochastic dynamics (\ref{DynamicsArray}) in an interlacing array (the measure (\ref{SignedMeasure}) coincides with the measure (\ref{EvolvedGibbs}) in Proposition \ref{MultilevelProp} as $x\to (0,\dots,0)$, see also Remark \ref{EntranceLaw}). This is known, see Remark \ref{RemarkRMTconnection}, to match the distribution of the eigenvalues of consecutive principal submatrices of the Gaussian and Laguerre unitary ensembles respectively, see \cite{Warren,InterlacingDiffusions}. The correlation kernels can be computed \cite{GUEminor,GUEminorErratum,DysonMinor,PartialConnection,forrester2011determinantal} in terms of Hermite and Laguerre polynomials respectively.

To obtain these formulae from $\mathfrak{K}_t$ in the squared Bessel/Laguerre case one needs the following ingredients. Recall that, as observed in (\ref{AllCoordEqPoly}), for $x=(0,\dots,0)$, $\mathsf{q}_k^{(n)}(z;x)=(-1)^k\frac{z^k}{k!}$. We then have the following identity (obtained by direct computation or see \cite{LaguerreIdentity}):
\begin{align}\label{LaguerreMonomial}
e^{-t\mathcal{B}^{(\theta)}}z^n=n!(-2t)^nL_n^{(\frac{\theta}{2}-1)}\left(\frac{z}{2t}\right),  
\end{align}
where $L_n^{(\alpha)}(z)=\sum_{k=0}^n(-1)^k \binom{n+\alpha}{n-k}\frac{z^k}{k!}$ is the Laguerre polynomial. Moreover, we have the identity:
\begin{equation}\label{LaguerreDerivativeIdentity}
\partial_z^ne^{t\mathcal{B}^{(\theta)}}(z,y)\big|_{z=0}=\frac{n!(-1)^n\Gamma\left(\frac{d}{2}\right)}{(2t)^n\Gamma\left(\frac{d}{2}+n\right)}L_n^{\left(\frac{\theta}{2}-1\right)}\left(\frac{y}{2t}\right)e^{t\mathcal{B}^{(\theta)}}(0,y).  
\end{equation}
This can be proven as follows. We make use of the formula
\begin{equation*}
 \partial_z^n e^{t\mathcal{B}^{(\theta)}}(z,y)=\frac{1}{(2t)^n}\sum_{k=0}^n e^{t\mathcal{B}^{(\theta+2k)}}(z,y)\binom{n}{k}(-1)^{n-k},
\end{equation*}
obtained by iterating the following relation (itself proven by direct computation using the explicit formula (\ref{BESQtransition}) and basic properties of derivatives of Bessel functions):
\begin{equation*}
\partial_z e^{t\mathcal{B}^{(\theta)}}(z,y)=\frac{1}{2t}\left[e^{t\mathcal{B}^{(\theta+2)}}(z,y)-e^{t\mathcal{B}^{(\theta)}}(z,y)\right]
\end{equation*}
and recalling that, see for example \cite{RevuzYor},
\begin{equation*}
e^{t\mathcal{B}^{(\theta)}}(0,y)=\frac{1}{(2t)^\frac{\theta}{2}\Gamma\left(\frac{\theta}{2}\right)} y^{\frac{\theta}{2}-1}e^{-\frac{y}{2}t}.
\end{equation*}
Putting everything together gives the explicit form of the kernel in terms of Laguerre polynomials. The situation is very similar in the Brownian case using classical identities, analogues of (\ref{LaguerreMonomial}), (\ref{LaguerreDerivativeIdentity}), that relate the heat kernel, $e^{t\frac{1}{2}\partial^2}(z,y)$, to the Hermite polynomials:
\begin{align*}
\partial_z^{n}e^{t\frac{1}{2}\partial^2}(z,y)=t^{-\frac{n}{2}}H_n\left(\frac{y-z}{\sqrt{t}}\right)e^{t\frac{1}{2}\partial^2}(z,y), \ \  e^{-t\frac{1}{2}\partial^2}z^n=t^nH_n\left(\frac{z}{t}\right),
\end{align*}
where $H_n(z)=n!\sum_{m=0}^{\lfloor \frac{n}{2} \rfloor}\frac{(-1)^m}{m!(n-2m)!}\frac{z^{n-2m}}{2^m}$ is the Hermite polynomial.

\subsection{Proof of Theorem \ref{ThmBessel}}\label{SectionRWKernel}

In this subsection we prove Theorem \ref{ThmBessel}. The lemma below is the key ingredient in the proof.

\begin{lem}\label{KeyBesselLemma}
Let $\theta \ge 2$. Then, for any $k \in \mathbb{N}$ we have, with $x,z\in (0,\infty)$, where $\delta_x$ is the delta function at $x$,
\begin{equation}
\partial_z^{-k}e^{t\mathcal{B}^{(\theta)}}(x,z)=\left[e^{t\mathcal{B}^{(4-\theta-2k)}}\partial^{-k}\delta_x\right](z). 
\end{equation}
\end{lem}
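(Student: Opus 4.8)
The plan is to recognise both sides of the identity, viewed as functions of $(t,z)$, as solutions of one and the same Kolmogorov backward equation with the same initial datum, and then to identify them by a martingale argument of exactly the type used in the proof of Lemma \ref{LemCoincidence}.

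The first and main step is the elementary operator identity: for $g$ smooth on $(0,\infty)$ which integrates polynomials, with $g(0+)$ finite and $\lim_{y\to 0}yg'(y)=0$, integration by parts gives, for any $\alpha\in\mathbb{R}$,
\[
\partial^{-1}\big(\mathcal{B}^{(\alpha)}g\big)(z)=\int_0^z\big(2yg''(y)+\alpha g'(y)\big)\,dy=2zg'(z)+(\alpha-2)g(z)+(2-\alpha)g(0+),
\]
i.e. $\partial^{-1}\big(\mathcal{B}^{(\alpha)}g\big)=\mathcal{B}^{(\alpha-2)}\big(\partial^{-1}g\big)+(2-\alpha)g(0+)$. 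Applying this with $g(\cdot)=e^{t\mathcal{B}^{(\theta)}}(x,\cdot)$ and $\alpha=4-\theta$, the boundary term is $(\theta-2)\,e^{t\mathcal{B}^{(\theta)}}(x,0+)$, which vanishes for every $\theta\ge 2$: the prefactor is $0$ when $\theta=2$, while $e^{t\mathcal{B}^{(\theta)}}(x,0+)=0$ when $\theta>2$ by the behaviour of (\ref{BESQtransition}) near the origin, see \cite{BesselSurvey}. From the second step of the iteration onward the function to which $\partial^{-1}$ is applied, namely $\partial^{-(j-1)}_z e^{t\mathcal{B}^{(\theta)}}(x,z)$, already vanishes as $z\to 0+$, so no further boundary terms appear; iterating $k$ times and using Lemma \ref{LemmaObservations} to commute the $\partial^{-1}$'s yields the key intertwining
\[
\partial_z^{-k}\,\mathcal{B}^{(4-\theta)}_z\,e^{t\mathcal{B}^{(\theta)}}(x,z)=\mathcal{B}^{(4-\theta-2k)}_z\,\partial_z^{-k}\,e^{t\mathcal{B}^{(\theta)}}(x,z),\qquad\theta\ge 2.
\]

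Next, set $u(t,z)=\partial_z^{-k}e^{t\mathcal{B}^{(\theta)}}(x,z)$. By the symmetry $e^{t\mathcal{B}^{(\theta)}}(x,z)=e^{t\mathcal{B}^{(4-\theta)}}(z,x)$ (equivalently, since $(\mathcal{B}^{(\theta)})^{*}=\mathcal{B}^{(4-\theta)}$) we have $\partial_t e^{t\mathcal{B}^{(\theta)}}(x,z)=\mathcal{B}^{(4-\theta)}_z e^{t\mathcal{B}^{(\theta)}}(x,z)$; differentiating under the integral sign (legitimate by the standard estimates on the derivatives of the kernel used throughout the paper, cf. \cite{StroockPDEbook}) and invoking the intertwining above shows that $\partial_t u=\mathcal{B}^{(4-\theta-2k)}_z u$. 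Moreover $u(t,0+)=0$ for every $t$, the crude bounds $0\le u(t,z)\le z^{k-1}/(k-1)!$ and an analogous polynomial bound for $\partial_z u$ hold uniformly for $t\in[0,T]$, and $u(0+,z)=\partial^{-k}\delta_x(z)=\frac{(z-x)^{k-1}}{(k-1)!}\mathbf{1}_{(x<z)}$ because $e^{t\mathcal{B}^{(\theta)}}(x,\cdot)\to\delta_x$ weakly as $t\downarrow 0$. Finally, let $(\mathsf{X}_s;s\ge 0)$ be a squared Bessel diffusion of parameter $4-\theta-2k\,(\le 0)$ started from $z$ and killed at its first hitting time $T_0$ of the origin, so that $[e^{t\mathcal{B}^{(4-\theta-2k)}}g](z)=\mathbf{E}_z[g(\mathsf{X}_t)\mathbf{1}_{(t<T_0)}]$. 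Exactly as in the proof of Lemma \ref{LemCoincidence}, It\^o's formula together with $\partial_t u=\mathcal{B}^{(4-\theta-2k)}_z u$ shows that $\big(u(t-s,\mathsf{X}_{s\wedge T_0});0\le s\le t\big)$ is a local martingale; the polynomial bounds above and the polynomial moments of the squared Bessel diffusion, see \cite{RevuzYor}, upgrade it to a genuine martingale, while $u(\cdot,0+)=0$ disposes of the killed trajectories. Optional stopping then gives $u(t,z)=\mathbf{E}_z[u(0+,\mathsf{X}_t)\mathbf{1}_{(t<T_0)}]=[e^{t\mathcal{B}^{(4-\theta-2k)}}\partial^{-k}\delta_x](z)$, which is the assertion.

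The main obstacle is the intertwining step: keeping careful track of the boundary terms and justifying the integration by parts at the origin (which is precisely where the hypothesis $\theta\ge 2$ enters), and verifying that $\partial^{-1}$ genuinely conjugates $\mathcal{B}^{(\alpha)}$ into $\mathcal{B}^{(\alpha-2)}$ on the relevant class of functions. The remaining points — differentiation under the integral sign, the elementary a priori bounds, and the martingale argument in the sub-Markovian regime $4-\theta-2k\le 0$ — are routine adaptations of arguments already present in the excerpt.
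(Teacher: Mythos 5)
Your argument is correct, but it takes a genuinely different route from the paper. The paper's proof stays entirely at the level of kernels: it writes the identity (\ref{KeyIdentity1}) in Bessel form as $e^{t\mathcal{B}^{(\gamma+2)}}(y,z)=-\partial_z^{-1}\partial_y e^{t\mathcal{B}^{(\gamma)}}(y,z)$, integrates it in $y$ over $(x,\infty)$ to get $\int_0^z e^{t\mathcal{B}^{(\gamma)}}(x,y)\,dy=\int_x^\infty e^{t\mathcal{B}^{(\gamma+2)}}(y,z)\,dy$, iterates this $k$ times so that the nested integral defining $\partial_z^{-k}$ becomes a nested integral over half-lines of $e^{t\mathcal{B}^{(\theta+2k)}}(y_1,z)$, applies the symmetry (\ref{BesselSymmetry}) once, and collapses the nested integral to $\int_x^\infty e^{t\mathcal{B}^{(4-\theta-2k)}}(z,y)\frac{(y-x)^{k-1}}{(k-1)!}\,dy$; no PDE uniqueness is needed. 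You instead prove a generator-level intertwining $\partial^{-1}\mathcal{B}^{(\alpha)}=\mathcal{B}^{(\alpha-2)}\partial^{-1}+(2-\alpha)\,g(0+)$, show both sides of the lemma solve the same backward equation with the same initial and Dirichlet data, and conclude by the martingale argument of Lemma \ref{LemCoincidence}. Your route is longer and analytically heavier — it needs differentiation under the integral sign and a uniqueness statement in the sub-Markovian regime — but it makes completely transparent where $\theta\ge 2$ enters (the vanishing of the boundary term at the origin), and it is exactly the kind of "careful with boundary conditions" generator computation that Remark \ref{RemarkProbRep} suggests for extending the lemma beyond the Bessel case; the paper's route is shorter but relies on the special exact identities (\ref{KeyIdentity1}) and (\ref{BesselSymmetry}). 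Two small points of rigor in your write-up: the first integration by parts also requires $\lim_{y\to 0}y\,\partial_y e^{t\mathcal{B}^{(\theta)}}(x,y)=0$, which you assert but should check from (\ref{BESQtransition}); and the claimed polynomial bound on $\partial_z u$ uniformly in $t\in[0,T]$ fails for $k=1$ near $t=0$ (where $\partial_z u=e^{t\mathcal{B}^{(\theta)}}(x,z)$ concentrates at $x$), so you should run the martingale argument on $[0,t-\epsilon]$ and let $\epsilon\downarrow 0$, as is standard.
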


\begin{proof}
We begin with the following symmetry identity. For $\gamma \ge 2$ and $x,z\in (0,\infty)$ we have:
\begin{equation}\label{BesselSymmetry}
 e^{t\mathcal{B}^{(\gamma)}}(x,z)=e^{t\mathcal{B}^{(4-\gamma)}}(z,x),
\end{equation}
see Proposition 3 in \cite{BesselSurvey}. Moreover, we have the following identity, for $x,z\in (0,\infty)$,
\begin{equation}\label{BesselInductive}
\int_0^z e^{t\mathcal{B}^{(\gamma)}}(x,y)dy=\int_x^\infty e^{t\mathcal{B}^{(\gamma+2)}}(y,z)dy,
\end{equation}
which can be seen as follows. First note that, the identity (\ref{KeyIdentity1}) in the squared Bessel case can be written as $e^{t\mathcal{B}^{(\gamma+2)}}(y,z)=-\partial_z^{-1}\partial_y e^{t\mathcal{B}^{(\gamma)}}(y,z)$
and  then integrate in $y$ from $x$ to $\infty$ (note that the boundary term at $\infty$ on the right hand side vanishes). The $k=1$ case of the proposition then easily follows by using (\ref{BesselSymmetry}) and (\ref{BesselInductive}) in the third and second equalities below respectively:
\begin{align*}
 \partial_z^{-1}e^{t\mathcal{B}^{(\theta)}}(x,z)=\int_0^ze^{t\mathcal{B}^{(\theta)}}(x,y)dy=\int_{x}^\infty e^{t\mathcal{B}^{(\theta+2)}}(y,z)dy&=\int_{x}^\infty e^{t\mathcal{B}^{(2-\theta)}}(z,y)dy\\
 &=\left[e^{t\mathcal{B}^{(2-\theta)}}\partial^{-1}\delta_x\right](z).
\end{align*}
The general case is analogous, by making repeated use of (\ref{BesselInductive}) in the second equality below and then (\ref{BesselSymmetry}) we obtain:
\begin{align*}
  \partial_z^{-k}e^{t\mathcal{B}^{(\theta)}}(x,z)&=\int_0^z\int_0^{y_k}\cdots \int_0^{y_2}e^{t\mathcal{B}^{(\theta)}}(x,y_1)dy_1\cdots dy_k\\&=\int_x^\infty\int_{y_k}^{\infty}\cdots \int_{y_2}^{\infty}e^{t\mathcal{B}^{(\theta+2k)}}(y_1,z)dy_1\cdots dy_k
  \\
  &=\int_x^\infty\int_{y_k}^{\infty}\cdots \int_{y_2}^{\infty}e^{t\mathcal{B}^{(4-2k-\theta)}}(z,y_1)dy_1\cdots dy_k
  \\
  &=\int_x^{\infty}e^{t\mathcal{B}^{(4-\theta-2k)}}(z,y)\frac{(y-x)^{k-1}}{(k-1)!}dy= \left[e^{t\mathcal{B}^{(4-\theta-2k)}}\partial^{-k}\delta_x\right](z),
\end{align*}
since we note that for a non-negative function $f$
\begin{align*}
  \int_x^{\infty}\int_{y_k}^{\infty} \cdots \int_{y_2}^{\infty} f(y_1)dy_1\cdots dy_k= \int_x^{\infty} f(y)\frac{(y-x)^{k-1}}{(k-1)!}dy,
\end{align*}
and this completes the proof.
\end{proof}

We will also need the following important lemma from \cite{NicaQuastelRemenik}, see Proposition 5.7 therein which we have translated in our notation. Analogous results in the discrete setting have appeared in \cite{KPZfixedpoint,matetski2022tasep}.

\begin{lem}\label{RWrepLem} Let $x=(x_1,\dots,x_N)\in \mathbb{W}_N^{\downarrow}$ and $n=1,\dots,N$. Let $\left(\mathsf{R}_k;k\ge 0\right)$ be a random walk with exponential random variable with parameter $1$ steps to the left and denote by $\mathbf{E}$ expectation with respect to it. Let $\tau=\min\left\{k\ge 0:\mathsf{R}_k\ge x_{k+1}\right\}$. Then, for any $y_1,y_2$, we have
\begin{align}\label{RWrepDisplay}
  \sum_{k=0}^{n-1}\left[\partial^{-(n-k)}\delta_{x_{n-k}}\right](y_1)\mathsf{q}^{(n)}_{k}(y_2;x)=\mathbf{E}_{\mathsf{R}_0=y_1}\left[e^{y_1-\mathsf{R}_\tau}\frac{\left(\mathsf{R}_\tau-y_2\right)^{n-\tau-1}}{(n-\tau-1)!}\mathbf{1}_{(\tau<n)}\right].
\end{align}
\end{lem}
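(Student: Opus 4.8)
This is Proposition~5.7 of \cite{NicaQuastelRemenik} rewritten in the present notation; the argument there carries over, and here is the plan I would follow. Since only $x_1,\dots,x_n$ enter the statement I may assume $N=n$ and argue by induction on $n$. For $n=1$ both sides equal $\mathbf{1}_{(x_1<y_1)}$ — the walk already lies ``above the barrier'' at time $0$ exactly when $y_1\ge x_1$ — so the identity holds for all $y_1\neq x_1$; at $y_1=x_1$ the two sides differ by $\mathsf{q}_{n-1}^{(n)}(y_2)$, a consequence of the strict inequality carried by $\partial^{-1}(\cdot,\cdot)$ versus the non-strict one in the definition of $\tau$, but this is a null set in $y_1$, harmless for the use of the lemma inside the Fredholm determinant of Theorem~\ref{ThmBessel}, so I would simply fix the convention and prove the identity for $y_1\neq x_1$.

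For the inductive step I split on the sign of $y_1-x_1$. On $\{y_1>x_1\}$ one has $\tau=0$, so the right-hand side is $\frac{(y_1-y_2)^{n-1}}{(n-1)!}$, while every indicator $\mathbf{1}_{(x_{n-k}<y_1)}$ on the left is active and the left-hand side becomes $\sum_{k=0}^{n-1}\frac{(y_1-x_{n-k})^{n-k-1}}{(n-k-1)!}\,\mathsf{q}_k^{(n)}(y_2;x)$. By Definition~\ref{PolyDefinition} the polynomials $\mathsf{q}_0^{(n)},\dots,\mathsf{q}_{n-1}^{(n)}$ form the basis of $\mathfrak{P}_{\le n-1}$ dual to the functionals $p\mapsto(-1)^i\partial_z^i p(z)\big|_{z=x_{n-i}}$, so this sum is exactly the expansion of $y_2\mapsto\frac{(y_1-y_2)^{n-1}}{(n-1)!}\in\mathfrak{P}_{\le n-1}$ in that basis, hence equals $\frac{(y_1-y_2)^{n-1}}{(n-1)!}$ and the two sides match.

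On $\{y_1<x_1\}$ the $k=n-1$ term on the left vanishes, its prefactor being $\big[\partial^{-1}\delta_{x_1}\big](y_1)=\mathbf{1}_{(x_1<y_1)}=0$, so the left-hand side is $\sum_{k=0}^{n-2}\big[\partial^{-(n-k)}\delta_{x_{n-k}}\big](y_1)\,\mathsf{q}_k^{(n)}(y_2;x)$. For the right-hand side, $\mathsf{R}_0=y_1<x_1$ forces $\tau\ge1$ on $\{\tau<n\}$; conditioning on the first step $\mathsf{R}_1=w_1$ gives $\tau=1+\tau'$ with $\tau'$ the analogous stopping time for the walk $(\mathsf{R}_1,\mathsf{R}_2,\dots)$ and the shifted barrier $(x_2,\dots,x_n)$. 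Writing $e^{y_1-\mathsf{R}_\tau}=e^{y_1-\mathsf{R}_1}e^{\mathsf{R}_1-\mathsf{R}_\tau}$ and using the key cancellation $\mathbf{E}\big[e^{y_1-\mathsf{R}_1}\varphi(\mathsf{R}_1)\big]=\int^{y_1}\varphi(w_1)\,dw_1$ — the factor $e^{y_1-\mathsf{R}_1}$ kills the exponential step density — turns the right-hand side into $\int^{y_1}\mathbf{E}_{\mathsf{R}_0=w_1}\big[e^{w_1-\mathsf{R}_{\tau'}}\frac{(\mathsf{R}_{\tau'}-y_2)^{(n-1)-\tau'-1}}{((n-1)-\tau'-1)!}\mathbf{1}_{(\tau'<n-1)}\big]\,dw_1$. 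The integrand is the right-hand side of the lemma at level $n-1$ for the initial condition $(x_2,\dots,x_n)$ with $w_1$ in place of $y_1$; by the inductive hypothesis it equals $\sum_{k=0}^{n-2}\big[\partial^{-(n-1-k)}\delta_{x_{n-k}}\big](w_1)\,\mathsf{q}_k^{(n-1)}(y_2;(x_2,\dots,x_n))$, and since $\mathsf{q}_k^{(n-1)}(\cdot;(x_2,\dots,x_n))$ and $\mathsf{q}_k^{(n)}(\cdot;(x_1,\dots,x_n))$ are the unique polynomials solving the same interpolation conditions at the same nodes $x_n,\dots,x_{n-k}$, they coincide. Performing $\int^{y_1}\big[\partial^{-(n-1-k)}\delta_{x_{n-k}}\big](w_1)\,dw_1=\big[\partial^{-(n-k)}\delta_{x_{n-k}}\big](y_1)$ then reproduces the left-hand side, closing the induction.

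The conceptual heart is the cancellation $\mathbf{E}[e^{y_1-\mathsf{R}_1}\varphi(\mathsf{R}_1)]=\int^{y_1}\varphi$, which is what allows a random walk to appear at all; the main obstacle is book-keeping — keeping the reductions to the shifted barrier $(x_2,\dots,x_n)$, the level shift $n\mapsto n-1$, the degrees of the polynomials, and the strict/non-strict inequality conventions mutually consistent, since these indices are easy to transpose.
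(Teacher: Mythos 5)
Your proof is correct. Note first that the paper itself does not prove this lemma: it is imported verbatim as Proposition~5.7 of \cite{NicaQuastelRemenik}, so there is no in-paper argument to compare against; what you have written is a self-contained verification, and it holds up. The two pillars both check out: (i) for $y_1>x_1$ the left-hand side is precisely the expansion of $y_2\mapsto\frac{(y_1-y_2)^{n-1}}{(n-1)!}$ in the basis $\{\mathsf{q}_k^{(n)}\}$ dual to the functionals $p\mapsto(-1)^i\partial_z^ip(z)\big|_{z=x_{n-i}}$, since $\ell_k\bigl(\tfrac{(y_1-\cdot)^{n-1}}{(n-1)!}\bigr)=\frac{(y_1-x_{n-k})^{n-k-1}}{(n-k-1)!}$, which matches the coefficients $[\partial^{-(n-k)}\delta_{x_{n-k}}](y_1)$ when all indicators are active; (ii) for $y_1<x_1$ the one-step decomposition with the cancellation $\mathbf{E}[e^{y_1-\mathsf{R}_1}\varphi(\mathsf{R}_1)]=\int_{-\infty}^{y_1}\varphi(w)\,dw$, the consistency $\mathsf{q}_k^{(n-1)}(\cdot;(x_2,\dots,x_n))=\mathsf{q}_k^{(n)}(\cdot;x)$ for $k\le n-2$ (same nodes $x_{n-i}$, and only conditions $i=0,\dots,k$ determine a degree-$k$ polynomial), and the integration $\int_{-\infty}^{y_1}[\partial^{-(n-1-k)}\delta_{x_{n-k}}](w)\,dw=[\partial^{-(n-k)}\delta_{x_{n-k}}](y_1)$ close the induction cleanly. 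Your observation about $y_1\in\{x_1,\dots,x_n\}$ is accurate: with the paper's strict inequality in $\partial^{-m}(x,y)$ versus the non-strict one in $\tau$, the identity as stated does fail on that null set, which is harmless for its only use (inside the kernel of a Fredholm determinant, i.e.\ under integration in $y_1$); flagging the convention rather than glossing over it is the right call. The only cosmetic point is that one should say explicitly that Fubini/the Markov property at time $1$ is justified because on $\{\tau<n\}$ one has $\mathsf{R}_\tau\in[x_n,y_1]$, so all integrands are bounded on their support; this is immediate but worth a line.
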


We can now prove Theorem \ref{ThmBessel}.

\begin{proof}[Proof of Theorem \ref{ThmBessel}]
We apply Theorem \ref{ThmOneSidedDown}. Observe that we have $\mathsf{L}^{(k)}=\mathcal{B}^{(\theta+2N-2k)}$ and $\mathsf{c}^{(k)}\equiv 0$. Moreover, since $\theta\ge 2$ the left boundary $0$ is an entrance boundary point while $\infty$ is always natural for all $k=1,\dots,N$, so that in particular our standing assumption is satisfied. We now rewrite the function $\mathfrak{G}_{n_2}(y_1,y_2)$ in the form that appears in the definition of the kernel $\mathfrak{B}_t^{(\theta)}$. We compute
\begin{align*}
\mathfrak{G}_{n_2}(y_1,y_2)&=\sum_{k=0}^{n_2-1}\partial_{y_1}^{-(n_2-k)}e^{t\mathsf{L}^{(n_2-k)}}(x_{n_2-k},y_1)\mathsf{q}_k^{(n_2)}(y_2;x)\\
&=\sum_{k=0}^{n_2-1}\partial_{y_1}^{-(n_2-k)}e^{t\mathcal{B}^{(\theta+2N-2n_2+2k)}}(x_{n_2-k},y_1)\mathsf{q}_k^{(n_2)}(y_2;x)\\
&=e^{t\mathcal{B}_{y_1}^{(4-\theta-2N)}}\sum_{k=0}^{n_2-1}\left[\partial^{-(n_2-k)}\delta_{x_{n_2-k}}\right](y_1)\mathsf{q}^{(n_2)}_{k}(y_2;x)\\
&=e^{t\mathcal{B}_{y_1}^{(4-\theta-2N)}}\mathbf{E}_{\mathsf{R}_0=y_1}\left[e^{y_1-\mathsf{R}_\tau}\frac{\left(\mathsf{R}_\tau-y_2\right)^{n-\tau-1}}{(n-\tau-1)!}\mathbf{1}_{(\tau<n)}\right],
\end{align*}
where we have used $\partial_z^{-(n_2-k)}e^{t\mathcal{B}^{(\theta+2N-2n_2+2k)}}(x_{n_2-k},y_1)=\left[e^{t\mathcal{B}^{(4-\theta-2N)}}\partial^{-(n_2-k)}\delta_{x_{n_2-k}}\right](y_1)$ by virtue of Lemma \ref{KeyBesselLemma} (note that the semigroup on the right hand side is independent of $k$ and thus can be taken out of the sum) and then Lemma \ref{RWrepLem}. This gives the first expression for $\mathfrak{B}_t^{(\theta)}$.

It remains to establish the equivalent form of $\mathfrak{B}_t^{(\theta)}$ given in (\ref{PathIntegralKernel}). Observe that for $n_1\ge n_2$ the equality is obvious since $\partial^{-(n_2-n_1)}\partial^{n_2}=\partial^{n_1}$ in this case. For $n_2>n_1$, we claim that for $n_2=0,1,\dots, N$, we have for any $z\in (l,r)$
\begin{equation}\label{DerivativeAtZero}
  \partial_{y_1}^{n_2}e^{t\mathcal{B}^{(4-\theta-2N)}}(y_1,z)\big|_{y_1=0}=0,
\end{equation}
from which the desired result follows by virtue of the fact that for $0\le k \le m$, and $f$ satisfying $\partial^if(l)=0$, for $i=0,\dots,m-1$, we have $\partial^{-k}\partial^{m} f=\partial^{m-k}f$. This fact can easily be proven by induction: $\partial^{-k}\partial^m f=\partial^{-(k-1)} \partial^{-1}\partial \partial^{m-1}f=\partial^{-(k-1)}\partial^{m-1}f$, since $\partial^{m-1}f(l)=0$, and so on. Now, equality (\ref{DerivativeAtZero}) can be seen as follows. From the symmetry property (\ref{BesselSymmetry}) we have:
\begin{equation*}
  \partial_{y_1}^{n_2}e^{t\mathcal{B}^{(4-\theta-2N)}}(y_1,z)= \partial_{y_1}^{n_2}e^{t\mathcal{B}^{(2N+\theta)}}(z,y_1).
\end{equation*}
Then, observe that by iterating the equality (proven by direct computation using properties of Bessel functions)
\begin{equation*}
  \partial_y e^{t\mathcal{B}^{(\theta)}}(z,y)=\frac{1}{2t}\left(-e^{t\mathcal{B}^{(\theta)}}(z,y)+e^{t\mathcal{B}^{(\theta-2)}}(z,y)\right)
\end{equation*}
we obtain the formula
\begin{equation*}
    \partial_{y_1}^{n_2}e^{t\mathcal{B}^{(2N+\theta)}}(z,y_1) = \frac{1}{(2t)^{n_2}}\sum_{j=0}^{n_2}(-1)^{n_2-j}\binom{n_2}{j}e^{t\mathcal{B}^{(2N+\theta-2j)}}(z,y_1).
\end{equation*}
The claim follows since $e^{t\mathcal{B}^{(2N+\theta-2j)}}(z,0)=0$, for $j=0,1,\dots,N$.
\end{proof}

\begin{rmk}\label{RemarkProbRep}
Some formal computations (one needs to be careful with boundary conditions) show that it is possible to find an analogue of Lemma \ref{KeyBesselLemma} for general $\mathsf{L}^{(k)}$-diffusions as in our standing assumption. Further computations then lead to an expression of the form 
\begin{equation}\label{NotUsefulDisplay}
 \sum_{k=0}^{n-1}g(k)\left[\partial^{-(n-k)}\delta_{x_{n-k}}\right](y_1)\mathsf{q}^{(n)}_{k}(y_2;x),\end{equation}
 appearing in the formula for the correlation kernel, for a certain function $g(k)$ depending on $\mathsf{L}^{(k)}$ (in the squared Bessel and Brownian cases $g(k)\equiv 1$). Following the working in \cite{NicaQuastelRemenik, KPZfixedpoint, matetski2022tasep} it is not hard to give an intermediate probabilistic representation for (\ref{NotUsefulDisplay}) in terms of an exponential random walk $\left(\mathsf{R}^*_k;k\ge 0\right)$ with steps only to the right; after conjugation $\left[\partial^{-(n-k)}\delta_{x_{n-k}}\right](y_1)$ can be written as a transition probability for $\mathsf{R}^*_k$ and $\mathsf{q}_k^{(n)}(y_2)$ in terms of a related stopping time. However, in order to obtain an expression as in Lemma \ref{RWrepLem}, one needs to reverse time (the direction of the walk) and with non-constant $g(k)$ this involves a last exit time for the walk $\left(\mathsf{R}_k;k\ge 0\right)$ which is a not a Markov time (we do not have the strong Markov property starting from a last exit time) and this is not conducive for the rest of the argument. It might still be possible to give a somewhat different (but still potentially useful for asymptotics) random walk representation to Lemma \ref{RWrepLem} but we do not pursue it further here.
\end{rmk}

\section{Proof of Theorem \ref{NoncollidingThm}}\label{SectionNonColliding}

We begin with a brief discussion on why the semigroup $\left(\mathsf{P}_t^{(N)};t\ge 0\right)$ from (\ref{NonCollidingSemigroup}) is well-defined and on well-posedness of the SDE (\ref{NonCollidingSDE}), even from points with multiple coinciding coordinates. The following lemma is an immediate consequence of the differentiation formulae for $\mathsf{\Delta}_N(x)$ in \cite{Doumerc}, by virtue of the polynomial form of $\mathsf{a}$ and $\mathsf{b}$.

\begin{lem}
Let $N\in \mathbb{N}$. Consider $\mathsf{L}$ with $\mathsf{a}$ and $\mathsf{b}$ as in (\ref{DiffusionDrift}). We have the following relation, with $x=(x_1,\dots,x_N)\in (l,r)^N$,
\begin{equation*}
 \left[\sum_{i=1}^N \mathsf{L}_{x_i}\right] \mathsf{\Delta}_N(x)=\lambda_N \mathsf{\Delta}_N(x), \ \ \textnormal{ with } \lambda_N=\frac{1}{6}N(N-1)(2a_2(N-2)+3b_1).
\end{equation*}
\end{lem}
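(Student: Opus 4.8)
The plan is to verify the eigenfunction relation $\left[\sum_{i=1}^N \mathsf{L}_{x_i}\right]\mathsf{\Delta}_N(x)=\lambda_N\mathsf{\Delta}_N(x)$ by a direct computation, using the differentiation formulae for the Vandermonde determinant $\mathsf{\Delta}_N(x)=\prod_{1\le i<j\le N}(x_j-x_i)$ referenced from \cite{Doumerc}. The key facts about logarithmic derivatives of $\mathsf{\Delta}_N$ that one would use are
\begin{equation*}
\frac{\partial_{x_i}\mathsf{\Delta}_N(x)}{\mathsf{\Delta}_N(x)}=\sum_{j\neq i}\frac{1}{x_i-x_j},\qquad
\frac{\partial_{x_i}^2\mathsf{\Delta}_N(x)}{\mathsf{\Delta}_N(x)}=\sum_{\substack{j,k\neq i\\ j\neq k}}\frac{1}{(x_i-x_j)(x_i-x_k)}.
\end{equation*}
Since $\mathsf{a}(x)=a_2x^2+a_1x+a_0$ and $\mathsf{b}(x)=b_1x+b_0$, applying $\mathsf{L}_{x_i}=\mathsf{a}(x_i)\partial_{x_i}^2+\mathsf{b}(x_i)\partial_{x_i}$ to $\mathsf{\Delta}_N$ and dividing by $\mathsf{\Delta}_N$ gives $\sum_i\bigl[\mathsf{a}(x_i)\sum_{j\neq k,\ j,k\neq i}\frac{1}{(x_i-x_j)(x_i-x_k)}+\mathsf{b}(x_i)\sum_{j\neq i}\frac{1}{x_i-x_j}\bigr]$. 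The heart of the argument is that this rational expression is in fact a \emph{constant} — all the poles cancel — and that constant is $\lambda_N$.

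To see the pole cancellation and extract the constant, I would follow the standard route: split the double sum over $j,k\neq i$ with $j\neq k$ into its symmetric part, and use the partial-fraction identity $\frac{1}{(x_i-x_j)(x_i-x_k)}=\frac{1}{x_j-x_k}\bigl(\frac{1}{x_i-x_j}-\frac{1}{x_i-x_k}\bigr)$ together with the classical symmetrization trick over the symmetric group on $\{i,j,k\}$; for the $a_2x_i^2$ and $a_1x_i$ and $a_0$ pieces of $\mathsf{a}(x_i)$ one writes $x_i^m$ in terms of $(x_i-x_j)$, $(x_i-x_k)$ and the other variables so that the singular terms telescope against the first-order ($\mathsf{b}$) contribution. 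The cleanest bookkeeping device is to note that $\sum_i\mathsf{L}_{x_i}\mathsf{\Delta}_N$ is a polynomial (being a differential operator with polynomial coefficients applied to a polynomial) which is antisymmetric in $x_1,\dots,x_N$ and of degree at most $\binom{N}{2}=\deg\mathsf{\Delta}_N$ (the operator $\mathsf{a}\partial^2$ preserves degree, $\mathsf{b}\partial$ lowers or preserves it); hence it must be a scalar multiple of $\mathsf{\Delta}_N$, and the scalar is read off from the top-degree coefficient. Matching the coefficient of the leading monomial — equivalently, computing $\sum_i\bigl(a_2x_i^2\,\partial_{x_i}^2+b_1x_i\,\partial_{x_i}\bigr)$ acting on $\prod_{i<j}(x_j-x_i)$ at leading order, which amounts to summing $a_2\deg_i(\deg_i-1)+b_1\deg_i$ over the exponent pattern $(0,1,\dots,N-1)$ — yields $\sum_{m=0}^{N-1}\bigl(a_2m(m-1)+b_1m\bigr)=a_2\cdot\frac{(N-1)N(2N-1)}{6}-a_2\cdot\frac{(N-1)N}{2}+b_1\cdot\frac{(N-1)N}{2}$, which simplifies to $\tfrac16 N(N-1)\bigl(2a_2(N-2)+3b_1\bigr)=\lambda_N$.

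The main obstacle is purely the algebraic simplification: showing that the rational function $\sum_i[\mathsf{a}(x_i)(\cdots)+\mathsf{b}(x_i)(\cdots)]$ has no poles is the nontrivial step, since a priori it has simple poles along each hyperplane $x_i=x_j$. The degree-and-antisymmetry argument sidesteps most of this — it guarantees a priori that $\bigl[\sum_i\mathsf{L}_{x_i}\bigr]\mathsf{\Delta}_N$ is divisible by $\mathsf{\Delta}_N$ with polynomial (indeed constant) quotient, so one only needs the leading-coefficient computation above — and that is the approach I would actually write up, citing \cite{Doumerc} for the Vandermonde differentiation formulae that make the degree count transparent. (One could alternatively invoke the general principle that the Vandermonde is the ground-state $h$-transform eigenfunction for $N$ independent copies of a diffusion with polynomial coefficients, but the self-contained polynomial-degree argument is shorter here.)
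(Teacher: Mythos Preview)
Your proposal is correct and follows essentially the same route the paper points to: the paper's proof consists of a single sentence citing the differentiation formulae for $\mathsf{\Delta}_N$ from \cite{Doumerc} together with the polynomial form of $\mathsf{a},\mathsf{b}$, and your argument is precisely a self-contained execution of that. The antisymmetry-plus-degree observation (that $\sum_i\mathsf{L}_{x_i}\mathsf{\Delta}_N$ is an antisymmetric polynomial of degree at most $\binom{N}{2}$, hence a scalar multiple of $\mathsf{\Delta}_N$) followed by reading off the scalar from the staircase monomial is exactly the standard way to make the cited result explicit, and your eigenvalue computation is correct.
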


Thus, $\mathsf{\Delta}_N(x)$ is a strictly positive eigenfunction in $\mathbb{W}_N^{\uparrow,\circ}$ of the generator $\sum_{i=1}^N \mathsf{L}_{x_i}$ of $N$ independent $\mathsf{L}$-diffusions in $\mathbb{W}_N^{\uparrow}$ killed when they intersect (namely with Dirichlet boundary conditions on the boundary of $\mathbb{W}_N^{\uparrow}$). We can then define the Doob $h$-transform \cite{Doob,RevuzYor,Pinsky} by $\mathsf{\Delta}_N$ of the corresponding sub-Markov semigroup, with transition density with respect to the Lebesgue measure given by the Karlin-McGregor formula \cite{KarlinMcGregor} as $\det\left(e^{t\mathsf{L}}(x_i,y_j)\right)_{i,j=1}^N$, which gives the definition of $\mathsf{P}_t^{(N)}$ in (\ref{NonCollidingSemigroup}).

We also have the following result on the SDEs. As mentioned, for initial conditions $\mathsf{z}(0)\in \mathbb{W}_N^{\uparrow,\circ}$ a direct generic argument is available that uses the Doob $h$-transform structure, see Appendix A in \cite{SingularValuesBMDrift}. Instead, for $\mathsf{z}(0)\in \mathbb{W}_N^{\uparrow}$ we apply the general results of \cite{GraczykMalecki} by checking the conditions therein. These conditions have already been checked in previous works \cite{GraczykMalecki,HuaPickrell} for some generators of the form $\mathsf{L}$ and the general case is analogous. 

\begin{prop}
Let $\mathsf{z}(0)\in \mathbb{W}_N^{\uparrow}$. Then, the system of SDEs (\ref{NonCollidingSDE}) has a unique strong solution with no collisions for positive time, namely almost surely, for all $t>0$, $\mathsf{z}(t)\in \mathbb{W}_N^{\uparrow,\circ}$.
\end{prop}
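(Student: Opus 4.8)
The plan is to prove the two regimes separately, exactly as flagged in the discussion preceding the statement. For an initial point $\mathsf{z}(0)=x\in\mathbb{W}_N^{\uparrow,\circ}$, away from the diagonal and from the boundary of $\mathbb{W}_N^{\uparrow}$ the coefficients of (\ref{NonCollidingSDE}) are locally Lipschitz and the diffusion coefficient is non-degenerate, since $\mathsf{a},\mathsf{b}$ are polynomials and $\mathsf{a}>0$ on $(l,r)$; hence classical theory yields a unique strong solution up to the first time $\zeta$ that $\mathsf{z}$ exits $\mathbb{W}_N^{\uparrow,\circ}$. To upgrade this to a global solution with $\zeta=\infty$ (equivalently, with no collisions for positive time) I would invoke the Doob $h$-transform structure: by construction the law of $(\mathsf{z}(t);t<\zeta)$ is the $\mathsf{\Delta}_N$-transform of $N$ independent $\mathsf{L}$-diffusions killed upon collision, with transition semigroup $\mathsf{P}_t^{(N)}$ of (\ref{NonCollidingSemigroup}), so it suffices to know that $\mathsf{P}_t^{(N)}$ is conservative, i.e. $\mathsf{P}_t^{(N)}\mathbf{1}=\mathbf{1}$. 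This is the generic argument of Appendix A of \cite{SingularValuesBMDrift}.

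Conservativeness of $\mathsf{P}_t^{(N)}$ I would establish by the martingale argument already used in Lemma \ref{LemCoincidence} and Proposition \ref{MartingalePolynomials}. Let $\mathsf{X}(s)=(\mathsf{X}_1(s),\dots,\mathsf{X}_N(s))$ be $N$ independent $\mathsf{L}$-diffusions started from $x\in\mathbb{W}_N^{\uparrow,\circ}$ and let $\tau_{\mathrm{coll}}$ be their first collision time. By the preceding lemma $\sum_{i=1}^N\mathsf{L}_{x_i}\mathsf{\Delta}_N=\lambda_N\mathsf{\Delta}_N$, so $u(s,y)=e^{-s\lambda_N}\mathsf{\Delta}_N(y)$ satisfies $\partial_s u+\sum_i\mathsf{L}_{y_i}u=0$, and It\^{o}'s formula shows that $s\mapsto e^{-s\lambda_N}\mathsf{\Delta}_N(\mathsf{X}(s))$ is a local martingale; since $\mathsf{\Delta}_N$ has polynomial growth and the $\mathsf{L}$-diffusions have polynomial moments (as recorded in Section \ref{Introduction}) it is a true martingale, exactly as in Lemma \ref{LemCoincidence}. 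As $\mathsf{\Delta}_N$ vanishes whenever two coordinates coincide, optional stopping at $t\wedge\tau_{\mathrm{coll}}$ gives $\mathbb{E}_x\!\left[\mathsf{\Delta}_N(\mathsf{X}(t))\mathbf{1}_{(t<\tau_{\mathrm{coll}})}\right]=e^{t\lambda_N}\mathsf{\Delta}_N(x)$. Because the killed transition density is $\det\left(e^{t\mathsf{L}}(x_i,y_j)\right)_{i,j=1}^N$ by the Karlin--McGregor formula \cite{KarlinMcGregor}, the left-hand side equals $\int_{\mathbb{W}_N^{\uparrow}}\mathsf{\Delta}_N(y)\det\left(e^{t\mathsf{L}}(x_i,y_j)\right)_{i,j=1}^N dy$, whence $\mathsf{P}_t^{(N)}\mathbf{1}(x)=1$. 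The fact that $\mathsf{z}(t)\in\mathbb{W}_N^{\uparrow,\circ}$ for all $t>0$ is then automatic, since the $h$-transformed process never reaches the set where $\mathsf{\Delta}_N$ vanishes.

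For a general initial point $x\in\mathbb{W}_N^{\uparrow}$, including points with coinciding coordinates, I would apply the results of \cite{GraczykMalecki} on strong solutions of non-colliding particle systems. One rewrites (\ref{NonCollidingSDE}) in their form $d\mathsf{z}_i=\sigma(\mathsf{z}_i)\,d\mathsf{w}_i+\bigl(\mathsf{b}(\mathsf{z}_i)+\sum_{j\neq i}\tfrac{H(\mathsf{z}_i,\mathsf{z}_j)}{\mathsf{z}_i-\mathsf{z}_j}\bigr)dt$ with $\sigma=\sqrt{2\mathsf{a}}$ and $H(u,v)=2\mathsf{a}(u)$ (symmetrising $H$ if their statement requires it), and verifies their hypotheses: $\sigma$ is locally Lipschitz and non-degenerate on $(l,r)$ since $\mathsf{a}$ is a positive polynomial there; $\mathsf{b}$ is a polynomial, hence locally Lipschitz; the endpoints $l,r$ are inaccessible by the standing assumption in Definition \ref{StandingAssumption}; and the repulsion coefficient multiplying $(\mathsf{z}_i-\mathsf{z}_j)^{-1}$ equals $2\mathsf{a}(\mathsf{z}_i)=\sigma(\mathsf{z}_i)^2$, which is exactly the critical strength (the analogue of $\beta=2$ for Dyson's Brownian motion) at which \cite{GraczykMalecki} guarantee both strong uniqueness and instantaneous, and permanent, separation of the coordinates. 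These checks have already been carried out for particular generators of the form $\mathsf{L}$ in \cite{GraczykMalecki} and in \cite{HuaPickrell}, and by virtue of the polynomial form of $\mathsf{a},\mathsf{b}$ the general case is identical; I would only write out explicitly the one or two inequalities in their hypotheses that involve $\mathsf{a}$ and $\mathsf{b}$.

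The main obstacle is this last point. The conclusion depends on the repulsion being \emph{exactly} $2\mathsf{a}(\mathsf{z}_i)$, the threshold at which coinciding coordinates separate immediately and never re-collide, so the argument must be run at the critical case of the \cite{GraczykMalecki} criteria rather than in the easier super-critical regime. A related subtlety is that $\mathsf{a}$ may vanish at an endpoint that looks accessible (for instance $\mathsf{a}(x)=2x$ at $0$ for the squared Bessel generator), which could seem to weaken the repulsion near that endpoint; this is controlled precisely because the Feller classification built into the standing assumption forces $l$ and $r$ to be natural or entrance, so on every finite time interval the particles stay bounded away from the degeneracy locus of $\mathsf{a}$. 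Everything else is routine once the framework of \cite{GraczykMalecki} is in place.
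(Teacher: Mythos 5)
Your overall strategy for the essential case (general $x\in\mathbb{W}_N^{\uparrow}$, possibly with coinciding coordinates) is the same as the paper's: invoke Theorem 2.2 of \cite{GraczykMalecki}. The separate treatment of $x\in\mathbb{W}_N^{\uparrow,\circ}$ via the Doob $h$-transform and conservativeness of $\mathsf{P}_t^{(N)}$ is correct (and is exactly the generic argument of \cite{SingularValuesBMDrift} that the paper alludes to) but is redundant, since the Graczyk--Ma\l{}ecki theorem already covers interior starting points; the paper runs only the second argument.

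The genuine gap is in your verification of the Graczyk--Ma\l{}ecki hypotheses, precisely at the point you flag as ``symmetrising $H$ if their statement requires it.'' Their framework does require the interaction kernel $H_{ij}(x,y)$ to be symmetric, and your choice $H(u,v)=2\mathsf{a}(u)$ is not; moreover the obvious symmetrization $\mathsf{a}(u)+\mathsf{a}(v)$ does not work, because
\begin{equation*}
\frac{2\mathsf{a}(x_i)-\bigl(\mathsf{a}(x_i)+\mathsf{a}(x_j)\bigr)}{x_i-x_j}=a_2\,(x_i-x_j),
\end{equation*}
and summing over $j\neq i$ leaves a residual drift depending on $\sum_{j\neq i}x_j$, i.e.\ on all coordinates, which is not of the admissible single-particle form $b_i(x_i)$. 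The step that actually makes the argument go through is the algebraic identity
\begin{equation*}
 2\mathsf{a}(x_i)\sum_{j\neq i}\frac{1}{x_i-x_j}=\sum_{j\neq i}\frac{2a_2x_ix_j+a_1(x_i+x_j)+2a_0}{x_i-x_j}+2(N-1)a_2x_i+(N-1)a_1,
\end{equation*}
so that one takes the symmetric kernel $H(x,y)=2a_2xy+a_1(x+y)+2a_0$ and absorbs the leftover \emph{affine function of $x_i$ alone} into the drift. One then checks conditions (A1)--(A3) for the three pieces $2a_2xy$, $a_1(x+y)$, $2a_0$ (done in \cite{GraczykMalecki,HuaPickrell}) and adds the inequalities, notes that (A4) is vacuous because $\mathsf{a}>0$ on $(l,r)$, and that (A5) is vacuous because the drift is the same for all $i$. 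Without this specific decomposition your appeal to the theorem does not close; I would also drop the heuristic about ``exactly the critical strength,'' since what is actually verified is the list (A1)--(A5) for this $H$, not a sharp threshold statement.
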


\begin{proof}
We apply Theorem 2.2 in \cite{GraczykMalecki}. The Yamada-Watanabe conditions (C1)-(C2) therein hold by our standing assumption (namely the form of the polynomial functions $\mathsf{a}$ and $\mathsf{b}$). Moreover, note that we can write the interacting drift term in the SDE (\ref{NonCollidingSDE}) as follows, for $i=1,\dots,N$:
\begin{equation*}
 2\mathsf{a}(x_i)\sum_{j\neq i}\frac{1}{x_i-x_j}=\sum_{j\neq i}\frac{2a_2x_ix_j+a_1(x_i+x_j)+2a_0}{x_i-x_j}+2(N-1)a_2x_i+(N-1)a_1.
\end{equation*}
We can thus take the function $H_{ij}(x,y)\equiv H(x,y)$ in \cite{GraczykMalecki} to be equal to
\begin{equation*}
H(x,y)=2a_2xy+a_1(x+y)+2a_0.
\end{equation*}
If we write
\begin{equation*}
H_1(x,y)=2a_2xy, \ H_2(x,y)=a_1(x+y), \ H_3(x,y)=2a_0,  
\end{equation*}
we note that conditions (A1), (A2), (A3) in \cite{GraczykMalecki} have already been checked for each of $H_1, H_2, H_3$ in \cite{GraczykMalecki, HuaPickrell} from which the corresponding condition for $H$ follows simply by adding up the individual inequalities in those conditions. Finally, the sets in condition (A4) in \cite{GraczykMalecki} are empty since, by our standing assumption, $\mathsf{a}(x)>0$ for all $x\in (l,r)$ and moreover condition (A5) is vacuous since the drift $\mathsf{b}$ does not depend depend on $i$. The conclusion of the proposition follows from the aforementioned theorem.
\end{proof}

Moving on towards proving Theorem \ref{NoncollidingThm} the following proposition plays a key role. In the Brownian and squared Bessel cases the proposition below specialises, in an equivalent form (as in these cases it is possible to write $e^{-t\mathsf{L}}p$, with $p$ a polynomial and $t>0$, as a certain integral expression, see \cite{KatoriTanemura,KatoriTanemuraBessel}), to some key results from \cite{KatoriTanemura,KatoriTanemuraBessel}. The main idea is to rewrite the measure from (\ref{NonCollidingDistribution}), (\ref{preliminarydensity}) below in an equivalent form so that the functions appearing in the first and last determinants in (\ref{DisplayNonColliding1}) are biorthogonal with respect to the transition density $e^{(t_M-t_1)\mathsf{L}}(y_1,y_2)$ as we show later on. This working is basically equivalent (but better adapted to the present setting) to finding biorthogonal functions as we did in the previous section and will allow for an application of yet another variant of the Eynard-Mehta theorem \cite{JohanssonDeterminantal,BorodinDeterminantal,BorodinRains}.

\begin{prop}\label{DistributionNonColliding}
Let $x=(x_1,\dots,x_N)\in \mathbb{W}_N^{\uparrow}$ and $M\in \mathbb{N}$ be arbitrary. The distribution of the dynamics (\ref{NonCollidingSDE}), starting from initial condition $x$, at times $0<t_1< t_2 < \cdots < t_M$,
can be written as follows, with respect to Lebesgue measure on the $M$-fold product $\mathbb{W}_N^\uparrow \times \cdots \times \mathbb{W}_N^\uparrow$:
\begin{align}
 \frac{1}{Z_{N,t_M}} \det\left(\mathcal{Q}_i^{(t_1)}\left(y^{(1)}_j;x\right)\right)_{i,j=1}^N \det\left(e^{(t_2-t_1)\mathsf{L}}\left(y^{(1)}_i,y^{(2)}_j\right)\right)_{i,j=1}^N\times \cdots \nonumber \\ \times\det\left(e^{(t_M-t_{M-1})\mathsf{L}}\left(y^{(M-1)}_i,y^{(M)}_j\right)\right)_{i,j=1}^N \left(\mathcal{P}_{i}^{(t_M)}\left(y_j^{(M)};x\right)\right)_{i,j=1}^N,\label{DisplayNonColliding1}
\end{align}
for some normalization constant $Z_{N,t_M}$ independent of the x variables and the functions $\mathcal{Q}_i^{(t)}\left(\cdot\right)=\mathcal{Q}_i^{(t)}\left(\cdot;x\right)$ and polynomials $\mathcal{P}_i^{(t)}\left(\cdot\right)=\mathcal{P}_i^{(t)}\left(\cdot;x\right)$, for $i=1,\dots,N$, are given by
\begin{align}
   \mathcal{Q}_i^{(t)}(y;x) &=\frac{1}{2\pi \textnormal{i}}\oint_{\mathsf{\Gamma}_i^{(N)}}e^{t\mathsf{L}}(z,y)\frac{1}{\prod_{k=1}^i(z-x_k)}dz,\label{QfnComplexVarRep}\\
 \mathcal{P}_{i}^{(t)}(y;x)&=e^{-t\mathsf{L}_y}\prod_{k=1}^{i-1}(y-x_k), \label{PfnRep}
\end{align}
with the convention $\mathcal{P}_1^{(t)}\equiv 1$ and where $\mathsf{\Gamma}_i^{(N)}$ is a positively oriented contour around $x_1,\dots,x_i$ in a complex neighbourhood of $(x_1,x_N)$. Alternatively, $\mathcal{Q}_i^{(t)}(y;x)$ can be written as 
\begin{equation}\label{DividedDiffRep}
    \mathcal{Q}_i^{(t)}(y;x)= e^{t\mathsf{L}}(\cdot,y)[x_1,\dots,x_i],
\end{equation}
where $f[x_1,\dots,x_i]$ denotes the divided difference \cite{DividedDifferences} of a function $f$ on $(l,r)$ evaluated at points $(x_1,\dots,x_i)$.
\end{prop}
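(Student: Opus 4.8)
The plan is to start from the explicit transition kernel (\ref{NonCollidingSemigroup}) and chain it $M$ times. Applying $\mathsf{P}_{t_1}^{(N)}(x,\cdot)$, then $\mathsf{P}_{t_2-t_1}^{(N)}$, and so on, and using the semigroup/Karlin--McGregor structure $\det(e^{s\mathsf{L}}(\cdot,\cdot))$ for the intermediate steps (the Vandermonde factors telescope, leaving $\mathsf{\Delta}_N(y^{(M)})/\mathsf{\Delta}_N(x)$ and a scalar $e^{-t_M\lambda_N}$), the joint density of $(\mathsf{z}(t_1),\dots,\mathsf{z}(t_M))$ takes the form
\begin{equation}\label{preliminarydensity}
\frac{e^{-t_M\lambda_N}}{\mathsf{\Delta}_N(x)}\det\!\left(e^{t_1\mathsf{L}}(x_i,y^{(1)}_j)\right)_{i,j=1}^N\prod_{m=1}^{M-1}\det\!\left(e^{(t_{m+1}-t_m)\mathsf{L}}(y^{(m)}_i,y^{(m+1)}_j)\right)_{i,j=1}^N\mathsf{\Delta}_N(y^{(M)}).
\end{equation}
(For $x\in\mathbb{W}_N^{\uparrow,\circ}$ this is immediate; for general $x\in\mathbb{W}_N^\uparrow$ one passes to the limit using the smoothness and L'H\^opital argument already invoked for (\ref{NonCollidingSemigroup}), which I would defer to the weak-continuity discussion promised there.) So (\ref{NonCollidingDistribution}) above refers to exactly this expression.

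The substance is to rewrite the first determinant $\det(e^{t_1\mathsf{L}}(x_i,y^{(1)}_j))$ and the trailing Vandermonde $\mathsf{\Delta}_N(y^{(M)})$ so that the first one becomes $\det(\mathcal{Q}_i^{(t_1)}(y^{(1)}_j;x))$ and the last becomes $\det(\mathcal{P}_i^{(t_M)}(y^{(M)}_j;x))$, at the cost of constants absorbed into $Z_{N,t_M}$. For the last determinant this is a pure linear-algebra move: $\mathsf{\Delta}_N(y)=\det(y_j^{i-1})=\det(\prod_{k=1}^{i-1}(y_j-x_k))$ up to the sign/constant coming from column operations, and then I apply the \emph{backward} diffusion flow $e^{-t_M\mathsf{L}}$ in each $y^{(M)}$-variable. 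Here is the key point: by Lemma~\ref{LemCoincidence} and the semigroup property (Lemma~\ref{LemmaSemigroup}), inserting $e^{-(t_M-t_{M-1})\mathsf{L}}e^{(t_M-t_{M-1})\mathsf{L}}$ between the last transition kernel and the Vandermonde and moving the backward operator onto the polynomial is legitimate — $e^{-s\mathsf{L}}$ acting on a polynomial is well-defined (Proposition~\ref{WellDefinedProp}) and the pairing $\int e^{s\mathsf{L}}(x,y)[e^{-s\mathsf{L}}p](y)dy = p(x)$ makes the substitution consistent. Iterating this observation through all $M-1$ intermediate kernels shows one can replace $\mathsf{\Delta}_N(y^{(M)})$ by $\det(\mathcal{P}_i^{(t_M)}(y^{(M)}_j))$ while leaving everything else unchanged, modulo constants. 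For the first determinant, I want to replace $e^{t_1\mathsf{L}}(x_i,y)$ in row $i$ by the divided difference $e^{t_1\mathsf{L}}(\cdot,y)[x_1,\dots,x_i]$; this is again a row operation on the matrix $(e^{t_1\mathsf{L}}(x_i,y_j))_{i,j}$ (the divided difference at $x_1,\dots,x_i$ is a fixed linear combination of $e^{t_1\mathsf{L}}(x_1,\cdot),\dots,e^{t_1\mathsf{L}}(x_i,\cdot)$ with leading coefficient $1/\prod_{k<i}(x_i-x_k)$), so it changes the determinant only by a factor $\mathsf{\Delta}_N(x)^{-1}$ times a constant, cancelling the $\mathsf{\Delta}_N(x)^{-1}$ already present in (\ref{preliminarydensity}) and confirming $Z_{N,t_M}$ is $x$-independent. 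Finally the contour-integral formula (\ref{QfnComplexVarRep}) is identified with the divided difference (\ref{DividedDiffRep}) via the Hermite--Genocchi / residue representation $f[x_1,\dots,x_i]=\frac{1}{2\pi\mathrm{i}}\oint_{\mathsf{\Gamma}_i^{(N)}}\frac{f(z)}{\prod_{k=1}^i(z-x_k)}dz$, applied to $f(z)=e^{t_1\mathsf{L}}(z,y)$, which is analytic in $z$ near $[x_1,x_N]$ by the spectral-expansion remark in the introduction; the contour may be taken around only $x_1,\dots,x_i$ as stated, and the enlargement to the common contour $\mathsf{\Gamma}^{(N)}$ of Theorem~\ref{NoncollidingThm} is harmless when the numerator is a polynomial killing the extra poles (this is where $\mathcal{P}$ and $\mathcal{Q}$ mesh in the kernel computation, though that is beyond this proposition).

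The main obstacle I anticipate is bookkeeping the constants and signs through the repeated column/row operations and the telescoping Vandermondes, and — more delicately — justifying that the backward-flow substitution commutes with the nested integrations. Concretely, one must check that $\int_{(l,r)^N}$ of the product in (\ref{preliminarydensity}) against the inserted $e^{-s\mathsf{L}}e^{s\mathsf{L}}=\mathrm{id}$ converges and that Fubini applies; this rests on the growth bounds from Lemma~\ref{LemCoincidence} (polynomial bound on $\partial_x u$) together with the fact, recorded in the introduction, that $y\mapsto|\partial_x^i e^{t\mathsf{L}^{(k)}}(x,y)|$ integrates polynomials, so the potentially-divergent backward flow is always paired with a transition density that tames it. I would carry out the argument in the order: (i) derive (\ref{preliminarydensity}) by iterating (\ref{NonCollidingSemigroup}); (ii) perform the column operations on the trailing Vandermonde and push $e^{-t_M\mathsf{L}}$ through the intermediate kernels to produce the $\mathcal{P}$-determinant; (iii) perform the row operations on the first determinant to produce divided differences, absorbing $\mathsf{\Delta}_N(x)^{-1}$; (iv) identify the divided difference with the contour integral (\ref{QfnComplexVarRep}) and note the contour may be taken around $x_1,\dots,x_i$ only; (v) collect all $x$-independent constants into $Z_{N,t_M}$.
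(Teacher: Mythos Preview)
Your overall plan --- steps (i), (iii), (iv), (v) --- matches the paper's proof closely. The gap is in step (ii).

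You correctly observe $\mathsf{\Delta}_N(y^{(M)})=\det\bigl(\prod_{k=1}^{i-1}(y_j^{(M)}-x_k)\bigr)_{i,j}$ by row operations. But the next move, replacing each entry $\prod_{k=1}^{i-1}(y_j^{(M)}-x_k)$ by $\mathcal{P}_i^{(t_M)}(y_j^{(M)})=e^{-t_M\mathsf{L}}\prod_{k=1}^{i-1}(y_j^{(M)}-x_k)$, is \emph{not} justified by ``inserting $e^{-s\mathsf{L}}e^{s\mathsf{L}}$ between the last kernel and the Vandermonde and pushing through''. The pairing $\int e^{s\mathsf{L}}(x,y)[e^{-s\mathsf{L}}p](y)\,dy=p(x)$ is an identity \emph{after integrating out} $y$; it does not say the function $y\mapsto e^{s\mathsf{L}}(x,y)[e^{-s\mathsf{L}}p](y)$ equals $y\mapsto e^{s\mathsf{L}}(x,y)p(y)$ pointwise. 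So your insertion argument does not preserve the density as a function of $(y^{(1)},\dots,y^{(M)})$, and iterating it through the intermediate kernels does not fix this.

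The paper's argument at this step is again pure linear algebra and needs no kernels at all: by Proposition~\ref{WellDefinedProp} (specifically (\ref{LeadingCoeff})), $\mathcal{P}_i^{(t_M)}(y)$ is a polynomial in $y$ of degree exactly $i-1$ with leading coefficient $e^{-t_M(i-1)(b_1+a_2(i-2))}$. Hence $\{\mathcal{P}_i^{(t_M)}\}_{i=1}^N$ is a triangular basis of $\mathfrak{P}_{\le N-1}$, and row operations alone give
\[
\det\bigl(\mathcal{P}_i^{(t_M)}(y_j^{(M)})\bigr)_{i,j=1}^N=c\,\mathsf{\Delta}_N(y^{(M)})
\]
for an explicit constant $c=c(t_M,a_2,b_1,N)$, which is what gets absorbed into $Z_{N,t_M}$.

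One further remark: you defer the extension from $x\in\mathbb{W}_N^{\uparrow,\circ}$ to general $x\in\mathbb{W}_N^\uparrow$. In the paper this is a substantial portion of the proof, not a one-line L'H\^opital appeal: one establishes pointwise continuity of the density in $x$ via the divided-difference form (\ref{DividedDiffRep}) and then runs a dominated-convergence argument, using the mean value theorem for divided differences to bound $\mathcal{Q}_i^{(t_1)}(\cdot;x^{(n)})$ uniformly along an approximating sequence $x^{(n)}\to x$. This is where the representation (\ref{DividedDiffRep}) is genuinely needed, not merely as an alternative to the contour integral.
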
 

\begin{proof} Observe that, by the Markov property the distribution of the dynamics (\ref{NonCollidingSDE}), starting from initial condition $x=(x_1,\dots,x_N)\in \mathbb{W}_N^\uparrow$, at times $0<t_1 < t_2 < \cdots < t_M$ is given by:
\begin{equation}\label{NonCollidingDistribution}
\mathsf{P}_{t_1}^{(N)}\left(x,dy^{(1)}\right)\mathsf{P}_{t_2-t_1}^{(N)}\left(y^{(1)},dy^{(2)}\right)\cdots \mathsf{P}_{t_M-t_{M-1}}^{(N)}\left(y^{(M-1)},dy^{(M)}\right).
\end{equation}
Consider $x\in \mathbb{W}_N^{\uparrow,\circ}$ first. From the explicit formula in (\ref{NonCollidingSemigroup}) the above display (disregarding the $dy^{(i)}$ terms to ease notation), can be written as:
\begin{align}
\frac{1}{Z_{N,t_M}}\frac{1}{\mathsf{\Delta}_N(x)}\det\left(e^{t_1\mathsf{L}}\left(x_i,y^{(1)}_j\right)\right)_{i,j=1}^N\det\left(e^{(t_2-t_1)\mathsf{L}}\left(y^{(1)}_i,y^{(2)}_j\right)\right)_{i,j=1}^N \times \cdots\nonumber\\ 
\times \det\left(e^{(t_M-t_{M-1})\mathsf{L}}\left(y^{(M-1)}_i,y^{(M)}_j\right)\right)_{i,j=1}^N \mathsf{\Delta}_N\left(y^{(M)}\right).\label{preliminarydensity}
\end{align}
Here, $Z_{N,t_M}$ is a normalization constant which may change from line to line. We concentrate on the first two factors. Using the identity $\mathsf{\Delta}_N(x)=\prod_{i=2}^N \prod_{m=1}^{i-1}(x_i-x_m)$ we can write
\begin{equation*}
 \frac{1}{\mathsf{\Delta}_N(x)}\det\left(e^{t_1\mathsf{L}}\left(x_i,y^{(1)}_j\right)\right)_{i,j=1}^N=\det\left(e^{t_1\mathsf{L}}\left(x_i,y_j^{(1)}\right)\frac{1}{\prod_{m=1}^{i-1}(x_i-x_m)}\right)_{i,j=1}^N.
\end{equation*}
Then, by row operations we get that this is equal to:
\begin{equation*}
 \det\left(\sum_{m=1}^ie^{t_1\mathsf{L}}(x_m,y^{(1)}_j)\frac{1}{\prod_{\substack{k=1\\ k\neq m}}^i(x_m-x_k)}\right)_{i,j=1}^N.  
\end{equation*}
By the residue theorem, recall that $z\mapsto e^{t\mathsf{L}}(z,y)$ is analytic in a complex neighbourhood of $(x_1,x_N)$, we can write:
\begin{equation}\label{QSumRep}
 \sum_{m=1}^ie^{t_1\mathsf{L}}(x_m,y)\frac{1}{\prod_{\substack{k=1\\ k\neq m}}^i(x_m-x_k)}=\frac{1}{2\pi \textnormal{i}}\oint_{\mathsf{\Gamma}_i^{(N)}}e^{t\mathsf{L}}(z,y)\frac{1}{\prod_{k=1}^i(z-x_k)}dz.
\end{equation}
This gives the complex variables representation in (\ref{QfnComplexVarRep}) for the $\mathcal{Q}$-functions, while in (\ref{DividedDiffProof}) below we will see the divided differences representation (\ref{DividedDiffRep}). 

Now, we turn to the last factor $\mathsf{\Delta}_N\left(y^{(M)}\right)$. Recall that from Proposition \ref{WellDefinedProp}, we have that $e^{-t_M\mathsf{L}_y}\prod_{k=1}^{i-1}(y-x_k)$ is a polynomial of degree $i-1$ in $y$, with leading order coefficient $e^{t_M(i-1)(b_1+a_2(i-2))}$. Thus, by row operations $\mathsf{\Delta}_N\left(y^{(M)}\right)$ is equal, up to a multiplicative constant depending only on $t_M, b_1, a_2, N$, to
\begin{equation}\label{PpolynomialInterExpression}
 \det\left(e^{-t_M\mathsf{L}_{y_{j}^{(M)}}}\prod_{k=1}^{i-1}\left(y_j^{(M)}-x_k\right)\right)_{i,j=1}^N.
\end{equation}
This gives the required representation (\ref{PfnRep}) for the $\mathcal{P}$-polynomials.

We now extend the statement to general $x\in \mathbb{W}_N^{\uparrow}$ by continuity as follows. We show that the measure on $\mathbb{W}_N^\uparrow \times \cdots \times \mathbb{W}_N^\uparrow$ in (\ref{NonCollidingDistribution}), given by its density (\ref{preliminarydensity}) for $x\in \mathbb{W}_N^{\uparrow,\circ}$, which we have equivalently rewritten in (\ref{DisplayNonColliding1}), is weakly continuous in the initial condition $x$. We first show pointwise continuity in $x$ for the density as written in (\ref{DisplayNonColliding1}). Towards this end, observe that the expression for $\mathcal{P}_i^{(t)}(\cdot;x)$ in (\ref{PpolynomialInterExpression}) is easily seen to be continuous in $x=(x_1,\dots,x_N)\in \mathbb{W}_N^\uparrow$ since we can write by linearity
\begin{equation}\label{PpolyBackwardInTimeFormula}
   e^{-t_M\mathsf{L}_y}\prod_{k=1}^{i-1}(y-x_k) = \sum_{m=0}^{i-1} (-1)^{1-i-m}\mathfrak{e}_{i-1-m}(x_1,\dots,x_{i-1}) e^{-t_M\mathsf{L}_y}y^m,
\end{equation}
where  $\mathfrak{e}_k(x_1,\dots,x_{i-1})$ is the $k$-th elementary symmetric polynomial which is continuous in $x\in \mathbb{W}_N^{\uparrow}$. To see that the left hand side of (\ref{QSumRep}) is continuous in $x\in \mathbb{W}_N^\uparrow$ is slightly more involved (from the contour integral expression on the right hand side of (\ref{QSumRep}) this is straightforward but the real variable arguments below will also be used at the end of the proof) and we need some preliminaries. We define the divided difference $f[y_1,\dots,y_n]$ of a smooth function $f$ on $(l,r)$ at (distinct) points $(y_1,\dots,y_n)\in \mathbb{W}_n^{\uparrow,\circ}$ by the following procedure:
\begin{equation*}
f[y_1,y_2]=\frac{f(y_2)-f(y_1)}{y_2-y_1}, \ \ f[y_1,y_2,y_3]=\frac{f[y_2,y_3]-f[y_1,y_2]}{y_3-y_1},
\end{equation*}
and so on until
\begin{equation*}
f[y_1,\dots,y_n]=\frac{f[y_2,\dots,y_n]-f[y_1,\dots,y_{n-1}]}{y_n-y_1}.
\end{equation*}
The key observation, see for example \cite{DividedDifferences} for the general formula, is that for $x\in \mathbb{W}_N^{\uparrow,\circ}$, for $i=1,\dots, N$, we have (with the obvious convention $f[y_1]=f(y_1)$):
\begin{equation}\label{DividedDiffProof}
\sum_{m=1}^i e^{t_1\mathsf{L}}(x_m,y)\frac{1}{\prod_{\substack{k=1\\ k\neq m}}^i(x_m-x_k)}= e^{t_1\mathsf{L}}(\cdot,y)[x_1,\dots,x_i].
\end{equation}
Then, using the fact that divided differences are continuous for smooth functions, even at points with coinciding coordinates, see $\cite{DividedDifferences}$, we can extend the left hand side of (\ref{QSumRep}), namely $\mathcal{Q}_i^{(t)}\left(\cdot;x\right)$, for $i=1,\dots,N$, by continuity to general $x\in \mathbb{W}_N^\uparrow$. This gives pointwise continuity of the densities.

Now, consider points $x^{(n)}\in \mathbb{W}_N^{\uparrow,\circ}$ converging to $x\in \mathbb{W}_N^\uparrow$ as $n\to \infty$. To complete the proof we need to dominate the density given in (\ref{DisplayNonColliding1}), where we plug in $x=x^{(n)}$ in the formula, independently of $n$ by some integrable function on $\mathbb{W}_N^\uparrow \times \cdots \times \mathbb{W}_N^\uparrow$ (recall that for weak convergence we are testing against some continuous bounded function $\mathsf{F}$). By taking absolute values and expanding the determinans it suffices to dominate, uniformly in $n$, each of the following terms,
\begin{align*}
    \prod_{i=1}^N \left|\mathcal{Q}_i^{(t_1)}\left(y_{\sigma_1(i)}^{(1)};x^{(n)}\right)\right| \prod_{i=1}^N e^{(t_2-t_1)\mathsf{L}}\left(y_i^{(1)},y_{\sigma_2(i)}^{(2)}\right) &\times \cdots \\ \cdots \times 
\prod_{i=1}^N &e^{(t_M-t_{M-1})\mathsf{L}}\left(y_i^{(M-1)},y_{\sigma_M(i)}^{(M)}\right) \prod_{i=1}^N \left|\mathcal{P}_i^{(t_M)}\left(y_{\sigma_{M+1}(i)}^{(M)};x^{(n)}\right)\right|,
\end{align*}
by an integrable function on $\mathbb{W}_N^\uparrow \times \cdots \times \mathbb{W}_N^\uparrow$, where the $\sigma_1, \sigma_2, \dots,\sigma_{M+1}$ are arbitrary permutations of $\left\{1,2,..., N \right\}$. Moreover, it suffices to bound, uniformly in $n$, each factor of the form, for any $i,j=1,\dots,N$,
\begin{equation}
     \left|\mathcal{Q}_i^{(t_1)}\left(y_1;x^{(n)}\right)\right|  e^{(t_2-t_1)\mathsf{L}}\left(y_1,y_2\right) \cdots 
 e^{(t_M-t_{M-1})\mathsf{L}}\left(y_{M-1},y_{M}\right)  \left|\mathcal{P}_j^{(t_M)}\left(y_M;x^{(n)}\right)\right|
\end{equation}
by an integrable function of $y=(y_1,\dots,y_M) \in (l,r)^M$. Towards this end, note that $|e^{-t_M\mathsf{L}_{y_M}}y_M^m|$ is dominated by a non-negative polynomial $h_m$ of degree $2\left\lfloor \frac{m+1}{2} \right\rfloor$. Hence, from (\ref{PpolyBackwardInTimeFormula}) the sequence $\left\{\left|\mathcal{P}_j^{(t)}\left(y_M;x^{(n)}\right)\right|\right\}_{n=1}^\infty$ is dominated by the polynomial:
\begin{equation}\label{DominatingFunctionP}
\sum_{m=0}^{j-1} \sup_{n\ge 1}\left|\mathfrak{e}_{i-1-m}\left(x^{(n)}_1,\dots,x^{(n)}_{j-1}\right)\right| h_m(y_M). 
\end{equation}
Now, using the mean value theorem for divided differences, see \cite{DividedDifferences}, there exists some $\xi^{(n)}_i \in \left(x^{(n)}_1,x^{(n)}_i\right)$ such that:
\begin{equation}\label{MeanValueDividedDiff}
 \mathcal{Q}_i^{(t)}\left(y_1;x^{(n)}\right)=e^{t_1\mathsf{L}}(\cdot,y_1)\left[x^{(n)}_1,\dots,x^{(n)}_i\right]=\frac{1}{i!}\partial_{\xi^{(n)}_i}^{i-1}e^{t_1\mathsf{L}}\left(\xi^{(n)}_i,y_1\right).
\end{equation}
In particular, the sequence $\left\{\left|\mathcal{Q}_i^{(t)}\left(y_1;x^{(n)}\right)\right|\right\}_{n=1}^\infty$ is dominated by $\sup_{\xi \in \mathcal{U}_i} \frac{1}{i!}\left|\partial_{\xi}^{i-1}e^{t_1 \mathsf{L}}(\xi,y_1)\right|$, with $\mathcal{U}_i=\left(\inf_{n\ge 1}x_1^{(n)},\sup_{n\ge 1}x_i^{(n)}\right)$. Putting everything together it thus suffices to show that 
\begin{equation}\label{IntegrableDisplay}
 \sup_{\xi \in \mathcal{U}_i} \left|\partial_{\xi}^{i-1}e^{t_1 \mathsf{L}}(\xi,y_1)\right|   e^{(t_2-t_1)\mathsf{L}}\left(y_1,y_2\right) \cdots 
 e^{(t_M-t_{M-1})\mathsf{L}}\left(y_{M-1},y_{M}\right) h_j\left(y_M\right)
\end{equation}
is integrable on $(l,r)^M$. Then, using Tonelli's theorem (observe that all factors are non-negative), noting that,
\begin{equation}
\int_l^r \cdots \int_l^r e^{(t_2-t_1)\mathsf{L}}(y_1,y_2)\cdots e^{(t_M-t_{M-1})\mathsf{L}}(y_{M-1},y_M)h_j(y_M)dy_2\cdots dy_M=e^{(t_M-t_1)\mathsf{L}}h_j(y_1)
\end{equation}
is a polynomial and the fact that by standard estimates estimates \cite{StroockPDEbook}, see \cite{HuaPickrell} where this is worked out in detail in the Hua-Pickrell case, $\sup_{\xi \in \mathcal{U}_i} \left|\partial_{\xi}^{i-1}e^{t_1 \mathsf{L}}(\xi,\cdot)\right|$ integrates polynomials in $(l,r)$ we get that (\ref{IntegrableDisplay}) is integrable. The dominated convergence theorem gives the desired weak convergence (and we have already shown convergence of the densities to the desired formula), extending the result to general $x\in \mathbb{W}_N^\uparrow$ and this completes the proof.
\end{proof}

The following two lemmas, along with the version of the Eynard-Mehta theorem stated in Proposition \ref{EynardMehtaNonColliding}, essentially explain why we wrote the distribution of the dynamics at different times in the form given in Proposition \ref{DistributionNonColliding}.

\begin{lem}\label{LemOrhogonalityNonColliding}
Let $t>0$ and $x\in \mathbb{W}_N^{\uparrow}$. The functions $\mathcal{Q}_i^{(t)}(\cdot;x)$ and polynomials $\mathcal{P}_j^{(t)}(\cdot;x)$ are biorthogonal with respect to the Lebesgue measure on $(l,r)$:
\begin{equation*}
 \int_{l}^r \mathcal{Q}_i^{(t)}(y;x)\mathcal{P}_j^{(t)}(y;x)dy=\mathbf{1}_{(i=j)},
\end{equation*}
for $i,j=1,\dots,N$.
\end{lem}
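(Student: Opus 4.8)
The plan is to prove the biorthogonality relation
$\int_l^r \mathcal{Q}_i^{(t)}(y;x)\mathcal{P}_j^{(t)}(y;x)\,dy = \mathbf{1}_{(i=j)}$
by reducing it, via the semigroup property of $e^{t\mathsf{L}}$ on polynomials (Lemma \ref{LemmaSemigroup}) and Lemma \ref{LemCoincidence}, to a statement purely about divided differences and polynomial evaluation. First I would use the divided-difference representation (\ref{DividedDiffRep}) for $\mathcal{Q}_i^{(t)}$, namely $\mathcal{Q}_i^{(t)}(y;x) = e^{t\mathsf{L}}(\cdot,y)[x_1,\dots,x_i]$, together with (\ref{PfnRep}), $\mathcal{P}_j^{(t)}(y;x) = e^{-t\mathsf{L}_y}\prod_{k=1}^{j-1}(y-x_k)$. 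The key point is that for a fixed smooth function $f$ on $(l,r)$ which integrates polynomials, the operators ``integrate against $e^{t\mathsf{L}}(z,\cdot)$ as a function of the second variable'' and ``apply $e^{-t\mathsf{L}}$ as a power series to a polynomial'' are adjoint in the sense that
$\int_l^r e^{t\mathsf{L}}(z,y)\,e^{-t\mathsf{L}_y}p(y)\,dy = p(z)$
for any polynomial $p$ and $z\in(l,r)$. This in turn follows from Lemma \ref{LemCoincidence} (which identifies $\int_l^r e^{t\mathsf{L}}(z,y)q(y)\,dy$ with $e^{t\mathsf{L}}q(z)$ for $q$ a polynomial) applied to $q = e^{-t\mathsf{L}}p$, combined with the semigroup identity $e^{t\mathsf{L}}e^{-t\mathsf{L}}p = p$ from Lemma \ref{LemmaSemigroup}; one has to be slightly careful that $e^{-t\mathsf{L}}p$ is still a polynomial (Proposition \ref{WellDefinedProp}) so Lemma \ref{LemCoincidence} legitimately applies even though $-t<0$ — the lemma is stated for $t>0$ and integration against the transition kernel, so here I would invoke it with the positive time $t$ and the polynomial $e^{-t\mathsf{L}}p$ in place of $p$.

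With this adjointness in hand, the computation proceeds as follows. Write $p_j(y) = \prod_{k=1}^{j-1}(y-x_k)$, a polynomial of degree $j-1$. Then, integrating against the divided-difference-in-$z$ expression and interchanging the (finite) divided-difference operation with the integral,
\begin{align*}
\int_l^r \mathcal{Q}_i^{(t)}(y;x)\mathcal{P}_j^{(t)}(y;x)\,dy
&= \int_l^r \Big( e^{t\mathsf{L}}(\cdot,y)[x_1,\dots,x_i]\Big)\, e^{-t\mathsf{L}_y}p_j(y)\,dy\\
&= \Big(\, z \mapsto \int_l^r e^{t\mathsf{L}}(z,y)\,e^{-t\mathsf{L}_y}p_j(y)\,dy \,\Big)[x_1,\dots,x_i]\\
&= p_j[x_1,\dots,x_i].
\end{align*}
Now $p_j(z) = \prod_{k=1}^{j-1}(z-x_k)$ is a monic polynomial of degree $j-1$ with roots exactly at $x_1,\dots,x_{j-1}$. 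The standard properties of divided differences give: the $i$-th divided difference of a polynomial of degree $j-1$ is $0$ when $i > j$ (divided difference of order $i-1 \geq j$ of a degree-$(j-1)$ polynomial vanishes), is $1$ when $i=j$ (the $(j-1)$-st divided difference equals the leading coefficient, here monic), and when $i<j$ one evaluates $p_j[x_1,\dots,x_i]$, which is zero because $p_j$ vanishes at each of $x_1,\dots,x_{j-1} \supseteq \{x_1,\dots,x_i\}$ — more precisely, a divided difference $p_j[x_1,\dots,x_i]$ with $i \le j-1$ is a combination of values $p_j(x_1),\dots,p_j(x_i)$ (in the distinct-point case) all of which vanish, and by continuity of divided differences this persists at confluent points, or one simply notes $p_j[x_1,\dots,x_i]$ is the leading coefficient of the degree-$\le i-1$ interpolating polynomial through $(x_m, p_j(x_m))=(x_m,0)$, hence $0$. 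Either way $p_j[x_1,\dots,x_i] = \mathbf{1}_{(i=j)}$, which is the claim.

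I would handle first the generic case $x \in \mathbb{W}_N^{\uparrow,\circ}$ where all coordinates are distinct and divided differences are literal finite combinations of point evaluations, making the interchange of $[x_1,\dots,x_i]$ with the $y$-integral completely elementary (it is a finite linear combination); then extend to general $x \in \mathbb{W}_N^\uparrow$ by continuity, using that both $\mathcal{Q}_i^{(t)}(\cdot;x)$ and $\mathcal{P}_j^{(t)}(\cdot;x)$ depend continuously on $x$ (as already established in the proof of Proposition \ref{DistributionNonColliding}, via continuity of divided differences of smooth functions and of elementary symmetric polynomials) together with a domination argument identical in spirit to the one there, so that the integral passes to the limit. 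The main obstacle, though a mild one, is justifying the interchange of the integral with the divided-difference/derivative operations and making sure the appeal to Lemma \ref{LemCoincidence} with a ``backward'' exponential applied to a polynomial is airtight; once the adjointness identity $\int_l^r e^{t\mathsf{L}}(z,y) e^{-t\mathsf{L}_y}p(y)\,dy = p(z)$ is cleanly stated and proved (a one-line consequence of Lemmas \ref{LemCoincidence} and \ref{LemmaSemigroup} plus Proposition \ref{WellDefinedProp}), the rest is routine divided-difference bookkeeping.
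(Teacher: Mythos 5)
Your proof is correct and follows essentially the same route as the paper's: both rest on the adjointness identity $\int_l^r e^{t\mathsf{L}}(z,y)\,e^{-t\mathsf{L}_y}p(y)\,dy = p(z)$ obtained from Lemmas \ref{LemCoincidence} and \ref{LemmaSemigroup}, treat the case $x\in\mathbb{W}_N^{\uparrow,\circ}$ first, and extend to general $x\in\mathbb{W}_N^{\uparrow}$ by the same dominated-convergence/continuity argument as in Proposition \ref{DistributionNonColliding}. The only difference is cosmetic: the paper evaluates the resulting sum $\sum_{m=1}^i \prod_{k=1}^{j-1}(x_m-x_k)\big/\prod_{k\neq m}(x_m-x_k)$ via the residue theorem, whereas you read it off as the divided difference $p_j[x_1,\dots,x_i]$ of the monic degree-$(j-1)$ polynomial $p_j$ — these are the same computation in two equivalent languages.
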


\begin{proof}
We first assume $x \in \mathbb{W}_N^{\uparrow,\circ}$. We use the representation of $\mathcal{Q}$ as a sum on the left hand side of (\ref{QSumRep}). We thus have, using Lemmas \ref{LemCoincidence} and \ref{LemmaSemigroup},
\begin{align*}
\int_{l}^r \mathcal{Q}_i^{(t)}(y;x)\mathcal{P}_j^{(t)}(y;x)dy&=\int_l^r \sum_{m=1}^ie^{t\mathsf{L}}(x_m,y)\frac{1}{\prod_{\substack{k=1\\ k\neq m}}^i(x_m-x_k)}e^{-t\mathsf{L}_y}\prod_{k=1}^{j-1}(y-x_k)dy\\
 &= \sum_{m=1}^i\frac{1}{\prod_{\substack{k=1\\ k\neq m}}^i(x_m-x_k)}\int_{l}^r e^{t\mathsf{L}}(x_m,y)e^{-t\mathsf{L}_y}\prod_{k=1}^{j-1}(y-x_k)dy\\
 &=\sum_{m=1}^i\frac{1}{\prod_{\substack{k=1\\ k\neq m}}^i(x_m-x_k)}\prod_{k=1}^{j-1}(x_m-x_k)=\frac{1}{2\pi \textnormal{i}}\oint_{\mathsf{\Gamma}_i^{(N)}}\frac{\prod_{k=1}^{j-1}(z-x_k)}{\prod_{k=1}^{i}(z-x_k)}=\mathbf{1}_{(i=j)},
\end{align*}
the last equality being a simple fact from complex analysis. We finally extend the result to general $x\in \mathbb{W}_N^{\uparrow}$ by continuity, arguing using the dominated convergence theorem by virtue of (\ref{DominatingFunctionP}) and (\ref{MeanValueDividedDiff}), as in the proof of Proposition \ref{DistributionNonColliding}.
\end{proof}

\begin{lem}\label{PropagationLemNonColliding}
Let $x\in \mathbb{W}_N^\uparrow$. Let $0\le s\le t$ and $i=1,\dots,N$. Then, we have
\begin{align*}
\mathcal{Q}_i^{(s)}e^{(t-s)\mathsf{L}}(y;x)=\mathcal{Q}_i^{(t)}(y;x), \ \ e^{(t-s)\mathsf{L}}\mathcal{P}_i^{(t)}(y;x)=\mathcal{P}_i^{(s)}(y;x).
\end{align*}
\end{lem}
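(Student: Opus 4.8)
The plan is to prove the two identities separately, both by reducing to statements about the already-established objects $e^{t\mathsf{L}}$ acting on polynomials (Lemmas \ref{LemCoincidence}, \ref{LemmaSemigroup} and Proposition \ref{WellDefinedProp}) together with the smoothing/analyticity of the transition density $z\mapsto e^{t\mathsf{L}}(z,y)$.

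First I would handle $e^{(t-s)\mathsf{L}}\mathcal{P}_i^{(t)}(y;x)=\mathcal{P}_i^{(s)}(y;x)$, which is the easier of the two. By definition \eqref{PfnRep}, $\mathcal{P}_i^{(t)}(y;x)=e^{-t\mathsf{L}_y}\prod_{k=1}^{i-1}(y-x_k)$, a polynomial of degree $i-1$ in $y$ by Proposition \ref{WellDefinedProp}. Here $e^{(t-s)\mathsf{L}}$ with $t-s\ge 0$ is the genuine diffusion semigroup, which by Lemma \ref{LemCoincidence} coincides on polynomials with the power-series definition \eqref{powerseriesdef}. Hence by the semigroup property for the power-series flow (Lemma \ref{LemmaSemigroup}, valid for all complex times), $e^{(t-s)\mathsf{L}}e^{-t\mathsf{L}_y}\prod_{k=1}^{i-1}(y-x_k)=e^{-s\mathsf{L}_y}\prod_{k=1}^{i-1}(y-x_k)=\mathcal{P}_i^{(s)}(y;x)$, since $(t-s)+(-t)=-s$. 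That closes the second identity.

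For the first identity, $\mathcal{Q}_i^{(s)}e^{(t-s)\mathsf{L}}(y;x)=\mathcal{Q}_i^{(t)}(y;x)$, I would work first for $x\in\mathbb{W}_N^{\uparrow,\circ}$ using the finite-sum representation on the left of \eqref{QSumRep}: $\mathcal{Q}_i^{(s)}(w;x)=\sum_{m=1}^i e^{s\mathsf{L}}(x_m,w)\big/\prod_{k\ne m}(x_m-x_k)$. Applying $e^{(t-s)\mathsf{L}}$ in the $w$-variable means integrating $\int_l^r \mathcal{Q}_i^{(s)}(w;x)\,e^{(t-s)\mathsf{L}}(w,y)\,dw$, and by the Chapman–Kolmogorov identity for the $\mathsf{L}$-diffusion, $\int_l^r e^{s\mathsf{L}}(x_m,w)e^{(t-s)\mathsf{L}}(w,y)\,dw=e^{t\mathsf{L}}(x_m,y)$ (the density integrates polynomials and is smooth, so the interchange is justified; the semigroup property of the genuine transition kernel is standard). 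Summing over $m$ gives exactly $\sum_{m=1}^i e^{t\mathsf{L}}(x_m,y)\big/\prod_{k\ne m}(x_m-x_k)=\mathcal{Q}_i^{(t)}(y;x)$. Finally I would extend to general $x\in\mathbb{W}_N^{\uparrow}$ by continuity, invoking the mean-value representation \eqref{MeanValueDividedDiff} and the domination estimate from the proof of Proposition \ref{DistributionNonColliding} (the polynomial bound on $\sup_{\xi}|\partial_\xi^{i-1}e^{s\mathsf{L}}(\xi,\cdot)|$ via \cite{StroockPDEbook}) together with dominated convergence, exactly as in Lemma \ref{LemOrhogonalityNonColliding}.

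The only delicate point — the "main obstacle", such as it is — is justifying the interchange of the $w$-integral with the finite sum and with the semigroup composition, i.e. that $\int_l^r e^{s\mathsf{L}}(x_m,w)e^{(t-s)\mathsf{L}}(w,y)\,dw$ really equals $e^{t\mathsf{L}}(x_m,y)$ in the pointwise sense needed here; this is the ordinary Chapman–Kolmogorov equation for the Markov semigroup and holds because the densities are jointly smooth and have enough integrability (they integrate polynomials), so Fubini/Tonelli applies. In the $s=0$ edge case one interprets $e^{0\cdot\mathsf{L}}(x_m,\cdot)$ as $\delta_{x_m}$ and the identity is immediate; alternatively one restricts to $0<s\le t$, which is all that is used in the proof of Theorem \ref{NoncollidingThm}. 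Everything else is a direct unwinding of definitions \eqref{QfnComplexVarRep}, \eqref{PfnRep}, \eqref{QSumRep}.
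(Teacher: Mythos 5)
Your proposal is correct and follows essentially the same route as the paper: the $\mathcal{P}$-identity via the power-series semigroup property (Lemma \ref{LemmaSemigroup}, with Lemma \ref{LemCoincidence} identifying the genuine semigroup with the power-series flow on polynomials), and the $\mathcal{Q}$-identity via the finite-sum representation in \eqref{QSumRep}, Chapman--Kolmogorov, and a continuity extension to general $x\in\mathbb{W}_N^{\uparrow}$ as in Proposition \ref{DistributionNonColliding}.
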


\begin{proof}
We first prove the identity for $\mathcal{Q}$ and assume $x\in \mathbb{W}_N^{\uparrow,\circ}$. We use the representation of $\mathcal{Q}$ as a sum on the left hand side of (\ref{QSumRep}) to compute, using the semigroup property,
\begin{align*}
  \int_l^r\mathcal{Q}_i^{(s)}(w)e^{(t-s)\mathsf{L}}(w,y)dw=\sum_{m=1}^i\frac{1}{\prod_{\substack{k=1\\ k\neq m}}^i(x_m-x_k)}\int_l^{r}e^{s\mathsf{L}}(x_m,w)e^{(t-s)\mathsf{L}}(w,y)dw =\mathcal{Q}_i^{(t)}(y).
\end{align*}
We then extend to general $x\in \mathbb{W}_N^\uparrow$ by continuity as in the proof of Proposition \ref{DistributionNonColliding}. Finally, the relation $e^{(t-s)\mathsf{L}}\mathcal{P}_i^{(t)}=\mathcal{P}_i^{(s)}$, for any $x\in \mathbb{W}_N^\uparrow$, is an immediate consequence of Lemma \ref{LemmaSemigroup}.
\end{proof}

We require the following special case (yet another variation) of the Eynard-Mehta theorem, see for example \cite{JohanssonDeterminantal,BorodinDeterminantal,BorodinRains}.

\begin{prop}\label{EynardMehtaNonColliding}
Consider a probability measure on the $M$-fold product $(l,r)^N\times \cdots \times (l,r)^N$ of the form 
\begin{align}\label{Measure}
\frac{1}{Z}\det\left(\mathcal{H}_i\left(y_j^{(1)}\right)\right)_{i,j=1}^N\prod_{r=1}^{M-1}\det\left(\phi_{r,r+1}\left(y_i^{(r)},y_j^{(r+1)}\right)\right)_{i,j=1}^N \det\left(\widetilde{\mathcal{H}}_i\left(y_j^{(M)}\right)\right)_{i,j=1}^N,
\end{align}
with $Z$ a non-zero normalization constant. For $n>m$, define
\begin{equation*}
\phi_{m,n}(y_1,y_2)=\left(\phi_{m,m+1}*\cdots *\phi_{n-1,n}\right)(y_1,y_2),
\end{equation*}
where $\left(\phi*\psi\right)(y_1,y_2)=\int_l^r\phi(y_1,z)\psi(z,y_2)dz$. Suppose we have, for $i,j=1,\dots,N$,
\begin{equation*}
  \int_l^r \int_l^r\mathcal{H}_i(y_1)\phi_{1,M}(y_1,y_2)\widetilde{\mathcal{H}}_j(y_2)dy_1dy_2=\mathbf{1}_{(i=j)}.
\end{equation*}
Finally, we define for 
\begin{align*}
 \mathcal{H}_i^{(r)}(y)=\left(\mathcal{H}_i*\phi_{1,r}\right)(y), \ \  \widetilde{\mathcal{H}}_i^{(r)}(y)=\left(\phi_{r,M}*\widetilde{\mathcal{H}}_i\right)(y).
\end{align*}
Then, the point process on $\{1,\dots,M\}\times (l,r)$ induced by (\ref{Measure}) is determinantal with correlation kernel:
\begin{equation}
K\left[(m,y_1);(n,y_2)\right]=\sum_{i=1}^N\mathcal{H}_i^{(m)}(y_1)\widetilde{\mathcal{H}}_i^{(n)}(y_2)-\phi_{n,m}(y_2,y_1)\mathbf{1}_{(n<m)}.
\end{equation}
\end{prop}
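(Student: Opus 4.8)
The plan is to recognise Proposition \ref{EynardMehtaNonColliding} as a standard instance of the Eynard--Mehta theorem and to reduce it to one of the versions in \cite{JohanssonDeterminantal,BorodinDeterminantal,BorodinRains} after checking that its hypotheses are met; the only genuine input beyond bookkeeping is the biorthogonality assumption, whose role is to collapse the Gram matrix appearing in the general statement to the identity $I_N$. I would organise the argument in three steps: (i) identify the normalisation constant and in particular confirm $Z\neq 0$; (ii) record the two structural facts that make the ``propagated'' families consistent, namely the semigroup property of the $\phi$'s and the propagation of biorthogonality to every level; (iii) invoke the general Eynard--Mehta theorem and simplify the kernel using $G^{-1}=I_N$.

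For step (i) I would use Andreief's identity (the continuous Cauchy--Binet formula) $\int \det(f_i(a_j))_{i,j=1}^N\det(g_i(a_j))_{i,j=1}^N\prod_{j}da_j=N!\det\big(\int f_i g_j\big)_{i,j=1}^N$ repeatedly. Integrating out the intermediate layers $y^{(2)},\dots,y^{(M-1)}$ one at a time replaces two consecutive factors $\det(\phi_{r-1,r})\det(\phi_{r,r+1})$ by $N!\,\det(\phi_{r-1,r+1})$ (using $\phi_{r-1,r}*\phi_{r,r+1}=\phi_{r-1,r+1}$), so after $M-2$ such steps the product of middle determinants becomes $(N!)^{M-2}\det\big(\phi_{1,M}(y_i^{(1)},y_j^{(M)})\big)_{i,j=1}^N$. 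Two further applications of Andreief, first over $y^{(1)}$ and then over $y^{(M)}$, reduce the remaining integral to $(N!)^2\det(G)$ with $G_{ij}=\int\!\!\int \mathcal{H}_i(y_1)\phi_{1,M}(y_1,y_2)\widetilde{\mathcal{H}}_j(y_2)\,dy_1\,dy_2=\mathbf{1}_{(i=j)}$ by hypothesis; hence $\det G=1$, the unnormalised measure has total mass $(N!)^M\neq 0$, and (consistently with it being a probability measure) $Z=(N!)^M$.

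For steps (ii)--(iii), the observations I would record are: the semigroup property $\phi_{m,n}=\phi_{m,k}*\phi_{k,n}$ for $m\le k\le n$, immediate from the definition of $\phi_{m,n}$, which makes $\mathcal{H}_i^{(r)}=\mathcal{H}_i*\phi_{1,r}$ and $\widetilde{\mathcal{H}}_i^{(r)}=\phi_{r,M}*\widetilde{\mathcal{H}}_i$ internally consistent across levels; and the resulting fact that biorthogonality holds at \emph{every} level, $\int \mathcal{H}_i^{(r)}(y)\widetilde{\mathcal{H}}_j^{(r)}(y)\,dy=\int\!\!\int \mathcal{H}_i(a)\phi_{1,M}(a,b)\widetilde{\mathcal{H}}_j(b)\,da\,db=\mathbf{1}_{(i=j)}$. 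With these, the measure (\ref{Measure}) is exactly in the form to which the Eynard--Mehta theorem applies --- no positivity of the $\phi_{r,r+1}$ is needed for the algebra, only absolute convergence of the relevant convolutions and pairings --- and that theorem outputs a determinantal point process with correlation kernel $-\phi_{n,m}(y_2,y_1)\mathbf{1}_{(n<m)}+\sum_{i,j=1}^N(G^{-1})_{ij}\,\mathcal{H}_i^{(m)}(y_1)\widetilde{\mathcal{H}}_j^{(n)}(y_2)$. Substituting $G^{-1}=I_N$ gives precisely the stated formula. For completeness I would either cite the kernel computation from \cite{BorodinRains} directly, or reprove it along the usual lines: expand the generating functional $\mathbb{E}\big[\prod(1+g)\big]$ over the marked points by multilinearity of the determinants, recombine the $\det(\phi_{r,r+1})$ factors using Andreief, and recognise the outcome as a single determinant of the claimed kernel.

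The main obstacle is organisational rather than conceptual. One must carefully match the orientation and argument-ordering conventions of whichever version of Eynard--Mehta is cited --- several references write the free-propagator term as $\mathbf{1}_{(m<n)}$ with the spatial arguments of $\phi$ interchanged, which merely transposes the kernel and hence yields the same determinantal point process --- and one must ensure that all the convolutions $\phi_{m,n}$, the pairings defining $G$, and the functions $\mathcal{H}_i^{(m)},\widetilde{\mathcal{H}}_i^{(n)}$ are absolutely convergent so that the formal manipulations above are licit; in the application to (\ref{NonCollidingSDE}) this is guaranteed by the integrability properties of $e^{t\mathsf{L}}(x,y)$ recorded in Section \ref{SectionBackwardFlow} and by the estimates used in the proof of Proposition \ref{DistributionNonColliding}.
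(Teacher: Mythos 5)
The paper does not prove this proposition at all: it is stated as "yet another variation" of the Eynard--Mehta theorem and simply cited from \cite{JohanssonDeterminantal,BorodinDeterminantal,BorodinRains}, so your reduction to the general form of that theorem is exactly the intended route. Your supporting checks are correct --- the repeated Andreief/Cauchy--Binet integrations give $Z=(N!)^M\neq 0$ and identify the Gram matrix as $I_N$ (so $G^{-1}=I_N$ collapses the double sum to the stated single sum), and your observation that the $\mathbf{1}_{(n<m)}$ orientation of the free-propagator term merely transposes the kernel, leaving the determinantal point process unchanged, is the right way to reconcile the differing conventions in the cited references.
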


We are now in a position to prove Theorem \ref{NoncollidingThm}.

\begin{proof}[Proof of Theorem \ref{NoncollidingThm}]
We apply Proposition \ref{EynardMehtaNonColliding}, with the obvious identifications (observe that after symmetrization the probability measure (\ref{DisplayNonColliding1}) is exactly of the form (\ref{Measure})), since by virtue of Proposition \ref{DistributionNonColliding} and Lemmas \ref{LemOrhogonalityNonColliding} and \ref{PropagationLemNonColliding} we have:
\begin{align*}
 \int_{l}^r \int_l^r \mathcal{Q}_i^{(t_1)}(y_1;x)e^{(t_M-t_1)\mathsf{L}}(y_1,y_2)\mathcal{P}_j^{(t_M)}(y_2;x) dy_1 dy_2   =\mathbf{1}_{(i=j)}.
\end{align*}
To obtain the desired form of the kernel it only remains to simplify the sum as follows:
\begin{align*}
 \sum_{i=1}^N\mathcal{Q}_i^{(s)}\left(y_1;x\right)\mathcal{P}_i^{(t)}\left(y_2;x\right)&=\sum_{i=1}^{N}\frac{1}{2\pi \textnormal{i}}\oint_{\mathsf{\Gamma}_i^{(N)}}e^{s\mathsf{L}}(z,y_1)\frac{1}{\prod_{m=1}^{i}(z-x_m)}dz \times e^{-t\mathsf{L}_{y_2}}\prod_{m=1}^{i-1}(y_2-x_m)\\
 &=\frac{1}{2\pi \textnormal{i}}\oint_{\mathsf{\Gamma}^{(N)}}e^{s\mathsf{L}}(z,y_1)e^{-t\mathsf{L}_{y_2}}\sum_{i=1}^N\frac{\prod_{m=1}^{i-1}(y_2-x_m)}{\prod_{m=1}^{i}(z-x_m)}dz,
\end{align*}
since we can enlarge the contour from $\mathsf{\Gamma}_i^{(N)}$ to $\mathsf{\Gamma}^{(N)}=\mathsf{\Gamma}_N^{(N)}$ without encountering any poles and thus can bring the sum over $i$ inside the integral. Then, using the identity
\begin{align*}
\sum_{i=1}^{N}\frac{\prod_{m=1}^{i-1}(w-x_m)}{\prod_{m=1}^{i}(z-x_m)}=\left(\prod_{m=1}^N\frac{w-x_m}{z-x_m}-1\right)\frac{1}{w-z}=(w-z)^{N-1}\prod_{m=1}^N\frac{1}{z-x_m},
\end{align*}
we get the conclusion. Finally, we note that we could have used instead in the sum above the representation of $\mathcal{Q}_i^{(s)}\left(y;x\right)$ as a divided difference from (\ref{DividedDiffRep}) which avoids the use of complex variables but the expression is possibly less amenable to asymptotic analysis.
\end{proof}

\section{A partial connection between the interacting particle systems}\label{SectionConnection}

In this section we explain a rather partial, but still non-trivial connection, between the interacting particle systems with one-sided collisions (\ref{IPS1}), (\ref{IPS2}) and the non-colliding SDEs (\ref{NonCollidingSDE}). We need some notation and terminology. We say that $x \in \mathbb{W}_n^{\uparrow}$ and $y\in \mathbb{W}_{n+1}^{\uparrow}$ interlace and write $x\prec y$ if the following inequalities hold:
\begin{equation*}
y_1 \le x_1 \le y_2 \le \cdots \le y_n \le x_n \le y_{n+1}.
\end{equation*}
We then define interlacing arrays (of length $N$) by:
\begin{equation}\label{InterlacingArrayDefinition}
\mathbb{IA}_{N}=\left\{X^{(N)}=(x^{(1)},x^{(2)},\dots, x^{(N)})\in \mathbb{W}_1^{\uparrow}\times\mathbb{W}_2^{\uparrow}\times\cdots \times \mathbb{W}_N^{\uparrow}  :x^{(1)}\prec x^{(2)} \prec \cdots \prec x^{(N)}\right\}.
\end{equation}
We observe that a similar object to interlacing arrays, the set $\mathcal{D}_N$, has appeared before in (\ref{SetDN}), see Remark \ref{InterlacingComparison} for more details on this connection. We call the $x^{(i)}$'s the rows of the array.
We consider the following system of SDEs with reflection in $\mathbb{IA}_N$:
\begin{equation}\label{DynamicsArray}
d\mathsf{x}_i^{(k)}(t)=\sqrt{2\mathsf{a}\left(\mathsf{x}_i^{(k)}(t)\right)}d\mathsf{w}_i^{(k)}(t)+\mathsf{b}^{(k)}\left(\mathsf{x}_i^{(k)}(t)\right)dt+\frac{1}{2}d\mathfrak{l}_i^{\uparrow,(k)}(t)-\frac{1}{2}d\mathfrak{l}_i^{\downarrow,(k)}(t),
\end{equation}
with the $\mathsf{w}_i^{(k)}$ being independent standard Brownian motions and the finite variation terms $\mathfrak{l}_i^{\uparrow,(k)}, \mathfrak{l}_i^{\downarrow,(k)}$, which increase only when particles collide in order to preserve the interlacing can be identified with the semimartingale local times:
\begin{align*}
\mathfrak{l}_i^{\uparrow,(k)}&=\textnormal{sem. loc. time of } \mathsf{x}_i^{(k)}-\mathsf{x}_{i-1}^{(k-1)} \textnormal{ at } 0,\\
    \mathfrak{l}_i^{\downarrow,(k)}&=\textnormal{sem. loc. time of } \mathsf{x}_i^{(k)}-\mathsf{x}_{i}^{(k-1)} \textnormal{ at } 0.
\end{align*}
For indices which overflow or underflow, the corresponding terms are identically zero. 
 See Figure \ref{FigureInterlacing} for an illustration of the dynamics (\ref{DynamicsArray}). These SDEs have a unique strong solution (by virtue of the polynomial form of $\mathsf{a}$ and $\mathsf{b}$), see \cite{InterlacingDiffusions}, up until the stopping time
\begin{equation*}
\tau_{\textnormal{col}}=\inf\left\{t\ge 0: \exists \  (n,i,j), \ 2 \le n \le N-1, \ 1 \le i <j \le n,  \textnormal{ such that } \mathsf{x}_i^{(n)}(t)=\mathsf{x}_j^{(n)}(t)
\right\},
\end{equation*}
which corresponds to the problematic situation when a particle $\mathsf{x}_i^{(k)}$ gets trapped between $\mathsf{x}_{i-1}^{(k-1)}$ and $\mathsf{x}_i^{(k-1)}$ and gets pushed in opposing directions. Under the initial conditions we will consider, see equation (\ref{GibbsMeasure}) below, $\tau_{\textnormal{col}}=\infty$ almost surely and thus this situation does not arise.

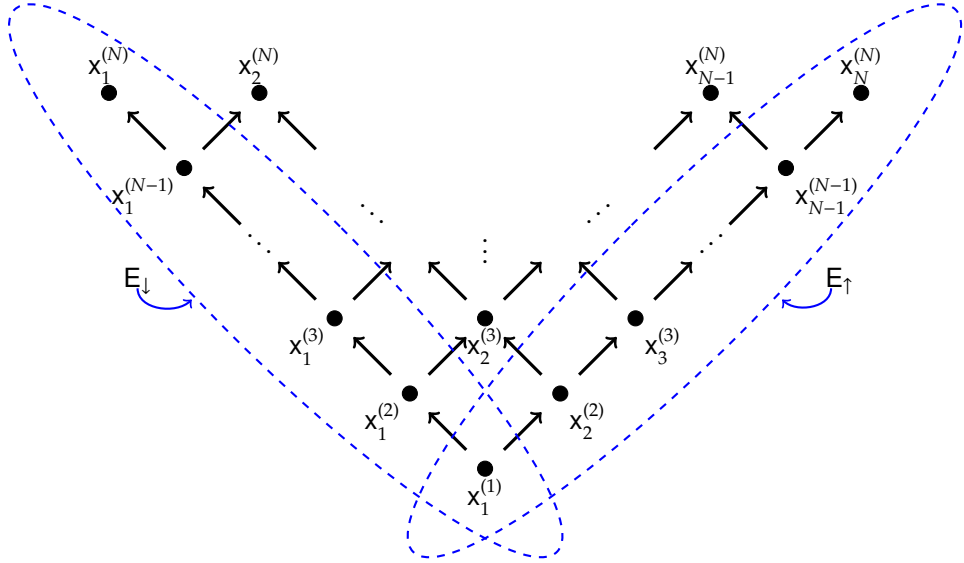
\begin{figure}
\centering
\captionsetup{singlelinecheck = false, justification=justified}
\begin{tikzpicture}

\draw[fill] (5,0) circle [radius=0.1];
\node[below ] at (5,0) {$\mathsf{x}_1^{(1)}$};

\draw[fill] (4,1) circle [radius=0.1];
\node[below left] at (4,1) {$\mathsf{x}_1^{(2)}$};

\draw[fill] (6,1) circle [radius=0.1];
\node[below right ] at (6,1) {$\mathsf{x}_2^{(2)}$};

\draw[fill] (3,2) circle [radius=0.1];
\node[below left ] at (3,2) {$\mathsf{x}_1^{(3)}$};

\draw[fill] (5,2) circle [radius=0.1];
\node[below ] at (5,2) {$\mathsf{x}_2^{(3)}$};

\draw[fill] (7,2) circle [radius=0.1];
\node[below right ] at (7,2) {$\mathsf{x}_3^{(3)}$};

\node[] at (5,3) {$\vdots$};

\node[] at (8,3.1) {$\iddots$};

\node[] at (2,3.1) {$\ddots$};

\draw[fill] (1,4) circle [radius=0.1];
\node[below left] at (1,4) {$\mathsf{x}_1^{(N-1)}$};

\draw[fill] (9,4) circle [radius=0.1];
\node[below right ] at (9,4) {$\mathsf{x}_{N-1}^{(N-1)}$};

\draw[fill] (0,5) circle [radius=0.1];
\node[above ] at (0,5) {$\mathsf{x}_1^{(N)}$};

\draw[fill] (2,5) circle [radius=0.1];
\node[above ] at (2,5) {$\mathsf{x}_2^{(N)}$};

\draw[fill] (8,5) circle [radius=0.1];
\node[above ] at (8,5) {$\mathsf{x}_{N-1}^{(N)}$};

\draw[fill] (10,5) circle [radius=0.1];
\node[above ] at (10,5) {$\mathsf{x}_{N}^{(N)}$};

\node[] at (6.5,3.5) {$\iddots$};

\node[] at (3.5,3.5) {$\ddots$};

\draw[->,very thick] (5.25,0.25) to (5.75,0.75);

\draw[->,very thick] (6.25,1.25) to (6.75,1.75);

\draw[->,very thick] (7.25,2.25) to (7.75,2.75);

\draw[->,very thick] (8.25,3.25) to (8.75,3.75);

\draw[->,very thick] (9.25,4.25) to (9.75,4.75);

\draw[->,very thick] (4.25,1.25) to (4.75,1.75);

\draw[->,very thick] (5.25,2.25) to (5.75,2.75);

\draw[->,very thick] (3.25,2.25) to (3.75,2.75);

\draw[->,very thick] (7.25,4.25) to (7.75,4.75);

\draw[->,very thick] (1.25,4.25) to (1.75,4.75);

\draw[->,very thick] (4.75,0.25) to (4.25,0.75);

\draw[->,very thick] (5.75,1.25) to (5.25,1.75);

\draw[->,very thick] (3.75,1.25) to (3.25,1.75);

\draw[->,very thick] (2.75,2.25) to (2.25,2.75);

\draw[->,very thick] (1.75,3.25) to (1.25,3.75);

\draw[->,very thick] (0.75,4.25) to (0.25,4.75);

\draw[->,very thick] (2.75,4.25) to (2.25,4.75);

\draw[->,very thick] (8.75,4.25) to (8.25,4.75);

\draw[->,very thick] (4.75,2.25) to (4.25,2.75);

\draw[->,very thick] (6.75,2.25) to (6.25,2.75);

\draw[blue, thick, dashed, rotate around={135:(7.65,2.5)}] (7.65,2.5) ellipse (1.02cm and 5.1cm);

\draw[blue, thick, dashed, rotate around={45:(2.35,2.5)}] (2.35,2.5) ellipse (1.02cm and 5.1cm);

\node[right] at (9.4,2.5) {$\mathsf{E}_{\uparrow}$};

\node[left] at (0.7,2.5) {$\mathsf{E}_{\downarrow}$};

\draw[blue, ->, thick]  (9.6,2.4) to[out=-90,in=-45] (8.95,2.25);

\draw[blue, ->, thick]  (0.38,2.4) to[out=-90,in=-135] (1.1,2.25);

\end{tikzpicture}\caption{The figure gives a cartoon description of the dynamics (\ref{DynamicsArray}) in $\mathbb{IA}_N$. The particles at row $k$ evolve as independent $\mathsf{L}^{(k)}$-diffusions modulo the following interactions. Particle $\mathsf{x}_i^{(k)}$ is autonomous except for its interaction with its two nearest neighbours in row $(k-1)$, namely $\mathsf{x}_{i-1}^{(k-1)}$ and $\mathsf{x}_{i}^{(k-1)}$: it receives an infinitesimal push, corresponding to the local times $\mathfrak{l}_i^{\uparrow,(k)}$ and $\mathfrak{l}_i^{\downarrow,(k)}$ respectively, only when it collides with them, denoted in the figure by the incoming arrows, in order for the interlacing to remain true. The projection maps $\mathsf{E}_{\uparrow}$ and $\mathsf{E}_{\downarrow}$ defined later on in (\ref{RightProjection}) and (\ref{LeftProjection}) simply return the coordinates at the right and left edges of the array respectively as encircled in the figure. It is clear that the corresponding particle systems $(\mathsf{x}_1^{(1)}(t),\dots,\mathsf{x}_N^{(N)}(t))$ and $(\mathsf{x}_1^{(1)}(t),\dots,\mathsf{x}_1^{(N)}(t))$ on the right and left edges of the array only interact among themselves (observe that there are no incoming arrows from other coordinates) and thus their evolution is Markovian and upon relabelling governed by the equations (\ref{IPS1}) and (\ref{IPS2}). }\label{FigureInterlacing}
\end{figure}

The connection of the dynamics (\ref{DynamicsArray}) to the interacting particle systems (\ref{IPS1}) and (\ref{IPS2}) is clear. They are simply the projections to the autonomous coordinates on the right and left edges of the array respectively, see Figure \ref{FigureInterlacing} for an illustration. A connection to the particle system (\ref{NonCollidingSDE}) on the other hand is not obvious at all. The next result explains it.

\begin{prop}\label{MultilevelProp} Let $\mu$ be a probability measure supported on $\mathbb{W}_N^{\uparrow,\circ}$. Under the standing assumption in Definition \ref{StandingAssumption}, suppose the SDEs (\ref{DynamicsArray}) in $\mathbb{IA}_N$ are initialized according to:
\begin{equation}\label{GibbsMeasure}
\mu\left(dx^{(N)}\right) \frac{\prod_{j=1}^{N-1}j!}{\mathsf{\Delta}_N\left(x^{(N)}\right)}\mathbf{1}_{\left(x^{(1)}\prec \cdots \prec x^{(N)}\right)} dx^{(1)}\cdots dx^{(N-1)}.
\end{equation}
Then, $\tau_{\textnormal{col}}=\infty$ almost surely, the projection on the top row $\left(\mathsf{x}^{(N)}(t);t\ge 0\right)$ is distributed as a Markov process with semigroup $\left(\mathsf{P}_t^{(N)};t\ge 0\right)$ from (\ref{NonCollidingSDE}) and for fixed time $T\ge 0$ the distribution of $\left(\mathsf{x}^{(1)}(T),\dots,\mathsf{x}^{(N)}(T)\right)$ in $\mathbb{IA}_N$ is given by:
\begin{equation}\label{EvolvedGibbs}
\left[\mu\mathsf{P}_T^{(N)}\right]\left(dx^{(N)}\right) \frac{\prod_{j=1}^{N-1}j!}{\mathsf{\Delta}_N\left(x^{(N)}\right)}\mathbf{1}_{\left(x^{(1)}\prec \cdots \prec x^{(N)}\right)} dx^{(1)}\cdots dx^{(N-1)}.
\end{equation}
\end{prop}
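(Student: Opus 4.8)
The plan is to prove the three assertions --- non-explosion ($\tau_{\textnormal{col}}=\infty$ a.s.), the Markov property and semigroup identification of the top row, and the explicit fixed-time law (\ref{EvolvedGibbs}) --- by exhibiting an intertwining (Markov function / Rogers--Pitman type) relation between the dynamics (\ref{DynamicsArray}) and the non-colliding semigroup $\mathsf{P}_t^{(N)}$. The key object is the \emph{conditional} kernel $\mathsf{\Lambda}_N$ mapping a top row $x^{(N)}\in \mathbb{W}_N^{\uparrow,\circ}$ to the measure on the lower rows
\begin{equation*}
\mathsf{\Lambda}_N\left(x^{(N)},\left(dx^{(1)},\dots,dx^{(N-1)}\right)\right)=\frac{\prod_{j=1}^{N-1}j!}{\mathsf{\Delta}_N\left(x^{(N)}\right)}\mathbf{1}_{\left(x^{(1)}\prec \cdots \prec x^{(N)}\right)}dx^{(1)}\cdots dx^{(N-1)},
\end{equation*}
which is a genuine probability kernel because the volume of the polytope $\{x^{(1)}\prec\cdots\prec x^{(N)}\}$ with $x^{(N)}$ fixed equals $\mathsf{\Delta}_N(x^{(N)})/\prod_{j=1}^{N-1}j!$ (the normalization of the continuous Gelfand--Tsetlin polytope). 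So the measure (\ref{GibbsMeasure}) is exactly $\mu(dx^{(N)})\,\mathsf{\Lambda}_N(x^{(N)},\cdots)$, and (\ref{EvolvedGibbs}) is the claim that this ``Gibbs with boundary $\mu$'' form is preserved by the dynamics, with the boundary evolving under $\mathsf{P}^{(N)}_T$.

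First I would set up the diffusion generator $\mathsf{G}_N$ of the array dynamics (\ref{DynamicsArray}) on $\mathbb{IA}_N$, acting on smooth functions with Neumann-type boundary conditions encoding the local-time pushes: $\frac12\partial_{\cdots}$ derivatives vanishing in the appropriate normal directions at each interlacing facet. The heart of the argument is the intertwining relation at the level of generators,
\begin{equation*}
\mathsf{G}_N\,\mathsf{\Lambda}_N = \mathsf{\Lambda}_N\,\mathsf{\mathfrak{L}}_N,\qquad \text{where }\ \mathsf{\mathfrak{L}}_N \ \text{ is the generator of }\ \left(\mathsf{P}_t^{(N)}\right),
\end{equation*}
equivalently $\mathsf{G}_N(f\circ \pi_N) = (\mathsf{\mathfrak{L}}_N f)\circ\pi_N$ does \emph{not} hold naively (the top row is not autonomous in the array), so instead one verifies the weaker intertwining $\mathsf{G}_N(\mathsf{\Lambda}_N g) = \mathsf{\Lambda}_N(\mathsf{\mathfrak{L}}_N g)$ for $g$ a function of the top row only. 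This is precisely the kind of computation carried out in \cite{InterlacingDiffusions} for these polynomial-coefficient diffusions (and Proposition \ref{PropIntertwining} referenced in the acknowledgements), and by the polynomial form of $\mathsf{a},\mathsf{b}$ the required identities --- essentially the differentiation formulae for $\mathsf{\Delta}_N$ already recorded in Section \ref{SectionNonColliding} together with the boundary-term cancellations --- hold. Granting the intertwining, the Rogers--Pitman criterion gives immediately that if the array starts from $\mu\,\mathsf{\Lambda}_N$ then the top row is Markov with semigroup $\mathsf{P}^{(N)}_t$, and the full array at time $T$ has law $(\mu\mathsf{P}^{(N)}_T)\,\mathsf{\Lambda}_N$, which is exactly (\ref{EvolvedGibbs}). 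The non-explosion $\tau_{\textnormal{col}}=\infty$ a.s. then follows because under this law the top row is at all positive times in $\mathbb{W}_N^{\uparrow,\circ}$ (by the no-collision property of (\ref{NonCollidingSDE}) established in the Proposition just before Proposition \ref{DistributionNonColliding}), hence a.s.\ all lower rows stay strictly interlaced --- no particle $\mathsf{x}_i^{(n)}$ can be trapped between coinciding neighbours because its neighbours in row $n-1$ are themselves separated, propagating down from row $N$; a short Gronwall/comparison argument on each row, using that the pushing local times are a.s.\ finite on compact time intervals up to $\tau_{\textnormal{col}}$, upgrades ``does not collide at the boundary'' to $\tau_{\textnormal{col}}=\infty$.

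Concretely the steps, in order, would be: (1) check $\mathsf{\Lambda}_N$ is a probability kernel (continuous Gelfand--Tsetlin volume) and that $\mu\,\mathsf{\Lambda}_N$ for $\mu$ supported on $\mathbb{W}_N^{\uparrow,\circ}$ is supported on the open array so (\ref{DynamicsArray}) has a unique strong solution up to $\tau_{\textnormal{col}}$ by \cite{InterlacingDiffusions}; (2) identify the generators $\mathsf{G}_N$ (with its Neumann boundary conditions) and $\mathsf{\mathfrak{L}}_N$; (3) prove the generator intertwining $\mathsf{G}_N\mathsf{\Lambda}_N g=\mathsf{\Lambda}_N\mathsf{\mathfrak{L}}_N g$ for $g$ depending on the top row --- the $\mathsf{L}^{(k)}$ on row $k$ combine through the $\mathsf{b}^{(k)}=\mathsf{b}+(N-k)\mathsf{a}'$ drifts and the local-time boundary terms to reproduce, after integrating over the lower rows against $\mathsf{\Lambda}_N$, exactly the conjugated operator $\mathsf{\Delta}_N^{-1}(\sum_i \mathsf{L}_{x_i}-\lambda_N)\mathsf{\Delta}_N$; (4) invoke the Markov-functions theorem (Rogers--Pitman / Kurtz, as in \cite{InterlacingDiffusions}) to conclude the top-row is Markov with $\mathsf{P}^{(N)}_t$ and that $\mu\,\mathsf{\Lambda}_N$ evolves to $(\mu\mathsf{P}^{(N)}_t)\,\mathsf{\Lambda}_N$; (5) deduce $\tau_{\textnormal{col}}=\infty$ a.s.\ from strict interlacing being preserved, which in turn follows from the top row staying in $\mathbb{W}_N^{\uparrow,\circ}$ for $t>0$. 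The main obstacle is step (3): verifying that all the boundary (local-time) contributions in $\mathsf{G}_N\mathsf{\Lambda}_N g$ cancel against each other and against the Jacobian-type terms produced by differentiating $\mathsf{\Delta}_N^{-1}\mathbf{1}_{(\cdot\prec\cdots\prec\cdot)}$ --- this is a careful but essentially mechanical integration-by-parts on the interlacing polytope, and its validity in this exact polynomial-coefficient generality is the content one borrows from \cite{InterlacingDiffusions}; the non-explosion propagation in step (5), while intuitively clear, also needs a genuine (if short) argument since the array generator is only defined up to $\tau_{\textnormal{col}}$.
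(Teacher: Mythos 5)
Your proposal is correct in spirit and rests on the same foundation as the paper's proof, namely the intertwining/Markov-functions machinery of \cite{InterlacingDiffusions}; but the concrete route differs. The paper does not set up a single generator-level intertwining between the full array generator $\mathsf{G}_N$ and the top-row generator and then invoke Rogers--Pitman. Instead it verifies, term by term, the hypotheses of the ready-made multilevel result (Proposition 13.9 of \cite{InterlacingDiffusions}), which is itself built as a tower of \emph{two-level} intertwinings between consecutive rows: one introduces the dual (conjugate) diffusions $\widehat{\mathsf{L}^{(n)}}$ and their speed-measure densities $\widehat{\mathsf{m}^{(n)}}$, checks that $(\widehat{\mathsf{m}^{(n+1)}})^{-1}$ is an eigenfunction of $\widehat{\mathsf{L}^{(n+1)}}$ whose Doob $h$-transform recovers $\mathsf{L}^{(n)}$, and verifies the two identities (13.33)--(13.34) of \cite{InterlacingDiffusions} relating $\mathsf{\Delta}_n$, the kernels $\mathfrak{L}^{n}_{n-1}$, and the Karlin--McGregor determinants. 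What the paper's route buys is that it never has to make rigorous the ``Neumann boundary conditions plus integration by parts on the interlacing polytope'' computation you defer to step (3): that work is packaged inside the cited proposition, and the verification reduces to algebra with explicit eigenfunctions. What your route would buy, if carried out, is a self-contained one-shot argument; but as you acknowledge, step (3) in your formulation is exactly the hard part, and you end up borrowing it from \cite{InterlacingDiffusions} anyway.

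The one place where your logic needs repair rather than just more detail is step (5). You propose to deduce $\tau_{\textnormal{col}}=\infty$ \emph{from} the conclusion of the Markov-functions theorem (top row in $\mathbb{W}_N^{\uparrow,\circ}$ for $t>0$, strict interlacing propagating downward). But the array process, hence the object to which you would apply Rogers--Pitman, is a priori only defined up to $\tau_{\textnormal{col}}$; using the time-$T$ law of the full array to prove $\tau_{\textnormal{col}}>T$ is circular unless you first localize (e.g.\ work on $\{\tau_{\textnormal{col}}>T\}$ and derive a contradiction on the complement, or build the process row by row). In the paper this issue does not arise because $\tau_{\textnormal{col}}=\infty$ a.s.\ is part of the \emph{conclusion} of Proposition 13.9 of \cite{InterlacingDiffusions}, established inside its inductive consecutive-level construction, not deduced afterwards. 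Your downward-propagation observation (coinciding coordinates in row $n$ force coinciding coordinates in row $n+1$, hence in row $N$) is correct and is the right mechanism, but it must be run on the localized process.
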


\begin{proof}
We apply Proposition 13.9 in \cite{InterlacingDiffusions}, see Section 13.3.1 there for the general setup. We take the $L_n$ generator in display (13.31) therein our $\mathsf{L}^{(n)}$-diffusion. Then, the so-called dual or conjugate diffusion $\widehat{\mathsf{L}^{(n)}}$ has generator:
\begin{equation*}
\widehat{\mathsf{L}^{(n)}}=\mathsf{a}(x)\frac{d^2}{dx^2}+\left[\mathsf{a}'(x)-\mathsf{b}^{(n)}(x)\right]\frac{d}{dx}=\mathsf{a}(x)\frac{d^2}{dx^2}-\left[\mathsf{b}(x)+(N-n-1)\mathsf{a}'(x)\right]\frac{d}{dx},
\end{equation*}
and note that if a boundary point is natural for $\mathsf{L}^{(n)}$ it remains natural for $\widehat{\mathsf{L}^{(n)}}$ and if it is entrance for $\mathsf{L}^{(n)}$ it is exit (see \cite{ItoMckean, HandbookBM, EthierKurtz,KarlinTaylor} for this terminology) for $\widehat{\mathsf{L}^{(n)}}$, see \cite{InterlacingDiffusions} for justifications. Associated to the diffusion $\widehat{\mathsf{L}^{(n)}}$ is a certain positive measure, called the speed measure (see also Proposition \ref{PropInvariantMeasure} and its proof where this is also discussed), having density with respect to Lebesgue measure in $(l,r)$ given by (denoted as $\widehat{m^n}$ in the notation of \cite{InterlacingDiffusions}), see \cite{InterlacingDiffusions}:
\begin{equation*}
\widehat{\mathsf{m}^{(n)}}(x)=\exp\left(-\int_{\zeta}^x\frac{\mathsf{b}^{(n)}(y)}{\mathsf{a}(y)}dy\right).
\end{equation*}
Here, $\zeta \in (l,r)$ is an arbitrary (fixed) point; changing $\zeta$ amounts to changing $\widehat{\mathsf{m}^{(n)}}$ by a multiplicative constant but this plays no role in what follows. We then pick the functions $g_n, G_n$ and constants $c_n$ in Proposition 13.9 (more precisely in Section 13.3.1 therein) of \cite{InterlacingDiffusions} as follows:
\begin{align*}
    g_n(x_1,\dots,x_n)&=\prod_{i=1}^n\left(\widehat{\mathsf{m}^{(n+1)}}(x_i)\right)^{-1}\mathsf{\Delta}_n(x),\\
    G_n(x_1,\dots,x_n)&=\frac{1}{(n-1)!}\mathsf{\Delta}_n(x),\\
    c_n&=a_2\frac{(n+1)n(n-1)}{3}+(b_1+2a_2(N-n-1))\frac{(n+1)n}{2}.
\end{align*}
Observe that, we have
\begin{equation*}
  \mathsf{\Delta}_n(x)=(n-1)!\int_{y\prec x} \prod_{i=1}^{n-1} \widehat{\mathsf{m}^{(n)}}(y_i) \prod_{i=1}^n \left(\widehat{\mathsf{m}^{(n)}}(y_i)\right)^{-1} \mathsf{\Delta}_{n-1}(y)dy_1\cdots dy_{n-1}
\end{equation*}
and hence the relation (13.33) in \cite{InterlacingDiffusions} holds. Moreover, a simple computation shows that $\left(\widehat{\mathsf{m}^{(n+1)}}\right)^{-1}$ is an eigenfunction of $\widehat{\mathsf{L}^{(n+1)}}$ with eigenvalue $\mathsf{c}^{(n)}=2(N-n-1)a_2+b_1$ and the corresponding Doob $h$-transform \cite{Doob,RevuzYor,Pinsky} by this function gives a $\mathsf{L}^{(n)}$-diffusion, $\widehat{\mathsf{m}^{(n+1)}}\circ \widehat{\mathsf{L}^{(n+1)}}\circ\left(\widehat{\mathsf{m}^{(n+1)}}\right)^{-1}-\mathsf{c}^{(n)}=\mathsf{L}^{(n)}$. At the level of transition densities in $(l,r)$ we then have:
\begin{align*}
e^{-\mathsf{c}^{(n)}t}\frac{\widehat{\mathsf{m}^{(n+1)}}(x)}{\widehat{\mathsf{m}^{(n+1)}}(y)}e^{t\widehat{\mathsf{L}^{(n+1)}}}(x,y)&=e^{t\mathsf{L}^{(n)}}(x,y).
\end{align*}
We note that if a boundary point is exit for $\widehat{\mathsf{L}^{(n+1)}}$ its transition kernel has an atom there. This is disregarded when we consider the transition density $e^{t\widehat{\mathsf{L}^{(n+1)}}}(x,y)$ in $(l,r)$, which equivalently can be thought of as the transition kernel of a $\widehat{\mathsf{L}^{(n+1)}}$-diffusion killed (instead of absorbed) when it reaches an exit boundary point. Thus, we readily check that 
\begin{align*}
 e^{c_{n-1}t}\frac{\mathsf{\Delta}_N(y)}{\mathsf{\Delta}_{N}(x)}\det\left(e^{t\mathsf{L}^{(n)}}(x_i,y_j)\right)_{i,j=1}^n=e^{-c_n t}\frac{\prod_{i=1}^n\widehat{\mathsf{m}^{(n+1)}}(x_i)\mathsf{\Delta}_N(y)}{\prod_{i=1}^n\widehat{\mathsf{m}^{(n+1)}}(y_i)\mathsf{\Delta}_{N}(x)}\det\left(e^{t\widehat{\mathsf{L}^{(n+1)}}}(x_i,y_j)\right)_{i,j=1}^n,
\end{align*}
and so display (13.34) in \cite{InterlacingDiffusions} holds. Finally, the assumptions (\textbf{R}), (\textbf{BC}+) and (\textbf{YW}) in Proposition 13.9 in \cite{InterlacingDiffusions} all hold by our standing assumption in Definition \ref{StandingAssumption}. We can thus apply Proposition 13.9 of \cite{InterlacingDiffusions}, from which the conclusion follows by noting that 
\begin{equation*}
 \prod_{n=1}^{N-1}\mathfrak{L}_n^{n+1}\left(x^{(n+1)},dx^{(n)}\right)= \frac{\prod_{j=1}^{N-1}j!}{\mathsf{\Delta}_{N}\left(x^{(N)}\right)}\mathbf{1}_{\left(x^{(1)}\prec \cdots \prec x^{(N)}\right)}dx^{(1)}\cdots dx^{(N-1)},
\end{equation*}
where the Markov kernel $\mathfrak{L}^{n}_{n-1}$ from $\mathbb{W}_n^{\uparrow}$ to $\mathbb{W}^{\uparrow}_{n-1}$ is given by the formula (for $x\in \mathbb{W}_N^{\uparrow,\circ}$ but can be extended to general $x\in \mathbb{W}_N^{\uparrow}$ by continuity, see for example \cite{HuaPickrell}):
\begin{equation*}
  \mathfrak{L}_{n-1}^n(x,dy)=\frac{\prod_{i=1}^{n-1}\widehat{\mathsf{m}^{(n)}}(y_i)g_{n-1}(y_1,\dots,y_{n-1})}{G_{n}(x_1,\dots,x_n)}\mathbf{1}_{(y\prec x)}dy= \frac{(n-1)!\mathsf{\Delta}_{n-1}(y)}{\mathsf{\Delta}_n(x)}\mathbf{1}_{(y\prec x)}dy.
\end{equation*}
\end{proof}

\begin{rmk}\label{HistoryRmk}
Proposition \ref{MultilevelProp} first appeared in the case of Brownian motion in the work of Warren \cite{Warren}. Further examples (the Ornstein-Uhlenbeck, squared Bessel, radial Ornstein-Uhlenbeck and Jacobi cases) were discussed in \cite{InterlacingDiffusions}. 
\end{rmk}

\begin{rmk}\label{EntranceLaw}
The dynamics (\ref{DynamicsArray}) can also be studied from singular initial conditions, when some of the coordinates of the top row coincide, using an entrance law, see \cite{RevuzYor} for the terminology, which is built from an entrance law for the dynamics of the top row $\left(\mathsf{P}_t^{(N)};t\ge 0\right)$, see \cite{InterlacingDiffusions,SingularValuesBMDrift} for more details. In the particular case of all the coordinates of the top row initially equal to some $x_*\in (l,r)$, the resulting measure in (\ref{EvolvedGibbs}) is seen to coincide (after a little computation), subject to identifying the set $\mathcal{D}_N$ from (\ref{SetDN}) with $\mathbb{IA}_N$ as discussed in Remark \ref{InterlacingComparison}, with the measure in (\ref{SignedMeasure}) with $x_i\equiv x_*$. For other initial conditions these two measures are different. Nevertheless, they are still in some sense related and we explain this in more detail around Proposition \ref{PropAlternativeRoute} below.
\end{rmk}

\begin{rmk}\label{RemarkRMTconnection} By virtue of Proposition \ref{MultilevelProp} the projection of the dynamics (\ref{DynamicsArray}) in $\mathbb{IA}_N$ on the top row match the evolution of eigenvalues of certain Hermitian matrix valued diffusions, or equivalently the dynamics (\ref{NonCollidingSDE}), see Section \ref{SectionExamples} for references (also we note that although the result is stated only for the top level analogous results hold for the $n$-th level by replacing $ \mathsf{L}$ by $\mathsf{L}^{(n)}$). Moreover, it can be shown (and has been shown in the cases mentioned in Remark \ref{HistoryRmk}) that the measure (\ref{EvolvedGibbs}) is also the distribution of the eigenvalues of consecutive sub-matrices of such a matrix valued diffusion at a fixed time $T\ge 0$. This is certainly true in the general setting of this paper (assuming one introduces the right Hermitian matrix valued diffusion corresponding to $\mathsf{L}$) but we will not pursue it further here. Finally, we note that the joint dynamics of the eigenvalues of consecutive sub-matrices are different from (\ref{DynamicsArray}), even in the case of Hermitian Brownian motion, see \cite{ConsecutiveMinorsDyson}.
\end{rmk}

We now give a more direct connection between the semigroups of the interacting particle systems in (\ref{IPS1}), (\ref{IPS2}) and (\ref{NonCollidingSDE}). Define the projections $\mathsf{E}_{\uparrow}:\mathbb{IA}_N \to \mathbb{W}_N^{\uparrow}$ and $\mathsf{E}_{\downarrow}:\mathbb{IA}_N \to \mathbb{W}_N^{\downarrow}$ as follows, see Figure \ref{FigureInterlacing} for an illustration,
\begin{align}
\mathsf{E}_{\uparrow}\left[(x^{(1)},x^{(2)},\dots,x^{(N)})\right]&=(x_1^{(1)},x_2^{(2)},\dots,x_N^{(N)}),\label{RightProjection} \\
\mathsf{E}_{\downarrow}\left[(x^{(1)},x^{(2)},\dots,x^{(N)})\right]&=(x_1^{(1)},x_1^{(2)},\dots,x_1^{(N)}).\label{LeftProjection}
\end{align}
Observe that, we can also think of $\mathsf{E}_{\uparrow}, \mathsf{E}_{\downarrow}$ as Markov kernels. Define the Markov kernel $\mathsf{\Lambda}$ from $\mathbb{W}_N^{\uparrow,\circ}$ to $\mathbb{IA}_N$ by, with $Y^{(N)}=(y^{(1)},\dots,y^{(N)})$:
\begin{equation}\label{ArrayMarkovKernel}
\mathsf{\Lambda}\left(x,dY^{(N)}\right)=\frac{\prod_{j=1}^{N-1}j!}{\mathsf{\Delta}_N(x)} \mathbf{1}_{\left(y^{(N)}=x,Y^{(N)}\in \mathbb{IA}_N\right)}dy^{(1)}\cdots dy^{(N-1)}dy^{(N)}.
\end{equation}
Note that, $\mathsf{\Lambda}\left(x,\cdot\right)$ is simply the distribution of a uniformly random interlacing array in $\mathbb{IA}_N$ with top arrow $x$. A key role is played by the kernels $\mathsf{\Lambda}\mathsf{E}_{\uparrow}$ and $\mathsf{\Lambda}\mathsf{E}_{\downarrow}$ from $\mathbb{W}_N^{\uparrow,\circ}$ to $\mathbb{W}_N^{\uparrow}$ and to $\mathbb{W}_N^{\downarrow}$ respectively: we simply pick a random array with fixed top row and then project to either edge. We have the following intertwining relations.

\begin{prop}\label{PropIntertwining} Let $t \ge 0$. Then, under the standing assumption in Definition \ref{StandingAssumption}, we have the intertwinings
\begin{align}
 \mathsf{P}_t^{(N)}\mathsf{\Lambda}\mathsf{E}_{\downarrow}=\mathsf{\Lambda}\mathsf{E}_{\downarrow} \mathsf{S}_t^{\downarrow,(N)},\\
 \mathsf{P}_t^{(N)}\mathsf{\Lambda}\mathsf{E}_{\uparrow}=\mathsf{\Lambda}\mathsf{E}_{\uparrow} \mathsf{S}_t^{\uparrow,(N)}.
\end{align}
\end{prop}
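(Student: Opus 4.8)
\textbf{Proof proposal for Proposition \ref{PropIntertwining}.}

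The plan is to deduce the two intertwinings from the single ``master'' relation in Proposition \ref{MultilevelProp}: the dynamics (\ref{DynamicsArray}) on the array $\mathbb{IA}_N$, initialised from the conditional uniform measure $\mathsf{\Lambda}(x,\cdot)$ on arrays with top row $x$, has top-row marginal evolving by $\mathsf{P}_t^{(N)}$, while the whole array at time $t$ is distributed as $\left[\delta_x\mathsf{P}_t^{(N)}\right]\mathsf{\Lambda}=\mathsf{P}_t^{(N)}(x,\cdot)\mathsf{\Lambda}$ (take $\mu=\delta_x$ in (\ref{GibbsMeasure})--(\ref{EvolvedGibbs}), using the entrance-law extension of Remark \ref{EntranceLaw} when $x$ has coinciding coordinates, or restrict first to $x\in\mathbb{W}_N^{\uparrow,\circ}$ and extend by the weak-continuity argument already used in Proposition \ref{DistributionNonColliding}). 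In operator form this says $\mathsf{\Lambda}\,\mathsf{Q}_t^{(N)} = \mathsf{P}_t^{(N)}\mathsf{\Lambda}$, where $\mathsf{Q}_t^{(N)}$ denotes the Markov semigroup on $\mathbb{IA}_N$ of the array dynamics (\ref{DynamicsArray}) (well-defined for all $t$ since $\tau_{\mathrm{col}}=\infty$ a.s. under these initial conditions, again by Proposition \ref{MultilevelProp}).

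The second ingredient is the observation, explained around Figure \ref{FigureInterlacing}, that the right-edge process $\left(\mathsf{x}_1^{(1)}(t),\dots,\mathsf{x}_N^{(N)}(t)\right)$ and the left-edge process $\left(\mathsf{x}_1^{(1)}(t),\dots,\mathsf{x}_1^{(N)}(t)\right)$ of the array dynamics are each autonomous (Markov) and, after relabelling, solve exactly (\ref{IPS1}) and (\ref{IPS2}) respectively. Concretely, the $k$-th coordinate on the right edge, $\mathsf{x}_k^{(k)}$, evolves as an $\mathsf{L}^{(k)}$-diffusion with a one-sided push from its unique lower-row neighbour that can collide with it, namely $\mathsf{x}_{k-1}^{(k-1)}$ (the other neighbour $\mathsf{x}_k^{(k-1)}$ lies strictly to its right and never reaches it along the right edge); this is precisely the SDE (\ref{IPS1}). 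The symmetric statement holds on the left edge for (\ref{IPS2}). In kernel language this is the identity $\mathsf{E}_{\uparrow}\,\mathsf{S}_t^{\uparrow,(N)} = \mathsf{Q}_t^{(N)}\,\mathsf{E}_{\uparrow}$ as maps into $\mathbb{W}_N^\uparrow$ (reading $\mathsf{E}_{\uparrow}$ as a deterministic Markov kernel), and likewise $\mathsf{E}_{\downarrow}\,\mathsf{S}_t^{\downarrow,(N)} = \mathsf{Q}_t^{(N)}\,\mathsf{E}_{\downarrow}$. Composing: $\mathsf{P}_t^{(N)}\mathsf{\Lambda}\mathsf{E}_{\uparrow} = \mathsf{\Lambda}\,\mathsf{Q}_t^{(N)}\,\mathsf{E}_{\uparrow} = \mathsf{\Lambda}\,\mathsf{E}_{\uparrow}\,\mathsf{S}_t^{\uparrow,(N)}$, and identically with $\downarrow$, which is the claim.

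I would carry this out in three steps: (i) state and justify $\mathsf{\Lambda}\mathsf{Q}_t^{(N)}=\mathsf{P}_t^{(N)}\mathsf{\Lambda}$ as a direct corollary of Proposition \ref{MultilevelProp} with $\mu=\delta_x$, handling the singular-$x$ case via the entrance law of Remark \ref{EntranceLaw} or a continuity argument; (ii) verify the edge-autonomy identities $\mathsf{E}_{\uparrow}\mathsf{S}_t^{\uparrow,(N)}=\mathsf{Q}_t^{(N)}\mathsf{E}_{\uparrow}$ and $\mathsf{E}_{\downarrow}\mathsf{S}_t^{\downarrow,(N)}=\mathsf{Q}_t^{(N)}\mathsf{E}_{\downarrow}$ by comparing the defining SDEs, citing \cite{InterlacingDiffusions} for strong existence/uniqueness and for the fact that the edge processes are closed sub-systems; (iii) compose. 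The main obstacle, and the only place needing genuine care, is step (ii): one must check that along each edge the problematic ``trapping'' configuration of $\tau_{\mathrm{col}}$ never constrains an edge particle in two directions — i.e. that $\mathsf{x}_k^{(k)}$ really only ever feels the push from $\mathsf{x}_{k-1}^{(k-1)}$ and never from $\mathsf{x}_k^{(k-1)}$ — so that the edge SDE genuinely closes up into (\ref{IPS1}) (resp. (\ref{IPS2})) with the correct local-time terms $\tfrac12 d\mathfrak{l}_k^{\uparrow}$ (resp. $-\tfrac12 d\mathfrak{l}_k^{\downarrow}$) and $\mathfrak{l}_1\equiv 0$; this is where the geometry of $\mathbb{IA}_N$ and the pushing rules of (\ref{DynamicsArray}) must be used, and it is essentially the content of the discussion accompanying Figure \ref{FigureInterlacing}, which I would make precise here.
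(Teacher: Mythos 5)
Your proposal is correct and follows essentially the same route as the paper: the paper likewise combines the Dynkin-criterion identities $\mathsf{A}_t^{(N)}\mathsf{E}_{\uparrow}=\mathsf{E}_{\uparrow}\mathsf{S}_t^{\uparrow,(N)}$, $\mathsf{A}_t^{(N)}\mathsf{E}_{\downarrow}=\mathsf{E}_{\downarrow}\mathsf{S}_t^{\downarrow,(N)}$ (edge autonomy, as in Figure \ref{FigureInterlacing}) with the master intertwining $\mathsf{P}_t^{(N)}\mathsf{\Lambda}=\mathsf{\Lambda}\mathsf{A}_t^{(N)}$ and then composes. The only cosmetic difference is that the paper derives the master relation via the filtering argument (conditioning the array on the top-row filtration and computing $\mathbb{E}[f(\mathsf{X}^{(N)}(t))]$ two ways) rather than reading it off the fixed-time marginal in Proposition \ref{MultilevelProp} with $\mu=\delta_x$, but these amount to the same computation.
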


\begin{proof}
Let $\left(\mathsf{A}_t^{(N)};t\ge 0\right)$ be the semigroup\footnote{We note that there is no explicit expression for $\mathsf{A}_t^{(N)}$ but there is a formula for the transition kernel of two consecutive levels $((\mathsf{x}^{(k)}(t),\mathsf{x}^{(k+1)}(t));t\ge 0)$ in the dynamics (\ref{DynamicsArray}), see \cite{InterlacingDiffusions}.} of the process $(\mathsf{X}^{(N)}(t)=(\mathsf{x}^{(1)}(t),\dots,\mathsf{x}^{(N)}(t));t\ge 0)$ following the dynamics (\ref{DynamicsArray}) in the array $\mathbb{IA}_N$. Observe that, since the projections $\left(\mathsf{E}^{\uparrow}\left(\mathsf{X}^{(N)}(t)\right);t\ge 0\right)$ and $\left(\mathsf{E}^{\downarrow}\left(\mathsf{X}^{(N)}(t)\right);t\ge 0\right)$ are autonomous and given by the interacting particle systems (\ref{IPS1}) and (\ref{IPS2}) respectively, by Dynkin's criterion \cite{Dynkin} we have the intertwinings, for any $t\ge 0$: 
\begin{align}
 \mathsf{A}_t^{(N)}\mathsf{E}_{\downarrow}&=\mathsf{E}_{\downarrow}\mathsf{S}_t^{\downarrow,(N)},\label{Dynkin1}\\
  \mathsf{A}_t^{(N)}\mathsf{E}_{\uparrow}&=\mathsf{E}_{\uparrow}\mathsf{S}_t^{\uparrow,(N)}.\label{Dynkin2}
\end{align}

We now exhibit an intertwining between $\mathsf{A}_t^{(N)}$ and $\mathsf{P}_t^{(N)}$ which is less obvious. We follow the filtering framework for intertwinings presented in \cite{CarmonPetitYor}. Suppose we are in the setting of Proposition \ref{MultilevelProp}, with the dynamics (\ref{DynamicsArray}) initialized according to a probability measure of the form (\ref{GibbsMeasure}). Consider the natural filtrations $\left(\mathcal{F}_t\right)_{t\ge 0}$ and $\left(\mathcal{G}_t\right)_{t\ge 0}$ of $\mathsf{X}^{(N)}$ and $\mathsf{x}^{(N)}$:
\begin{align*}
\mathcal{F}_t=\sigma\left(\mathsf{X}^{(N)}(s)|s\le t\right), \ \ \mathcal{G}_t=\sigma\left(\mathsf{x}^{(N)}(s)|s\le t\right).
\end{align*}
By Proposition \ref{MultilevelProp}, $\left(\mathsf{x}^{(N)}(t);t\ge 0\right)$ is Markov with respect to its natural filtration $\left(\mathcal{G}_t\right)_{t\ge 0}$ with semigroup $\left(\mathsf{P}^{(N)}_t;t\ge 0\right)$. Moreover, by construction, see Corollary 13.1 and the argument for the proof of Proposition 13.9 in \cite{InterlacingDiffusions}, we have that the conditional distribution of $\mathsf{X}^{(N)}(t)$ given $\mathcal{G}_t$, namely the history of $(\mathsf{x}^{(N)}(s);s\le t)$, is uniform on arrays in $\mathbb{IA}_N$ with top row $\mathsf{x}^{(N)}(t)$ (the same holds if we only condition on $\widetilde{\mathcal{G}}_t=\sigma\left(\mathsf{x}^{(N)}(t)\right)$ and this would also give the result below). More formally, for any $t\ge 0$ and Borel function $f:\mathbb{IA}_N \to \mathbb{R}_+$ we have:
\begin{equation*}
\mathbb{E}\left[f\left(\mathsf{X}^{(N)}(t)\right)\big| \mathcal{G}_t\right]=\mathsf{\Lambda}f\left(\mathsf{x}^{(N)}(t)\right).
\end{equation*}
Let $\mathsf{x}^{(N)}(0)=x\in \mathbb{W}_N^{\uparrow,\circ}$ and a Borel function $f:\mathbb{IA}_N \to \mathbb{R}_+$ be arbitrary. Then, we can compute the following expectation in two ways:
\begin{align*}
\mathbb{E}\left[f\left(\mathsf{X}^{(N)}(t)\right)\right]&=\left[\mathsf{\Lambda}\mathsf{A}_t^{(N)}f\right] (x),\\
\mathbb{E}\left[f\left(\mathsf{X}^{(N)}(t)\right)\right]&=\mathbb{E}\left[\mathbb{E}\left[f\left(\mathsf{X}^{(N)}(t)\right)\big|\mathcal{G}_t\right]\right]=\mathbb{E}\left[\mathsf{\Lambda}f\left(\mathsf{x}^{(N)}(t)\right)\right]=\left[\mathsf{P}_t^{(N)}\mathsf{\Lambda}f\right](x).
\end{align*}
Hence, since $x\in \mathbb{W}_N^{\uparrow,\circ}$ and $f:\mathbb{IA}_N \to \mathbb{R}_+$ were arbitrary we have the intertwining, for $t\ge 0$:
\begin{align}\label{IntertwiningArrayTop}
\mathsf{P}_t^{(N)}\mathsf{\Lambda}=\mathsf{\Lambda}\mathsf{A}_t^{(N)}.
\end{align}
By putting (\ref{Dynkin1}) and (\ref{Dynkin2}) together with (\ref{IntertwiningArrayTop}) we get the statement of the proposition.
\end{proof}

We now discuss an alternative route to Proposition \ref{DeterminantalProp1}. For concreteness we focus on the right edge dynamics $\mathsf{S}_t^{\uparrow,(N)}$ but the situation for $\mathsf{S}_t^{\downarrow,(N)}$ is completely analogous. Define the operator $\left(\mathsf{\Lambda}\mathsf{E}_\uparrow\right)^{-1}$ acting on sufficiently smooth functions $g$ on $\mathbb{W}_N^{\uparrow,\circ}$ as follows:
\begin{equation}\label{InverseKernel}
\left(\mathsf{\Lambda}\mathsf{E}_\uparrow\right)^{-1}g(x)=\frac{1}{\prod_{j=1}^{N-1}j!}\prod_{i=1}^{N-1}\left(-\partial_{x_i}\right)^{N-i}\left[g(x)\mathsf{\Delta}_N(x)\right].
\end{equation}
Then, $\left(\mathsf{\Lambda}\mathsf{E}_\uparrow\right)^{-1}$ is a left inverse to the Markov kernel $\mathsf{\Lambda}\mathsf{E}_\uparrow$ as shown below.

\begin{lem}\label{InverseMarkovKernel}
Let $f$ be a continuous function on $\mathbb{W}_N^\uparrow$. Then, with $x\in \mathbb{W}_N^{\uparrow,\circ}$, we have
\begin{equation*}
\left[\left(\mathsf{\Lambda}\mathsf{E}_\uparrow\right)^{-1}\mathsf{\Lambda}\mathsf{E}_\uparrow f\right](x) =f(x).
\end{equation*}
\end{lem}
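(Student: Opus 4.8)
The plan is to verify directly that the composition $\left(\mathsf{\Lambda}\mathsf{E}_\uparrow\right)^{-1}\mathsf{\Lambda}\mathsf{E}_\uparrow$ acts as the identity on continuous functions, by computing $\mathsf{\Lambda}\mathsf{E}_\uparrow f$ explicitly as an integral and then applying the explicit differential-operator formula \eqref{InverseKernel}. Recall from \eqref{ArrayMarkovKernel} that $\mathsf{\Lambda}(x,dY^{(N)})$ is the uniform measure on interlacing arrays with fixed top row $x$, and $\mathsf{E}_\uparrow$ projects an array onto its right edge $(y_1^{(1)},y_2^{(2)},\dots,y_N^{(N)})$. Thus, for $x\in\mathbb{W}_N^{\uparrow,\circ}$,
\begin{equation*}
\left[\mathsf{\Lambda}\mathsf{E}_\uparrow f\right](x)=\frac{\prod_{j=1}^{N-1}j!}{\mathsf{\Delta}_N(x)}\int_{\mathbb{IA}_N(x)} f\left(y_1^{(1)},y_2^{(2)},\dots,y_{N-1}^{(N-1)},x_N\right)\,dy^{(1)}\cdots dy^{(N-1)},
\end{equation*}
where $\mathbb{IA}_N(x)$ is the set of arrays with top row $x$. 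The key structural point is that, because $f$ evaluated on the right edge depends only on the variables $y_i^{(i)}$ for $i=1,\dots,N-1$ (and $x_N$), the integration over the ``interior'' coordinates of each row can be carried out first and produces explicit polynomial factors; more precisely, the first main step is to rewrite the constrained integral so that $g(x):=\mathsf{\Lambda}\mathsf{E}_\uparrow f(x)$ satisfies $g(x)\mathsf{\Delta}_N(x)$ equals a nested iterated integral of $f$ along the right-edge variables, with the integration limits organized exactly so that applying $\prod_{i=1}^{N-1}(-\partial_{x_i})^{N-i}$ collapses all of it back to $f(x)$.

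The cleanest way to organize the computation is inductively on $N$, using the branching structure of interlacing arrays: an array of length $N$ with top row $x\in\mathbb{W}_N^{\uparrow,\circ}$ consists of a row $x^{(N-1)}=(y_1,\dots,y_{N-1})$ with $x^{(N-1)}\prec x$, together with an array of length $N-1$ with top row $x^{(N-1)}$. This lets one peel off the top row: the conditional law of $x^{(N-1)}$ given $x$ under $\mathsf{\Lambda}(x,\cdot)$ is $\frac{(N-1)!\,\mathsf{\Delta}_{N-1}(y)}{\mathsf{\Delta}_N(x)}\mathbf{1}_{(y\prec x)}dy$ — this is exactly the Markov kernel $\mathfrak{L}^N_{N-1}$ appearing in the proof of Proposition \ref{MultilevelProp}. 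Correspondingly, \eqref{InverseKernel} factors as $\left(\mathsf{\Lambda}\mathsf{E}_\uparrow\right)^{-1} = \left(\mathfrak{L}^N_{N-1}\right)^{-1}\circ \left(\mathsf{\Lambda}_{N-1}\mathsf{E}_{\uparrow,N-1}\right)^{-1}$, where $(\mathfrak{L}^N_{N-1})^{-1}g(x)=\frac{1}{(N-1)!}(-\partial_{x_{N-1}})^{1}\cdots$ — one needs to check that the outermost block of derivatives $(-\partial_{x_i})^{N-i}$ for $i\le N-1$ minus the corresponding block $(-\partial_{x_i})^{N-1-i}$ for the length-$(N-1)$ operator is precisely one extra $\partial_{x_i}$ applied after the $\mathfrak{L}^N_{N-1}$-averaging. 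Thus the induction reduces the statement to the base case $N=1$ (trivial) together with the single-row identity that $(\mathfrak{L}^N_{N-1})^{-1}$ is a left inverse of $\mathfrak{L}^N_{N-1}$ at the level of the extra variables.

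So the heart of the matter is the following lemma-within-the-proof: if $h(y_1,\dots,y_{N-1})$ is continuous and one sets
\begin{equation*}
G(x_1,\dots,x_N)=\frac{(N-1)!}{\mathsf{\Delta}_N(x)}\int_{y\prec x} h(y)\,\mathsf{\Delta}_{N-1}(y)\,dy,
\end{equation*}
then $\frac{1}{(N-1)!}\,(-\partial_{x_1})^{N-1}(-\partial_{x_2})^{N-2}\cdots(-\partial_{x_{N-1}})^{1}\big[G(x)\mathsf{\Delta}_N(x)\big]$ recovers $h$ evaluated on the right edge — but this is an identity about inverting a triangular system of iterated integrals of the form $\int_{l}^{x_i}(\cdots)$. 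Concretely, $G(x)\mathsf{\Delta}_N(x)=(N-1)!\int_{y_1<x_1}\int_{y_2<x_2}\cdots\int_{y_{N-1}<x_{N-1}} h(y)\mathsf{\Delta}_{N-1}(y)\mathbf{1}_{(y\prec x)}\,dy$; the interlacing constraints $y_i\le x_{i+1}$ are automatically inactive after differentiating in $x_1,\dots,x_{N-1}$ and evaluating, and applying $\partial_{x_i}^{N-i}$ for each $i$ in the right order differentiates away all the nested integrals, leaving $h(x_1,\dots,x_{N-1})$ times $\mathsf{\Delta}_{N-1}(x_1,\dots,x_{N-1})/$(leftover Vandermonde factors) which one checks cancels. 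The main obstacle, and the place requiring genuine care rather than symbol-pushing, is bookkeeping: verifying that the powers $N-i$ in \eqref{InverseKernel} match exactly the depth of nesting of the integrals produced by the interlacing structure, and that the antisymmetrization hidden in the Vandermonde determinants does not introduce spurious cross terms when the derivatives act. I would handle this by the induction above, where each inductive step only involves differentiating a single integral $\int_{l}^{x_{N-1}}$ once — reducing the combinatorial bookkeeping to a clean one-line check per level — and appeal to continuity of $f$ to justify differentiating under the (finitely iterated) integral sign and evaluating boundary terms, exactly as in the fundamental theorem of calculus.
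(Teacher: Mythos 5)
Your setup (the explicit integral formula for $\mathsf{\Lambda}\mathsf{E}_\uparrow f$ and the decision to induct) is fine, but the induction you propose — peeling off the \emph{top row} of the array via the kernel $\mathfrak{L}^N_{N-1}$ — does not work as described, and the ``lemma-within-the-proof'' you reduce to is false. Set $G(x)\mathsf{\Delta}_N(x)=(N-1)!\int_{y\prec x}h(y)\mathsf{\Delta}_{N-1}(y)\,dy$. This is an $(N-1)$-fold integral, whereas the operator $\prod_{i=1}^{N-1}(-\partial_{x_i})^{N-i}$ has total order $N(N-1)/2$; for $N\ge 3$ there are more derivatives than integrals, so one cannot ``differentiate away all the nested integrals, leaving $h$'' — the extra derivatives land on $h$ itself, which is only assumed continuous. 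Concretely, for $N=3$ one computes $(-\partial_{x_1})^2(-\partial_{x_2})\bigl[2\int_{x_2}^{x_3}\int_{x_1}^{x_2}h(y_1,y_2)(y_2-y_1)\,dy_1\,dy_2\bigr]=2\bigl[\partial_1h(x_1,x_2)(x_2-x_1)-h(x_1,x_2)\bigr]$, which is not (a multiple of) $h(x_1,x_2)$ even for smooth $h$. Relatedly, the claimed factorization $\left(\mathsf{\Lambda}\mathsf{E}_\uparrow\right)^{-1}=\left(\mathfrak{L}^N_{N-1}\right)^{-1}\circ\left(\mathsf{\Lambda}_{N-1}\mathsf{E}_{\uparrow,N-1}\right)^{-1}$ cannot be taken at face value: the length-$(N-1)$ inverse is a differential operator in the row-$(N-1)$ variables, which have already been integrated out by $\mathfrak{L}^N_{N-1}$, so to use it one must commute it through that integration (integrate by parts against $\mathsf{\Delta}_{N-1}(y)\mathbf{1}_{(y\prec x)}$), and this — the actual content of the lemma — is left unaddressed.

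The fix, which is the route the paper takes, is to induct in the other direction: peel off the \emph{left edge} of the array rather than its top row. Since $\mathsf{\Delta}_N(x)$ cancels between \eqref{InverseKernel} and \eqref{ArrayMarkovKernel}, one is applying $\prod_{i=1}^{N-1}(-\partial_{x_i})^{N-i}$ to a bare iterated integral. The left-edge coordinates satisfy $x_1=x_1^{(N)}\le x_1^{(N-1)}\le\cdots\le x_1^{(1)}$, each appearing as the lower limit of the next integral, so each of the $N-1$ successive applications of $-\partial_{x_1}$ removes exactly one integral (no derivative ever hits $f$), and after all of them one has plugged $x_1$ into the first slot of $f$. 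What remains is precisely the length-$(N-1)$ problem for the array with top row $(x_2,\dots,x_N)$, with the remaining operator $\prod_{i=2}^{N-1}(-\partial_{x_i})^{N-i}$ matching the length-$(N-1)$ version of \eqref{InverseKernel} after relabelling. This keeps the derivative and integral counts matched at every stage and requires only continuity of $f$.
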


\begin{proof}
We prove this by induction. Observe that, we can write explicitly:
\begin{equation}\label{IntegralExpression}
\mathsf{\Lambda}\mathsf{E}_\uparrow f(x)=\frac{\prod_{j=1}^{N-1}j!}{\mathsf{\Delta}_N(x)}\int_{x^{(1)}\prec x^{(2)} \prec \cdots \prec x^{(N)}=x}f\left(x_1^{(1)},x_2^{(2)},\dots,x_N^{(N)}\right)dx^{(1)}dx^{(2)}\cdots dx^{(N-1)}.
\end{equation}
Note that the above integral is $2^{-1}N(N-1)$-dimensional. The base case of the induction is then equivalent to the identity
\begin{equation*}
-\partial_{x_1}\int_{x_1}^{x_2}f(y,x_2)dy=f(x_1,x_2).
\end{equation*}
For the inductive step we first apply $\left(-\partial_{x_1}\right)^{N-1}$ to the iterated integral in (\ref{IntegralExpression}) by sequentially applying $(N-1)$ times the operator $-\partial_{x_1}$ (also observe that the Vandermonde determinants cancel out). This annihilates the integrals in the variables $x_1^{(N-1)},x_1^{(N-2)},\dots,x_1^{(1)}$ and plugs in $x_1$ for $x_1^{(1)}$ in $f$. Then, after appropriate relabelling of the variables, we are left with a $2^{-1}(N-1)(N-2)$-dimensional integral which is exactly in the form required to apply the inductive hypothesis (we are basically looking at the interlacing array of length $N-1$ and top row $\big(x_2^{(N)},\dots,x_N^{(N)}\big)=(x_2,\dots,x_N)$ obtained by removing the left edge coordinates). This completes the proof.
\end{proof}

Hence, by virtue of Proposition \ref{PropIntertwining}, by applying $\left(\mathsf{\Lambda}\mathsf{E}_\uparrow\right)^{-1}$ on both sides of the relevant intertwining, we obtain 
\begin{equation}\label{AlternativeSemigroup}
\mathsf{S}_t^{\uparrow,(N)}=\left(\mathsf{\Lambda}\mathsf{E}_{\uparrow}\right)^{-1}\mathsf{P}_t^{(N)}\mathsf{\Lambda}\mathsf{E}_{\uparrow}.
\end{equation}
This gives an alternative expression to the one in (\ref{TransitionDensityIPS1Dis}) for $\mathsf{S}_t^{\uparrow,(N)}$ (by combining Propositions \ref{DeterminantalProp1} and \ref{PropAlternativeRoute} below it follows directly that the right hand side of (\ref{AlternativeSemigroup}) coincides with the expression in (\ref{TransitionDensityIPS1Dis})). We note that the argument just used to obtain (\ref{AlternativeSemigroup}) appears to have been first used in \cite{DiekerWarren} to study the transition kernels of some discrete interacting particle systems related to the RSK algorithm. Our interest in expression (\ref{AlternativeSemigroup}) is that it provides a rather clear path to Proposition \ref{DeterminantalProp1}. Namely, consider the following measure on $\mathbb{IA}_N$ given by, with $x\in \mathbb{W}_N^{\uparrow,\circ}$  (this extends to $x\in \mathbb{W}_N^\uparrow$ as the Vandermonde determinants in the formula cancel out),
\begin{equation}\label{AlternativeMeasure}
\left(\mathsf{\Lambda}\mathsf{E}_{\uparrow}\right)^{-1}\mathsf{P}_t^{(N)}\mathsf{\Lambda}\left(x,\cdot\right). 
\end{equation}
Now, by virtue of (\ref{AlternativeSemigroup}), $\mathsf{S}_t^{\uparrow,(N)}(x,\cdot)$ is seen to be the right edge marginal of (\ref{AlternativeMeasure}) in $\mathbb{IA}_N$. The following proposition then gives an alternative, rather roundabout (given all the preliminaries we need) way, of obtaining Proposition \ref{DeterminantalProp1} but which is maybe conceptually more satisfying.

\begin{prop}\label{PropAlternativeRoute}
 The measure (\ref{SignedMeasure}), subject to identifying $\mathcal{D}_N$ from (\ref{SetDN}) with $\mathbb{IA}_N$, coincides with the measure (\ref{AlternativeMeasure}).   
\end{prop}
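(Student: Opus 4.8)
The plan is to establish Proposition \ref{PropAlternativeRoute} by computing the measure (\ref{AlternativeMeasure}) explicitly and matching it term-by-term against (\ref{SignedMeasure}). First I would write out $\mathsf{P}_t^{(N)}\mathsf{\Lambda}(x,\cdot)$: applying the Markov kernel $\mathsf{\Lambda}$ of (\ref{ArrayMarkovKernel}) after the non-colliding semigroup (\ref{NonCollidingSemigroup}) produces, for a generic array $Y^{(N)}=(y^{(1)},\dots,y^{(N)})\in\mathbb{IA}_N$, the expression
\begin{equation*}
\left[\mathsf{P}_t^{(N)}\mathsf{\Lambda}\right]\left(x,dY^{(N)}\right)=e^{-t\lambda_N}\frac{\prod_{j=1}^{N-1}j!}{\mathsf{\Delta}_N(x)}\det\left(e^{t\mathsf{L}}(x_i,y^{(N)}_j)\right)_{i,j=1}^N\mathbf{1}_{\left(Y^{(N)}\in\mathbb{IA}_N\right)}\prod_{1\le i\le n\le N}dy_i^{(n)},
\end{equation*}
where the Vandermonde factor $\mathsf{\Delta}_N(y^{(N)})$ from (\ref{NonCollidingSemigroup}) has cancelled against the one in $\mathsf{\Lambda}$. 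Then I would apply the left-inverse operator $\left(\mathsf{\Lambda}\mathsf{E}_\uparrow\right)^{-1}$ from (\ref{InverseKernel}), which acts in the $y^{(N)}=x$ variable only: it multiplies by $\mathsf{\Delta}_N(x)$ — cancelling the remaining $1/\mathsf{\Delta}_N(x)$ — divides by $\prod_{j=1}^{N-1}j!$, and applies $\prod_{i=1}^{N-1}(-\partial_{x_i})^{N-i}$ to the resulting function of $x$.

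The key computation is then to carry these $N-1$ iterated differentiations through the determinant $\det\left(e^{t\mathsf{L}}(x_i,y_j^{(N)})\right)_{i,j=1}^N$. By multilinearity of the determinant in its rows, $(-\partial_{x_i})^{N-i}$ acts only on the $i$-th row, turning the $(i,j)$ entry into $(-1)^{N-i}\partial_{x_i}^{N-i}e^{t\mathsf{L}}(x_i,y_j^{(N)})$; since $i$ ranges over $1,\dots,N-1$ and the $N$-th row is untouched (as $(-\partial_{x_N})^{0}=\mathrm{id}$), the overall sign collected is $\prod_{i=1}^{N-1}(-1)^{N-i}=(-1)^{\sum_{i=1}^{N-1}(N-i)}=(-1)^{N(N-1)/2}$. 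This yields precisely $(-1)^{N(N-1)/2}e^{-t\lambda_N}\det\left(\partial_{x_i}^{N-i}e^{t\mathsf{L}}(x_i,y_j^{(N)})\right)_{i,j=1}^N$ times the indicator of $\mathbb{IA}_N$. It remains only to reconcile the scalar prefactor $e^{-t\lambda_N}$ with the prefactor $e^{-t\sum_{k=1}^{N-1}k\mathsf{c}^{(k)}}$ appearing in (\ref{SignedMeasure}); using $\mathsf{c}^{(k)}=2(N-k-1)a_2+b_1$ one computes
\begin{equation*}
\sum_{k=1}^{N-1}k\,\mathsf{c}^{(k)}=\sum_{k=1}^{N-1}k\left(2(N-k-1)a_2+b_1\right)=\tfrac{1}{6}N(N-1)(2a_2(N-2)+3b_1)=\lambda_N,
\end{equation*}
a routine but essential elementary-summation identity. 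Finally, replacing $\mathbb{IA}_N$ by $\mathcal{D}_N$ is harmless because, as noted in Remark \ref{InterlacingComparison}, the two differ only on a Lebesgue-null set of boundary configurations and the density is continuous; with the identification in place the two measures agree.

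The main obstacle, such as it is, is bookkeeping rather than conceptual: one must be careful that $\left(\mathsf{\Lambda}\mathsf{E}_\uparrow\right)^{-1}$ is genuinely applicable here — it is defined on "sufficiently smooth functions on $\mathbb{W}_N^{\uparrow,\circ}$", and the function $x\mapsto\det\left(e^{t\mathsf{L}}(x_i,y_j^{(N)})\right)\mathsf{\Delta}_N(x)$ to which it is applied (before the cancellation) is smooth by smoothness of $z\mapsto e^{t\mathsf{L}}(z,y)$, so Lemma \ref{InverseMarkovKernel}'s setup is respected; and one must track that the derivatives $\partial_{x_i}$ commute past the indicator $\mathbf{1}_{(Y^{(N)}\in\mathbb{IA}_N)}$, which they do since that indicator does not involve the top-row variable $y^{(N)}=x$ once we have fixed $y^{(N)}=x$ — more precisely, $\left(\mathsf{\Lambda}\mathsf{E}_\uparrow\right)^{-1}$ differentiates in the free variable $x$ of the kernel, while the interlacing constraints on the lower rows $y^{(1)}\prec\cdots\prec y^{(N-1)}\prec x$ are treated as part of the measure, so one should expand via multilinearity with the constraint carried along. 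Once these points are handled the identification of the two explicit formulae is immediate, and since both (\ref{SignedMeasure}) and (\ref{AlternativeMeasure}) extend continuously from $\mathbb{W}_N^{\uparrow,\circ}$ to all of $\mathbb{W}_N^\uparrow$ (the Vandermonde factors having cancelled), the equality persists on the full chamber.
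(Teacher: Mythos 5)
Your proposal is correct and follows exactly the paper's argument: write out the explicit kernels for $\mathsf{P}_t^{(N)}$, $\mathsf{\Lambda}$ and $(\mathsf{\Lambda}\mathsf{E}_\uparrow)^{-1}$, cancel the Vandermonde factors, pull the sign $(-1)^{N(N-1)/2}$ out of $\prod_{i=1}^{N-1}(-\partial_{x_i})^{N-i}$, and verify $\sum_{k=1}^{N-1}k\,\mathsf{c}^{(k)}=\lambda_N$. Your write-up merely spells out the bookkeeping that the paper leaves implicit.
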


\begin{proof}
This follows by simply writing out the explicit expressions for $\mathsf{P}_t^{(N)}$, $\mathsf{\Lambda}$ and $\left(\mathsf{\Lambda}\mathsf{E}_\uparrow\right)^{-1}$ given in (\ref{NonCollidingSemigroup}), (\ref{ArrayMarkovKernel}) and (\ref{InverseKernel}) respectively, noting that,
\begin{equation*}
 \prod_{i=1}^{N-1}\left(-\partial_{x_i}\right)^{N-i}=(-1)^{\frac{N(N-1)}{2}}\prod_{i=1}^{N-1}\partial_{x_i}^{N-i} \ \textnormal{ and } \ e^{-t\sum_{k=1}^{N-1}k\mathsf{c}^{(k)}}= e^{-t\lambda_N},
\end{equation*}
and comparing with the explicit expression in (\ref{SignedMeasure}).
\end{proof}

Finally, observe that for deterministic initial condition $x\in \mathbb{W}_N^{\uparrow,\circ}$ for the top row, the measure (\ref{GibbsMeasure}) on $\mathbb{IA}_N$ can be written as $\mathsf{\Lambda}(x,\cdot)$, and by Proposition \ref{MultilevelProp} its evolution under the dynamics (\ref{DynamicsArray}) after time $t$  is given by $\mathsf{P}_t^{(N)}\mathsf{\Lambda}(x,\cdot)$ which is related (by virtue of the above) to the signed measure (\ref{SignedMeasure}) by an application of $\left(\mathsf{\Lambda}\mathsf{E}_\uparrow\right)^{-1}$.

We conclude with the following immediate corollary of Proposition \ref{MultilevelProp}. Of course, this was known for the diffusions discussed in Remark \ref{HistoryRmk}. For the Brownian case this goes back even earlier by using certain path transformations related to the RSK algorithm, see \cite{BougerolJeulin,OConnellYorRepresentation}.

\begin{cor}\label{CorollaryDistribution}
Let $\mu$ be a probability measure supported on $\mathbb{W}_N^{\uparrow,\circ}$. Suppose the particle system $\left(\mathsf{z}(t);t\ge 0\right)$ following (\ref{NonCollidingSDE}) is initialized according to $\mu$ and the particle systems $\left(\mathsf{x}^\downarrow(t);t \ge 0\right)$ and $\left(\mathsf{x}^\uparrow(t);t \ge 0\right)$ following (\ref{IPS1}) and (\ref{IPS2}) respectively are initialized according to $\mu\mathsf{\Lambda}\mathsf{E}_{\uparrow}$ and $\mu\mathsf{\Lambda}\mathsf{E}_{\downarrow}$. Then, we have the following equality in distribution at the process level:
\begin{align*}
    \left(\mathsf{z}_1(t);t\ge 0\right)&\overset{\textnormal{d}}{=} \left(\mathsf{x}_N^\downarrow(t);t \ge 0\right),\\
     \left(\mathsf{z}_N(t);t\ge 0\right)&\overset{\textnormal{d}}{=} \left(\mathsf{x}_N^\uparrow(t);t \ge 0\right).
\end{align*}
\end{cor}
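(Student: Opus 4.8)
The plan is to derive Corollary \ref{CorollaryDistribution} as a direct consequence of the intertwining relations in Proposition \ref{PropIntertwining} together with the fact (from Proposition \ref{MultilevelProp}, or rather the discussion around it) that the projections $\mathsf{E}_\uparrow$ and $\mathsf{E}_\downarrow$ of the array dynamics (\ref{DynamicsArray}) onto the right and left edges are themselves Markovian with semigroups $\mathsf{S}_t^{\uparrow,(N)}$ and $\mathsf{S}_t^{\downarrow,(N)}$. The essential point is that an intertwining $\mathsf{P}_t^{(N)}\mathsf{\Lambda}\mathsf{E}_\uparrow=\mathsf{\Lambda}\mathsf{E}_\uparrow\mathsf{S}_t^{\uparrow,(N)}$ of Markov semigroups, via a Markov kernel, transfers distributional information from one process to the other: if $\left(\mathsf{z}(t);t\ge 0\right)$ has semigroup $\mathsf{P}_t^{(N)}$ started from $\mu$, then the pushforward under $\mathsf{\Lambda}\mathsf{E}_\uparrow$ of the law of $\left(\mathsf{z}(t);t\ge 0\right)$ at the level of finite-dimensional distributions matches the law of $\left(\mathsf{x}^\uparrow(t);t\ge 0\right)$ started from $\mu\mathsf{\Lambda}\mathsf{E}_\uparrow$, and the last coordinate $\mathsf{x}_N^\uparrow$ is a deterministic function of the $\mathbb{W}_N^\uparrow$-valued state.

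Concretely, I would proceed as follows. First, fix times $0\le t_1<\cdots<t_m$ and bounded measurable test functions $f_1,\dots,f_m$ on $(l,r)$. For the process $\left(\mathsf{z}(t);t\ge 0\right)$ started from $\mu$, write its $m$-dimensional distribution using the Markov property and the semigroup $\mathsf{P}_t^{(N)}$. The aim is to show
\[
\mathbb{E}_\mu\!\left[\prod_{j=1}^m f_j(\mathsf{z}_N(t_j))\right]=\mathbb{E}_{\mu\mathsf{\Lambda}\mathsf{E}_\uparrow}\!\left[\prod_{j=1}^m f_j(\mathsf{x}_N^\uparrow(t_j))\right].
\]
Introduce on $\mathbb{W}_N^\uparrow$ the function $\pi_N(y)=y_N$, the top coordinate, which is continuous, so that $f_j(\mathsf{z}_N(t_j))=f_j(\pi_N(\mathsf{z}(t_j)))$. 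Now use the intertwining $\mathsf{P}_t^{(N)}\mathsf{\Lambda}\mathsf{E}_\uparrow=\mathsf{\Lambda}\mathsf{E}_\uparrow\mathsf{S}_t^{\uparrow,(N)}$ repeatedly. The key algebraic observation is that the composite kernel $\mathsf{\Lambda}\mathsf{E}_\uparrow$ from $\mathbb{W}_N^{\uparrow,\circ}$ to $\mathbb{W}_N^\uparrow$ satisfies $(\mathsf{\Lambda}\mathsf{E}_\uparrow)(f_j\circ\pi_N)=f_j\circ\pi_N$ as functions on $\mathbb{W}_N^{\uparrow,\circ}$: indeed, $\mathsf{E}_\uparrow$ maps an array to its right edge $(x_1^{(1)},\dots,x_N^{(N)})$ whose last entry is precisely $x_N^{(N)}$, which under $\mathsf{\Lambda}$ is frozen equal to the top-row input, so the top coordinate is unchanged. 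Hence, inserting the intertwining successively between the semigroup factors and commuting $\mathsf{\Lambda}\mathsf{E}_\uparrow$ all the way to the right (where it acts on a function of $\pi_N$ alone and therefore acts as the identity), one converts the $\mathsf{P}^{(N)}$-chain of expectations into the $\mathsf{S}^{\uparrow,(N)}$-chain, with the initial condition correctly transformed from $\mu$ to $\mu\mathsf{\Lambda}\mathsf{E}_\uparrow$. This proves equality of all finite-dimensional distributions of $(\mathsf{z}_N(t);t\ge 0)$ and $(\mathsf{x}_N^\uparrow(t);t\ge 0)$, which is the claimed equality in distribution at the process level. The argument for the left edge is identical, using the second intertwining in Proposition \ref{PropIntertwining}, the projection $\pi_1(y)=y_1$, and the fact that $\mathsf{E}_\downarrow$ returns $(x_1^{(1)},\dots,x_1^{(N)})$ whose last entry $x_1^{(N)}$ equals the first coordinate of the top row under $\mathsf{\Lambda}$.

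I expect the main (mild) obstacle to be bookkeeping at the level of finite-dimensional distributions rather than anything substantive: one must be slightly careful that the intertwining relation, which is stated for the one-step semigroups, propagates correctly through the Chapman–Kolmogorov composition, and that the function $f_j\circ\pi_N$ being invariant under $\mathsf{\Lambda}\mathsf{E}_\uparrow$ is exactly what makes the telescoping work. A second minor point is the support issue: $\mu$ is supported on the open chamber $\mathbb{W}_N^{\uparrow,\circ}$, which is where $\mathsf{\Lambda}\mathsf{E}_\uparrow$ (and the inverse-type manipulations) are defined without continuity-extension subtleties, so no boundary argument is needed here; one simply invokes Proposition \ref{PropIntertwining} as stated. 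Finally, since the statement is about equality of the full processes, I would phrase the conclusion in terms of cylinder sets / finite-dimensional distributions, which determine the law on path space for these (right-continuous) Markov processes; this is standard and I would not belabour it.
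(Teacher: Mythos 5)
Your argument is correct, but it is not the route the paper takes. The paper presents the corollary as an immediate consequence of Proposition \ref{MultilevelProp} via a pathwise coupling: start the array dynamics (\ref{DynamicsArray}) from $\mu\mathsf{\Lambda}$; then the top row is a copy of $(\mathsf{z}(t);t\ge 0)$ started from $\mu$, the right (resp.\ left) edge is a copy of $(\mathsf{x}^\uparrow(t))$ (resp.\ $(\mathsf{x}^\downarrow(t))$) started from $\mu\mathsf{\Lambda}\mathsf{E}_\uparrow$ (resp.\ $\mu\mathsf{\Lambda}\mathsf{E}_\downarrow$), and the coordinates $\mathsf{x}_N^{(N)}(t)=\mathsf{z}_N(t)=\mathsf{x}_N^\uparrow(t)$ and $\mathsf{x}_1^{(N)}(t)=\mathsf{z}_1(t)=\mathsf{x}_N^\downarrow(t)$ literally coincide in this coupling, so the process-level identities need no further computation. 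Your derivation instead works at the semigroup level from Proposition \ref{PropIntertwining}, telescoping the intertwining through the finite-dimensional distributions. This is valid, and the mechanism you isolate — that $\mathsf{\Lambda}\mathsf{E}_\uparrow$ leaves the extreme coordinate deterministic — is exactly why only the extreme particles transfer. One small point of precision: for the telescoping you need slightly more than $(\mathsf{\Lambda}\mathsf{E}_\uparrow)(f\circ\pi_N)=f\circ\pi_N$; you need the commutation $\mathsf{\Lambda}\mathsf{E}_\uparrow\, M_{f\circ\pi_N}=M_{f\circ\pi_N}\,\mathsf{\Lambda}\mathsf{E}_\uparrow$ with the multiplication operator, so that each $f_j$ can be pulled past the kernel before the next intertwining step; this holds for the same reason (the last edge coordinate is a.s.\ equal to $x_N$ under $\mathsf{\Lambda}\mathsf{E}_\uparrow(x,\cdot)$), but it is the statement you should actually invoke. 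Also be aware that Proposition \ref{PropIntertwining} is itself proved in the paper using the coupling of Proposition \ref{MultilevelProp}, so your route is not logically lighter — it just repackages the coupling into an algebraic identity; what it buys is a proof that uses only the stated semigroup relations and makes transparent that nothing beyond the extreme coordinates can be transferred this way, whereas the paper's coupling is shorter and gives the process-level statement with no finite-dimensional bookkeeping at all.
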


\begin{rmk}
For completeness we also record explicitly the probabilistic interpretation of the intertwinings (\ref{Dynkin1}) and (\ref{Dynkin2}). These relations imply that the projections under $\mathsf{E}_{\downarrow}$ and $\mathsf{E}_{\uparrow}$ of the full interlacing Markov process (\ref{DynamicsArray}) are Markovian with semigroups $\mathsf{S}_t^{\downarrow,(N)}$ and $\mathsf{S}_t^{\uparrow,(N)}$ respectively (this of course can also be  observed directly from the SDEs, as we have done already). This is an instance of Dynkin's criterion \cite{Dynkin} for when a process given as a function (in this case $\mathsf{E}_\downarrow$ or $\mathsf{E}_{\uparrow}$) of a Markov process is Markov itself.
\end{rmk}

\bibliographystyle{acm}
\bibliography{References}

\bigskip 

\noindent{\sc School of Mathematics, University of Edinburgh, James Clerk Maxwell Building, Peter Guthrie Tait Rd, Edinburgh EH9 3FD, U.K.}\newline
\href{mailto:theo.assiotis@ed.ac.uk}{\small theo.assiotis@ed.ac.uk}

\end{document}